\def\hmath$#1${\texorpdfstring{{\rmfamily\textit{#1}}}{#1}}
\newtheorem{theorem}{Theorem}
\newtheorem{corollary}[theorem]{Corollary}
\newtheorem{observation}[theorem]{Observation}
\newtheorem{conjecture}[theorem]{Conjecture}
\newtheorem{definition}[theorem]{Definition}
\newtheorem{lemma}[theorem]{Lemma}
\newtheorem{proposition}[theorem]{Proposition}
\newtheorem{problem}[theorem]{Problem}
\newtheorem{claim}[theorem]{Claim}
\theoremstyle{remark}
\newtheorem{remark}[theorem]{Remark}
\newtheorem{example}[theorem]{Example}
\newcommand{\indicator}{\mathds{1}}
\newcommand{\hypercubegraph}{\mathsf{RAG}(n, \{\pm 1\}^d, p,\sigma)}
\newcommand{\sphere}{\mathbb{S}}
\newcommand{\dsphere}{\mathbb{S}^{d-1}}
\newcommand{\dtorus}{\mathbb{T}^{d}}
\newcommand{\onetorus}{\mathbb{T}^{1}}
\newcommand{\poly}{\mathsf{poly}}
\newcommand{\RAG}{\mathsf{RAG}}
\newcommand{\bfG}{\mathbf{G}}
\newcommand{\bfH}{\mathbf{H}}
\newcommand{\bfK}{\mathbf{K}}
\newcommand{\bfA}{\mathbf{A}}
\newcommand{\RGG}{\mathsf{RGG}}
\newcommand{\ergraph}{\mathsf{G}(n,p)}
\newcommand{\hypercube}{\{\pm 1\}^d}
\newcommand{\signedcount}{\mathsf{SC}}
\newcommand{\signedweight}{\mathsf{SW}}
\newcommand{\unsignedweight}{\mathsf{W}}
\newcommand{\advantage}{\mathsf{ADV}}
\newcommand{\unitcircle}{\mathbb{S}^1}
\newcommand{\ergraphhalf}{\mathsf{G}(n, 1/2)}
\newcommand{\unif}{\mathsf{Unif}}
\newcommand{\error}{\mathsf{Err}}
\newcommand{\numconnectedcomponets}{\mathsf{numc}}
\newcommand{\nonzero}{\mathsf{NZ}}
\newcommand{\eqclasssize}{\mathsf{ES}}
\newcommand{\sfp}{\mathsf{p}}
\newcommand{\bfx}{\mathbf{x}}
\newcommand{\bfnull}{\mathbf{0}}
\newcommand{\bfc}{\mathbf{c}}
\newcommand{\bfv}{\mathbf{v}}
\newcommand{\bfy}{\mathbf{y}}
\newcommand{\bfz}{\mathbf{z}}
\newcommand{\bfg}{\mathbf{g}}
\newcommand{\bfgg}{\mathbf{gg}}
\newcommand{\bfh}{\mathbf{h}}
\newcommand{\bfu}{\mathbf{u}}
\newcommand{\bfP}{\mathbf{P}}
\newcommand{\bfQ}{\mathbf{Q}}
\newcommand{\expect}{{\mathbf{E}}}
\newcommand{\distribution}{{\mathcal{D}}}
\newcommand{\infl}{{\mathbf{Inf}}}
\newcommand{\inflvector}{\overrightarrow{\mathbf{{Inf}}}}
\newcommand{\maxinfl}{{\mathbf{MaxInf}}}
\newcommand{\Var}{{\mathbf{Var}}}
\newcommand{\Cov}{{\mathbf{Cov}}}
\newcommand{\TV}{{\mathsf{TV}}}
\newcommand{\KL}{{\mathsf{KL}}}
\newcommand{\prob}{{\mathbf{P}}}
\newcommand{\ER}{Erd{\H o}s-R\'enyi }
\newcommand{\B}{\Big}
\newcommand{\Group}{\mathcal{G}}
\newcommand{\quadand}{\quad \text{and} \quad}
\newcommand{\sign}{\mathrm{sign}}
\newcommand{\iidsim}{\stackrel{\mathrm{i.i.d.}}\sim}
\title{Detection of $L_\infty$ Geometry in Random Geometric Graphs:\\
Suboptimality of Triangles and Cluster Expansion}
\author{Kiril Bangachev\thanks{Dept. of EECS, MIT. \texttt{kirilb@mit.edu} } \quad Guy Bresler\thanks{Dept. of EECS, MIT. \texttt{guy@mit.edu}. Supported by NSF Career Award CCF-1940205.}}
\begin{document}

\maketitle
\pagenumbering{roman}
\begin{abstract}
In this paper we study the random geometric graph  $\RGG(n,\dtorus,\unif,\sigma^q_p,p)$ with $L_q$ distance where each vertex is sampled uniformly from the $d$-dimensional torus and where the connection radius is chosen so that the marginal edge probability is $p$.  
%
In addition to results addressing other questions, we make progress on determining when it is possible to distinguish $\RGG(n,\dtorus,\unif,\sigma^q_p,p)$ from the Erd\H os-R\'enyi graph $\ergraph$.



Our strongest result is in the extreme setting $q = \infty$, in which case $\RGG(n,\dtorus,\unif,\sigma^{\infty}_p,p)$ is the \textsf{AND} of $d$ 1-dimensional random geometric graphs. We derive a formula similar to the \emph{cluster-expansion} from statistical physics, capturing the compatibility of subgraphs from each of the $d$ 1-dimensional copies, and use it to bound the signed expectations of small subgraphs. We show that counting signed 4-cycles is optimal among all low-degree tests, succeeding with high probability if and only if $d = \tilde{o}(np).$ In contrast, the signed triangle test is suboptimal and only succeeds when $d = \tilde{o}((np)^{3/4}).$ Our result stands in 
sharp contrast to the existing literature on random geometric graphs (mostly focused on $L_2$ geometry) where the signed triangle statistic is optimal. 

\end{abstract}
\newpage

\setcounter{tocdepth}{2}
{
  \hypersetup{linkcolor=black}
  \tableofcontents
}
\newpage
\pagenumbering{arabic}
\section{Introduction}
Networks arising in the sciences are often modeled as latent space graphs. Each node in a network has a latent feature vector and the probability of connection between two nodes is a function of the two feature vectors. One instance is the case of (random) geometric graphs in which each feature vector is a (random) element of a metric space and the connection function is determined by the distance between the two vectors. Applications of random geometric graphs include protein-protein interactions and viral spread in the biological sciences \cite{Higham03,Preciado2009SpectralAO}, 
wireless networks and motion planning in engineering \cite{Haenggi2009,Solovey18},
consensus dynamics and citation networks in the social sciences \cite{XIE2016167,ESTRADA201620}. 

Formally, a random geometric graph is defined as follows.

\begin{definition}[Random Geometric Graph]
  Given are a metric space $(\Omega, \mu),$ a distribution $\distribution$ over $\Omega,$ and 
  {connection} function $\sigma:\Omega\times \Omega\longrightarrow [0,1]$ such that $\sigma(\bfx,\bfy)$ only depends on $\mu(\bfx,\bfy).$ 
  Let $\expect[\sigma(\bfx,\bfy)] = p.$
  Then, $\RGG(n,\Omega, \distribution,\sigma,p)$ is the following distribution over $n$-vertex graphs.
$$\displaystyle
\prob[\bfG = A] =
\expect_{\bfx^1, \bfx^2, \ldots, \bfx^n}
\Bigg[
\prod_{1\le i < j\le n} \sigma(\bfx^i,\bfx^j)^{A_{i,j}}
(1-\sigma(\bfx^i,\bfx^j))^{1-A_{i,j}}\Bigg].
$$
When $\sigma$ is monotone in $\mu,$ we say that $\RGG(n,\Omega, \distribution,\sigma,p)$ is a monotone random geometric graph.
\end{definition}
In words, each node $i$ has an associated independent latent vector $\bfx^i$ in $\Omega$ distributed according to $\mathcal{D}.$ Conditioned on $\bfx^1, \bfx^2,\ldots, \bfx^n,$ each pair of nodes $i$ and $j$ independently forms an edge with probability $\sigma(\bfx^i,\bfx^j).$ Now on, we will focus on the monotone case which has the natural interpretation that closer nodes are more (less) likely to be adjacent.\footnote{Non-monotone settings sometimes also have very natural interpretations, see for example \cite{bangachev2023random}.} In practice, one sometimes observes the network with partial data on the underlying feature vectors. In this work, we assume that the vectors are fully hidden.

Associated to random geometric graphs with latent vectors are a wide range of statistical and computational tasks such as: 1) \emph{Clustering and Embedding} of the nodes in a way that captures the distances between latent vectors \cite{li2023spectral,OConner20,Ma2020UniversalLS}; 2) \emph{Estimating the dimension} of the underlying space $\Omega$ in the case when dimension is naturally defined such as $\Omega \in \{\dsphere,\dtorus,\hypercube\}$ \cite{Bubeck14RGG,friedrich2023dimension}; 3) \emph{Testing} whether the network has a geometric structure against a ``pure noise'' (i.e., \ER)\footnote{In the \ER distribution $\ergraph,$ each of the $\binom{n}{2}$ edges appears independently with probability $p.$ As there is no underlying dependence structure, this is a natural null model.} null hypothesis \cite{Devroye11,Bubeck14RGG,Brennan19PhaseTransition,Liu2021APV,Liu2022STOC,Brennan22AnisotropicRGG,bangachev2023random} and others.

The current work is mostly focused on the hypothesis-testing question which can be formalised as follows (e.g. \cite{bangachev2023random}): Given $G$, decide between
\begin{equation}
    \label{eq:hypoithesistesting}
    H_0: G\sim \ergraph\quad  \text{versus}\quad  H_1:G\sim \RGG(n,\Omega, \distribution,\sigma,p). 
\end{equation}
Associated to these hypotheses are (at least) two different questions:
\begin{enumerate}
    \item \textit{Statistical:} 
    First, when is there a consistent test?
    To this end, we aim to characterize the parameter regimes in which the total variation between the two distributions tends to zero or instead to one. 
    \item \textit{Computational:} Second, we can ask for a computationally efficient test. In particular, when does there exist a polynomial-time test solving \cref{eq:hypoithesistesting} with high probability?
\end{enumerate}
This hypothesis testing question has received significant attention in recent years in the case when $(\Omega, \mu)$ captures an $L_2$ geometry. Namely, $\mu$ is the induced $L_2$ distance from $\mathbb{R}^d$ and $(\Omega,\mathcal{D})$ is either the unit sphere $\dsphere$ with its uniform (Haar) measure
\cite{Devroye11,Bubeck14RGG,Brennan19PhaseTransition,Liu2022STOC}
or Euclidean space $\mathbb{R}^d$ with a Gaussian measure
\cite{Liu2021APV,Liu21PhaseTransition,Brennan22AnisotropicRGG}. 
In all of the above \textit{monotone} models, the conjectured information-theoretically optimal statistic is the signed triangle statistic (see \cref{def:signedcountsperformance}), which is also computable in polynomial time. For a more extensive summary of results in models with $L_2$ geometry, we refer the reader to \cite{duchemin22,bangachev2023random}. Here, we only discuss the case (most relevant to our work) when $\Omega = \dsphere, \distribution = \unif,$ and 
$\sigma(\bfx,\bfy) = \indicator[\langle \bfx, \bfy\rangle\ge \rho^d_p],$ where $\rho^d_p$ is chosen so that the expected density is $p.$ The state of the art results are as follows. When $d = \tilde{O}(n^3p^3),$ by counting signed triangles (see \cref{def:signedcountsperformance}) one can distinguish between the $\RGG$ model and $\ergraph$ with high probability \cite{Bubeck14RGG,Liu2022STOC}. There is a matching information-theoretic lower bound when $p =\Theta(n^{-1})$ \cite{Liu2022STOC} and  
when $p = \Tilde{\Theta}(1)$ \cite{Bubeck14RGG}. The case $n^{-1}\ll p \ll \Tilde{\Theta}(1)$ remains open and the best known lower bound due to \cite{Liu2022STOC} is $d = \tilde{\Omega}(n^3p^2).$ Namely, when $d = \tilde{\Omega}(n^3p^2),$ one has 
$$
\TV\Big(\RGG(n,\dsphere,\unif, \sigma,p),\ergraph\Big) = o(1).
$$

In \cite{Bubeck14RGG}, the authors also show that the signed triangle statistic is optimal for \emph{exact} recovery of the dimension in the model $\Omega = \dsphere,\mathcal{D} = \unif$ and $\sigma(\bfx^i, \bfx^j) =\indicator[\langle \bfx^i,\bfx^j\rangle\ge 0].$ Using the (signed) triangle statistic for detecting geometry in monotone models is intuitive as it captures the axiomatic triangle inequality: If $x$ and $y $ are close and $y$ and $z$ are close, then so are $x$ and $z$ \cite{Bubeck14RGG}.

These results and intuition have led to the conventional wisdom that (signed) triangles are most informative in monotone random geometric graphs.\footnote{\cite{bangachev2023random} provides several \emph{geometric} examples in which signed triangles are not the optimal statistical test for \cref{eq:hypoithesistesting}. However, neither of them is a \emph{monotone random geometric graph}. In these examples, either the connection functions are not monotone in the respective distance or the connections functions do not correspond to true ``distances'' (but, for example, to a non-PSD inner product \cite[Theorem 6.17]{bangachev2023random}) } Subsequent works in very different geometries have also used triangle-based statistics, for example to 
estimate the hidden dimension
\cite{Almagro22,
friedrich2023dimension}.  

\medskip 

In this paper, we go against this conventional wisdom and demonstrate that the (signed) triangle statistic can be \emph{suboptimal}. More concretely, we study the hypothesis 
testing problem under
$L_q$ geometry for $q\in [1,\infty)\cup \{\infty\}$ and show that different values of $q$ yield both quantitatively and qualitatively different behaviours (see \cref{Fig:linftydiag,Fig:lqdiag}). In particular, when $q = \infty,$ triangle-based tests are always suboptimal. The suboptimality of triangle-based statistics extends to the task of dimension estimation as well. We use the (unweighted version of the) model of \cite{Friedrich23cliques,friedrich2023dimension} with $L_q$ geometry over $\dtorus,$ defined as follows.

\begin{definition}[$L_q$-Hard Thresholds Model on $\dtorus$] 
\label{def:Lqtorus}
Consider the torus $\dtorus\cong (2\sphere^1)^{\times d},$ which is a product of $d$ circles of circumference $2.$\footnote{We choose the circumference to be equal to 2 simply for convenience.}\footnote{One can equivalently define $\dtorus = \mathbb{R}^d/\sim,$ where $\bfx\sim \bfy$ if and only if $\bfx - \bfy\in 2\mathbb{Z}^d.$} Let $\unif$ be the uniform (Haar) measure over $\dtorus.$ For $x_1, y_1\in 2\sphere^1,$ denote by $|x_1 - y_1|_C\in [0,1]$ the circular distance, i.e. the length of the shorter arc connecting $x_1$ and $y_1.$ For $1\le q<+\infty,$ introduce the $L_q$ distance on $\dtorus$ given by 
$$
\|\bfx - \bfy\|_{q}\coloneqq 
\Big(
\sum_{i = 1}^d 
|x_i - y_i|_C^q
\Big)^{1/q}.
$$
Also, denote $\|\bfx-\bfy\|_\infty \coloneqq \lim_{q\longrightarrow + \infty}\|\bfx - \bfy\|_{q} = \max_i |x_i - y_i|_C.$
Let $1\ge p \ge 0,\tau^q_p\ge 0$ be such that 
$\expect_{\bfx,\bfy\iidsim \unif(\dtorus)}\Big[\indicator[\|\bfx-\bfy\|_q\le \tau^q_p]\Big] = p$
and $\sigma^q_p(\bfx,\bfy)\coloneqq \indicator[\|\bfx-\bfy\|_q\le \tau^q_p].$ Then, $\RGG(n, \dtorus,\unif,\sigma^q_p,p)$ is the random geometric graph over $\dtorus$ in which vertices are adjacent if and only if the $L_q$ distance between the corresponding latent vectors is at most $\tau_p^q,$ leading to expected density $p.$ 
\end{definition}

\begin{figure}[!htb]
   \begin{minipage}{0.47\textwidth}
     \centering
     \includegraphics[width=0.8\linewidth]{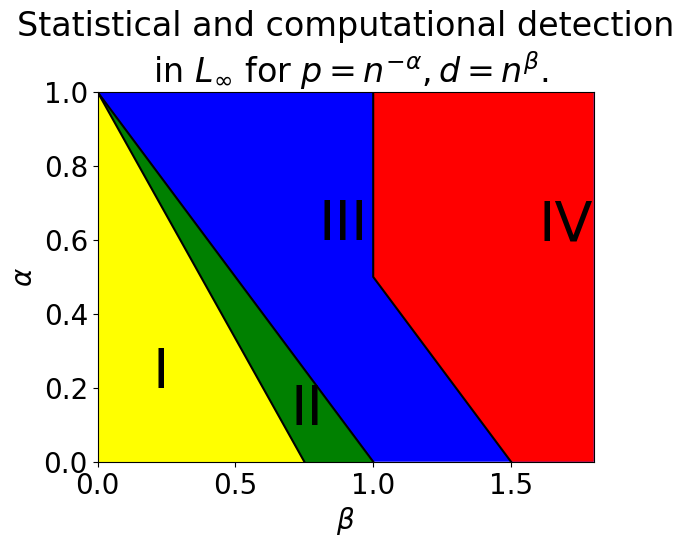}
     \caption{\footnotesize{Visualizing \cref{thm:linfinitydetection,thm:linftylowdegreeindist,thm:linftyinformationtheory}. In region $I,$ the signed triangle test solves \cref{eq:hypoithesistesting} for $\RGG(n,\dtorus, \unif, \sigma^\infty_p,p)$ with high probability. In region $\mathrm{I} + \mathrm{II},$ the signed 4-cycle test  succeeds with high probability. In region $\mathrm{III} + \mathrm{IV},$ no low-degree polynomial test succeeds. In $\mathrm{IV},$ it is information theoretically impossible to solve \cref{eq:hypoithesistesting} with high probability. The last region is potentially suboptimal.}}\label{Fig:linftydiag}
   \end{minipage}\hfill
   \begin{minipage}{0.47\textwidth}
     \centering
     \includegraphics[width=0.7\linewidth]{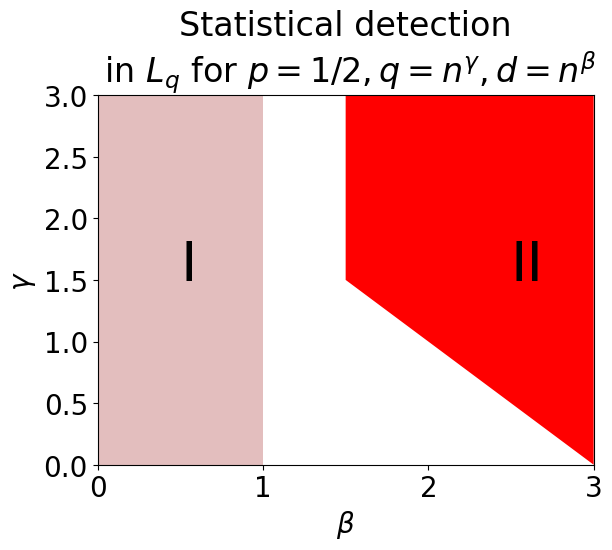}
     \caption{\footnotesize{Visualizing \cref{thm:torussmallq,thm:entropybound}. 
     In region $\mathrm{I},$ the entropy of 
     $\RGG(n,\dtorus,\unif,\sigma^q_{1/2}, 1/2)$ is much lower than that of $\ergraphhalf.$ Yet, we do not know any efficient test that distinguishes the two graph models in this region (even though, we believe the signed 4-cycle count does in a strictly larger region \cref{conj:sigendfourcycleinlqp}). In region $\mathrm{II},$  it is information theoretically impossible to solve \cref{eq:hypoithesistesting} with high probability. Both regions are potentially suboptimal.
     }}\label{Fig:lqdiag}
   \end{minipage}\hfill
\end{figure}

To the best of our knowledge, the work of \cite{Friedrich23cliques} is the first to explore \cref{eq:hypoithesistesting} for random geometric graphs in non-$L_2$ geometries. 
They showed that in the $L_q$ model of \cref{def:Lqtorus}\footnote{Their result is actually slightly more general as it applies to inhomogeneous random geometric graphs.} for fixed $p,n,$
$$\lim_{d\to \infty}\TV\Big(\RGG(n,\dtorus, \unif, \sigma^q_p,p),\ergraph\Big) = 0.$$
Their approach, based on a multidimensional Berry-Esseen theorem and mimicking \cite{Devroye11}, however, only yields $\TV$ distance of order $o(1)$ when $d = \exp(\Omega(n^2)).$ The authors pose the direction of improving this bound as an open problem, which is also one of the main motivations of the current work.

A different direction of study taken in \cite{Friedrich23cliques} is estimating the probability with which a given set of edges
appears
in $\RGG(n,\dtorus, \unif, \sigma^\infty_p,p).$  In the homogeneous case when also $d = \omega(\log(n)^2)$ their results can be restated as follows: if $|\mathcal{A}|$ is constant, then
the probability that all edges of $\mathcal{A}$ appear in $\RGG(n,\dtorus, \unif, \sigma^\infty_p,p)$ is $p^{|\mathcal{A}|}(1 + o(1)).$
This also allows the authors to bound the clique number of $\RGG(n,\dtorus, \unif, \sigma^\infty_p,p)$ (and its inhomogeneous generalization). In a subsequent paper, the authors use these quantities for estimating the dimension of a random geometric graph \cite{friedrich2023dimension}.

\subsection{Main Results for \texorpdfstring{$L_\infty$}{Linfty} Geometry}
The $L_\infty$ case is special because of the following factorization property over coordinates: \emph{$\|\bfx - \bfy\|_\infty\le \tau$ holds if and only if $|x_i - y_i|\le \tau $ holds for each $i \in [d].$} This means that each edge $(u,v)$ is the $\mathsf{AND}$ of $d$ \emph{independent} edges in the 1-dimensional random geometric graphs over the different coordinates. In comparison, 
previously studied $L_2$ models have a (weighted) $\mathsf{MAJORITY}$ combinatorics. For instance, in the spherical case $\langle \bfx,\bfy\rangle\ge \rho$ if and only if 
$\sum_{i = 1}^d |x_i|\times |y_i|\times \sign(x_iy_i)\ge \rho.$ Here, each $\sign(x_iy_i)$ is an independent 1-dimensional edge and the values $|x_i|\times |y_i|$ are the corresponding weights. 

Factorization over the induced independent 1-dimensional random geometric graphs makes the computation of expected (signed) subgraph counts tractable as computations in one dimension are naturally much simpler (see \cref{claim:1dtoruscounts}). Signed subgraph counts are fundamental in studying random graph distributions as they are the Fourier coefficients of the probability density.
The factorization property, also utilized in \cite{Friedrich23cliques}, is the first main ingredient in our results in the $L_\infty$ case. 

The second ingredient is combining the induced 1-dimensional structures via the $\mathsf{AND}$ function. While in certain special cases this step is nearly trivial (e.g., in \cref{thm:linftyinformationtheory} we only need to do it for $K_{2,t}$ subgraphs and in \cref{thm:linfinitydetection} for triangles and 4-cycles), in full generality it requires a careful analysis of the compatibility of induced 1-dimensional structures. We carry out such an analysis in \cref{sec:linftysignedcounts} by viewing each 1-dimensional structure as a polymer and expanding the product over the $d$ coordinates. A rearrangement of terms yields a tremendous amount of cancellations that leaves us with an expression for the expected signed subgraph counts similar to the celebrated cluster expansion formula (e.g., \cite{mayer40cluster,kotecky86,friedli_velenik_2017}) from statistical physics (which has found many other applications in combinatorics, e.g. \cite{scott2005cluster}). In our case, the compatibility criterion is given by the size of the overlap of different 1-dimensional structures. What makes a cluster-expansion-like formula appealing is a rapid decay of terms which means that terms corresponding to small clusters determine its asymptotics (as in the Koteck{\`y}-Preiss theorem \cite{kotecky86}). The derivation and analysis of this formula is our technical and conceptual highlight in the $L_\infty$ case.

\medskip

Throughout we will frequently refer to signed subgraph count tests and low-degree polynomial tests. As these are by now standard in the literature on latent space graphs, we defer the definitions to \cref{sec:preliminaries}.
Throughout the rest of the paper, we make the following assumption:\footnote{Most results can be extended to the setting $\min(p, 1-p) = \Omega(n^{-1}), \exp((\log n)^\delta)<d,$ but this comes at a significant cost in the exposition.} 
\begin{equation}\tag{\textbf{A}}
\label{eq:assumption}
    \text{There exist some absolute constants $\delta, \epsilon>0$ such that  $n^{-1 + \epsilon}\le p \le 1/2, n^\delta \le d.$}
\end{equation}

\subsubsection{Detecting $L_\infty$ Geometry}

Our first result shows information-theoretic indistinguishability from \ER graphs for dimension above a certain value. An argument due to Liu and Racz \cite{Liu2021APV} (see \cref{eq:Kltensorization}) reduces this question to bounding signed counts of $K_{2,t}$ subgraphs, which facilitates the following result. 

\begin{theorem}[Information-Theoretic Lower Bound for $L_\infty$ Model] 
\label{thm:linftyinformationtheory}
If $d = \tilde{\omega}(\max(n^{3/2}p,n)),$ then
$$
\TV\Big(
\RGG(n, \dtorus,\unif,\sigma^\infty_{p},p),
\ergraph
\Big) = o(1).
$$
\end{theorem}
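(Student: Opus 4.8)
The plan is to follow the $K_{2,t}$-tensorization argument of Liu and Rácz, which the excerpt explicitly references. The key identity (their \cref{eq:Kltensorization}) bounds the $\chi^2$-divergence, and hence $\TV^2$, between $\RGG$ and $\ergraph$ by a sum over $t\ge 2$ of suitably normalized signed counts of complete bipartite graphs $K_{2,t}$. Concretely, writing $\mathsf{SE}[K_{2,t}]$ for the signed expectation of the $K_{2,t}$ pattern (the expectation over two ``hub'' latent points $\bfx,\bfy$ of $\bigl(\expect_{\bfz}[(\sigma(\bfx,\bfz)-p)(\sigma(\bfy,\bfz)-p)]\bigr)^t$ type quantity, appropriately normalized by $p,1-p$), the bound takes the shape
\begin{equation*}
\TV^2\bigl(\RGG,\ergraph\bigr)\;\le\;\sum_{t\ge 2}\frac{n^{t+2}}{t!}\,\bigl|\widehat{\mathsf{SE}}[K_{2,t}]\bigr|,
\end{equation*}
so it suffices to show this series is $o(1)$ under $d=\tilde\omega(\max(n^{3/2}p,n))$. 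So the first step is to set up this reduction precisely and reduce everything to estimating the $K_{2,t}$ signed moments.

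The second and main step is to compute or upper bound $\widehat{\mathsf{SE}}[K_{2,t}]$ for the $L_\infty$ model. Here I would exploit the factorization property: an edge is the $\mathsf{AND}$ over $d$ coordinates of independent $1$-dimensional circular RGG edges, each with marginal $p^{1/d}$. The relevant quantity is, for two hubs at circular-coordinate differences $\bfa=(a_1,\dots,a_d)$, the ``per-leaf correlation'' $\rho(\bfa):=\expect_{\bfz}[\sigma(\bfx,\bfz)\sigma(\bfy,\bfz)]-p^2 = \prod_i q(a_i) - p^2$, where $q(a_i)$ is the $1$-dimensional overlap probability (length of intersection of two arcs of length $\tau^\infty_p=\tfrac12 p^{1/d}$ at offset $a_i$), a simple explicit piecewise-linear function on the circle. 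Then $\widehat{\mathsf{SE}}[K_{2,t}]$ is essentially $\expect_{\bfa}\bigl[\rho(\bfa)^t\bigr]/(p(1-p))^{t}$ up to lower-order corrections, and the task is to bound the $t$-th moment of $\rho(\bfa)=\prod_i q(a_i)-p^2$ when $\bfa$ is uniform on $\dtorus$. Since $q(a_i)$ ranges in $[0,2\tau^\infty_p]$ with mean $p^{1/d}\cdot$(something) — more precisely $\expect[q(a_i)] = p^{2/d}$ by independence, so $\expect[\prod_i q(a_i)]=p^2$ and $\rho$ is genuinely centered — one needs the typical multiplicative fluctuation of $\prod_i q(a_i)$ around $p^2$. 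Writing $q(a_i)=p^{2/d}(1+\xi_i)$ with $\xi_i$ mean-zero bounded fluctuations of order $\Theta(1)$ (variance $\Theta(1)$), we get $\prod_i q(a_i)=p^2\prod_i(1+\xi_i)$, and $\prod(1+\xi_i)$ concentrates multiplicatively; its $t$-th moment behaves like $e^{O(t^2/d)}$-type bounds or, after subtracting the mean, like $(Ct/d)^{t/2}$-type bounds. The upshot I expect is $\widehat{\mathsf{SE}}[K_{2,t}] \lesssim (1-p)^{-t}\bigl(C/d\bigr)^{\lceil t/2\rceil}$ or similar; plugging into the series, the $t=2$ term contributes $\asymp n^4/d^2$ (forcing $d\gg n^2$?) — so I must be careful: the genuine leading behavior should instead give $n^4/d \cdot (\text{something})$ plus the dominant constraint, and matching the stated threshold $d=\tilde\omega(\max(n^{3/2}p,n))$ means the $t=2$ term must actually scale like $n^3p^2/d$ and the $t$-growth like $n/d$. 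I would therefore recompute the normalization carefully: the correct normalized $K_{2,t}$ signed expectation carries an extra $p^t$-type factor from the centered leaf variables, so $\widehat{\mathsf{SE}}[K_{2,t}]\lesssim p^{t}\cdot(C/d)^{\lceil t/2\rceil}(1-p)^{-t}$, giving a series term $\tfrac{n^{t+2}}{t!}p^t(C/d)^{\lceil t/2\rceil}$; the $t=2$ term is $\asymp n^4p^2/d$, which is $o(1)$ iff $d\gg n^2 p^2$; combined with convergence of the tail (geometric ratio $\asymp np\sqrt{C/d}<1$, i.e. $d\gg n^2p^2$) this matches $d=\tilde\omega(n^{3/2}p)$ only if an additional $\sqrt{np}$ saving appears — likely from a sharper odd/even analysis of $\expect[\xi_1\cdots]$ using that a single coordinate appearing an odd number of times vanishes in expectation.

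The main obstacle, then, is the precise moment bound on $\prod_{i=1}^d q(a_i)$ — specifically getting the right power of $d$ (one expects $d^{-\lceil t/2\rceil}$, exploiting that $\expect[\xi_i]=0$ so any term in the multinomial expansion of $\prod(1+\xi_i)^{\otimes t}$ with a coordinate used exactly once dies) together with the right $p$- and $t$-dependence so that the series sums to $o(1)$ exactly in the regime $d=\tilde\omega(\max(n^{3/2}p,n))$. I would handle this by: (i) establishing $\expect[\xi_i^k]=\Theta_k(1)$ and $\expect[\xi_i]=0$ from the explicit $1$-d overlap function $q$; (ii) expanding $\expect\bigl[(\prod_i(1+\xi_i) - 1)^t\bigr]$ combinatorially, grouping by which coordinates appear and with what multiplicity, and noting each surviving coordinate contributes multiplicity $\ge 2$, hence at most $t/2$ distinct coordinates, hence a factor $d^{\lfloor t/2\rfloor}\cdot d^{-t}$ relative to the naive count; (iii) separately tracking the $n$ (from the $n^{t+2}$ prefactor: $2$ hubs, $t$ leaves) and $p$ powers; and (iv) summing the resulting series and verifying the two binding constraints reduce to $d=\tilde\omega(n^{3/2}p)$ and $d=\tilde\omega(n)$. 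The $d=\tilde\omega(n)$ constraint should come out of the $t\to\infty$ tail (where $p^t$ saturates and one is left with $(n/d)$-type growth), while $d=\tilde\omega(n^{3/2}p)$ should be the $t=2$ or $t=3$ term. I would also double-check the boundary cases $p=\Theta(n^{-1+\epsilon})$ and moderately large $d$ against assumption \eqref{eq:assumption}, and cite \cref{claim:1dtoruscounts} for the $1$-dimensional computations rather than redoing them.
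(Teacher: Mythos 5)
Your overall plan matches the paper's route: both use the Liu--Rácz KL-tensorization bound (the paper's \cref{eq:Kltensorization}), reduce to moments of $\sigma*\sigma$ (equivalently, centered $K_{2,t}$ signed expectations), and exploit the $L_\infty$ factorization $\sigma*\sigma(\bfx)=\prod_{i=1}^d f(x_i)$ to compute those moments coordinate-by-coordinate. The paper's \cref{claim:explicitconvolutionmoments} is exactly your ``Step 2'' done explicitly: $\expect[(\sigma*\sigma)^t]=p^{2t}(1+\Theta(d\lambda^3 t^2))$, which plugged into \cref{eq:Kltensorization} yields the stated threshold.

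However, there is a concrete quantitative gap in your Step~2 that you yourself half-noticed when worrying about ``the extra $\sqrt{np}$ saving.'' You model the one-dimensional overlap function by writing $q(a_i)=p^{2/d}(1+\xi_i)$ with $\xi_i$ ``bounded fluctuations of order $\Theta(1)$ (variance $\Theta(1)$).'' This is wrong, and it is the crux of the theorem. The threshold $\tau^\infty_p$ satisfies $(\tau^\infty_p)^d=p$, so $\tau^\infty_p=p^{1/d}=1-\lambda$ with $\lambda=\Theta(\log(1/p)/d)$; the arc of neighbors of a point covers almost the entire circle, not a short arc (your ``$\tau^\infty_p=\tfrac12 p^{1/d}$'' and ``$q(a_i)$ ranges in $[0,2\tau^\infty_p]$'' are both off). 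Consequently the overlap probability $q(a)=f(a)$ lives in the tiny interval $[1-2\lambda,\,1-\lambda]$: it equals $1-2\lambda$ on the complement of a measure-$2\lambda$ set and varies linearly by at most $\lambda$ on that set. Its mean is $(1-\lambda)^2=p^{2/d}$, so the centered fluctuation $\xi_i$ has amplitude $O(\lambda)$ supported on a measure-$O(\lambda)$ set, giving $\Var[\xi_i]=\Theta(\lambda^3)=\tilde\Theta(d^{-3})$ --- not $\Theta(1)$. This is precisely the ``extra saving'' you were looking for: with $\Var[\xi_i]=\tilde\Theta(d^{-3})$ your own combinatorial framework gives $\expect\bigl[(\sigma*\sigma/p^2-1)^2\bigr]=\tilde\Theta(d^{-2})$, and the $t=2$ term of the Liu--Rácz sum becomes $\tilde\Theta(n^3p^2/d^2+n^2p/d^2)$, which is $o(1)$ precisely when $d=\tilde\omega(\max(n^{3/2}p,n))$. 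Without the corrected variance your argument yields a strictly weaker threshold, and with the claimed $\Theta(1)$ variance the product $\prod_i(1+\xi_i)$ would not even concentrate. The fix is elementary once the geometry is straightened out, and the paper does it by directly writing the explicit piecewise-linear $f$, computing $\int_0^1 f^t$, and raising to the $d$-th power; you should do the same (or redo your $\xi_i$ moment estimates with the correct $\expect[\xi_i^k]=\Theta(\lambda^{k+1})$ scaling).
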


\cref{thm:linftyinformationtheory} already highlights a quantitative difference between $L_\infty$ random geometric graphs over $\dtorus$ and $L_2$ models over $\dsphere$(see the aforementioned results of \cite{Liu2022STOC}). The former converge to \ER at a polynomially smaller dimension. Much more interesting, however, is the following qualitative difference. Signed triangles are suboptimal for detecting $L_\infty$ geometry and signed four-cycles are strictly stronger at any density $p \ge n^{-1+\epsilon}.$

\begin{theorem}
    \label{thm:linfinitydetection}
Under Assumption \eqref{eq:assumption}, consider the hypothesis testing \cref{eq:hypoithesistesting} with 
$H_1:\RGG(n,\dtorus,\unif,\sigma^\infty_p,p).$ 
\begin{enumerate}
\item The signed 4-cycle test distinguishes the two graph models w.h.p. if and only if $d = \tilde{o}(np).$ 
\item
The signed triangle test distinguishes the two graph models w.h.p. if and only if $d = \tilde{o}((np)^{3/4}).$ 
\end{enumerate}
\end{theorem}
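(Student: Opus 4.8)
The plan is a standard second-moment (signal-to-noise) analysis of the two statistics $\signedcount_{K_3}$ and $\signedcount_{C_4}$; the only genuinely $L_\infty$-specific input is the evaluation of the relevant signed subgraph weights.

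\emph{Step 1: the signed weights.} Write $r \coloneqq \tau^\infty_p = p^{1/d}$ for the single-coordinate edge probability and $\eta \coloneqq 1-r = \tfrac{\log(1/p)}{d}(1+o(1))$; by \eqref{eq:assumption}, $\eta = o(1)$ and even $d\eta^3 = \tilde o(1)$. For a graph $H$ set $\signedweight_H \coloneqq \expect\big[\prod_{(i,j)\in E(H)}\big(\sigma^\infty_p(\bfx^i,\bfx^j)-p\big)\big]$. Expanding the product and using independence across the $d$ coordinates, $\signedweight_H = \sum_{S\subseteq E(H)}(-p)^{|E(H)|-|S|}f_1(S)^d$, where $f_1(S)$ is the probability that all edges of $S$ are present in a single one-dimensional copy -- exactly the quantities furnished by \cref{claim:1dtoruscounts}. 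The key observation is that every \emph{proper} subset of the edges of a cycle is a forest, and for a forest $F$ one has $f_1(F) = r^{|E(F)|}$ exactly (its distance constraints are met greedily along the tree); hence all forest terms telescope, $\sum_{S\subsetneq E(H)}(-p)^{|E(H)|-|S|}r^{d|S|} = p^{|E(H)|}\sum_{S\subsetneq E(H)}(-1)^{|E(H)|-|S|} = -p^{|E(H)|}$, leaving the clean identity $\signedweight_H = f_1(H)^d - p^{|E(H)|}$ for any single cycle $H$. A short circle computation with threshold $r = 1-\eta$ gives $f_1(K_3) = r^3+\eta^3$ and $f_1(C_4) = r^4 + \Theta(\eta^3)$, whence, using $d\eta^3 = \tilde o(1)$,
$$
\signedweight_{K_3} = p^3\big((1+(\eta/r)^3)^d-1\big) = \tilde\Theta\big(p^3/d^2\big), \qquad \signedweight_{C_4} = \tilde\Theta\big(p^4/d^2\big).
$$

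\emph{Step 2: signal versus noise.} Under $H_0 = \ergraph$ every signed count has mean $0$, and since the edges are independent with mean-zero centered entries only identical copies of $H$ contribute to the second moment, so $\Var_{H_0}[\signedcount_H] = \Theta(n^{|V(H)|}p^{|E(H)|})$ (using $p\le 1/2$). Under $H_1$, $\expect[\signedcount_H] = (1+o(1))n^{|V(H)|}\signedweight_H$, so the signal is $\tilde\Theta(n^3p^3/d^2)$ for $K_3$ and $\tilde\Theta(n^4p^4/d^2)$ for $C_4$, against $H_0$-noise $\Theta(n^{3/2}p^{3/2})$ resp.\ $\Theta(n^2p^2)$. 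Granting (Step~3) that $\Var_{H_1}[\signedcount_H] = \tilde O(\Var_{H_0}[\signedcount_H])$ in the range $d = \poly(n)$, Chebyshev on both sides shows that thresholding $\signedcount_H$ at $\tfrac12\expect_{H_1}[\signedcount_H]$ succeeds with high probability exactly when the signal dominates the noise, i.e.\ $d^2 = \tilde o(n^{3/2}p^{3/2})$ for $K_3$ and $d^2 = \tilde o(n^2p^2)$ for $C_4$, equivalently $d = \tilde o((np)^{3/4})$ and $d = \tilde o(np)$. Conversely, when $d$ exceeds these thresholds the signal is $\tilde O\big(\sqrt{\Var_{H_0}[\signedcount_H]}\big)$; a Paley--Zygmund (second-and-fourth-moment) argument then shows that under $H_0$ the statistic $\signedcount_H$ puts $\Omega(1)$ mass above $c\sqrt{\Var_{H_0}[\signedcount_H]}$ for a small constant $c$, so the laws of $\signedcount_H$ under $H_0$ and $H_1$ overlap with constant probability and no threshold test on $\signedcount_H$ distinguishes with high probability. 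This gives both directions modulo the $H_1$-variance bound.

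\emph{Step 3: the variance under $H_1$ (the main obstacle).} It remains to show $\Var_{H_1}[\signedcount_H] = \tilde O(\Var_{H_0}[\signedcount_H])$ for $d = \poly(n)$. Expanding $\expect_{H_1}[\signedcount_H^2]$ over ordered pairs of copies $(\phi,\psi)$ and grouping by the isomorphism type of the overlap $F = \phi(H)\cup\psi(H)$, each type contributes $\Theta(n^{|V(F)|})$ times a signed weight of $F$. Translation invariance of $\unif$ on $\dtorus$ makes the covariance of any pair sharing at most one vertex vanish: conditioning on the latent point of the shared vertex and translating the torus shows each copy's conditional signed weight is a \emph{constant}, so the two factorize. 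Since $|V(H)|\le 4$, only $O(1)$ overlap types share two or more vertices, and for each one evaluates $\signedweight_F$ by the coordinate factorization of Step~1. The crucial feature -- precisely the rapid decay behind the cluster-expansion formula of \cref{sec:linftysignedcounts} -- is that $|\signedweight_F| = \tilde O\big(p^{|E(F)|}(d\eta^3)^{k(F)}\big)$ with $k(F)\ge 1$ (any such $F$ contains a cycle, since $H$ is one), so every off-diagonal term carries an extra $(d\eta^3)^{\ge 1} = \tilde o(1)$. Inserting $n^{|V(F)|}$ and comparing with $\Var_{H_0}[\signedcount_H] = \Theta(n^{|V(H)|}p^{|E(H)|})$, a finite check over the handful of overlap types of $K_3$ and $C_4$ shows each off-diagonal contribution is of lower order once $d = \poly(n)$. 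Carrying out this case analysis cleanly -- i.e.\ bounding the signed weights of all ``doubled'' copies of $K_3$ and $C_4$ -- is the technical heart of the argument.
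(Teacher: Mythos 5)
Your plan mirrors the paper's argument closely: Step~1 reproduces \cref{cor:linftysignedcycles} (obtained in the paper from \cref{eq:1dimtoddim}--\eqref{eq:errordefinition} and \cref{claim:1dtoruscounts}), Step~2 is the signal-to-noise comparison from \cref{def:signedcountsperformance}, and Step~3 is the overlap-type expansion carried out in \cref{appendix:linftyperformanceomitted}. The thresholds you arrive at are correct. However, the bridge between Steps~2 and 3 is false as stated.

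You assume, and claim Step~3 delivers, that $\Var_{H_1}[\signedcount_H] = \tilde{O}(\Var_{H_0}[\signedcount_H])$ for $d = \poly(n)$. This is not true under Assumption \eqref{eq:assumption}. For triangles, the overlap expansion gives (cf.\ \cref{eq:threecyclevars})
\[
\Var_{\bfG\sim \RGG}\big[\signedcount_{C_3}(\bfG)\big] = \Theta(n^3p^3) + \tilde{O}\big(n^4p^5/d^2\big),
\]
the second term coming from the $\Theta(n^4)$ pairs of triangles sharing an edge. Its ratio to $\Var_{H_0}=\Theta(n^3p^3)$ is $\tilde{O}(np^2/d^2)$, which is $\omega(1)$ whenever $d=\tilde{o}(n^{1/2}p)$: take $p = 1/2$ and $d=n^{0.1}$, both allowed under \eqref{eq:assumption} and well inside the claimed success regime $d=\tilde{o}((np)^{3/4})$. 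So the ``finite check'' in Step~3 does not show each off-diagonal term is lower order; for small $d$ the $n^4p^5/d^2$ term dominates $\Var_{H_0}$. The analogous issue arises for $C_4$ via the $\tilde{O}(n^5p^6/d^2 + n^6p^7/d^3)$ terms in \cref{eq:fourcyclevars}.

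The repair, which is what the paper actually does, is to carry the full multi-term bound on $\Var_{H_1}$ into the signal-to-noise comparison rather than collapsing it to $\tilde{O}(\Var_{H_0})$. One then verifies that whenever an off-diagonal variance term dominates $\Var_{H_0}$, the squared signal also dominates that term: for triangles, $n^4p^5/d^2 \gg n^3p^3$ forces $d\ll n^{1/2}p \le np^{1/2}$, which in turn forces $(n^3p^3/d^2)^2 \gg n^4p^5/d^2$. The threshold $d=\tilde{o}((np)^{3/4})$ is therefore unaffected, but this check is part of the proof and cannot be skipped.
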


We provide some intuition behind the suboptimality of signed triangles and further consequences in  \cref{sec:linfty4vs3}. 
Before that, however, we address the large gap left between the 4-cycle statistic upper bounds in \cref{thm:linfinitydetection} and information-theoretic lower bound for convergence to \ER in \cref{thm:linftyinformationtheory}.
We show that the signed 4-cycle statistic is optimal (up to lower order terms) among low-degree tests.

\begin{theorem}[Computational Lower Bound for $L_\infty$ Model]
\label{thm:linftylowdegreeindist}
Under Assumption \eqref{eq:assumption},
there exists some function $f_\epsilon(t) = o_t(1)$ with the following property. No polynomial test of degree $(\log n)^{5/4}/(\log \log n)$ can distinguish 
$\ergraph$ and $\RGG(n, \dtorus,\unif, \sigma_p^\infty,p)$ with high probability when $d \ge (np)^{1 + f_\epsilon(np)}.$ \end{theorem}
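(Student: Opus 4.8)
The plan is to run the standard low-degree likelihood-ratio argument. By the framework recalled in \cref{sec:preliminaries}, it suffices to show that for $D := (\log n)^{5/4}/\log\log n$ the degree-$D$ likelihood ratio $L^{\le D}$ between $\RGG(n,\dtorus,\unif,\sigma^\infty_p,p)$ and $\ergraph$ satisfies $\|L^{\le D}\|^2 = 1 + o(1)$ (equivalently, the degree-$D$ advantage is $o(1)$), since this precludes any degree-$D$ polynomial from distinguishing the two models with high probability. Expanding $L$ in the $p$-biased Fourier basis indexed by edge-subsets of $K_n$ and grouping terms by isomorphism type yields
$$
\|L^{\le D}\|^2 \;=\; 1 + \sum_{H}\frac{(n)_{v(H)}}{|\automorphism(H)|}\,\widehat{\RGG}(H)^2 ,
$$
the sum being over isomorphism classes of graphs $H$ with $e(H)\le D$ and no isolated vertices, where $v(H)$ is the number of vertices and $\widehat{\RGG}(H)$ is the normalized signed expectation of $H$ in $\RGG(n,\dtorus,\unif,\sigma^\infty_p,p)$. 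Integrating out a degree-$1$ vertex and using the homogeneity of the torus shows $\widehat{\RGG}(H)=0$ unless every vertex of $H$ has degree at least $2$, so the sum restricts to minimum-degree-$\ge 2$ graphs; the goal is to prove it is $o(1)$ whenever $d\ge (np)^{1+f_\epsilon(np)}$ for a suitable $f_\epsilon(t)=o_t(1)$.

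The key input is the cluster-expansion bound of \cref{sec:linftysignedcounts}, which controls $\widehat{\RGG}(H)$ for every $H$ with $\delta(H)\ge 2$ through the sizes and pairwise overlaps of the one-dimensional cyclic structures composing $H$; the rapid decay of the expansion means that the dominant contributions come from the smallest such structures. Plugging this bound into the sum, one identifies the $4$-cycle as the unique heaviest subgraph: a single $C_4$ contributes $\tilde{\Theta}\big((np/d)^4\big)$, which is exactly the threshold $d=\tilde o(np)$ of \cref{thm:linfinitydetection}, and the vertex-disjoint union of $j$ four-cycles contributes $\tfrac1{j!}\,\tilde\Theta\big((np/d)^{4}\big)^{j}$, so that the full family of disjoint-$C_4$ unions totals $\exp\!\big(\tilde\Theta((np/d)^4)\big)-1 = o(1)$ precisely when $d=\tilde\omega(np)$. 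Every other minimum-degree-$\ge2$ graph should be strictly lighter: identifying vertices between the cyclic ``blocks'' of $H$ --- as happens for longer cycles, $K_4$, $K_{2,3}$, theta graphs, and so on --- removes a factor of $n$ while, thanks to the parity cancellations already visible in the one-dimensional signed counts of odd cycles (and to the $p^{e(H)}$ suppression of dense $H$), the extra edges do not compensate; since $p \ge n^{-1+\epsilon}$ this gap is polynomial in $n$, so each such $H$ is dominated by the disjoint-$C_4$ contribution. Concretely, one extracts from the expansion a per-graph bound of the form $(n)_{v(H)}\,\widehat{\RGG}(H)^2 \le \big(\tilde O(np/d)\big)^{\Omega(e(H))}$.

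To conclude, one bounds the tail of the sum by an entropy count: there are $\exp(\Theta(m\log m))$ isomorphism classes of graphs with $m$ edges, so the total contribution of graphs with $e(H)\le D$ is at most $\sum_{m\le D}\exp(\Theta(m\log m))\big(\tilde O(np/d)\big)^{\Omega(m)}$. Writing $d=(np)^{1+f_\epsilon(np)}$ and using $np\ge n^{\epsilon}$ (so $\log(np)\asymp\log n$), the per-graph factor is $(np)^{-\Omega(f_\epsilon(np)\,m)}$ up to $(\log n)^{O(m)}$ corrections; taking $f_\epsilon(t):=C\log\log t/\log t$, which is $o_t(1)$, makes $(np)^{f_\epsilon(np)}$ a large enough power of $\log n$ to absorb both these corrections and the $\exp(\Theta(m\log m))$ entropy for all $m\le D$, and the sum over $m$ then converges geometrically to $o(1)$. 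The main obstacle I anticipate is the second step: extracting from the cluster-expansion formula a single bound on $\widehat{\RGG}(H)$ that is simultaneously tight at $H=C_4$ (so as to land exactly on the $d\asymp np$ threshold of \cref{thm:linfinitydetection}) and decays geometrically in $e(H)$ uniformly over all other minimum-degree-$\ge 2$ graphs --- in particular handling graphs with heavily shared vertices and odd cyclic structure, where the cancellations that make the one-dimensional signed counts small must be propagated carefully through the product over the $d$ coordinates.
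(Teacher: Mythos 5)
Your high-level plan coincides with the paper's: bound $\advantage_{\le D}^2-1$ as a sum of $\expect_{\bfG\sim\RGG}[\signedweight_H]^2/(p(1-p))^{|E(H)|}$ over subgraphs $H$ of $K_n$ with $|E(H)|\le D$, kill all $H$ with a degree-$1$ vertex via homogeneity/\cref{rmk:onragfactorization}, feed in the cluster-expansion bound on signed weights, and then do an entropy count. That is exactly what \cref{sec:advantagebound} does, with \cref{thm:linftysignedcounts} as the key input. However there are two linked quantitative problems with your write-up.

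First, the per-graph bound you write, $(n)_{v(H)}\widehat{\RGG}(H)^2 \le (\tilde O(np/d))^{\Omega(e(H))}$, does not hold and is not what \cref{thm:linftysignedcounts} gives. The proposition gives $|\expect[\signedweight_H]| \lesssim p^{|E(H)|}((\log d)^{C}/d)^{|V(H)|/2}$, so after normalizing and multiplying by $(n)_{v(H)}$ one obtains
$(n)_{v(H)}\widehat{\RGG}(H)^2 \lesssim (2p)^{|E(H)|-|V(H)|}\big(2np(\log d)^{C}/d\big)^{|V(H)|}$,
i.e.\ a decay governed by the number of \emph{vertices}, with only a mild extra $(2p)^{e-v}\le 1$ helping out for dense $H$. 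For a dense graph such as $K_t$ the exponent $|V(H)|\asymp\sqrt{e(H)}$ is much smaller than $\Omega(e(H))$, and the claimed $e(H)$-bound would require $2p \lesssim \tilde O(np/d)$, i.e.\ $d \lesssim \tilde O(n)$, which is false in the regime $d \gg n$ allowed by Assumption \eqref{eq:assumption}. This is precisely the point the paper highlights: the exponent $|V(H)|/2$ in \cref{thm:linftysignedcounts} is the crucial quantity, not $|E(H)|$, and even relaxing it to $|V(H)|/(2+\xi)$ would break the proof. Consequently your organization of the sum by $m=e(H)$ and the $\exp(\Theta(m\log m))$ isomorphism-class count is the wrong accounting; the paper instead groups by $t=v(H)$, bounds the number of choices of $V(H)$ by $\binom{n}{t}$, and handles the number of edge-sets by $2^{\binom{t}{2}}$ (for $t\le D^{2/3}$) or $\binom{\binom{t}{2}}{\le D}$ (for $t>D^{2/3}$).

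Second, and as a direct consequence, your proposed $f_\epsilon(t)=C\log\log t/\log t$ is too small. In the paper's Case $t\le D^{2/3}$ the binding constraint is
$np\cdot 2^{D^{2/3}}(\log d)^C/d = o(1)$ with $D^{2/3}\asymp(\log n)^{5/6}/(\log\log n)^{2/3}$,
which forces $d/(np)\ge e^{\Theta((\log n)^{5/6}/(\log\log n)^{2/3})}$. Writing $d=(np)^{1+f_\epsilon}$ and using $np\ge n^\epsilon$ so that $\log(np)\asymp\log n$, this requires $f_\epsilon(t)\gtrsim(\log t)^{-1/6}(\log\log t)^{-2/3}$, which is polynomially larger (in $\log t$) than $\log\log t/\log t$. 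Your choice would not absorb the $2^{t^2/2}$-type entropy term coming from dense subgraphs on $t$ vertices. The fix is to take $f_\epsilon$ decaying no faster than $(\log t)^{-1/6}$, which is still $o_t(1)$ as the theorem statement requires.

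Your final worry about extracting a bound that ``decays geometrically in $e(H)$'' is where the plan goes astray: no such bound is available, and you should aim for the $v(H)$-based bound instead, organizing the subsequent sum over subgraphs by vertex count as in \cref{sec:advantagebound}.
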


A popular conjecture is that ``sufficiently noisy'' statistical problems in high-dimension can be solved in polynomial time only if there
is an 
$O(\log n)$-degree polynomial tests that solves them
\cite{hopkins18}. In this light, our result suggests that at least one of the following is true: 1) There is a statistical-computational gap for detecting $L_\infty$ geometry;
2) Or, \cref{thm:linftyinformationtheory} is suboptimal. Whether there is a statistical-computational gap for testing between $\RGG(n, \dtorus,\unif, \sigma_p^\infty,p)$ and $\ergraph$ is an exciting question for future research. Closely related models in the literature provide examples of both positive and negative answers to this question. Spherical random geometric graphs do not exhibit a statistical-computational gap in the dense case $p= 1/2$ \cite{Bubeck14RGG}. On the other hand, in \cite[Definition 2.18]{kothari2023planted}, the authors construct an instance of the stochastic block model - which, in particular, can be realized as a random algebraic graph over a (discrete) torus - with an information-computation gap (at least within the low-degree polynomial tests framework).

The main step in proving \cref{thm:linftylowdegreeindist} is utilizing the aforementioned cluster-expansion-like approach
which gives the following bound on signed subgraph weights in $\RGG(n,\dtorus,\unif,\sigma^\infty_p,p).$ We give an overview of the cluster-expansion approach in \cref{sec:clusterhighlevel}. For a set of edges \linebreak 
$H = \{(i_1,j_1),(i_2,j_2),\ldots, (i_k,j_k)\},$ denote 
$
\signedweight_H(G) \coloneqq  
\prod_{(ij)\in E(H)}(G_{ij} - p).
$ 

\begin{proposition}
\label{thm:linftysignedcounts}
Suppose that $H\subseteq K_n$ is a graph on $|E(H)| \le (\log d)^{5/4}/(\log \log d)$ edges. Under Assumption \eqref{eq:assumption}, 
$$
\Big|\expect_{\bfG\sim \RGG(n,\dtorus,\unif,\sigma_p^\infty,p)}\Big[\signedweight_H(\bfG)\Big]\Big|  = 
{O}\Bigg(p^{|E(H)|}\bigg(\frac{{(\log d)^{C}}}{{d}}\bigg)^{|V(H)|/2}\Bigg
),
$$
where $C$ is an absolute constant.
\end{proposition}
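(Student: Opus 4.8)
The plan is to exploit the coordinate factorization of the $L_\infty$ model. Sampling $\bfx^1,\dots,\bfx^n \sim \unif(\dtorus)$ is the same as independently sampling, for each coordinate $\ell\in[d]$, a configuration on $2\unitcircle$; conditioned on all latent vectors, the edge $(ij)$ is present iff it is present in \emph{every} coordinate. Writing $q := \tau^\infty_p$ for the one‑dimensional threshold (so $p = q^d$ up to the boundary convention, hence $q = p^{1/d}$), the marginal one‑dimensional edge probability is $q$, and in each coordinate $\ell$ we get an independent copy of the $1$‑dimensional torus RGG with density $q$. The signed weight $\signedweight_H(\bfG) = \prod_{(ij)\in E(H)}(G_{ij}-p)$ does \emph{not} factor over coordinates, but $G_{ij} = \prod_{\ell} G^{(\ell)}_{ij}$ does, so expanding $G_{ij} - p = \prod_\ell G^{(\ell)}_{ij} - \prod_\ell q$ and then expanding the product over $(ij)\in E(H)$ produces a sum over ways of assigning, to each edge of $H$, a nonempty "active set" of coordinates (the coordinates on which we keep the $G^{(\ell)}_{ij}$ factor rather than a $-p$ term). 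This is exactly the polymer/cluster bookkeeping alluded to in the text: each coordinate $\ell$ carries a subgraph $H_\ell\subseteq H$ (the edges active in that coordinate), the term's value is $\pm p^{|E(H)|-\sum_\ell|E(H_\ell)|}$ times $\prod_\ell \expect[\signedweight'_{H_\ell}(\bfG^{(\ell)})]$ for an appropriately $p$‑centered one‑dimensional signed weight, and the combinatorics of which $(H_\ell)_\ell$ survive is governed by overlaps.

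First I would set up the one‑dimensional estimates. Using \cref{claim:1dtoruscounts} (the one‑dimensional count computations), I would bound, for a graph $F$ on the relevant vertex set, the centered one‑dimensional signed expectation $\bigl|\expect[\prod_{(ij)\in E(F)}(G^{(\ell)}_{ij}-q)]\bigr|$; on the circle this should decay like $q^{|E(F)|}$ times a gain of roughly $q$ per independent cycle, and more relevantly a term that is $0$ whenever $F$ has a degree‑$1$ vertex — reflecting that a pendant edge contributes a genuinely centered, mean‑zero factor in one dimension only after conditioning, so isolated/pendant structure kills the term. The key qualitative input is: a single coordinate contributes a nonzero centered signed weight only through its $2$‑cores / cycle structure, and each such contribution is at most $q^{|E(H_\ell)|}$ up to polylog. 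Next I would organize the coordinate‑expansion sum by the combinatorial type $\tau = (H_\ell)_{\ell\in[d]}$, observing that only $H_\ell$ without pendant vertices matter, and that $\sum_\ell |V(H_\ell)| \ge |V(H)|$ is forced because every vertex of $H$ that is touched at all must be touched — and in fact, since every active edge at a vertex must be "cycle‑supported," reaching all of $V(H)$ requires the active coordinates to collectively cover $H$ with their $2$‑cores, costing at least $|V(H)|/2$ in a counting sense. Turning this into the claimed exponent $(\text{polylog }d / d)^{|V(H)|/2}$: each coordinate is chosen from $d$ possibilities, so a configuration using $m$ "nontrivial" coordinates carries a factor $\binom{d}{m}\le d^m$ from the choice of which coordinates, but contributes $q^{\sum|E(H_\ell)|}$ with $\sum |E(H_\ell)|$ large; balancing $d^m$ against $q^{\sum_\ell |E(H_\ell)|} = p^{\sum_\ell |E(H_\ell)|/d}$ and using $q = p^{1/d}$ together with Assumption \eqref{eq:assumption} and $|E(H)| \le (\log d)^{5/4}/\log\log d$ is where the $(\log d)^C$ polylog slack and the $1/d$ per two vertices emerge.

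The main obstacle — and the technical heart of the argument — is controlling the \emph{sum over all surviving configurations}, not just a single term: there are exponentially many ways to distribute the active edge sets across the $d$ coordinates, and one needs the cluster‑expansion‑style cancellation/decay to show the total is dominated by the "sparsest" configurations (each coordinate carrying at most one small cycle, the cycles together just barely covering $V(H)$). Concretely I expect to need: (i) a clean lemma that $\expect[\signedweight'_{H_\ell}(\bfG^{(\ell)})] = 0$ unless $H_\ell$ is a disjoint union of cycles‑with‑chords structures (no pendant edges), to prune the sum; (ii) a counting bound showing $\#\{\tau : \sum_\ell|V(H_\ell)| = v,\ \cup_\ell V(H_\ell) \supseteq V(H)\}$ is at most $d^{v/2}\cdot (\text{poly in }|E(H)|)$, which is where the $|V(H)|/2$ versus "number of coordinates used" bookkeeping must be done carefully; and (iii) summing the resulting geometric‑type series in $|E(H)|$, which converges precisely because $|E(H)|$ is at most polylogarithmic in $d$ while each extra active coordinate costs a factor $\approx q^{2}\cdot d \le (\log d)^{O(1)}$ that is controlled — here Assumption \eqref{eq:assumption} ($p \le 1/2$, $d \ge n^\delta$, $p \ge n^{-1+\epsilon}$, hence $1 - q = \Theta((\log(1/p))/d) = \tilde\Theta(1/d)$) is used to pin down $q$ precisely. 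I would present the general cluster‑expansion formula first (\cref{sec:linftysignedcounts}), prove the vanishing lemma, then do the weighted count, and finally assemble the bound; the $|V(H)|/2$ exponent is the sharp footprint of "each nontrivial coordinate covers $\ge 2$ new vertices of $H$ at unit cost $1/d$."
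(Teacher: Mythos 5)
Your high‑level strategy matches the paper's cluster‑expansion approach: factorize the signed weight over the $d$ coordinates, track which edges are active in which coordinate, prune to pendant‑free per‑coordinate subgraphs, and balance coordinate‑choice combinatorics against per‑coordinate decay. But the central quantitative estimate in your sketch is wrong in a way that would make the sum fail to converge to anything useful. You set $q := \tau^\infty_p = p^{1/d}$, and under Assumption~\eqref{eq:assumption} this is $q = 1 - \lambda$ with $\lambda = \tilde\Theta(1/d)$, so $q \to 1$. Consequently your one‑dimensional bound ``each such contribution is at most $q^{|E(H_\ell)|}$ up to polylog'' and your per‑coordinate cost ``$\approx q^2 \cdot d \le (\log d)^{O(1)}$'' are not decays at all: $q^{|E(H_\ell)|}\approx 1$ and $q^2 d \approx d$. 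The actual small parameter is the one‑dimensional \emph{complement} density $\lambda$; \cref{claim:1dtoruscounts} gives $|\psi(K)| \le 2\lambda^{\max\{|V(K)|/2+1,\,|V(K)|-\numconnectedcomponets(K)\}}$, and the quantity that is genuinely polylogarithmic is $d\lambda = \Theta(\log(1/p))$. Your step (iii), summing a geometric‑type series over the number of nontrivial coordinates, relies on each new coordinate costing polylog, and that fails with $q$ where $\lambda$ should be. (Your $p$‑power bookkeeping also slips: the inactive‑coordinate factor is $q^{d|E(H)|-\sum_\ell|E(H_\ell)|}$, i.e. $p^{|E(H)|}\cdot q^{-\sum_\ell|E(H_\ell)|}$, not $p^{|E(H)| - \sum_\ell|E(H_\ell)|}$; this one is benign since $q^{-O(\polylog)} = 1+o(1)$, but it is a symptom of the same $q$/$\lambda$ confusion.)

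Beyond that, the $|V(H)|/2$ exponent does not drop out of the single heuristic ``each nontrivial coordinate covers $\ge 2$ new vertices at unit cost $1/d$.'' The paper first reduces to $2$‑connected $H$ (so $|V(H)|\le|E(H)|$) and then proves \cref{prop:innersumbound} by splitting on the number $i$ of nontrivial coordinates. For small $i$ the argument needs the subadditivity of $|V(\cdot)| - \numconnectedcomponets(\cdot)$ under edge unions (\cref{claim:subadditivity}), the $2$‑connectivity inequality $|E(H)|-|E(\mathcal{K})| \ge \numconnectedcomponets(\mathcal{K}) + |V(H)|-|V(\mathcal{K})|-1$ (\cref{claim:numcintwoconnectedsubgraph}), and an energy‑entropy trade‑off against the count of $i$‑tuples with small total vertex count (\cref{claim:lowprobabilityofsmallsum}); for large $i$ it instead bounds $|\error(A,\lambda)| \le d^{-3+o_d(1)}$ outright (\cref{claim:errorbound}). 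Your sketch collapses these into a single covering statement; even with the corrected small parameter, that heuristic yields at best a per‑term bound of the form $(i\lambda)^{|V(H)|-O(1)}$ without controlling the number of surviving configurations, which is precisely where the hard work in the paper lives. So the approach is the right one, but the decisive estimates are missing or stated incorrectly.
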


The quantity $p^{|E(H)|}$ appears naturally as each of the $|E(H)|$ edges has marginal expectation $p.$ An exponentially small quantity in the number of vertices i.e.  $\big({{(\log d)^{C}}}/{{d}}\big)^{|V(H)|/2}$---appears frequently in the computation of Fourier coefficients of probabilistic latent space graphs as 
it corresponds to events determined by the $|V(H)|$ latent vectors
(e.g., \cite{hopkins18} for planted clique and \cite{kothari2023planted,rush2022} for certain instances of the stochastic block model). While we do not currently have an intuitive explanation of why $|V(H)|/2$ is the correct quantitative dependence in our case, it is crucial to the proof of \cref{thm:linftylowdegreeindist} and a weaker exponent of the form $|V(H)|/(2 +\xi),$ where $\xi>0$ is constant, would not suffice.

A simplification of our methods in \cref{thm:linftysignedcounts} yields improved estimates in its unsigned analogue, i.e. subgraph counts in $\RGG(n,\dtorus,\unif, \sigma^\infty_p,p)$, which were studied in \cite{Friedrich23cliques}. See \cref{sec:linftyunsignedweights}.

\subsubsection{Triangles and 4-Cycles in $L_\infty$ Geometry}
\label{sec:linfty4vs3}

We end our discussion of the $L_\infty$ model with a further comparison between signed triangle counts and signed four-cycle counts. We begin with an example illustrating why signed triangles are less informative than signed four-cycles.

\begin{example}
\label{rmk:why4not3}
Consider, for simplicity, the density $p = 1/2$ case, in which one can compute that $\tau^\infty_{1/2} =  1 - \lambda,$ where $\lambda  = \Theta(1/d)$ (see \cref{def:Lqtorus}).

First, we interpret the signed expectation of triangle $\{1,2,3\},$ i.e. 
$\expect[(2\bfG_{12}-1)(2\bfG_{23} - 1)(2\bfG_{31} - 1)].$ This expectation measures the correlation between the events \textit{2 is a neighbour of $1$} (captured by the term $(2\bfG_{12} - 1)$) and \textit{2 is a two-step neighbour of $1$ via 3} (the term $(2\bfG_{13}-1)(2\bfG_{32} - 1)$). For the case of random geometric graphs over the unit sphere, e.g. \cite{Bubeck14RGG}, these two notions are well correlated as both are monotone in the distance between the latent vectors $\bfx^1, \bfx^2.$ The closer $\bfx^1, \bfx^2$ are, the larger the probability that $\bfx^3$ is a common neighbor or a neighbor of neither. 

This, however, is not the case in the $L_\infty$ model. Consider, for example, $\bfx^1 = (0,0,\ldots, 0), \bfx^{2_a} = (1,0,0,\ldots, 0),$ and $\bfx^{2_b} = (\frac{1}{2}, \frac{1}{2},\ldots, \frac{1}{2}).$ Clearly, $\|\bfx^1 - \bfx^{2_a}\|_\infty = 1,$ so vertices $1$ and $2_a$ are not adjacent. Still, the set of latent vectors adjacent to $\bfx^1, \bfx^{2_a}$ has measure $(1-2\lambda)\times (1-\lambda)^{d-1} = \frac{1}{2}(1 + o(1))$ since a point $\bfx^3$ is adjacent to $\bfx^1$ and $\bfx^{2_a}$ if and only if $(\bfx^3)_1 \not \in (-\lambda,\lambda)\cup (1-\lambda, 1 + \lambda),$ and 
$(\bfx^3)_i \not \in (1-\lambda, 1 + \lambda)$ for $i \in \{2,3,\ldots, d\}.$ In contrast, $\bfx^1$
 and $\bfx^{2_b}$ are adjacent and only at distance $1/2$, but the set of latent vectors adjacent to $\bfx^1, \bfx^{2_b}$ has the much smaller measure $(1-2\lambda)^d = \frac{1}{4}(1 + o(1)).$ A point $\bfx^3$ is adjacent to $\bfx^1$ and $\bfx^{2_b}$ if and only if $(\bfx^3)_i \not \in (\frac{1}{2}-\lambda,\frac{1}{2}+\lambda)\cup (1-\lambda, 1 + \lambda)\; \forall i.$ This lack of correlation causes (signed) triangle counts to be suboptimal. 

The 4-cycle statistic on cycle $\{1,3,2,4\}$ measures the correlation between two-step paths 1--3--2 and 1--4--2 from 1 to 2. This statistic does not suffer from the same issue as signed triangle counts because it measures the correlation between two objects of the same type. 

In \cref{sec:linftyunsignedweights}, we see yet another reason why bipartite subgraph tests are more informative in the $L_\infty$ model. It has to do with the fact that all short cycles in the complements of the induced 1-dimensional random geometric graphs are of even length.\end{example}

Finally, we show that the advantage of counting signed four cycles over counting signed triangles in $\RGG(n,\dtorus,\unif,\sigma^\infty_p ,p)$ extends beyond the task of distinguishing from \ER. The existing literature on dimension estimation is fully focused on triangle-based statistics \cite{Bubeck14RGG,Almagro22,friedrich2023dimension}. Not much is known about the optimality of these statistics beyond the case of $L_2$ geometry. We show that indeed, the simple signed 4-cycle counting test is stronger than the signed triangle test also for the problem of estimating the dimension in $\RGG(n,\dtorus,\unif,\sigma^\infty_p ,p).$ Specifically, we consider the following problem.

\begin{problem}
\label{problem:dimensionestimation}
On input $n,p$ and $\bfG,$ where $\bfG\sim \RGG(n,\dtorus, \unif,\sigma^\infty_p,p),$ find the unknown dimension $d$ exactly with high probability.
\end{problem}
Of course, one can also consider variants of this problem, such as when the expected density $p$ is unknown or when one allows for a small error in estimating $d.$ We focus on this simplest version as our goal is to demonstrate the advantage of counting signed four-cycles over counting signed three-cycles. The precise statement is given in \cref{sec:applicationstodimensionestimation}. For now, we say that our formal notion of success of exact recovery of dimension via polynomial tests (given in \cref{def:estimationfrompoly}) exactly captures prior work on the problem \cite{Bubeck14RGG,friedrich2023dimension} and it mimics the more common framework of hypothesis testing using low-degree polynomial tests (for example, \cite{hopkins18}). 
 
\begin{proposition}[Informal, Simple Tests for Dimension Estimation] 
\label{prop:linftydimensionestmation} Consider \cref{problem:dimensionestimation} under assumption \cref{eq:assumption} with known value of $\delta$ such that $d\ge n^\delta.$
\begin{enumerate}
    \item The signed 4-cycle statistic recovers the dimension $d$ correctly w.h.p. if and only if $d = \tilde{o}((np)^{2/3}).$
    \item The signed triangle statistic recovers the dimension $d$ correctly w.h.p. if and only if $d = \tilde{o}((np)^{1/2}).$
\end{enumerate}

\end{proposition}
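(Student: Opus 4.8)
The plan is to analyze each statistic as a function of the dimension via its first two moments, reducing exact recovery to telling apart consecutive dimensions. Fix $H\in\{K_3,C_4\}$, let $S_H(\bfG)$ denote the signed $H$-count, and set $\mu_H(d):=\expect_{\bfG\sim\RGG(n,\dtorus,\unif,\sigma^\infty_p,p)}[S_H(\bfG)]=c_H\,n^{|V(H)|}\,\expect_{\RGG_d}[\signedweight_H(\bfG)](1+o(1))$, where $c_H>0$ is explicit and $\RGG_d$ abbreviates the model in dimension $d$. The natural estimator outputs the $d'\ge n^\delta$ minimizing $|S_H(\bfG)-\mu_H(d')|$; since $d\mapsto\mu_H(d)$ is (eventually) strictly monotone, this is correct whenever $|S_H(\bfG)-\mu_H(d)|<\tfrac12\,\mathrm{gap}_H(d)$, where $\mathrm{gap}_H(d):=\min_{\pm}|\mu_H(d)-\mu_H(d\pm1)|$. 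Thus the ``if'' direction reduces to showing $\Var_{\RGG_d}[S_H(\bfG)]=o(\mathrm{gap}_H(d)^2)$ and invoking Chebyshev, while for the ``only if'' direction it suffices to show that when $\mathrm{gap}_H(d)=o(\sqrt{\Var_{\RGG_d}[S_H(\bfG)]})$ the laws of $S_H$ under $\RGG_d$ and $\RGG_{d\pm1}$ overlap.

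\emph{First moment.} Factoring $\sigma^\infty_p$ over the $d$ circles and using the one-dimensional counts of \cref{claim:1dtoruscounts} (combining over coordinates is one of the ``nearly trivial'' cases of \cref{sec:linftysignedcounts}), one obtains $\expect_{\RGG_d}[\signedweight_{K_3}(\bfG)]=\Theta(p^3/d^2)$ and $\expect_{\RGG_d}[\signedweight_{C_4}(\bfG)]=\Theta(p^4/d^2)$, with a dependence on $d$ smooth enough that the discrete derivative in $d$ has the expected order. Hence $\mu_{K_3}(d)=\Theta(n^3p^3/d^2)$ with $\mathrm{gap}_{K_3}(d)=\tilde\Theta(n^3p^3/d^3)$, and $\mu_{C_4}(d)=\Theta(n^4p^4/d^2)$ with $\mathrm{gap}_{C_4}(d)=\tilde\Theta(n^4p^4/d^3)$. (The denominator exponent $2$ is exactly what makes these consistent with \cref{thm:linfinitydetection}.)

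\emph{Second moment.} Write $\Var_{\RGG_d}[S_H(\bfG)]=\sum_{Q,Q'}\Cov(\signedweight_Q(\bfG),\signedweight_{Q'}(\bfG))$, the sum over ordered pairs of copies of $H$ grouped by the isomorphism type of $Q\cap Q'$. Vertex-disjoint pairs contribute $0$ by independence of disjoint latent vectors, and --- crucially --- so do pairs meeting in a single vertex $v$: translation-invariance of $\unif$ on $\dtorus$ makes $\expect[\signedweight_Q(\bfG)\mid\bfx^v]$ constant, which yields the block factorization $\expect[\signedweight_{Q\cup Q'}(\bfG)]=\expect[\signedweight_Q(\bfG)]\,\expect[\signedweight_{Q'}(\bfG)]$ and hence $\Cov=0$. (This cancellation is essential; the crude bound of \cref{thm:linftysignedcounts} applied to a bowtie or its $C_4$-analogue is far too weak.) The diagonal term $Q=Q'$ equals $\expect[\signedweight_Q(\bfG)^2]=p^{|E(H)|}(1-p)^{|E(H)|}(1+o(1))=\Theta(p^{|E(H)|})$, contributing $\Theta(n^{|V(H)|}p^{|E(H)|})$, and demanding this be $o(\mathrm{gap}_H(d)^2)$ gives exactly $d=\tilde o((np)^{1/2})$ for $K_3$ and $d=\tilde o((np)^{2/3})$ for $C_4$. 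For each of the remaining $O(1)$ overlap types (those sharing at least one edge), one expands $\expect[\signedweight_Q(\bfG)\signedweight_{Q'}(\bfG)]$ into a $\pm$-combination of $\expect[\signedweight_F(\bfG)]$ over $F\subseteq Q\cup Q'$ via $(G_e-p)^2=(1-2p)(G_e-p)+p(1-p)$, bounds each term with \cref{thm:linftysignedcounts} (and the block factorization when $F$ has a cut vertex), multiplies by the $\Theta(n^{|V(Q)\cup V(Q')|})$ count of such pairs, and checks using Assumption \eqref{eq:assumption} that the total is $o(\mathrm{gap}_H(d)^2)$ throughout the stated range. Combining, $\Var_{\RGG_d}[S_H(\bfG)]=o(\mathrm{gap}_H(d)^2)$ precisely in the claimed regime, and Chebyshev together with monotonicity of $\mu_H$ finishes the ``if'' direction.

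\emph{``Only if'' direction and main obstacle.} When $d$ exceeds the relevant threshold, $\mathrm{gap}_H(d)=\tilde\Theta(n^{|V(H)|}p^{|E(H)|}/d^3)=o(n^{|V(H)|/2}p^{|E(H)|/2})=o(\sqrt{\Var_{\RGG_d}[S_H(\bfG)]})$, the diagonal term still dominating the variance whenever $d\ge n^\delta$. Since this variance agrees to leading order with the one under $\ergraph$, $S_H$ is asymptotically Gaussian under $\RGG_d$ (via a fourth-moment bound and Paley--Zygmund, or a $U$-statistic central limit theorem), so the laws of $S_H$ under $\RGG_d$ and $\RGG_{d\pm1}$ have total variation bounded away from $1$, and no decision rule based on $S_H$ outputs the true dimension with probability $1-o(1)$. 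The heaviest part of the argument is the second-moment bookkeeping --- in particular verifying that no overlap type for $C_4$ beats the diagonal contribution once Assumption \eqref{eq:assumption} is in force, while controlling the polylogarithmic losses of \cref{thm:linftysignedcounts} --- whereas the conceptual crux is the single-common-vertex cancellation afforded by translation invariance. The full formal treatment, including the precise success/failure model for estimation, is carried out in \cref{sec:applicationstodimensionestimation}.
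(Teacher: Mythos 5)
Your proposal tracks the paper's argument essentially step for step: the same first-moment asymptotics from the cluster-expansion estimates, the same discrete gap $\mathcal{M}_d - \mathcal{M}_{d+1} = \tilde\Theta(n^{|V(H)|}p^{|E(H)|}/d^3)$, the same variance decomposition (diagonal term dominates, single-cut-vertex pairs cancel by translation invariance, remaining overlaps bounded via \cref{thm:linftysignedcounts} and the identity $(G_e-p)^2 = (1-2p)(G_e-p) + p(1-p)$), and the same Chebyshev comparison yielding the thresholds $d = \tilde o((np)^{1/2})$ and $d = \tilde o((np)^{2/3})$. The one place you diverge is the ``only if'' direction: the paper's \cref{def:estimationfrompoly} makes failure definitional (no disjoint confidence intervals exist once the gap drops below the standard deviation), whereas you gesture at a genuinely stronger claim about distributional overlap via a CLT for $S_H$ — this would take nontrivial additional work to justify, but it is not needed under the paper's formalization and does not affect the correctness of your main argument.
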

It is important to note that \cref{prop:linftydimensionestmation} holds under the assumption \cref{eq:assumption} requiring $np$ and $d$ to be polynomial in $n.$ The setting of \cite{friedrich2023dimension} in which the authors use a (weighted) signed triangle count is in the regime $np = \Theta(1), d = o(\log n).$


\subsection{Additional Results}

\subsubsection{\texorpdfstring{$L_q$}{Lq} Geometry for \texorpdfstring{$q<\infty$}{q}}
So far, we have shown that random geometric graphs with $L_\infty$ geometry behave qualitatively and quantitatively differently from $L_2$ models with respect to \cref{eq:hypoithesistesting}. This motivates the question of understanding \cref{eq:hypoithesistesting} under other geometries as well, in particular $L_q.$ 

The choice of latent space $(\dtorus, \unif)$ for comparison of random geometric graphs with $L_q$ geometries is natural. A large class of natural symmetries of $\dtorus$ such as coordinate permutations and translations ($\bfx\longrightarrow \bfc + \bfx$ for a fixed $\bfc$) are isometries for any $L_q$ metric: there exists a transitive group of isometries for any $L_q$ metric over $\dtorus$ that is also measure-preserving. This leads to the following desirable \emph{homogeneity} property: for any fixed $x\in \dtorus$ and $\bfy\sim\unif(\dtorus),$ the distance $\|x-\bfy\|_q$ (hence, $\sigma(x,\bfy)$) has the same distribution.

The analysis of $L_q$ models, however, turns out to be much more difficult when $q<\infty$ as the factorization over 1-dimensional random geometric graphs does not hold any longer. In particular, this makes the computation of signed subgraph counts much more difficult and we have not succeeded to perform such a computation even for triangles.

One special case in which we manage to bound the signed subgraph count is the case of bipartite graphs $K_{2,t},$ which is enough to prove an analogue of \cref{thm:linftyinformationtheory}. What makes this calculation simpler is that the signed expectation of $K_{2,t}$ has a very natural interpretation as the $t$-th centered moment of the self-convolution of $\sigma^q_{1/2}.$ Using the Bernstein-McDiarmid inequality (see \cref{claim:bernsteinmcdiarmidrag}), we bound the centered moments of $\sigma$ by revealing the $d$ coordinates one at a time. The technical highlight of this argument is proving that each coordinate (say $x_d$) is marginally nearly uniform on $\onetorus$ even conditioned on the value of $\sigma^q_{1/2}(\bfx,\bfy)$ when $q\ll d.$ The reason for this phenomenon is that the the contribution of the remaining $d-1$ coordinates, i.e. $\sum_{i = 1}^{d-1} 
|x_i - y_i|_C^q,$ is sufficiently anticoncentrated and, thus, there are no spikes in its distribution that would bias $x_d$ strongly when conditioning on $\sigma^q_{1/2}(\bfx,\bfy).$ The formal statement is given in \cref{claim:lqanticoncentration} and we prove it by adapting an anticoncentration inequality of Bobkov and Chistyakov \cite{Bobkov14} to random variables with unbounded density (see \cref{appendix:anticoncentration}).

In our analogue of \cref{thm:linftyinformationtheory},
we fix $p = 1/2$ and vary $q$ so that we obtain a meaningful comparison of the convergence to \ER for different geometries. 

\begin{theorem}
\label{thm:torussmallq}
Suppose that $q\ge 1.$  
\begin{enumerate}
    \item If $q = o(d/\log d),$ then 
    $
\TV\Big(
\RAG(n, \dtorus,\sigma^q_{1/2},1/2),
\ergraphhalf
\Big) = o(1)
$ whenever $dq = \tilde{\omega}(n^3).$
\item If $q = \Omega(d/\log d),$ then 
$
\TV\Big(
\RAG(n, \dtorus,\sigma^q_{1/2},1/2),
\ergraphhalf
\Big) = o(1)
$ whenever $d^2 = \tilde{\omega}(n^3).$
\end{enumerate}
\end{theorem}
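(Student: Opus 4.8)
The plan is to reduce the statement, via the Liu–Racz tensorization argument referenced in the excerpt (the bound \eqref{eq:Kltensorization} that expresses $\TV$ and $\KL$ between $\RGG$ and $\ergraphhalf$ in terms of signed $K_{2,t}$ weights), to controlling the quantities $\expect_{\bfG\sim\RGG}[\signedweight_{K_{2,t}}(\bfG)]$ for all $t$. As noted in the introduction, for $p=1/2$ the signed $K_{2,t}$ expectation equals the $t$-th centered moment of the self-convolution of $\sigma^q_{1/2}$; concretely, if we write $\psi(\bfx,\bfy)\coloneqq \sigma^q_{1/2}(\bfx,\bfy)-1/2$ and, for fixed $\bfx^1,\bfx^2\iidsim\unif(\dtorus)$, let $Z\coloneqq \expect_{\bfy\sim\unif}[\psi(\bfx^1,\bfy)\psi(\bfx^2,\bfy)]$, then the signed $K_{2,t}$ weight is $\expect_{\bfx^1,\bfx^2}[Z^t]$. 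So the whole theorem comes down to a moment/tail bound on $Z$, and then summing the resulting series in $t$ against the combinatorial factors coming from tensorization and checking that it is $o(1)$ under the stated hypotheses ($dq=\tilde\omega(n^3)$ in the first regime, $d^2=\tilde\omega(n^3)$ in the second).

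The core step is the tail bound on $Z$, and this is where the anticoncentration input (\cref{claim:lqanticoncentration}, adapted from Bobkov–Chistyakov) enters. I would reveal the $d$ coordinates of the latent vectors one at a time and apply the Bernstein–McDiarmid inequality (\cref{claim:bernsteinmcdiarmidrag}) to the function $Z$, or rather to $\psi$ viewed as a function of the coordinatewise circular distances. The crucial structural fact to establish is that, conditioned on the event $\{\sigma^q_{1/2}(\bfx,\bfy)=1\}$ (and on any subset of the other coordinates), the last coordinate $x_d$ is still close to uniform on $\onetorus$ — this follows because $\sum_{i=1}^{d-1}|x_i-y_i|_C^q$ is sufficiently anticoncentrated that conditioning on the threshold event only mildly reweights $|x_d-y_d|_C^q$. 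Quantitatively, anticoncentration of a sum of $d-1$ i.i.d. bounded terms at scale governed by $q$ is exactly what produces the two regimes: when $q=o(d/\log d)$ the spread is large enough that the per-coordinate influence is $\tilde O(q/d)$-ish, giving a sub-Gaussian-type bound on $Z$ with variance proxy $\sim q/d$ (hence $\TV\to 0$ once $dq=\tilde\omega(n^3)$); when $q=\Omega(d/\log d)$ the $L_q$ norm is essentially the $L_\infty$ norm up to $\polylog$ factors, the anticoncentration is weaker, and the per-coordinate influence is only $\tilde O(1/\sqrt d)$-type, yielding variance proxy $\sim 1/d$ times polylogs (hence the requirement $d^2=\tilde\omega(n^3)$). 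I would phrase the bound as: for each $t$, $\big|\expect[Z^t]\big| \le (C\log n)^{t}\,\eta^{t}$ with $\eta = q/d$ in regime (1) and $\eta = 1/d$ (up to polylog) in regime (2) — more carefully a factorial-corrected bound $\le t^{t/2}\eta^{t/2}\,\polylog$, matching the $|V(H)|/2 = t/2$ exponent philosophy of \cref{thm:linftysignedcounts}.

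With the moment bound in hand, the final step is bookkeeping: plug into the Liu–Racz bound, which schematically gives $\KL(\RGG\|\ergraphhalf) \lesssim \sum_{t\ge 2} \frac{n^{?}}{t!}\,\big|\expect[Z^t]\big|$ (the exact power of $n$ and the combinatorial weight are as in \cite{Liu2021APV}), and verify that under $dq=\tilde\omega(n^3)$ (resp. $d^2=\tilde\omega(n^3)$) this series is dominated by its first nontrivial term and is $o(1)$; then apply Pinsker. One has to be a little careful that the $t=2$ term (which is essentially $n^3$ times the second moment of $Z$, i.e. $n^3\eta^{\text{poly}}$ up to logs) is the binding constraint — this is precisely what dictates the $n^3$ on the right-hand side — and that the tail $t\ge 3$ contributes nothing extra, which the factorial-corrected moment bound ensures.

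The main obstacle is the anticoncentration statement for the conditional law of a single coordinate, i.e. proving \cref{claim:lqanticoncentration} with the right dependence on $q$. The term $|x_i-y_i|_C^q$ has a density that blows up near $0$ (it behaves like $u^{1/q-1}$), so standard anticoncentration for sums of bounded-density variables does not apply off the shelf; this is exactly why the excerpt flags the need to adapt Bobkov–Chistyakov to unbounded densities. Getting the threshold scale — and hence the per-coordinate influence — correct as a function of $q$, uniformly across the two regimes, and making sure the two regimes glue at $q\asymp d/\log d$, is the delicate part; everything downstream (McDiarmid, summing the series, Pinsker) is routine once that estimate is pinned down.
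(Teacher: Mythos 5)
Your high-level plan matches the paper's proof: use the Liu--Racz tensorization bound \eqref{eq:Kltensorization} to reduce to moments of $\gamma=\sigma*\sigma-1/4$, bound those moments by revealing coordinates one at a time via the Bernstein--McDiarmid inequality (\cref{claim:bernsteinmcdiarmidrag}), and feed in an anticoncentration estimate (\cref{claim:lqanticoncentration}, adapted from Bobkov--Chistyakov) to control the per-coordinate increments and variances. So you have identified the right machinery.

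However, your quantitative accounting has the dependence on $q$ backwards, and this is not a harmless slip --- it's exactly the dependence the theorem is about. You assert a variance proxy $\eta\sim q/d$ (and then, in parentheses, draw the conclusion $dq=\tilde\omega(n^3)$, which does \emph{not} follow from that $\eta$: a second moment of order $q/d$ would give $\KL\lesssim n^3 q/d$, i.e.\ the condition $d/q=\tilde\omega(n^3)$, the opposite $q$-dependence). The correct bounds, which the paper establishes, are $\|D_i\gamma\|_\infty=O(1/\sqrt{dq})$ and $\|\Var_i[\gamma]\|_\infty=O(1/(d^2q))$, yielding $\|\gamma\|_k\le Ck/\sqrt{dq}$ and hence variance proxy $1/(dq)$. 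The missing mechanism in your sketch is that the sup-norm of the density of $\sum_{i<d}U_i^q$ is indeed $\approx\sqrt{q/d}$ (this is the Bobkov--Chistyakov step, suitably corrected for the unbounded density of $U^q$ near $0$), but the McDiarmid increment $D_d\gamma$ integrates that density bound against the $L_q$ geometry, i.e.\ against $\ell\mapsto\ell^q$ on $[0,1]$, which costs a further factor $1/(q+1)$. It is this integration --- $\int_0^1\sqrt{q/d}\,\ell^q\,d\ell\approx 1/\sqrt{dq}$ --- that turns $\sqrt{q/d}$ into $1/\sqrt{dq}$ and makes the theorem say $dq$ rather than $d/q$. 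Your heuristic stops at the density bound and so gets the exponent of $q$ wrong. The same slip recurs in your large-$q$ heuristic (you say variance proxy $\sim 1/d$; the correct one is $\sim 1/d^2$, with increment $\tilde O(1/d)$ and per-coordinate variance $\tilde O(1/d^3)$, which is what produces $d^2=\tilde\omega(n^3)$). In both regimes you seem to be conflating the per-coordinate increment with the per-coordinate variance: the former is of order $1/\sqrt{dq}$, the latter is roughly its square divided by $d$ once you sum over coordinates, and it is the latter that sets the variance proxy in Bernstein--McDiarmid.

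Separately, the paper's bound on the marginal variance $\Var_d[\gamma]$ is not a direct McDiarmid/anticoncentration computation: it passes through a Fourier decomposition on the circle, writing $\Var_d[\gamma]\le \Var[\rho]\cdot\sup_{k\ge 1}\expect_R[\cos(\pi k R)]$ and bounding the second factor by $2\,\TV(R,\unif([-1,1]))$, where $R$ is the difference of last coordinates of two uniform points in the $L_q$ ball; showing $\TV(R,\unif)=O(1/(dq))$ is where the ``conditioned coordinate is nearly uniform'' intuition actually gets used, and it requires showing that the density of $R$ is monotone in $|y|_C$ and that $\inf_y\mu(y)\ge 1/2-O(1/(dq))$. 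Your sketch gestures at the right phenomenon but does not anticipate this decomposition; it is a genuine extra step, not just bookkeeping.
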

This statement interpolates between known results for $L_2$ models where convergence to \ER occurs when $d = \tilde{\omega}(n^3)$ (for example, in the spherical case \cite{Bubeck14RGG}) and $L_q$ models when convergence occurs for $d^2 = \tilde{\omega}(n^{3})$ (see \cref{thm:linftyinformationtheory}).

As already mentioned, we did not manage to prove algorithmic upper bounds for distinguishing general $L_q$ geometry. We present some minimal progress and conjectures in \cref{appendix:signedcountsinLq}, based on a Fourier-analytic interpretation of signed subgraph counts similar to \cite[Observation 2.1]{bangachev2023random}.
We can only rigorously show the following entropy-based upper bound which, however, does not obviously lead to a computationally efficient test. 

\begin{theorem} 
\label{thm:entropybound}
Take any $q\in [1,+\infty]$ and any $p$ such that $1/2\ge p \ge 1/n.$ If $d  = o(np/\log n),$ then 
$$
\TV\Big(\RGG(n,\dtorus,\unif, \sigma^\infty_p,p), \ergraph\Big) = 1 - o(1).
$$
\end{theorem}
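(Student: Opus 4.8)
The plan is to use an entropy/counting argument: the random geometric graph $\RGG(n,\dtorus,\unif,\sigma^\infty_p,p)$ is supported (essentially) on a set of graphs of size much smaller than the typical support of $\ergraph$, so a test that simply checks membership in a suitable high-probability set distinguishes the two. Concretely, the entropy of $\ergraph$ is $\binom{n}{2}H(p) = \Theta(n^2 p\log(1/p))$ bits (up to constants, using $p\le 1/2$), so by concentration (Chernoff/Azuma on the independent edges) there is a set $S_0$ of graphs with $|S_0|\le 2^{(1+o(1))\binom{n}{2}H(p)}$ carrying all but $o(1)$ of the mass of $\ergraph$, and moreover $\ergraph$ puts mass $o(1)$ on any fixed set of size $2^{(1-\Omega(1))\binom{n}{2}H(p)}$. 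The goal is therefore to exhibit a set $S_1$ of graphs with $\prob_{\bfG\sim\RGG}[\bfG\in S_1] = 1-o(1)$ and $|S_1| = 2^{(1-\Omega(1))\binom{n}{2}H(p)}$; then the test ``is $\bfG\in S_1$?'' succeeds, giving $\TV = 1-o(1)$.

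To bound the support size of the geometric model, I would condition on the latent vectors $\bfx^1,\dots,\bfx^n$. Given these, the edges are independent with $\bfG_{ij}\sim\Bernoulli(\sigma^\infty_p(\bfx^i,\bfx^j))$, and $\sigma^\infty_p(\bfx^i,\bfx^j)\in\{0,1\}$ is deterministic (hard threshold). So conditioned on the latent data the graph is \emph{fixed}: the only randomness in $\bfG$ is the choice of $\bfx^1,\dots,\bfx^n$ from $\dtorus^n$. Hence the support of $\RGG$ has size at most the number of distinct adjacency patterns realizable, which is controlled by the number of cells in the arrangement of the relevant threshold regions. More usefully for entropy: the map $(\bfx^1,\dots,\bfx^n)\mapsto \bfG$ factors through, for each coordinate $k\in[d]$, the ordering/threshold-structure of the $n$ points $x^1_k,\dots,x^n_k$ on $\onetorus$. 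The number of combinatorially distinct configurations of $n$ points on a circle with a fixed threshold arc is $n^{O(1)}$, so the total number of realizable graphs is at most $(n^{O(1)})^d = 2^{O(d\log n)}$. Under the hypothesis $d = o(np/\log n)$ we get $d\log n = o(np\log n/\log n)\cdot\log n$; more carefully, $d\log n = o(np)$, and since $H(p)=\Theta(p\log(1/p))\ge \Theta(p)$ (using $p\ge 1/n$ and $p\le 1/2$), we have $d\log n = o(np) = o(n\cdot nH(p)/\log(1/p))$. So $\log|\mathrm{supp}(\RGG)| = O(d\log n)$ while $\binom{n}{2}H(p) = \Theta(n^2 H(p))$, and the ratio is $O(d\log n /(n^2 H(p)))$; one checks $d = o(np/\log n)$ forces this to be $o(1)$, hence $\log|\mathrm{supp}(\RGG)| = o(\binom{n}{2}H(p))$, which is far better than the $(1-\Omega(1))$ factor needed. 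Take $S_1 = \mathrm{supp}(\RGG)$ (or a high-probability subset, though the whole support already works).

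The main obstacle is making the support-counting bound $2^{O(d\log n)}$ rigorous and getting the arithmetic to line up with the precise threshold $d = o(np/\log n)$ rather than something weaker like $d = o(np/\log^2 n)$. For the counting: fix coordinate $k$; the edge indicator $\indicator[|x^i_k - x^j_k|_C\le \tau]$ depends only on whether $x^j_k$ lies in the arc of length $2\tau$ centered at $x^i_k$, so the full $n\times n$ pattern in coordinate $k$ is determined by the cyclic order of the $2n$ points $\{x^i_k\}\cup\{x^i_k\pm\tau\ (\mathrm{mod}\ 2)\}$, giving at most $(2n)!\le 2^{O(n\log n)}$ patterns per coordinate — but that is too lossy. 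Instead one argues the pattern is determined by, for each $i$, the \emph{set} of $j$ with $x^j_k$ in $i$'s arc, and these sets form an interval structure on the circle, of which there are only $n^{O(1)}$; alternatively, just bound the number of distinct graphs directly by a VC-dimension / Warren's-theorem-type count on the arrangement of the $\binom{n}{2}$ functions $(\bfx)\mapsto \|\bfx^i-\bfx^j\|_\infty - \tau$ on $\dtorus^n\subset\mathbb{R}^{dn}$, each piecewise-linear with $O(d)$ pieces, yielding at most $(\mathrm{poly}(n,d))^{dn}$ cells — still fine since $dn\log(nd) = o(n^2 H(p))$ under the assumption. I'd expect the cleanest route is the per-coordinate interval argument giving exactly $2^{O(d\log n)}$; then the comparison with $\binom{n}{2}H(p)\ge c n^2 p$ and the hypothesis $d\log n = o(np)$, hence $d\log n\cdot n = o(n^2 p) = O(n^2 H(p)/\log(1/p)) = o(n^2 H(p))$ completes the argument. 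The remaining step — that $\ergraph$ assigns $o(1)$ mass to any set of size $2^{o(n^2 H(p))}$ — is standard concentration of the log-likelihood (equipartition) for i.i.d. Bernoulli($p$) edges and I would cite or include a one-line Chernoff bound.
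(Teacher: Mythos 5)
Your high-level plan—compare the (essentially finite) support of the hard-threshold geometric model to the entropy $\binom{n}{2}H(p)$ of $\ergraph$—is the same idea the paper uses, though the paper executes it differently: rather than counting the support of $\RGG$ directly, it builds a finite $\varepsilon$-net of $(\dtorus,L_q)$, shows the quantized model is $o(1)$-close in TV to $\RGG$ (union bound over edges whose distance lands in $[\tau-2\varepsilon,\tau+2\varepsilon]$, controlled by a small-ball bound), and then invokes a pre-existing ``small support vs.\ $\ergraph$'' lemma. The $\varepsilon$-net route is more robust because it handles all $q\in[1,\infty]$ uniformly, whereas your per-coordinate factorization is specific to $q=\infty$ (for $q<\infty$ the edge indicator does not factor over coordinates).

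There is a genuine error in your preferred counting step. You claim the per-coordinate adjacency pattern is ``an interval structure on the circle, of which there are only $n^{O(1)}$,'' giving $|\mathrm{supp}(\RGG)|\le (n^{O(1)})^d = 2^{O(d\log n)}$. That is off by a factor of $n$ in the exponent: already for a single coordinate, by placing $n/2$ well-separated pairs of points with each intra-pair gap independently set slightly below or slightly above $\tau$, one realizes $2^{\Omega(n)}$ distinct graphs, and more careful ordering arguments push this to $n^{\Theta(n)}$. The correct per-coordinate count is $n^{O(n)}$ (determined by the circular order of the $O(n)$ points and arc endpoints), so $|\mathrm{supp}(\RGG)|\le 2^{O(dn\log n)}$—the same order your Warren's-theorem fallback gives. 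Notice your own final arithmetic already computes $d\log n\cdot n = o(n^2 H(p))$, i.e., you silently inserted the factor of $n$ that your stated bound lacks; with the corrected support bound $2^{O(dn\log n)}$ the comparison $dn\log n = o(n^2p) \le o(n^2 H(p))$ under $d=o(np/\log n)$ is exactly what is needed, and the proposal goes through. So: the proposal is fixable and conceptually aligned with the paper for $q=\infty$, but the $n^{O(1)}$-per-coordinate claim is wrong and should be replaced by $n^{O(n)}$, and for $q<\infty$ you would need either the paper's $\varepsilon$-net argument or a genuinely different cell-counting argument for the non-product level sets $\{\|\bfx^i-\bfx^j\|_q=\tau\}$.
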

Interestingly, this gives the same bound as the signed 4-cycle test in the case of $L_\infty$ geometry (in \cref{thm:linfinitydetection}). 

\subsubsection{Random Algebraic Graphs}
\label{sec:rag}

What makes the Bernstein-McDiarmid analysis feasible in the case of \cref{thm:torussmallq} is that the coordinates of $\dtorus$ are independent. It turns out that the method can be extended to other cases of a product structure.

\begin{definition}[Random Algebraic Graph over an Abelian Group \cite{bangachev2023random}]
\label{def:rag}
Suppose that $\Group$ is a finite Abelian group or a finite-dimensional torus $\dtorus.$ Let $\unif$ be the uniform measure \footnote{That is, the Haar measure in the case of $\dtorus.$} over $\Group$ and let $\sigma: \Group\longrightarrow [0,1]$ be a measurable function such that $\sigma(\bfg) = \sigma(-\bfg)$ holds a.s. and $\expect_{\bfg\sim\unif(\Group)}[\sigma(\bfg)] = p.$ Then, the random algebraic graph $\RAG(n, \Group,\sigma,p)$ is a random graph over vertex set $[n]$ with distribution of its adjacency matrix $\bfA$ given by 
\begin{equation}
    \prob[\bfA = G] = \expect_{\bfx^1, \bfx^2, \ldots, \bfx^n\iidsim\unif(\Group)}
    \Bigg[
    \prod_{i<j} \sigma(\bfx^i - \bfx^j)^{G_{ij}}(1 - \sigma(\bfx^i - \bfx^j))^{1-G_{ij}}\Bigg].
\end{equation}
\end{definition}

For any choice of $n,d,q,p,$ the random geometric graph $\RGG(n,\dtorus,\unif, \sigma^q_p,p)$ is also a random algebraic graph under the choice $\Group = \dtorus$ and $\sigma(\bfg) = \indicator[\|\bfg\|_q\le \tau^q_p].$ Overloading notation, we will also use $\sigma^q_p$ as one function-argument, that is $\sigma^q_p(\bfx, \bfy) = \sigma^q_p(\bfx - \bfy).$

In \cite{bangachev2023random}, the authors study random algebraic graphs over $\hypercube$ with general connections $\sigma.$ They derive a general criterion based on the sizes of Fourier coefficients on each level that guarantee\linebreak $\TV\Big(\RAG(n,\hypercube, \sigma, p), \ergraph\Big) = o(1)$ in \cite[Theorem 3.1.]{bangachev2023random}. Using a much simpler argument, based on the combination of \cref{eq:Kltensorization} and Bernstein's inequality, we also recover such a criterion.

\begin{theorem} 
\label{thm:hypercube}
Suppose that $\sigma:\hypercube\longrightarrow [0,1]$ is a connection with expectation $p.$ Then, 
$$
\TV\Big(\RAG(n,\hypercube,\sigma,p),\ergraph\Big)^2 = O\Bigg(\frac{n^3\sum_{i = 1}^d\infl_i[\sigma]^2}{p^2(1-p)^2}\Bigg).
$$ 
\end{theorem}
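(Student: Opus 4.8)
The plan is to bound the total variation distance by a second-moment / $\chi^2$-type argument, reduced via the $K_{2,t}$-tensorization trick of Liu–Racz (referenced as \eqref{eq:Kltensorization}) to controlling signed expectations of complete bipartite subgraphs $K_{2,t}$. Concretely, the Liu–Racz approach shows that
$$
\TV\Big(\RAG(n,\hypercube,\sigma,p),\ergraph\Big)^2 \le \sum_{t\ge 2} \frac{n^t}{t!\, p^{2t}(1-p)^{2t}} \Big(\expect\big[\signedweight_{K_{2,t}}(\bfG)\big]\Big)^{?}
$$
— more precisely, it bounds the $\chi^2$ divergence by a sum over $t$ of (roughly) $n^{t} p^{-2t}(1-p)^{-2t}$ times the signed $K_{2,t}$ moment, where the two ``hub'' vertices are fixed and the $t$ ``middle'' vertices are summed over. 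So the first step is to write down that reduction carefully and identify exactly which signed-count quantity must be estimated.

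The second, and main, step is to compute the signed expectation of $K_{2,t}$ in $\RAG(n,\hypercube,\sigma,p)$. Fix latent vectors $\bfx,\bfy$ for the two hubs and a fresh latent vector $\bfz^{(j)}$ for each of the $t$ middle vertices. Because the middle vertices are mutually independent given $\bfx,\bfy$, the signed $K_{2,t}$ expectation factors as
$$
\expect_{\bfx,\bfy}\Big[\big(\psi(\bfx-\bfy)\big)^t\Big], \qquad \text{where}\quad \psi(\bfa) := \expect_{\bfz}\big[(\sigma(\bfx-\bfz)-p)(\sigma(\bfy-\bfz)-p)\big]
$$
is the self-convolution (correlation) of the centered connection function, viewed as a function of $\bfa=\bfx-\bfy$ on $\hypercube$. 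Expanding $\sigma$ in the Fourier basis on $\hypercube$, one gets $\psi(\bfa) = \sum_{S\ne\emptyset} \widehat{\sigma}(S)^2 \chi_S(\bfa)$, so $\psi$ is a nonnegative-Fourier-coefficient function with $\psi(\mathbf{0}) = \sum_{S\ne\emptyset}\widehat\sigma(S)^2 = \Var[\sigma]$ and $\expect_{\bfa}[\psi(\bfa)]=0$. The key quantitative input will be a bound on $\|\psi\|_\infty$ or on the moments $\expect_{\bfa}[\psi(\bfa)^t]$. Here I would use the fact that, since $\psi$ has nonnegative Fourier coefficients, $\|\psi\|_\infty = \psi(\mathbf 0) = \Var[\sigma]\le p(1-p)$, and — this is where the influences enter — I would further bound the relevant moment sum by the level-1 Fourier mass, i.e. $\sum_{i}\infl_i[\sigma]^2 = \sum_i \widehat\sigma(\{i\})^4$ or, more likely, the argument only needs $\sum_i \infl_i[\sigma]^2$ appearing through a low-level truncation of $\psi$ together with a Bernstein/Hoeffding concentration bound on the higher-degree part (the excerpt explicitly says the proof combines \eqref{eq:Kltensorization} with Bernstein's inequality). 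So the structure is: split $\psi = \psi_{\le 1} + \psi_{\ge 2}$, control $\psi_{\ge 2}$ in sup-norm by a Bernstein-type estimate on sums of independent coordinate contributions, and control $\psi_{\le 1}$ directly in terms of $\sum_i \widehat\sigma(\{i\})^2$, which is comparable to $\sqrt{d\sum_i\infl_i[\sigma]^2}$ by Cauchy–Schwarz — tracking constants to land exactly on $n^3\sum_i\infl_i[\sigma]^2/(p^2(1-p)^2)$.

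Finally I would assemble the pieces: plug the $K_{2,t}$ bound into the Liu–Racz sum over $t\ge 2$, check that the $t=2$ term dominates (giving the stated $n^3\sum_i\infl_i[\sigma]^2$ after accounting for the $n$ from summing over one extra middle vertex beyond the two hubs — note $K_{2,2}$ has $4$ vertices, matching a factor $n^4$ before the $p$-normalization, but the correct bookkeeping is $n^{2+t}$ with $t=2$ giving the leading $n^4$... I will recheck against \eqref{eq:Kltensorization} to get $n^3$), and verify that the geometric-series tail in $t$ converges under no extra assumption because each term carries an extra factor of roughly $n\,\|\psi\|_\infty/(p(1-p))^2 \cdot(\text{small})$. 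The main obstacle I anticipate is getting the higher-degree part $\psi_{\ge 2}$ controlled \emph{sharply enough} that it does not swamp the level-1 term: a naive bound $\|\psi_{\ge2}\|_\infty\le\Var[\sigma]$ is too weak, so the Bernstein step must exploit that each coordinate contributes independently and that the total variance is spread out; quantifying ``spread out'' is exactly what forces the appearance of $\sum_i\infl_i[\sigma]^2$ rather than $\big(\sum_i\infl_i[\sigma]\big)^2$, and making that step rigorous with the right power of $d$ cancelling is the crux.
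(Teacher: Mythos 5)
You have the right ingredients — the Liu–Racz bound \eqref{eq:Kltensorization}, the centered self-convolution $\gamma = \sigma*\sigma - p^2$ with Fourier expansion $\gamma = \sum_{S\ne\emptyset}\widehat\sigma(S)^2\omega_S$, and the intent to feed a Bernstein-type moment estimate on $\gamma$ back into the tensorized KL — but the decomposition you propose, splitting $\gamma$ into a level-$1$ part $\psi_{\le 1}$ and a higher-degree remainder $\psi_{\ge 2}$ controlled in sup-norm, is not the step the argument needs and, as you yourself flag at the end, you do not see how to close it. The difficulty is real and not merely technical: $\|\psi_{\ge 2}\|_\infty$ is attained at $\bfa=\mathbf{0}$ and equals $\sum_{|S|\ge 2}\widehat\sigma(S)^2$, which can be as large as $\Var[\sigma]\approx p(1-p)$ no matter how small $\sum_i\infl_i[\sigma]^2$ is, so there is no sup-norm bound of the kind you want. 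Moreover, your attempt to relate the level-$1$ weight $\sum_i\widehat\sigma(\{i\})^2$ to $\sum_i\infl_i[\sigma]^2$ via Cauchy--Schwarz introduces a spurious $\sqrt d$, which is a symptom that the level-by-level split does not naturally produce the target quantity — $\infl_i[\sigma]=\sum_{S\ni i}\widehat\sigma(S)^2$ aggregates Fourier mass across \emph{all} levels, not just level $1$.

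The missing observation is that one should not decompose $\gamma$ at all, but instead view it as a function of the $d$ independent coordinates and apply Bernstein--McDiarmid (\cref{claim:bernsteinmcdiarmidrag}) directly. Fix $\bfh_{-i}$ and vary only $\bfg_i$; from $\gamma = \sum_{S\ne\emptyset}\widehat\sigma(S)^2\omega_S$, only the terms with $i\in S$ move, so
$$
D_i\gamma(\bfh_{-i}) = 2\sum_{S\ni i}\widehat\sigma(S)^2\,\omega_{S\setminus\{i\}}(\bfh_{-i}) \le 2\infl_i[\sigma],
\qquad
\Var_i[\gamma(\bfh_{-i})] \le \Big(\sum_{S\ni i}\widehat\sigma(S)^2\Big)^2 = \infl_i[\sigma]^2.
$$
Plugging these into \cref{claim:bernsteinmcdiarmidrag} gives $\|\gamma\|_k \le C\big(\sqrt{k}\,\|\inflvector[\sigma]\|_2 + k\,\|\inflvector[\sigma]\|_\infty\big)$, which is exactly the moment bound needed; inserting it into \eqref{eq:Kltensorization} (and noting $\expect[\gamma]=0$, so the $t=1$ term vanishes and the $t=2$ term dominates, with geometric decay in $t$ under the nontrivial regime $n\|\inflvector[\sigma]\|_2/(p(1-p))=o(1)$) yields $\KL = O\big(n^3\|\inflvector[\sigma]\|_2^2/p^2(1-p)^2\big)$ and hence the theorem by Pinsker. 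So the fix is: replace the Fourier-level truncation with the per-coordinate oscillation/variance bookkeeping — that is precisely where $\sum_i\infl_i[\sigma]^2$ comes from, and it sidesteps the sup-norm obstruction entirely.
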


A detailed comparison between \cref{thm:hypercube} and \cite[Theorem 3.1.]{bangachev2023random} is provided in \cref{appendix:twohypercuberesults}. For now, we simply show two very quick applications of the theorem.

\begin{corollary} 
\label{cor:applicationshypercube}
$
\TV\Big(\RAG(n,\hypercube,\sigma,p),\ergraph\Big) = o(1) 
$ in the following cases:
\begin{enumerate}
    \item If $\sigma$ is $\frac{1}{r\sqrt{d}}$-Lipschitz and $d = o\big(\frac{n^3}{p^2r^4}\big) .$
    \item If $\sigma(\bfg) = \indicator\Big[\sum_{i = 1}^dg_i\ge \tau^{\hypercube}_p\Big],$ where $\tau^{\hypercube}_p$ is defined so that $\expect[\sigma] = p,$ and 
    $d = \tilde{\omega}(n^3p^2).$
\end{enumerate}
\end{corollary}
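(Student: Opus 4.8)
The plan is to obtain both parts as direct specializations of \cref{thm:hypercube}, so the only real work is to estimate the quantity $\sum_{i=1}^d \infl_i[\sigma]^2$ for the two connection functions. Since Assumption~\eqref{eq:assumption} forces $p \le 1/2$, the denominator $p^2(1-p)^2$ in \cref{thm:hypercube} is $\Theta(p^2)$; in each case it then suffices to bound $\sum_{i=1}^d \infl_i[\sigma]^2$ and check that the resulting total-variation estimate is $o(1)$ in the stated range of $d$.

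For part~1, I would bound each influence by the Lipschitz constant. A single coordinate flip moves a point of $\hypercube$ a bounded distance, so for a $\tfrac{1}{r\sqrt d}$-Lipschitz connection the $i$-th discrete derivative of $\sigma$ is pointwise $O\!\big(\tfrac{1}{r\sqrt d}\big)$, and hence $\infl_i[\sigma] = O\!\big(\tfrac{1}{r^2 d}\big)$. Summing over $i$ gives $\sum_{i=1}^d \infl_i[\sigma]^2 = O\!\big(\tfrac{1}{r^4 d}\big)$, and plugging this into \cref{thm:hypercube} (using $p \le 1/2$) yields a bound that is $o(1)$ in the claimed range. This case is routine.

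Part~2 is where the content lies. Write $S = \sum_{i=1}^d g_i$ for $\bfg \sim \unif(\hypercube)$, so $\sigma(\bfg) = \indicator[S \ge \tau]$ with $\tau = \tau^{\hypercube}_p$ chosen so that $\prob[S \ge \tau] = p$. Since $\sigma$ is invariant under coordinate permutations, all $\infl_i[\sigma]$ are equal, so $\sum_{i=1}^d \infl_i[\sigma]^2 = d\,\infl_1[\sigma]^2$ and everything comes down to one sharp influence estimate. I would do this in two steps. First, locate the threshold: a Chernoff/Gaussian estimate for the binomial tail $\prob[S \ge \tau] = p$, together with $\log(1/p) = O(\log n)$ from Assumption~\eqref{eq:assumption}, gives $\tau = (1+o(1))\sqrt{2 d \log(1/p)}$, so that $\tau/\sqrt d = \tilde{O}(1)$ and we sit only $\poly(\log n)$ standard deviations out. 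Second, estimate $\infl_1[\sigma]$: because $\sigma$ is a monotone symmetric threshold, its influence is read off from its low-degree Fourier spectrum, concretely from the truncated linear statistic $\expect[S\,\indicator[S \ge \tau]]$, and a local central limit theorem at the point $\tau$ introduces a Gaussian-density factor $e^{-\tau^2/(2d)}$, which by the first step equals $\tilde{\Theta}(p)$. Carrying the computation through yields a bound of the form $\sum_{i=1}^d \infl_i[\sigma]^2 = \tilde{O}(p^4/d)$, and substituting into \cref{thm:hypercube} gives $\TV\big(\RAG(n,\hypercube,\sigma,p),\ergraph\big)^2 = \tilde{O}(n^3 p^2/d)$, which is $o(1)$ precisely when $d = \tilde{\omega}(n^3 p^2)$.

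The main obstacle is the influence estimate in the second step above: one must get the powers of $p$ and $d$ exactly right, and since the relevant threshold lies in the moderate-deviations regime $\tau \approx \sqrt{d\log(1/p)}$ rather than in the bulk, the argument needs a local CLT (or a direct binomial-coefficient estimate) valid $\tilde{\Theta}(1)$ standard deviations from the mean, with the $\poly(\log n)$ factors tracked carefully — these are exactly what the $\tilde{\omega}$ in the statement absorbs. A cruder estimate using only the variance of $S$, or Hoeffding's inequality, would lose a polynomial factor and fail to recover the stated threshold. Part~1, by contrast, is a one-line Lipschitz bound.
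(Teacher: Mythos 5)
Part~1 of your proposal is correct and matches the paper's proof verbatim: the Lipschitz bound gives $\infl_i[\sigma]\le 1/(r^2 d)$, hence $\sum_i\infl_i[\sigma]^2\le 1/(r^4 d)$, and plugging into \cref{thm:hypercube} yields the stated range of $d$.

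Part~2, however, contains a genuine gap in the influence estimate, and it is exactly the step you flag as ``the content.'' You correctly note that the Gaussian density factor at the threshold, $e^{-\tau^2/(2d)}$, is $\tilde{\Theta}(p)$. But this says the \emph{pivotal probability} is
$\prob\big[\sigma(\bfx)\neq\sigma(\bfx^{\oplus i})\big] \approx \frac{e^{-\tau^2/(2d)}}{\sqrt d} = \tilde{\Theta}\!\big(p/\sqrt d\big)$,
and therefore, from the paper's own definition,
$\infl_i[\sigma] = \tfrac14\prob[\text{pivotal}] = \tilde{\Theta}\!\big(p/\sqrt d\big)$ — not $\tilde{O}(p^2/d)$. (As a sanity check: at $p=1/2$ this is the majority function, whose per-coordinate influence is classically $\Theta(1/\sqrt d)$, not $\Theta(1/d)$.) Summing gives $\sum_i\infl_i[\sigma]^2 = d\cdot\tilde\Theta(p^2/d) = \tilde\Theta(p^2)$, not $\tilde O(p^4/d)$. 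Fed into \cref{thm:hypercube}, this yields $\TV^2 = \tilde{O}(n^3)$, which is vacuous for every $d$; indeed the proof of \cref{thm:hypercube} requires $n\|\inflvector[\sigma]\|_2/(p(1-p))=o(1)$, and that hypothesis is simply violated here. Your proposal has replaced $\infl_i[\sigma]$ by $\infl_i[\sigma]^2$ somewhere in the ``carrying the computation through'' step, which is exactly what makes the numbers come out to match the corollary.

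You should know that the paper's own stated proof of part~2 contains the same lapse — it derives the pivotal probability $O(p\sqrt{\log(1/p)}/\sqrt d)$ and then asserts ``each influence is of order $\tilde O(p^2/d)$,'' conflating the influence with its square — so you have faithfully reproduced the intended route, but that route does not actually go through. The statement itself is very likely true (it mirrors the Liu--Racz spherical bound $d=\tilde\omega(n^3p^2)$), but proving it requires a sharper handle on $\expect[\gamma^2]=\sum_{S\neq\emptyset}\widehat\sigma(S)^4$, which really is $\tilde O(p^4/d)$ for a threshold, rather than the lossy pointwise bound $\Var_i[\gamma]\le\infl_i[\sigma]^2$ that only gives $\tilde O(p^2)$. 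The level-wise estimate of \cref{thm:maintheoremindistingishability} (or a direct computation of the $K_{2,t}$ moments of $\gamma$) is the natural repair; \cref{thm:hypercube} alone is too coarse for this connection.
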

\begin{proof}
For the first statement, observe that whenever $\sigma$ is $(r\sqrt{d})^{-1}$-Lipschitz, by the definition of influence,
$$
\infl_i[\sigma] = \expect_{\bfx\sim\unif(\hypercube)}\Bigg[\Big(\frac{\sigma(\bfx) - \sigma(\bfx^{\oplus i})}{2}\Big)^2\Bigg]\le 
\frac{1}{r^2d},
$$
where $\bfx^{\oplus i}$ denotes the vector $\bfx$ with the $i$-th coordinate flipped. We used
$|\sigma(\bfx) - \sigma(\bfx^{\oplus i})| \le \frac{2}{r\sqrt{d}}$ which follows directly from the Lipschitzness assumption.
The conclusion follows from \cref{thm:hypercube}.

For the second statement, again consider  
$\displaystyle
\infl_i[\sigma] 
$.
The expression $\sigma(\bfx) - \sigma(\bfx^{\oplus i})$ is non-zero only if $\bfx$ has $\frac{d + \tau_p}{2}$ or $\frac{d + \tau_p}{2} - 1$ ones. A simple calculation (carried out, for example, in \cite[Proof of Proposition 4.7]{bangachev2023random}) 
shows that the probability of this happening is $ O(p\sqrt{\log \frac{1}{p}}/{\sqrt{d}}).$ This means that each influence is of order $\tilde{O}(p^2/d)$ and the conclusion follows.
\end{proof}

\section{Preliminaries and Notation}
\label{sec:preliminaries}
\paragraph{Graph Notation.} Denote by $K_n$ the clique on $n$ vertices, by $K_{a,b}$ the complete bipartite graph with parts of sizes $a$ and $b,$ 
and by $C_m$ the cycle on $m$ vertices. 
For a set of edges 
$H = \{(i_1, j_1), \ldots, (i_k,j_k)\}\in [n]\times [n],$ denote by $H$ the subgraph of $K_n$ with vertex set $\{i_1, j_1, i_2, j_2, \ldots, i_k, j_k\}$ and edge set $\{(i_1, j_1), \ldots, (i_k,j_k)\}.$

A graph is $2$-connected if it is connected and for any $v\in V(H),$ the induced subgraph of $H$ on vertex set $V(H)\backslash\{v\}$
is connected.

\subsection{Statistical Detection of Latent Space Structure}
\label{prelim:statsindist}
\paragraph{Information Theory.}
We use the standard notions for Total Variation and KL-distance (for example, \cite{Polianskiy22+}). Specifically, for two distributions $\bfP,\bfQ$ over the same measurable spaces $(\Omega,\mathcal{F})$, such that $\bfP$ is absolutely continuous with respect to $\bfQ,$
\begin{equation}
\begin{split}
    &\TV(\bfP,\bfQ) = 
    \sup_{A\in \mathcal{F}}|\bfP(A) - \bfQ(A)| = 
    \frac{1}{2}\int_\Omega \Big|\frac{d\bfP(\omega)}{d\bfQ(\omega)} - 1\Big|d\bfQ(\omega),\\
    &\KL(\bfP\|\bfQ) = 
    \int_\Omega 
    \frac{d\bfP(\omega)}{d\bfQ(\omega)}\log \frac{d\bfP(\omega)}{d\bfQ(\omega)}
    d\bfQ(\omega).
    \end{split}
\end{equation}
Total variation appears naturally in hypothesis testing settings as $1 - \TV(\bfP, \bfQ)$ is the minimal sum of Type I and Type II errors when testing between $\bfP$ and $\bfQ$ with a single sample (e.g. \cite{Polianskiy22+}). In practice, it is usually more convenient to work and compute with $\KL.$ Importantly, this is enough for proving convergence in total variation due to the celebrated inequality of Pinsker stating that
$\TV(\bfP,\bfQ)^2 \le \frac{1}{2}\KL(\bfP,\bfQ).$ 

\paragraph{A Bound on the KL divergence due to Liu and Racz.}
In \cite{Liu2021APV}, the authors give the following convenient bound on the $\KL$ divergence between $\ergraph$ and a probabilistic latent space graph. Specialized to random algebraic graphs (which encompass graphs $\RGG(n,\dtorus, \unif, \sigma^q_p, p)$), their bound reads as follows:
\begin{equation}
\label{eq:Kltensorization}
\begin{split}
& \KL\Big(\RAG(n, \Group, \sigma, p) \|\ergraph\Big)\le  
\sum_{k = 0}^{n-1}
\log \Big(\expect_{\bfx\sim \unif{(\Group)}}\bigg[\Big(1 + \frac{\gamma(\bfx)}{p(1-p)}\Big)^k\bigg]\Big),\\
& \text{where }
\gamma(\bfx) \coloneqq  \expect_{\bfz\sim \unif{\Group}}\bigg[(\sigma(\bfx  -\bfz)-p)(\sigma(\bfz)-p)\bigg] = 
\expect_{\bfz\sim \unif{\Group}}\bigg[\sigma(\bfx  -\bfz)\sigma(\bfz)\bigg] - p^2.
\end{split}
\end{equation}

Over random algebraic graphs, $\gamma(\bfx) = \sigma*\sigma(\bfx) - p^2,$ where $\sigma*\sigma(\bfx): = \expect_{\bfz\sim \unif{\Group}}\bigg[\sigma(\bfx  -\bfz)\sigma(\bfz)\bigg]$
is the self-convolution. Thus, one can expand the left hand-side of \cref{eq:Kltensorization} either in terms of the moments of $\sigma*\sigma$ or in terms of the moments of $\sigma*\sigma - p^2.$ 
It turns out that in the case of $\RGG(n,\dtorus, \unif, \sigma^\infty_p,p),$ one can easily compute (up to lower-order terms) the moments of $\sigma*\sigma$ and this is enough to prove
\cref{thm:linftyinformationtheory}.

\begin{remark}
    \normalfont
In \cref{appendix:onthekltensobound}, we discuss two combinatorial interpretations of \cref{eq:Kltensorization} which connect the bound of Liu and Racz to different notions of pseudorandomness appearing in the literature. 
One is related to the recent break-through work of Kelly and Meka on 3-term arithmetic progressions \cite{kelley2023strong} and the other to the classic work of Chung-Graham-Wilson on quasi-random graphs \cite{chung87}. To the best of our knowledge, these interpretations were not known to Liu and Racz. 
\end{remark}

\paragraph{The Bernstein-McDiarmid Approach.} In the case of $L_q$ geometry for $q<\infty,$ calculating the moments of $\sigma*\sigma$ seems out of reach. Our proof of \cref{thm:torussmallq} instead exploits the product structure of $\dtorus$ to bound the moments of $\gamma$ via the Bernstein-McDiarmid inequality.

\begin{claim}[{\cite[Corollary 5.6 and Problem 5.2]{RvH}}]
\label{claim:bernsteinmcdiarmidrag}
    Let $\bfg_1, \bfg_2, \ldots, \bfg_d$ be independent random variables and $\gamma$ a function of $(\bfg_1, \bfg_2, \ldots, \bfg_d).$
    Denote $\bfg_{-i} \coloneqq  (\bfg_1, \bfg_2, \ldots, \bfg_{i-1},\bfg_{i+1},\ldots, \bfg_d)$ and 
    \begin{equation*}
    \begin{split}
    &D_i\gamma(\bfg_{-i}) \coloneqq  
    \sup_{\bfg^+_i}\gamma((\bfg_1, \bfg_2, \ldots, \bfg_{i-1},\bfg_i^+,\bfg_{i+1},\ldots, \bfg_d)) - 
    \inf_{\bfg^{-}_i}
    \gamma((\bfg_1, \bfg_2, \ldots, \bfg_{i-1},\bfg_i^-,\bfg_{i+1},\ldots, \bfg_d)), \\
    &\Var_i[\gamma(\bfg_{-i})] \coloneqq  
    \Var_{\bfg_i}[\gamma((\bfg_1, \bfg_2, \ldots, \bfg_{i-1},\bfg_i,\bfg_{i+1},\ldots, \bfg_d))|\bfg_{-i}].
    \end{split}
    \end{equation*}
    Then, for any positive $t,$
    $$
    \prob\bigg[\gamma(\bfg) \ge t + \expect[\gamma(\bfg)]\bigg]\le 
    \exp\Bigg(- \min\bigg(\frac{t^2}{4\sum_{j = 1}^d\|\Var_i[\gamma]\|_\infty} ,\frac{t}{2\max_i\|D_i\gamma\|_\infty} \bigg) \Bigg).
    $$    
    Furthermore, for some absolute constant $C,$
    $$
    \|\gamma\|_k\le C\Bigg( \sqrt{k}\sqrt{\sum_{i = 1}^d\|\Var_i[\gamma]\|_\infty} + 
    k \max_i \|D_if\|_\infty
    \Bigg).
    $$
\end{claim}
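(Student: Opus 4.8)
The plan is to prove \cref{claim:bernsteinmcdiarmidrag} by the classical Doob-martingale argument, controlling the conditional moment generating functions (MGFs) of the martingale increments by a Bernstein-type bound; the moment inequality then follows from the tail bound by integration. Throughout I may assume $\expect[\gamma]=0$ — this holds in every application in the paper (e.g. for the $\gamma$ arising in \cref{eq:Kltensorization}), and in general one works with $\gamma-\expect[\gamma]$, which has the same $D_i\gamma$ and $\Var_i[\gamma]$; then $\|\gamma\|_k=\|\gamma-\expect[\gamma]\|_k$ and the displayed moment bound is really a bound on the latter.

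\textbf{Step 1 (martingale setup and increments).} Let $\mathcal{F}_i=\sigma(\bfg_1,\dots,\bfg_i)$ and $M_i=\expect[\gamma(\bfg)\mid\mathcal{F}_i]$, so $M_0=0$, $M_d=\gamma(\bfg)$, and $\Delta_i:=M_i-M_{i-1}$ is a martingale-difference sequence. Using independence of the $\bfg_j$ I write $M_i=h_i(\bfg_1,\dots,\bfg_i)$ with $h_i(\cdot)=\expect_{\bfg_{i+1},\dots,\bfg_d}[\gamma]$; then again by independence $\Delta_i=h_i(\bfg_{1:i})-\expect_{\bfg_i}[h_i(\bfg_{1:i})\mid\mathcal{F}_{i-1}]$, which conditionally on $\mathcal{F}_{i-1}$ is a centered function of $\bfg_i$ alone. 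Next I bound its range and conditional variance. Since averaging over the ``future'' coordinates only shrinks the oscillation in $\bfg_i$, the conditional oscillation of $\Delta_i$ is at most $b_i:=\|D_i\gamma\|_\infty$, and being centered with oscillation $\le b_i$ it satisfies $|\Delta_i|\le b_i$ a.s.; similarly I claim $\Var(\Delta_i\mid\mathcal{F}_{i-1})\le v_i:=\|\Var_i[\gamma]\|_\infty$. This variance domination is the one delicate point: for fixed $\bfg_{1:i-1}$ one expands $\Var_{\bfg_i}\big(\expect_{\bfg_{i+1:d}}\gamma\big)=\expect_{\bfg_{i+1:d}}\expect_{\bfg_{i+1:d}'}\Cov_{\bfg_i}\big(\gamma(\cdot,\bfg_{i+1:d}),\gamma(\cdot,\bfg_{i+1:d}')\big)$ and bounds each covariance by the supremum of the single-coordinate variances via Cauchy--Schwarz.

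\textbf{Step 2 (MGF bound, tensorization, and the tail).} The elementary lemma is: if $Z$ is mean-zero with $|Z|\le b$ and $\expect[Z^2]\le v$, then expanding the exponential term by term gives $\expect[e^{\lambda Z}]\le\exp\!\big(\tfrac{v}{b^2}(e^{\lambda b}-1-\lambda b)\big)$ for all $\lambda>0$. Applying this conditionally on $\mathcal{F}_{i-1}$ to $Z=\Delta_i$ (valid by Step 1) and multiplying along the martingale via the tower property yields $\expect[e^{\lambda\gamma}]\le\exp\!\big(\sum_i\tfrac{v_i}{b_i^2}(e^{\lambda b_i}-1-\lambda b_i)\big)$; since $x\mapsto(e^x-1-x)/x^2$ is increasing this is at most $\exp\!\big(\tfrac{V}{b^2}(e^{\lambda b}-1-\lambda b)\big)$ with $V=\sum_i v_i$ and $b=\max_i b_i$. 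A Chernoff bound followed by the standard optimization over $\lambda$ (using $(1+u)\log(1+u)-u\ge u^2/(2+2u/3)$) gives $\prob[\gamma\ge t]\le\exp\!\big(-\tfrac{t^2}{2(V+bt/3)}\big)$, and a case split on whether $bt/3\le V$ shows $\tfrac{t^2}{2(V+bt/3)}\ge\min\!\big(\tfrac{t^2}{4V},\tfrac{t}{2b}\big)$, which is exactly the stated tail bound.

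\textbf{Step 3 (moment bound).} Since $D_i\gamma$ and $\Var_i[\gamma]$ are invariant under $\gamma\mapsto-\gamma$, Step 2 also bounds the lower tail, giving $\prob[|\gamma|\ge t]\le 2\exp\!\big(-\min(t^2/4V,\,t/2b)\big)$. Then $\|\gamma\|_k^k=\int_0^\infty kt^{k-1}\prob[|\gamma|\ge t]\,dt$ splits over the regimes $t\le 2V/b$ and $t>2V/b$: the sub-Gaussian part contributes $\lesssim(\sqrt{Vk})^k$ and the sub-exponential part $\lesssim(bk)^k$, so $\|\gamma\|_k\le C\big(\sqrt{k}\sqrt{V}+kb\big)$ for an absolute constant $C$, which is the claimed inequality after substituting $V=\sum_i\|\Var_i[\gamma]\|_\infty$ and $b=\max_i\|D_i\gamma\|_\infty$.

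\textbf{Main obstacle.} The genuinely delicate step is the conditional-variance domination in Step 1, namely that passing to the Doob martingale does not increase the per-coordinate variance ($\Var_{\bfg_i}(\expect_{\bfg_{i+1:d}}\gamma)\le\sup\Var_{\bfg_i}\gamma$), since it interchanges an expectation over the future coordinates with a variance in coordinate $i$; this is handled by the covariance expansion plus Cauchy--Schwarz sketched above. Everything else — the Bernstein MGF lemma, the tensorization, the Chernoff optimization, and the tail-to-moment passage — is routine, modulo keeping track of the numerical constants $4$ and $2$ in the exponent and the harmless centering remark noted at the outset.
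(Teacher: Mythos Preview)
The paper does not prove \cref{claim:bernsteinmcdiarmidrag}; it is quoted verbatim from an external reference (Corollary 5.6 and Problem 5.2 of the cited lecture notes), so there is no in-paper argument to compare against. Your proposal is the standard Doob-martingale/Bennett derivation that those notes use, and it is correct: the conditional-variance domination in Step~1 via the covariance expansion and Cauchy--Schwarz is the right way to justify $\Var(\Delta_i\mid\mathcal{F}_{i-1})\le\|\Var_i[\gamma]\|_\infty$, the MGF tensorization and the case split producing the constants $4$ and $2$ are clean, and the tail-to-moment integration is routine. Your remark about centering is exactly right---the moment inequality as displayed is vacuously false for nonzero constants, and every invocation in the paper has $\expect[\gamma]=0$, so the statement should be read for $\gamma-\expect[\gamma]$.
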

We bound $\Var_i[\sigma], D_i[\gamma]$ for $\gamma$ defined as in \cref{eq:Kltensorization} via a careful combination of Fourier-theoretic and anticoncentration arguments to obtain \cref{thm:torussmallq}. We also derive \cref{thm:hypercube} as a combination of \cref{eq:Kltensorization} and \cref{claim:bernsteinmcdiarmidrag}.

\subsection{Computational Detection of Latent Space Structure}
\label{sec:prelimcompindist}
To solve \cref{eq:hypoithesistesting}, one observes a certain $n$-vertex graph $G$ and needs to compute a function $f(G)$ based on which to decide between $H_0$ and $H_1.$ The graph $G$ is simply a sequence of $\binom{n}{2}$ bits. It is well-known that any function of $0/1$ vectors is simply a polynomial \cite{ODonellBoolean}. For computationally efficient tests, one needs to be able to compute $f$ in time polynomial in $n.$

\paragraph{Signed Subgraph Counts.}
Most important to the current paper are polynomials corresponding to signed-subgraph counts. Namely, suppose that we want to test between two graph distributions over $n$ vertices in which each edge appears with a marginal probability $p.$ Let $H = \{(i_1, j_1), (i_2,j_2), \ldots, (i_k,j_k)\}$ be any subgraph of $K_n.$ Then, we define the signed weight of $H$ as the polynomial
\begin{equation}
\label{eq:signedweight}
\signedweight_H(G) \coloneqq  
\sum_{(ij)\in E(H)}(G_{ij} - p).
\end{equation}
 For brevity and uniformity with the $\signedweight$ notation, for a set of edges $H = \{(i_1,j_1),(i_2,j_2),\ldots, (i_k,j_k)\},$ denote 
$\unsignedweight_H(G) = \prod_{(ij)\in H}G_{ij} = \indicator[G_{ij} = 1\; \forall (ij)\in H].$
Respectively, the signed count of $H$ in $\bfG$ is 
\begin{equation}
\label{eq:signedcount}
\signedcount_H(G) = \sum_{H_1\subseteq E(K_n)\; : H_1\sim H}
\signedweight_{H_1}(G),
\end{equation}
where the sum is over all subgraphs of $K_n$ isomorphic to $H.$ Note that whenever $H$ has a constant number of edges, the polynomial $\signedcount_H(G)$ is certainly efficiently computable. 

Clearly $\expect_{\bfG\sim \ergraph}\signedcount_H(\bfG) = 0,$ which leads to the following approach to \cref{eq:hypoithesistesting} appearing in \cite{Bubeck14RGG}. Upon observing $G,$ compute $\signedcount_H(G)$ and, if sufficiently close to 0, report $H_0.$ Else  report $H_1.$ Using Chebyshev's inequality, this can be formalized as follows.

\begin{definition}[Success  of the Signed Subgraph Count ] 
\label{def:signedcountsperformance}
We say that signed $H$-count statistical test $\signedcount_H(G)$ succeeds in distinguishing between $\ergraph$ and $\RGG$ if 
\begin{equation}
\label{eq:signedtestdefinition}
    \big|
    \expect_{\bfG\sim \RGG}\big[
    \signedcount_H(\bfG)\big]
    \big|  = \omega \big( 
    \sqrt{
    \Var_{\bfK\sim \ergraph}\big[
    \signedcount_H(\bfK)\big] + 
    \Var_{\bfG\sim \RGG}\big[
    \signedcount_H(\bfG)\big]
    }\big).
\end{equation}
Indeed, if this is the case, one can solve \cref{eq:hypoithesistesting} with Type I and Type II errors both of order $o(1)$ by comparing $\signedcount_H(G)$ to $\frac{1}{2}\expect_{\bfG\sim \RGG}\Big[
    \signedcount_H(\bfG)\Big].$

If, on the other hand, 
\begin{equation}
\label{eq:signedtestdefinitionfail}
    \big|
    \expect_{\bfG\sim \RGG}\big[
    \signedcount_H(\bfG)\big]
    \big|  = o\big( 
    \sqrt{
    \Var_{\bfK\sim \ergraph}\big[
    \signedcount_H(\bfK)\big] + 
    \Var_{\bfG\sim \RGG}\big[
    \signedcount_H(\bfG)\big]
    }\big),
\end{equation}
we say that the signed $H$-count statistical test fails with high probability.
\end{definition}

In this work, we are mostly interested in the case of triangles, $H = C_3,$ and 4-cycles, $H = C_4.$

\paragraph{Low-Degree Tests.} In \cref{def:signedcountsperformance}, one can replace $\signedweight_H(\cdot)$ with any polynomial $f(\cdot)$ and compare 
$$
\big|
    \expect_{\bfG\sim \RGG}\big[
    f(\bfG)\big] - 
\expect_{\bfG\sim \ergraph}\big[
    f(\bfG)\big]
    \big| \text{ and }
\sqrt{
    \Var_{\bfK\sim \ergraph}\big[
    f(\bfK)\big] + 
    \Var_{\bfG\sim \RGG}\big[
    f(\bfG)\big]
    }.  
$$
High-probability success and failure are similarly defined. 

A popular conjecture \cite{hopkins18} states that all polynomial time algorithms for solving (sufficiently noisy) hypothesis testing questions in high-dimension are captured by polynomials of degree $O(\log n).$ Indeed, there is growing evidence in support of this conjecture. Clearly, low degree polynomial tests capture (signed) counts of small subgraphs (note that one can even capture the first $O((\log n)/k)$ moments of the (signed) counts 
of a graph $H$ with $k$ edges), which have proven powerful in detecting random geometric graphs \cite{Bubeck14RGG}, planted cliques and colorings \cite{kothari2023planted}, the number of communities in a stochastic block model \cite{rush2022} and others. 
Low-degree polynomials further capture spectral methods \cite{kunisky2019notes},
constant round approximate message passing algorithms \cite{montanari2022equivalence}, and statistical query algorithms \cite{brennan2021statisticalquery}. Thus, a lot of recent work in high-dimensional statistics has focused on ruling out low-degree polynomial algorithms for statistical problems. This constitutes strong evidence that the respective statistical problems cannot be solved in polynomial time.  

Formally, in the case of \cref{eq:hypoithesistesting} one needs to show that there exists some function $D(n) = \omega(\log n)$ such that  for all degree $D = D(n)$ polynomials $f,$ it is the case that 
$$
\big|
    \expect_{\bfG\sim \RGG}\big[
    f(\bfG)\big] - 
\expect_{\bfG\sim \ergraph}\big[
    f(\bfG)\big]
    \big|  = o\big( 
    \sqrt{
    \Var_{\bfK\sim \ergraph}\big[
    f(\bfK)\big] + 
    \Var_{\bfG\sim \RGG}\big[
    f(\bfG)\big]
    }\big).
$$
One way to prove such an inequality is by bounding the following quantity \cite{hopkins18}:
\begin{equation}
    \advantage_{\le D}: = \max_{f\; : \; deg(f)\le D} 
    \frac{
    \expect_{\bfG\sim \RGG}\big[
    f(\bfG)\big]}{\sqrt{\expect_{\bfK\sim \ergraph}\big[
    f(\bfK)^2\big]}}\,.
\end{equation}
In particular, if $\advantage_{\le D} = 1 + o(1),$ then statistical test $f(\cdot)$ fails with large probability (e.g. \cite{rush2022}).

It turns out that the product structure of $\ergraph$ yields a convenient formula for $\advantage_{\le D}.$
The set of polynomials $\{\signedweight_H\times (p(1-p))^{-|E(H)|/2}\}_{H\subseteq E(K_n)\; 0 \le |E(H)|\le D}$ forms an orthonormal basis of the polynomials of degree up to $D$ with respect to $\ergraph.$ A standard application of the Cauchy-Schwartz inequality (e.g. \cite{hopkins18}) shows that
$$
\advantage^2_{\le D}-1 = 
\sum_{H\subseteq E(K_n)\; 1 \le |E(H)|\le D}
\expect_{\bfG\sim \RGG}\big[\signedweight_H\times (p(1-p))^{-|E(H)|/2}\big]^2.
$$
We summarize in the following proposition.

\begin{proposition} If there exists some $D = \omega(\log n)$ such that 
$$
\sum_{H\subseteq E(K_n)\; 1 \le |E(H)|\le D}
\expect_{\bfG\sim \RGG}\big[\signedweight_H\times (p(1-p))^{-|E(H)|/2}\big]^2  = o(1),
$$
then the Type I plus Type II error of any degree $D$ polynomial in solving \cref{eq:hypoithesistesting} is of order $1 - o(1).$ 
\end{proposition}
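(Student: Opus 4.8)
The plan is to deduce the statement from the advantage formula derived just above the proposition, where the only genuine input is the orthonormality of the signed-weight monomials under $\ergraph$.

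\textbf{Step 1: the hypothesis already says $\advantage_{\le D} = 1 + o(1)$.} Since $\ergraph$ is a product measure over the $\binom n2$ edge slots and $\{1,(x-p)/\sqrt{p(1-p)}\}$ is an orthonormal basis of $L^2(\Bernoulli(p))$, the normalized monomials $\overline{\signedweight}_H \coloneqq \signedweight_H\cdot(p(1-p))^{-|E(H)|/2}$ with $|E(H)|\le D$ form an orthonormal basis of the degree-$\le D$ polynomials in $L^2(\ergraph)$, and two distinct $H$'s differ on some edge slot. This is precisely what produces the displayed identity $\advantage_{\le D}^2 - 1 = \sum_{H:\,1\le|E(H)|\le D}\expect_{\bfG\sim\RGG}[\overline{\signedweight}_H(\bfG)]^2$ stated right before the proposition. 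The hypothesis is exactly that the right-hand side is $o(1)$, so $\advantage_{\le D} = \sqrt{1+o(1)} = 1+o(1)$.

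\textbf{Step 2: $\advantage_{\le D} = 1+o(1)$ forces every degree-$D$ test to fail.} Let $f$ be any polynomial of degree $\le D$ and put $g \coloneqq f - \expect_{\bfK\sim\ergraph}[f]$; then $g$ is again of degree $\le D$ and has no constant component, so $g = \sum_{1\le|E(H)|\le D}\widehat g_H\,\overline{\signedweight}_H$ with $\sum_H\widehat g_H^2 = \Var_{\bfK\sim\ergraph}[f]$ by Parseval. Hence
\[
\big|\expect_{\bfG\sim\RGG}[f] - \expect_{\bfK\sim\ergraph}[f]\big|^2
= \Big(\sum_H \widehat g_H\,\expect_{\bfG\sim\RGG}[\overline{\signedweight}_H]\Big)^2
\le \Big(\sum_H\widehat g_H^2\Big)\Big(\sum_H\expect_{\bfG\sim\RGG}[\overline{\signedweight}_H]^2\Big)
= \Var_{\bfK\sim\ergraph}[f]\cdot\big(\advantage_{\le D}^2-1\big),
\]
which is $o\big(\Var_{\bfK\sim\ergraph}[f] + \Var_{\bfG\sim\RGG}[f]\big)$. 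This is exactly the failure condition \eqref{eq:signedtestdefinitionfail} extended to an arbitrary degree-$D$ polynomial $f$ (as set up in the ``Low-Degree Tests'' paragraph of \cref{sec:prelimcompindist}), so the natural threshold test built from $f$ has Type I plus Type II error $1-o(1)$ in the sense of \cref{def:signedcountsperformance}. Since $f$ was arbitrary of degree $\le D$, the proposition follows.

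\textbf{Where the difficulty lies.} There is essentially no obstacle in the argument above: it is the routine second-moment/low-degree bookkeeping of \cite{hopkins18,rush2022}, and the single point needing care, the orthonormality of the $\overline{\signedweight}_H$ under $\ergraph$, is immediate from the product structure of $\ergraph$. All the real work is upstream, namely verifying the hypothesis $\sum_{1\le|E(H)|\le D}\expect_{\bfG\sim\RGG}[\overline{\signedweight}_H]^2 = o(1)$ for the model at hand; this is exactly where the signed-subgraph bounds of \cref{thm:linftysignedcounts} and the summation over all small $H$ (with $D$ chosen just below the threshold $(\log n)^{5/4}/(\log\log n)$) enter.
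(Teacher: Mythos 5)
Your proposal is correct and follows essentially the same route the paper takes: both rest on the orthonormality of the normalized signed-weight monomials under $\ergraph$ to get the displayed formula $\advantage_{\le D}^2 - 1 = \sum_H \expect_{\bfG\sim\RGG}[\overline{\signedweight}_H]^2$, and then conclude that $\advantage_{\le D} = 1+o(1)$ forces every degree-$D$ test to fail. The only difference is that the paper simply cites \cite{rush2022,hopkins18} for the implication ``$\advantage_{\le D} = 1+o(1)$ implies failure,'' whereas you spell out that implication via the Cauchy--Schwarz and Parseval step; this is exactly the standard argument those references contain, so there is no substantive divergence.
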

We use the bounds from \cref{thm:linftysignedcounts} and this proposition to prove \cref{thm:linftylowdegreeindist}. We note that low-degree polynomials are similarly used in the literature for estimation and refutation tasks (e.g. \cite{Schramm_2022,rush2022}). We discuss this in more detail in \cref{sec:applicationstodimensionestimation} in the context of estimating the dimension of a graph sampled from $\RGG(n,\dtorus,\unif, \sigma^\infty_p,p).$

\section{Detection of Geometry via Subgraph Counts in the \texorpdfstring{$L_\infty$}{Sup-Norm} Model}
\label{sec:complinfty}
The goal of this section is to prove \cref{thm:linfinitydetection,thm:linftylowdegreeindist,prop:linftydimensionestmation} which show lower and upper bounds on distinguishing $\RGG(n,\dtorus, \unif, \sigma^\infty_p,p)$ from Erd\H os-R\'enyi using low-degree polynomials. 
 As discussed in \cref{sec:prelimcompindist}, low-degree polynomials over graphs correspond to (signed) subgraph counts. In \cref{sec:clusterhighlevel}, we describe our ``cluster-expansion'' strategy for bounding the signed (and unsigned) weights of small subgraphs in $\RGG(n,\dtorus, \unif, \sigma^\infty_p,p)$ (given in \cref{thm:linftysignedcounts}). We complete this approach in \cref{appendix:proofof1dtorus,sec:linftyunsignedweights,sec:linftysignedcounts}.
In \cref{sec:linftyperformance} we use these estimates to prove \cref{thm:linfinitydetection,thm:linftylowdegreeindist,prop:linftydimensionestmation}.

\subsection{The Cluster-Expansion Approach to Bounding Expected (Signed) Weights}
\label{sec:clusterhighlevel}
Recall Assumption \eqref{eq:assumption}. 
Note that $\tau^\infty_p$ satisfies 
$(\tau^\infty_p)^d = p.$ Indeed, this is the case since $p = \prob[\|\bfx\|_\infty\le \tau^\infty_p] = \prob[|x_1|_C\le \tau^\infty_p]^d.$ This immediately implies that $\tau^\infty_p = 1 -\lambda_p^\infty,$ where $\lambda_p^\infty = \frac{\log (1/p)}{d}(1 + o(1)).$ We will write $\sigma,\lambda,\tau$ instead of $\sigma_p^\infty,\lambda_p^\infty,\tau_p^\infty $ for brevity.

Fix some subgraph $H\subseteq K_n$ defined by edges $e_1, e_2, \ldots, e_k.$ We want to understand \linebreak 
$\expect_{\bfG\sim \RGG(n,\dtorus, \unif, \sigma^\infty_{p}, p)}[\signedweight_H(\bfG)]$ and $\expect_{\bfG\sim \RGG(n,\dtorus, \unif, \sigma^\infty_{p}, p)}[\unsignedweight_H(\bfG)].$ We will describe how to utilize the \textsf{AND} structure of $L_\infty$ random geometric graphs, described in the introduction, towards this goal. This is done in several steps, which can be similarly applied in other instances of \textsf{AND} structure (see \cref{sec:discussion}). 

\paragraph{Step 1: Factorizing Expected Weights over Independent Coordinates.}
The main reason that the analysis over the $L_\infty$ model is simple is that the different coordinates factorize. Namely, $e_\ell = (i_\ell,j_\ell)$ is an edge if and only if $|x_u^{i_\ell} - x_u^{j_\ell}|_C\le 1-\lambda$ for each coordinate $u\in [d].$ Using the  independence of coordinates under the distribution $\unif(\dtorus),$
\begin{equation}
\label{eq:1dimtoddim}
\expect_{\bfG\sim \RGG(n,\dtorus, \unif, \sigma^\infty_{p}, p)}[\unsignedweight_H(\bfG)] = 
\expect_{\bfG\sim \RGG(n,\mathbb{T}^1, \unif, \sigma^\infty_{1-\lambda}, 1-\lambda)}[\unsignedweight_H(\bfG)]^d.
\end{equation}

\paragraph{Step 2: Computations Over a Single Coordinate via Inclusion-Exclusion.}
It turns out that computing the one-dimensional quantities over the graph complement $\overline{\bfG}$ is simpler than computing them over $\bfG.$ The intuitive reason is that in the complement each edge appears only with very low probability $\lambda = \tilde{\Theta}(1/d).$ In other words, the appearance of an edge is a very restrictive event that largely determines the configuration of latent vectors. Concretely, 
for a set of edges $A,$ denote by $\chi(A)$ the probability that no edge of $A$ appears in $\bfG\sim \RGG (n,\mathbb{T}^1, \unif, \sigma^\infty_{1-\lambda}, 1-\lambda)$, i.e.,
\begin{equation}
\label{eq:chidef}
\chi(A) :=\prob_{\bfG\sim \RGG (n,\mathbb{T}^1, \unif, \sigma^\infty_{1-\lambda}, 1-\lambda)}[G_{ij}=0\text{ for all }ij\in A]\,.
\end{equation}
Equivalently, $\chi(A)$ is the probability that each edge in set $A$ appears in the random geometric graph over $\onetorus$ with connection $\sigma(x,y)_\lambda^{1,>} = \indicator[|x-y|_C\ge 1-\lambda]$ and expected density $\lambda$: 
$$\chi(A) = \expect_{\bfH\sim \RGG(n,\onetorus,\unif, \sigma(x,y)_\lambda^{1,>}, \lambda )}[\unsignedweight_A(\bfH)]\,.$$
The reason this is feasible to compute is that the event $\big\{\sigma(x,y)_\lambda^{1,>} = 1\big\}$ significantly constrains the relative locations of $x, y$ on $\onetorus$: They are at distance $1 - \tilde{O}(d^{-1}),$ so they are nearly diametrically opposite.

Now, one can simply use the principle of inclusion-exclusion to convert the computations in the complement to computations over the original graph:
\begin{equation}
\begin{split}
\label{eq:pie}
        \expect_{\bfG\sim \RGG(n,\mathbb{T}^1, \unif, \sigma^\infty_{1-\lambda}, 1-\lambda)}[\unsignedweight_H(\bfG)] = 
        \sum_{A\subseteq E(H)}
        (-1)^{|E(A)|}
       \chi(A).
\end{split}
\end{equation}

\paragraph{Step 3: Measuring Perturbations From \ER.} Here, we take an approach inspired by statistical-physics of measuring perturbations from the ``ground state'' \ER graph.\footnote{While no familiarity with statistical physics is needed to follow the argument, we will borrow some terminology with the purpose of explaining our approach in familiar language.} Measuring perturbations from \ER is a very natural approach as that is the null model against which we are testing.

We first measure perturbations from \ER at the level of single subgraphs appearing in the 1-dimensional complements, as in \cref{eq:pie}. Namely, define \begin{equation}
\label{eq:piedefinition}
\psi(A)\coloneqq  \chi(A) - \lambda^{|E(A)|}.
\end{equation}
This is the deviation from the probability of all edges in $A$ appearing in $\mathsf{G}(n,\lambda).$ Recalling \cref{eq:pie}, we immediately get a perturbative expression for $\expect_{\bfG\sim \RGG(n,\mathbb{T}^1, \unif, \sigma^\infty_{1-\lambda}, 1-\lambda)}[\unsignedweight_H(\bfG)]$:
\begin{equation}
\label{eq:errordefinition1d}
    \begin{split}
& \expect_{\bfG\sim \RGG(n,\mathbb{T}^1, \unif, \sigma^\infty_{1-\lambda}, 1-\lambda)}[\unsignedweight_H(\bfG)] = \\
& = 
\sum_{A\subseteq H}
(-1)^{|E(A)|}\chi(A) = 
\sum_{A\subseteq H}
(-1)^{|E(A)|}(\psi(A) + \lambda^{|E(A)|})\\
& = 
(1-\lambda)^{|E(H)|} +
\error(H,\lambda),
\qquad \text{ where }\quad 
\error(H, \lambda)\coloneqq \sum_{A\subseteq H}(-1)^{|E(A)|}\psi(A).
    \end{split}
\end{equation}
We interpret each subgraph $A$ of $H$ as a \emph{polymer} and the quantity $(-1)^{|E(A)|}\psi(A)$ as the \emph{weight} of the polymer. In that view, the expression $\error(H,\lambda)$ is the sum of the weights of polymers which captures ``the first order''  deviation from the ground state $(1-\lambda)^{|E(H)|}.$ The quantity $(1-\lambda)^{|E(H)|}$ is a natural ground state for the expected weight of $H$ in one dimension as it corresponds to the expected weight when edges are independent. 

Now, \cref{eq:1dimtoddim} allows us to obtain a similar expression in the $d$-dimensional case:
\begin{equation}
\label{eq:errordefinition}
    \begin{split}
& \expect_{\bfG\sim \RGG(n,\dtorus, \unif, \sigma^\infty_{p}, p)}[\unsignedweight_H(\bfG)]= 
\Big((1-\lambda)^{|E(H)|} +
\error(H,\lambda)\Big)^d = 
\sum_{i = 0}^d \binom{d}{i}(1-\lambda)^{(d-i)|E(H)|}\error(H,\lambda)^i.
    \end{split}
\end{equation}
Again, the term $(1-\lambda)^{d|E(H)|} = p^{|E(H)|}$ corresponding to $i = 0$ is the ``ground state'' weight of $H$ in $\ergraph.$ Each term of the form $\error(H,\lambda)^i$ is composed of products of $i$-tuples of polymer weights, and, thus, can be interpreted as ``the $i$-th order'' perturbation from the ground state. 

\paragraph{Step 4: Bounds on Polymer Weights.} To derive a bound from \cref{eq:errordefinition}, one needs to bound the polymer weights and, subsequently, the $\error(H,\lambda)$ term. Those are relatively straightforward computations as they are all over a 1-dimensional random geometric graph (recall \cref{eq:chidef}). In \cref{appendix:proofof1dtorus}, we prove the following claim, which is used extensively. It shows that perturbations $\psi(A)$ are indeed small.

\begin{claim}
\label{claim:1dtoruscounts}
For every set of edges $A$ such that $V(A)\le {1}/{8\lambda},$ the following hold:
\begin{enumerate} 
    \item If $A$ can be decomposed as $A_1\cup A_2,$ where $|V(A_1)\cap V(A_2)|\le 1,$ then
    $\chi(A) = \chi(A_1)\chi(A_2).$
    \item If $A$ is a forest, then $\chi(A) = \lambda^{|E(A)|}$ and $\psi(A) = 0.$
    \item $\chi(A)\le \lambda^{|V(A)|-1}$ whenever $A$ is connected.
    \item If $A$ is not bipartite, $\chi(A) = 0.$ In particular, $\psi(C_{2m+1}) = - \lambda^{2m+1}.$ 
    \item $|\psi(A)|\le 2\cdot \lambda^{\max\{|V(A)|/2 + 1, |V(A)| - \numconnectedcomponets(A)\}},$ where 
    $\numconnectedcomponets(A)$ denotes the number of connected components of $A.$
    \item If $m\le 1/{8\lambda}$, then
    $\chi(C_m) = \lambda^{m-1}\phi(m-1),$ where 
    $\phi(m)\coloneqq \prob[U_1+ U_2+\cdots + U_{m-1}\in [-1,1]]$ for $U_1, U_2, \ldots, U_{m-1}\iidsim[-1,1]$.\footnote{One can easily check that $\phi(1) = 1,\phi(2) = 3/4,\phi(3) = 2/3,\phi(m-1) = \Theta(m^{-1/2}).$} 
\end{enumerate}
\end{claim}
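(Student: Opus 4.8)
The plan is to work directly with the representation $\chi(A) = \expect_{\bfH}[\unsignedweight_A(\bfH)]$, where $\bfH$ is the random geometric graph on $\onetorus = 2\sphere^1$ with connection $\sigma_\lambda^{1,>}(x,y) = \indicator[|x-y|_C \ge 1-\lambda]$. The key geometric fact is that if $\{i,j\}$ is an edge of $\bfH$, then $x_i$ and $x_j$ lie in the interval $[y_j + 1 - \lambda, y_j + 1 + \lambda] \pmod 2$ of length $2\lambda$ centered at the antipode of $x_j$; I will think of each latent point $x_i$ as having an ``antipode'' $\bar x_i := x_i + 1$, so an edge of $\bfH$ forces $|x_i - \bar x_j|_C \le \lambda$, i.e.\ $x_i$ lies within distance $\lambda$ of $\bar x_j$. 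First I would set up this antipodal reformulation carefully and record that on the event $V(A) \le 1/(8\lambda)$, the only way a cycle in $A$ can be ``closed'' is via these near-antipodal constraints summing to zero around the cycle.

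For the individual parts: part~(1) (multiplicativity across a cut vertex or disjoint union) follows because conditioning on the single shared latent vector makes the two sides independent, and $\chi$ of a single vertex with no incident edges is $1$; I'd phrase this as a short conditioning argument. Part~(2) (forests) follows by repeatedly peeling leaves: a leaf edge $\{i,j\}$ with $j$ of degree one contributes exactly probability $\lambda$ conditioned on everything else, since $x_j$ just needs to land in a fixed arc of length $2\lambda$; iterating gives $\chi(A) = \lambda^{|E(A)|}$ and hence $\psi(A) = 0$. Part~(3) follows from part~(2) by taking a spanning tree $T \subseteq A$ with $|E(T)| = |V(A)| - 1$ and noting $\chi(A) \le \chi(T) = \lambda^{|V(A)|-1}$ by monotonicity (fewer constraints can only increase the probability). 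Part~(4) (non-bipartite graphs) is the crisp consequence of the antipodal picture: traversing an odd cycle, the antipodal constraints would force $x_{i_0}$ to be within distance $(\text{odd number})\times \lambda$ of \emph{itself plus an odd multiple of $1$}, i.e.\ near its own antipode, which is a contradiction for $\lambda$ small enough given $V(A) \le 1/(8\lambda)$ — so the event has probability zero; the special case $\psi(C_{2m+1}) = \chi(C_{2m+1}) - \lambda^{2m+1} = -\lambda^{2m+1}$ is then immediate.

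Part~(6) is the computational heart: for an even cycle $C_m$, I would condition on the position of one vertex, write the $m-1$ antipodal ``offsets'' around the cycle as i.i.d.\ $\unif[-\lambda,\lambda]$ variables (each edge contributes probability $\lambda$ for the next vertex to land in the right arc, giving the overall $\lambda^{m-1}$ prefactor), and observe that the cycle closes precisely when the signed sum of these offsets (rescaled by $\lambda$ to live in $[-1,1]$) returns to the starting arc — but here one must be careful about the wraparound on $\onetorus$, which is exactly where the hypothesis $m \le 1/(8\lambda)$ is used to ensure the relevant sum stays within one ``period'' so that closure is equivalent to the sum lying in $[-1,1]$; this yields $\chi(C_m) = \lambda^{m-1}\phi(m-1)$ with $\phi$ as defined. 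Finally, part~(5) is obtained by combining the previous parts through the inclusion-exclusion/decomposition structure: decompose $A$ into its $2$-connected blocks and bridges; $\psi$ vanishes on any block that is a single edge (bridge) or a forest, a $2$-connected non-bipartite block kills $\chi$ entirely, and a $2$-connected bipartite block $B$ has $|\psi(B)| \le 2\lambda^{|V(B)|/2+1}$ via part~(6) together with the bound $\phi(m-1) = \Theta(m^{-1/2}) = O(\lambda)$ when $m$ is large (and an elementary check for small $m$) — then one assembles the bound $\lambda^{\max\{|V(A)|/2+1,\, |V(A)| - \numconnectedcomponets(A)\}}$ by multiplying block contributions and tracking how vertices and components add up. The main obstacle I anticipate is getting part~(6) exactly right: the subtle point is disentangling genuine ``closure of the cycle'' from spurious closure via a full loop around $\onetorus$, and verifying that $m \le 1/(8\lambda)$ is the right quantitative threshold; once part~(6) is nailed, parts~(4) and~(5) are essentially corollaries of its proof technique.
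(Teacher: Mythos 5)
Your proofs of parts (1)--(4) and (6) track the paper's argument closely. Part (1): your "condition on the shared latent vector" phrasing is morally the same as the paper's translation trick, but note that conditional independence alone doesn't give the product form — you also need that the conditional expectations don't depend on the value of the shared vertex, which is where the paper explicitly invokes translation-invariance of $\onetorus$ (and which is what allows item (1) to be stated for any random algebraic graph, cf.\ Remark~3.3). Parts (2), (3), (4) and (6) — leaf peeling / spanning tree monotonicity / antipodal contradiction / the i.i.d.\ offsets whose rescaled sum must land in $[-1,1]$ — are exactly the paper's arguments, and you correctly identify the role of $m \le 1/(8\lambda)$ as preventing spurious wraparound closure.

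Part (5) is where you diverge, and your sketch has a real gap. You propose a block decomposition and claim a per-block bound $|\psi(B)| \le 2\lambda^{|V(B)|/2+1}$ for $2$-connected bipartite blocks $B$ ``via part (6) together with $\phi(m-1) = \Theta(m^{-1/2}) = O(\lambda)$.'' Two problems. First, $\phi(m-1) = \Theta(m^{-1/2})$ is \emph{not} $O(\lambda)$ in the regime you're working in: $m \le 1/(8\lambda)$ gives $m^{-1/2} \gtrsim \lambda^{1/2}$, which is much larger than $\lambda$. (For cycles this error is harmless, since $\lambda^{m-1} \le \lambda^{m/2+1}$ already when $m\ge4$, but your stated justification is wrong.) Second, and more importantly, part (6) only covers cycles, not arbitrary $2$-connected bipartite blocks (e.g.\ $K_{2,3}$); you never establish the needed bound for those, nor do you explain how the block-level bounds combine to produce the $\max$ in the exponent — the telescoping of $\prod_i\chi(B_i) - \prod_i\lambda^{|E(B_i)|}$ across blocks that share cut vertices needs care. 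The paper's route is simpler and avoids all of this: take a spanning forest $T$ of $A$ to get $|\psi(A)| \le \chi(T) + \lambda^{|E(A)|} \le 2\lambda^{|V(A)|-\numconnectedcomponets(A)}$ directly; then observe that when this exponent falls below $|V(A)|/2+1$, the graph is forced to be either a disjoint union of edges ($\psi=0$ by part (2)) or a triangle plus disjoint edges ($\chi=0$ by parts (1), (4), so $\psi = -\lambda^{|E(A)|} = -\lambda^{|V(A)|/2+3/2}$). You should adopt that direct case analysis instead of the block decomposition.
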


Using \cref{claim:1dtoruscounts} and triangle inequality, one can easily derive (tight) upper bounds on $\error(H,\lambda).$ In combination with \cref{eq:errordefinition}, this is enough to provide tight bounds on the expected weights of subgraphs. We delay this to \cref{sec:linftyunsignedweights} and  now proceed to the much more subtle case of signed weights.

\paragraph{Step 5: From Unsigned Weights to Signed Weights - Again PIE.} Signed subgraph weights do not immediately factorize over the independent coordinates. That is, while in the unsigned case we have $\indicator[\|x^i - x^j\|_\infty\le 1-\lambda] = \prod_{u = 1}^d\indicator[\|x^i_u - x^j_u\|_\infty\le 1-\lambda],$ no such expression holds for $(\indicator[\|x^i - x^j\|_\infty\le 1-\lambda] - p).$\footnote{One cannot expect $(\indicator[\|x^i - x^j\|_\infty\le 1-\lambda] - p)$ to always be a $d$-th power, for example because $\indicator[\|x^i - x^j\|_\infty\le 1-\lambda] - p$ might be negative while a $d$'th power is always positive when $d$ is even.}
Instead, we reduce to what we know about unsigned weights:
\begin{equation}    \label{eq:signedcountsasunsignedcounts}
    \begin{split}
        & \expect_{\bfG\sim \RGG(n,\dtorus,\unif, \sigma^\infty_p, p)}[\signedweight_H(\bfG)] = 
        \expect\Big[\prod_{i = 1}^k(\bfG_{e_i} - p)\Big] \\
        & =
        \sum_{A\subseteq E(H)}(-p)^{|E(H)|-|E(A)|}
        \expect_{\bfG\sim \RGG(n,\dtorus,\unif, \sigma^\infty_p, p)}[\unsignedweight_A(\bfG)]\\
        & =  
        \sum_{A\subseteq E(H)}(-1)^{|E(H)|-|E(A)|}(1-\lambda)^{d(|E(H)|-|E(A)|))}
        \expect_{\bfG\sim \RGG(n,\dtorus,\unif, \sigma^\infty_p, p)}[\unsignedweight_A(\bfG)].
    \end{split}
\end{equation}
Using \cref{eq:errordefinition} for any $A\subseteq H,$ we obtain 
\begin{equation}
    \begin{split}
    \label{eq:signedcountsexpansion}
        & \expect_{\bfG\sim \RGG(n,\dtorus,\unif, \sigma^\infty_p, p)}[\signedweight_H(\bfG)] =\\
        & =  
        \sum_{A\subseteq H}(-1)^{|E(H)|-|E(A)|}(1-\lambda)^{d(|E(H)|-|E(A)|)}
        \sum_{i = 0}^d 
        \binom{d}{i} 
        (1-\lambda)^{(d-i)|E(A)|}\error(A,\lambda)^i\\
        & = 
        \sum_{i = 0}^d
        \binom{d}{i}
        (1-\lambda)^{(d-i)|E(H)|}
        \sum_{A\subseteq H}
        (-1)^{|E(H)| - |E(A)|}
        (1-\lambda)^{i(|E(H)| - |E(A)|)}
        \error(A,\lambda)^i.
    \end{split}
\end{equation}

\paragraph{Step 6: The Cluster Expansion Perspective on Signed Subgraph Counts.} Again, $\error(A,\lambda)^i$ is the sum of products of $i$-tuples of weights of polymers or, equivalently, ``the $i$-th order'' deviation from the ground state. When we sum over $A\subseteq H,$ each $i$-tuple will appear with some coefficient which captures the compatibility of the respective $i$-tuple. Specifically, in \cref{eq:errorexpansionshort}, we expand (using a similar approach to the formal derivation of the cluster expansion formula, e.g. in \cite[Chapter 5]{friedli_velenik_2017}) each $\error(A,\lambda)^i$ as a sum of $i$-tuples of polymer weights:
    \begin{align}
    \label{eq:polymerexpansion1}
            &\sum_{A\subseteq E(H)}
        (-1)^{|E(H)| - |E(A)|}
        (1-\lambda)^{i(|E(H)| - |E(A)|)}
        \error(A,\lambda)^i
        \\
        \label{eq:polymerexpansion}
        &=  
        \sum_{K_1,K_2,\ldots, K_i\subseteq E(H)}
        (1 - (1-\lambda)^i)^{|E(H)| - |E(K_1\cup \cdots \cup K_i)|}
        (-1)^{|E(K_1)| + \cdots + |E(K_i)|}
        \psi(K_1)\psi(K_2)\cdots \psi(K_i).
    \end{align}

\cref{eq:polymerexpansion} is the $i$-th order of the  ``cluster expansion'' for signed subgraph weights. 
Note that the ground state captured by the terms appearing when $i = 0$ vanishes as 
\begin{equation*}
     \label{eq:iequal0term}
      \sum_{A\subseteq H}(-1)^{|E(H)| - |E(A)|} = 
 \sum_{j = 0}^{|E(H)|}\binom{|E(H)|}{j}(-1)^{|E(H)| - j} = 0.
 \end{equation*}
The fact that the ground state is 0 is intuitive, because in the case of independent edges each signed subgraph weight has expectation 0. It remains to interpret the ``soft'' compatibility criterion captured by 
    the coefficient
    $(1 - (1-\lambda)^i)^{|E(H)| - |E(K_1\cup K_2\cdots \cup K_i)|}.$ Whenever $|E(K_1\cup K_2\cdots \cup K_i)|$ is small, this coefficient is very small as $1 - (1-\lambda)^i = \tilde{O}(d^{-1}).$ This means that polymers $K_1, K_2, \ldots, K_i$ are more compatible when 
    $|E(K_1\cup K_2\cdots \cup K_i)|$ is smaller. 
    Such a compatibility criterion should not be surprising---it says that the subgraphs $K_j$ corresponding to different coordinates are more compatible when they are more similar (so that their union does not blow up).

Our final goal will be to bound the $i$-th order deviation from the ground state for each $i.$ We will prove the following proposition which immediately gives the desired bound on signed subgraph counts \cref{thm:linftysignedcounts}. 

\begin{proposition}
\label{prop:innersumbound}
Recall the definition of $\error(A,\lambda)$ in \eqref{eq:errordefinition}.
For $1\le i \le d,$ the following inequality holds: 
\begin{equation}
    \Bigg|\sum_{A\subseteq H}
        (-1)^{|E(H)| - |E(A)|}
        (1-\lambda)^{i(|E(H)| - |E(A)|)}
        \error(A,\lambda)^i\Bigg| \le 
        \frac{1}{(4d)^i}\times\Big(\frac{(\log d)^{C}}{d}\Big)^{|V(H)|/2}.
\end{equation}
\end{proposition}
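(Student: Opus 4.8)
The plan is to bound the sum in \eqref{eq:polymerexpansion1} by first passing to the cluster-expansion form \eqref{eq:polymerexpansion} and then estimating the total contribution of $i$-tuples of polymers $(K_1,\dots,K_i)$ grouped by the structure of their union $K \coloneqq K_1\cup\cdots\cup K_i$. First I would use \cref{claim:1dtoruscounts}(5) to bound each polymer weight: $|\psi(K_j)| \le 2\lambda^{m(K_j)}$ where $m(K_j) \coloneqq \max\{|V(K_j)|/2 + 1,\, |V(K_j)| - \numconnectedcomponets(K_j)\}$, and crucially $\psi(K_j) = 0$ whenever $K_j$ is a forest (part (2)). So only tuples in which every $K_j$ has at least one cycle in each of its connected components contribute. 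Combined with the compatibility coefficient $(1 - (1-\lambda)^i)^{|E(H)| - |E(K)|}$, which is of size $(\tilde\Theta(i/d))^{|E(H)| - |E(K)|}$ and hence heavily penalizes tuples whose union $K$ is a small subgraph of $H$, the dominant mechanism is a trade-off: tuples where $K$ has many edges pay via the fewer factors of $1-(1-\lambda)^i$ but gain nothing, while tuples where $K$ has few edges need the $K_j$'s to be small, cyclic, and forced to overlap heavily.

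The main computation is a counting/summation argument. Fixing the union $K$ (a subgraph of $H$ with, say, $v = |V(K)|$ vertices, $e = |E(K)|$ edges, and $c = \numconnectedcomponets(K)$ components), I would (i) bound the number of ways to write $K$ as an ordered union of $i$ subgraphs by $2^{ie}$ or, more carefully, by noting each edge of $K$ must lie in at least one $K_j$ so there are at most $(2^i - 1)^e$ choices; (ii) bound the product $\prod_j |\psi(K_j)|$ using part (5) of \cref{claim:1dtoruscounts} — here the exponent $|V(H)|/2$ on the right-hand side of the target inequality should emerge from the bound $m(K_j) \ge |V(K_j)|/2 + 1$ together with the fact that a cyclic graph on its support, when we take $i$-fold products and sum, concentrates its mass; (iii) sum over all subgraphs $K \subseteq H$ of each type, of which there are at most $\binom{|E(H)|}{e} \le 2^{|E(H)|}$, and since $|E(H)| \le (\log d)^{5/4}/(\log\log d)$ this combinatorial factor is absorbed into the $(\log d)^C$ slack. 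The factor $(4d)^{-i}$ should come out of the product of $i$ copies of $1 - (1-\lambda)^i = O(i \log d / d)$ (or of the $\psi$-bounds themselves) once one checks that each polymer contributes at least one ``free'' factor of $\tilde O(1/d)$ beyond what is needed to build the $(\log d / d)^{|V(H)|/2}$ term — this is exactly the margin that part (5)'s $+1$ in the exponent $|V(K_j)|/2 + 1$ provides, and why a weaker exponent $|V(H)|/(2+\xi)$ would be fatal.

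The hard part will be step (ii)–(iii): organizing the sum over tuples so that the degrees of freedom (which edges of $H$ are covered, how they split among the $i$ polymers, and the shapes of the polymers) are all accounted for without losing more than $(\log d)^{C}$ and without losing the clean exponent $|V(H)|/2$. In particular one must handle the case where $K \subsetneq H$ (so some vertices/edges of $H$ are untouched by any polymer) versus $K = H$, and one must be careful that when $H$ is disconnected or has pendant trees, the polymers are forced onto the ``$2$-core''-like part of $H$, which is exactly what makes $|V(H)|/2$ rather than $|V(K)|/2$ the right quantity. I would likely first prove the bound for $H$ connected and $2$-connected, reducing the general case via \cref{claim:1dtoruscounts}(1)–(2) (block decomposition: forests contribute trivially, and the counts multiply over blocks sharing at most one vertex), and then assemble. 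A secondary technical point is controlling the alternating signs $(-1)^{|E(K_1)|+\cdots+|E(K_i)|}$: I do not expect to need cancellation among these (the triangle-inequality bound on $|\psi|$ should suffice given the $(4d)^{-i}$ room), but one should double-check that the crude bound does not overshoot for small $i$ such as $i=1,2$, which can be verified by hand against the explicit $\psi(C_m)$ formula in part (6).
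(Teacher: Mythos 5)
Your proposal captures the right spirit for small $i$ — cluster-expansion form, triangle inequality on the polymer weights, grouping by the union $\mathcal{K}=K_1\cup\cdots\cup K_i$ — and correctly identifies the reduction to $2$-connected $H$ and the role of the $+1$ in the exponent $|V(K_j)|/2+1$. But there are two genuine gaps.

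First, and most seriously, you never address the regime of large $i$, and the approach you describe actually fails there. Once $i\gtrsim 1/\lambda$, the compatibility factor $\bigl(1-(1-\lambda)^i\bigr)^{|E(H)|-|E(\mathcal{K})|}$ saturates near $1$ rather than penalizing small $\mathcal{K}$, and the combinatorial prefactor $2^{|E(H)|i}$ (whether you count ordered covers via $(2^i-1)^e$ or as $2^{|E(H)|i}$ times an expectation) grows far faster than any per-tuple polymer-weight bound can compensate. The paper handles this by abandoning the cluster expansion altogether for $i\ge 11|V(H)|/41$: it proves directly that $|\error(A,\lambda)|\le d^{-3+o_d(1)}$ for every $A\subseteq H$ with $|E(A)|\le (\log d)^{5/4}/(\log\log d)$, so $\error(A,\lambda)^i$ decays like $d^{-3i}$, which swallows both the $2^{|E(H)|}$ choices of $A$ and the target $(4d)^{-i}(\log d/d)^{|V(H)|/2}$. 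Your argument as written has no analogue of this and cannot produce a bound valid for all $1\le i\le d$.

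Second, in the small-$i$ regime your explanation of where $(4d)^{-i}$ and the exponent $|V(H)|/2$ come from is not correct in its stated form. There is only one compatibility factor, not ``$i$ copies'' of $1-(1-\lambda)^i$; and the $\lambda^i$ gain from the $+1$'s in $\prod_j(2\lambda)^{|V(K_j)|/2+1}$ does not control tuples in which $\sum_j|V(K_j)|$ is much smaller than $|V(H)|$ (i.e. all $K_j$ coincide on a small cyclic piece of $H$). Handling those tuples requires exactly the mechanism you omit: the alternative polymer bound $(2\lambda)^{|V(\mathcal{K})|-\numconnectedcomponets(\mathcal{K})}$ combined with the $2$-connectivity inequality $|E(H)|-|E(\mathcal{K})|\ge \numconnectedcomponets(\mathcal{K})+|V(H)|-|V(\mathcal{K})|-1$, which together turn the product of polymer weight and compatibility factor into $(2i\lambda)^{|V(H)|-1}$; and then a quantitative energy-entropy count (the paper's \cref{claim:lowprobabilityofsmallsum}) to show that tuples with small $\sum_j|V(K_j)|$ are rare enough. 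Without these two ingredients your step (ii)--(iii), which you flag as ``the hard part,'' does not close.
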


In proving \cref{prop:innersumbound}, there are two conceptually distinct regimes for $i,$ as is common in the asymptotic analysis of sums (in particular, in the cluster-expansion formula). 
\begin{enumerate}
    \item \textbf{Small values of $i$} (\cref{sec:smallibounds}).
    We use \cref{eq:polymerexpansion}.
    By \cref{claim:1dtoruscounts}, $|\psi(K_j)|\le (2\lambda)^{|V(K_j)|/2 + 1}.$ Thus, whenever $\sum_{j = 1}^i |V(K_j)|$ is large, the total weight $|\psi(K_1)\psi(K_2)\cdots \psi(K_i)|$ of the $i$-tuple is low. An energy-entropy trade-off phenomenon occurs---and there are very few $i$-tuples for which $\sum_{j = 1}^i |V(K_j)|$is small:
     \begin{claim}[Rephrasing of \cref{claim:lowprobabilityofsmallsum}]
    \label{claim:lownumberofsmallsum}
    Let $i \ge 2, 0 \le b \le i$ be integers and $a>0$ be a real number. Then, the number of $i$-tuples $K_1, K_2, \ldots K_i$ of $H$ such that $\sum_{j = 1}^i |V(K_j)|\le ab$ is at most 
    $$
    \exp\Big(b(\log i) + a^2i\log |E(H)| + |E(H)|b\Big).
    $$
    \end{claim}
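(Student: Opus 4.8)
The plan is to prove \cref{claim:lownumberofsmallsum} by a direct counting argument: split the $i$ coordinates into ``large'' and ``small'' ones according to the size of $V(K_j)$, estimate each family crudely but separately, and multiply. Write $v_j := |V(K_j)|$ and abbreviate $e := |E(H)|$. The only structural input I need is the elementary observation that the hypothesis $\sum_{j=1}^{i} v_j \le ab$ forces at most $b$ indices to be ``large'': if $B := \{\, j \in [i] : v_j \ge a \,\}$, then $a\,|B| \le \sum_{j \in B} v_j \le ab$, so $|B| \le b$.

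For the large coordinates I would use only the trivial bound that $K_j$ is one of the $2^{e}$ subgraphs of $H$. Summing first over the choice of the set $B$ --- a subset of $[i]$ of size at most $b$, hence at most $\sum_{\ell=0}^{b}\binom{i}{\ell} \le (b+1)\,i^{b}$ possibilities --- and then over the large pieces themselves contributes a factor at most $(b+1)\,i^{b}\,(2^{e})^{b}$, which is $\exp\!\big(O(b\log i + e\,b)\big)$ and accounts for the first and third terms in the claimed bound.

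For a small coordinate $j \notin B$ we have $v_j < a$, so the subgraph $K_j$ is supported on fewer than $a$ vertices and therefore has at most $\binom{v_j}{2} < a^{2}/2$ edges. Since $K_j$ is an edge-subset of $H$, there are at most $\sum_{k < a^{2}/2} \binom{e}{k} \le (a^{2}/2 + 1)\,e^{a^{2}/2}$ choices for it (this count includes $K_j = \varnothing$, so it is valid whether or not $K_j$ is empty). Taking the product over the at most $i$ small coordinates gives a factor at most $\big((a^{2}/2+1)\,e^{a^{2}/2}\big)^{i}$, which is $\exp\!\big(O(a^{2} i \log e)\big)$ once one uses $e \ge 2$ to absorb $\log(a^{2}/2+1)$ into a constant multiple of $a^{2}\log e$; this is the second term. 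Multiplying the two estimates and taking logarithms yields the claim, after absorbing the residual $\log(b+1)$ and $i\log(a^{2}/2+1)$ contributions into the three named terms by adjusting absolute constants.

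I do not expect any serious obstacle here: the argument is a short union bound over edge-subsets of $H$. The one genuine idea is to place the large/small threshold exactly at $v_j \approx a$ --- a larger threshold would weaken the bound $|B| \le b$, while a smaller one would inflate the per-coordinate count $e^{a^{2}/2}$ for the small pieces --- together with the remark that a subgraph of $H$ on few vertices necessarily has few edges, so there are only $e^{O(a^{2})}$ of them. If anything, the mildly delicate part is the bookkeeping of the absolute constants relating this rephrasing to the version proved as \cref{claim:lowprobabilityofsmallsum}, which is purely mechanical; in the intended application $a$ will be polylogarithmic and $e$ polynomially large, so these constants are harmless.
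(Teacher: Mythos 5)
Your proposal is correct and follows essentially the same route as the paper's proof of \cref{claim:lowprobabilityofsmallsum}: both arguments observe that the constraint $\sum_j |V(K_j)| \le ab$ forces at most $b$ indices $j$ to have $|V(K_j)|$ exceeding $a$, bound those ``large'' coordinates trivially by the total number $2^{|E(H)|}$ of edge-subsets, and bound each ``small'' coordinate (few vertices $\Rightarrow$ at most $O(a^2)$ edges) by $\exp(O(a^2\log|E(H)|))$, then union-bound over the choice of which indices are large. The paper fixes a set of $i-b$ small indices where you fix the complementary set $B$ of at most $b$ large indices, and you use the edge cap $a^2/2$ where the paper uses $a^2$, but these are cosmetic. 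Both proofs have the same mild looseness in absolute constants (e.g.\ both silently use $|E(H)|\ge 4$ or so to absorb polynomial prefactors into the three exponential terms), which you explicitly flag and which is harmless in the intended application.
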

    To handle the few potentially ``high-energy'' terms - for which $\sum_{j = 1}^i |V(K_j)|$ is small - we 
    use a comparison inequality. Namely,  $|\psi(K_j)|\le (2\lambda)^{|V(K_j)|-\numconnectedcomponets(K_j)}$ from \cref{claim:1dtoruscounts} for all $j$ and the fact that the quantity $|V(K)|-\numconnectedcomponets(K)$ is subadditive under edge unions (see \cref{claim:subadditivity}) allows us to 
    bound $|\psi(K_1)\psi(K_2)\cdots \psi(K_i)|$ by $|\psi(K_1\cup K_2\cdots \cup K_i)|$. This is useful because it makes all quantities in \eqref{eq:polymerexpansion} functions of $K_1\cup K_2\cdots \cup K_i$ (up to signs).

    \item \textbf{Large values of $i$ }(\cref{sec:largeibound}). ``High degree'' terms are asymptotically irrelevant due to a rapid enough decay of $ \error(A,\lambda)^i$ in \cref{eq:polymerexpansion1}. Specifically, one can prove that for all $A\subset E(H),$
    $|\error(A,\lambda)|\le d^{-3 + o_d(1)}$ (see \cref{claim:errorbound}) by applying triangle inequality over all subgraphs $K$ of $A$ (recall the definition of $\error(A,\lambda)$ in \eqref{eq:errordefinition}) and using  that $|\psi(K)|\le (2\lambda)^{\max(3, |V(K)|/2 + 1)}$ from \cref{claim:1dtoruscounts}. 
\end{enumerate}
We now fill in the details for deriving the bounds on expected weights and expected signed weights.

\subsection{Proof of \texorpdfstring{\cref{claim:1dtoruscounts}}{1 dimensional torus}}
\label{appendix:proofof1dtorus}
Let $\bfH\sim \RGG(n,\onetorus,\unif, \sigma(x,y)_\lambda^{1,>}, \lambda ).$
 \paragraph{Item 1.} Observe that if $A_1$ and $A_2$ do not share a vertex, then, clearly $\unsignedweight_{A_1}(\bfH)$ and $\unsignedweight_{A_2}(\bfH)$ are independent as they depend on disjoint sets of latent vectors. If $|V(A_1)\cap V(A_2)| = 1,$ we use the fact that $\sigma(x,y)_\lambda^{1,>}$ only depends on $x-y$ as follows. Let $V(A_1) = \{u_0, u_1, \ldots, u_k\}, V(A_2) = \{v_0, v_1, \ldots, v_r\},$ where $u_0 = v_0.$ Note that $A_1, A_2$ have no common edges. Then
\begin{equation}
    \begin{split}
    & \chi(A) = 
        \expect_{\bfH\sim \RGG(n,\onetorus,\unif, \sigma(x,y)_\lambda^{1,>}, \lambda )}
        \Big[
        \prod_{(u_s, u_t)\in E(A_1)}
        H_{(u_s,u_t)}
        \prod_{(v_k,v_\ell)\in E(A_2)}
        H_{(v_k,v_\ell)}
        \Big]\\
        & = 
        \expect_{\bfx^{u_0}, \bfx^{u_1}, \ldots, \bfx^{u_k}, 
        \bfx^{v_1}, \ldots, \bfx^{v_r}\iidsim\unif(\onetorus)
        }\Big[
        \prod_{(u_s, u_t)\in E(A_1)}
        \sigma(\bfx^{u_s},\bfx^{u_t})_\lambda^{1,>}
        \prod_{(v_k,v_\ell)\in E(A_2)}
        \sigma(\bfx^{v_k},\bfx^{v_\ell})_\lambda^{1,>}
        \Big]\\
        & = \expect_{\bfz,\bfx^{u_0}, \bfx^{u_1}, \ldots, \bfx^{u_k}, 
        \bfx^{v_1}, \ldots, \bfx^{v_r}\iidsim\unif(\onetorus)
        }\Big[
        \prod_{(u_s, u_t)\in E(A_1)}
        \sigma(\bfx^{u_s},\bfx^{u_t})_\lambda^{1,>}
        \prod_{(v_k,v_\ell)\in E(A_2)}
        \sigma(\bfx^{v_k} + \bfz,\bfx^{v_\ell} + \bfz)_\lambda^{1,>}
        \Big]\\
        & = \expect\Big[
        \prod_{(u_s, u_t)\in E(A_1)}        \sigma(\bfx^{u_s},\bfx^{u_t})_\lambda^{1,>}\Big]\times 
        \expect\Big[
        \prod_{(v_k,v_\ell)\in E(A_2)}
        \sigma(\bfx^{v_k} + \bfz,\bfx^{v_\ell} + \bfz)_\lambda^{1,>}
        \Big] = 
        \chi(A_1)\chi(A_2),
    \end{split}
\end{equation}
where we used the fact that the set of vectors 
$\bfx^{u_0}, \bfx^{u_1}, \ldots, \bfx^{u_k}, 
 \bfx^{u_0} + \bfz, \bfx^{v_1}+\bfz, \ldots, \bfx^{v_r}+\bfz$ are independent. 
 \paragraph{Item 2.} Follows from an inductive application of item 1 and the fact that each edge appears marginally with probability $\lambda$ in $\bfH.$  \paragraph{Item 3.} Let $T$ be a spanning tree of $A$ with $V(A) - 1$ edges. The simple fact $\unsignedweight_T(\bfH)\ge \unsignedweight_A(\bfH)$ and item 2 give the desired inequality.  \paragraph{Item 4.} Suppose that $A$ is not bipartite. Then it has an odd cycle formed by vertices $i_1,i_2, \cdots i_{2k+1},i_{2k+2} = i_1$ of length $2k+1\le \frac{1}{8\lambda}.$ We will show that for any latent vectors $x_{i_1},x_{i_2} , \ldots, x_{i_{2k+1}},$ it is the case that there exists some $t \in [2k+1]$ for which $\sigma(x_{i_t},x_{i_{t+1}})_\lambda^{1,>} = 0.$ Indeed, otherwise $|x_{i_t} - x_{i_{t+1}}|_C\ge 1-\lambda$ and $|x_{i_{t+1}} - x_{i_{t+2}}|_C\ge 1-\lambda $
imply that $|x_{i_t} - x_{i_{t+2}}|_C\le 2\lambda$ holds for each $t.$ However, this means that 
$|x_1 - x_{2k+1}|\le k\cdot  2\lambda <1-\lambda,$ which means that $\sigma(x_{i_1},x_{i_{2k+1}})_\lambda^{1,>} = 0.$ 

 \paragraph{Item 5.} Observe that $A$ has a spanning forest $T$ on $V(A) - \numconnectedcomponets(A)$ edges. This gives the bound
 $$|\psi(A)|\le |\chi(A)| + \lambda^{|E(A)|}\le 
 |\chi(T)| + \lambda^{|E(T)|} = 2\lambda^{|E(T)|} = 2\lambda^{|V(A)| - \numconnectedcomponets(A)}.$$ The only remaining case is when 
 $|V(A)|- \numconnectedcomponets(A)< |V(A)|/2 + 1$ or, equivalently, $\numconnectedcomponets(A) > |V(A)|/2 - 1.$ Note, however, that since $A$ is defined by a set of edges, there are no isolated vertices and, so, 
 $\numconnectedcomponets(A) \le |V(A)|/2.$ Thus, we have two cases. First, $\numconnectedcomponets(A) = |V(A)|/2,$ in which case $A$ must be the union of $|V(A)|/2$ disjoint edges, but then $\psi(A) = 0$ by item 2. Or, $\numconnectedcomponets(A) = |V(A)|/2 - 1/2,$ so 
 $A$ must be the union of a triangle and $(|V(A)|-3)/2$ disjoint edges. In that case, using items 1, 2, and 4, $\chi(A) = 0,$ so $\psi(A) = -\lambda^{|E(A)|} = -\lambda^{|V(A)|/2 + 3/2}$.
 \paragraph{Item 6.} Let $C_m$ be the cycle on $m$ vertices $1,2,\ldots, m.$ Note that whenever $(ij)$ is an edge in $\bfH,$ then $x_i = x_j + 1 + \lambda_{ij},$ where $\lambda_{ij}\in [-\lambda, \lambda].$ Thus, using that the path $1,2,\ldots, m$ is a tree and item 2,
 
\begin{align*}
 &\chi(C_m) = 
\expect[\unsignedweight_{C_m}(\bfH)] = 
\prob[H_{1,m} = 1,H_{1,2} = 1, \ldots, H_{m-1, m}]\\
&
=
\prob\big[H_{1,m} = 1\big| H_{1,2} = 1, \ldots, H_{m-1, m}\big]\prob\big[H_{1,2} = 1, \ldots, H_{m-1, m}\big]\\
&  
= \prob\big[|x_1 - x_m|_C\ge 1 - \lambda\big| x_{i+1} = x_i + 1 + \lambda_{i,i+1}, |\lambda_{i,i+1}|\le \lambda\forall i\big]\cdot\lambda^{m-1}\\ 
&
= \prob\Big[|x_1 - x_m|_C\ge 1 - \lambda\big| x_m = 1 + x_1 + \sum_{i = 1}^{m-1}\lambda_{i,i+1}, |\lambda_{i,i+1}|\le \lambda\forall i\Big]\cdot \lambda^{m-1}\\
& = 
\prob\Big[\sum_{i = 1}^{m-1}\lambda_{i,i+1} \in [-\lambda, \lambda]\big|
|\lambda_{i,i+1}|\le \lambda\forall i
\Big]\cdot \lambda^{m-1} = \lambda^{m-1}\phi(m-1).
\end{align*}
This completes the proof. \hfill \qed

\begin{remark}
\label{rmk:onragfactorization} Parts 1, 2, 3, and the bound $|\psi(A)|\le 2\cdot \lambda^{|V(A)|-\numconnectedcomponets(A)}$ hold for \textit{any} random algebraic graph of density $\lambda,$ without any condition on the size of $\lambda.$ The proof is the same. In particular, for any forest $F$ and any random geometric graph $\RGG(n,\dtorus, \unif, \sigma^q_p, p),$ we have 
$
\expect_{\bfG\sim \RGG(n,\dtorus, \unif, \sigma^q_p, p)}
\big[
\unsignedweight_F(\bfG)
\big] = p^{|E(F)|}.
$
\end{remark}

\subsection{Unsigned Weights of Small Subgraphs}
\label{sec:linftyunsignedweights}
Here, we compute the unsigned weight of a cycle. The argument for graphs beyond cycles is similar and is done in full detail in \cref{appendix:linftygraphcounts}, but we sketch here the necessary modifications.

\begin{proposition}
\label{prop:linftycyclecounts} Suppose that $1\le m \le {1}/{8\lambda}.$ Then, 
\begin{equation}
    \expect_{\bfG\sim \RGG(n,\dtorus,\unif,\sigma_p^\infty,p)}\Big[\unsignedweight_{C_m}(\bfG)\Big]=\\
    \begin{cases}
        p^m\Big(1 + \frac{d\lambda^m}{(1-\lambda)^m} + 
        {O}\big({d^2\lambda^{2m}}\big)\Big)
        \quad \text{when $m$ is odd,}\\[10pt] 
         p^m\Big(1 + \frac{d(\lambda^{m-1}\phi(m-1) - \lambda^m)}{(1-\lambda)^m} + {O}\big({d^2\lambda^{2(m-1)}}\big)\Big)
        \quad \text{when $m$ is even}.\\
    \end{cases}
\end{equation}
\end{proposition}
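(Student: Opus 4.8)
The plan is to apply the expansion \eqref{eq:errordefinition}, which gives
$$
\expect_{\bfG\sim \RGG(n,\dtorus,\unif,\sigma_p^\infty,p)}\big[\unsignedweight_{C_m}(\bfG)\big]
= \big((1-\lambda)^m + \error(C_m,\lambda)\big)^d
= \sum_{i=0}^d \binom{d}{i}(1-\lambda)^{(d-i)m}\error(C_m,\lambda)^i,
$$
so everything reduces to computing $\error(C_m,\lambda) = \sum_{A\subseteq C_m}(-1)^{|E(A)|}\psi(A)$ precisely, and then controlling the $i\ge 2$ tail. First I would identify which subgraphs $A\subseteq C_m$ contribute to $\error(C_m,\lambda)$: by \cref{claim:1dtoruscounts}(2), $\psi(A)=0$ whenever $A$ is a forest, so the only nonvanishing terms come from $A$ containing a cycle. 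Since the only cycle in $C_m$ is $C_m$ itself, the sole contributing subgraph is $A=C_m$, giving $\error(C_m,\lambda) = (-1)^m\psi(C_m)$. (Here I am using that $V(C_m)=m\le 1/(8\lambda)$ so the hypotheses of \cref{claim:1dtoruscounts} apply, which is exactly the standing assumption $m\le 1/(8\lambda)$ of the proposition.)

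Next I would evaluate $\psi(C_m) = \chi(C_m) - \lambda^m$. By \cref{claim:1dtoruscounts}(6), $\chi(C_m) = \lambda^{m-1}\phi(m-1)$. When $m$ is odd, $C_m$ is not bipartite, so in fact \cref{claim:1dtoruscounts}(4) gives $\chi(C_m)=0$, hence $\psi(C_m) = -\lambda^m$ and $\error(C_m,\lambda) = (-1)^m\psi(C_m) = (-1)^{m+1}\lambda^m = \lambda^m$ (as $m$ is odd). When $m$ is even, $\error(C_m,\lambda) = (-1)^m\psi(C_m) = \psi(C_m) = \lambda^{m-1}\phi(m-1) - \lambda^m$. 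In both cases $|\error(C_m,\lambda)| = O(\lambda^{m-1})$, and in the odd case it equals $\lambda^m$ exactly.

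Then I would plug into the binomial sum. The $i=0$ term is $(1-\lambda)^{dm} = p^m$. The $i=1$ term is $d(1-\lambda)^{(d-1)m}\error(C_m,\lambda) = \frac{p^m\, d\,\error(C_m,\lambda)}{(1-\lambda)^m}$, which gives the stated main correction: $\frac{p^m d\lambda^m}{(1-\lambda)^m}$ in the odd case and $\frac{p^m d(\lambda^{m-1}\phi(m-1)-\lambda^m)}{(1-\lambda)^m}$ in the even case. For $i\ge 2$, I would bound
$$
\Bigg|\sum_{i=2}^d\binom{d}{i}(1-\lambda)^{(d-i)m}\error(C_m,\lambda)^i\Bigg|
\le p^m\sum_{i=2}^d \binom{d}{i}\frac{|\error(C_m,\lambda)|^i}{(1-\lambda)^{im}}
\le p^m\sum_{i=2}^d\Big(\frac{d|\error(C_m,\lambda)|}{(1-\lambda)^m}\Big)^i,
$$
and since $\lambda=\Theta((\log d)/d)$ and $m$ is constant, $d|\error(C_m,\lambda)|/(1-\lambda)^m = \tilde O(d\lambda^{m-1})$, which for $m\ge 2$ is $\tilde O(d^{2-m}(\log d)^{m-1}) = o(1)$; thus the geometric series is dominated by its first term, giving $O(d^2\lambda^{2m})$ in the odd case and $O(d^2\lambda^{2(m-1)})$ in the even case (using $|\error(C_m,\lambda)|\le 2\lambda^{m-1}$ there), which matches the claimed error terms after factoring out $p^m$.

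The main obstacle is really just bookkeeping: one must be careful that the standing hypothesis $m\le 1/(8\lambda)$ is what licenses \emph{every} invocation of \cref{claim:1dtoruscounts} (parts 2, 4, and 6), and one must confirm that the $i\ge 2$ terms genuinely collapse to the leading power — i.e., that $d|\error(C_m,\lambda)|/(1-\lambda)^m$ is bounded (indeed $o(1)$ for $m\ge 2$ under Assumption \eqref{eq:assumption}), so that the $O(d^2\lambda^{2(m-1)})$ bound is not swamped by higher-order terms. For $m=2$ specifically, one should double-check that the error term $O(d^2\lambda^{2(m-1)}) = O(d^2\lambda^2) = \tilde O(1)$ statement is still the intended (vacuous-looking but correct) bound, or note that the proposition is only asserting an asymptotic expansion whose remainder is of the stated order; no cancellation beyond what is written is claimed. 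Everything else is a routine expansion of $(1-\lambda)^{-m}$ and collection of terms.
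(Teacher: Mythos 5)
Your proposal is essentially correct and follows the same route as the paper: identify that $\error(C_m,\lambda)$ reduces to the single term coming from $A=C_m$ (all proper subgraphs are forests, so their $\psi$-contributions vanish by \cref{claim:1dtoruscounts}(2)), read off its value from parts (4) and (6) of \cref{claim:1dtoruscounts}, and then control the $i\ge 2$ terms of the binomial expansion by a geometric series. The paper's proof is just a more compact version: it writes the one-dimensional expectation directly as $(1-\lambda)^m + \lambda^m$ (odd) or $(1-\lambda)^m + \lambda^{m-1}\phi(m-1) - \lambda^m$ (even) and then raises to the $d$-th power.

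Two small presentational points worth tightening. First, your justification of exponential decay invokes ``$m$ is constant,'' but the proposition allows $m$ as large as $1/(8\lambda) = \tilde\Theta(d)$. The conclusion still holds because $d\lambda^{m-1} \le d\lambda^2 = \tilde O(1/d) = o(1)$ for every $m\ge 3$ (the paper phrases this exactly as $d\lambda^m/(1-\lambda)^m \le d\lambda^3/(1-\lambda)^3 = o(1)$), so the fix is trivial but should be stated. Second, the replacement of $(1-\lambda)^{-im}$ by $\Theta(1)^{-i}$ in the geometric series also quietly uses the standing hypothesis $m \le 1/(8\lambda)$, via $(1-\lambda)^m \ge (1-\lambda)^{1/(8\lambda)} = \Omega(1)$; you flagged the hypothesis as the thing that licenses \cref{claim:1dtoruscounts}, but it plays this second role as well.
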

\begin{proof} We use \cref{eq:errordefinition}. In the odd case $C_m$ is not bipartite and item 4 of \cref{claim:1dtoruscounts} applies, yielding
\begin{align*}
    & \expect_{\bfG\sim \RGG(n,\dtorus,\unif,\sigma_p^\infty,p)}\Big[\unsignedweight_{C_m}(\bfG)\Big]=
    \big((1-\lambda)^m + \lambda^{m}\big)^d = 
    (1-\lambda)^{md}(1 + \lambda^m/(1-\lambda)^m)^d\\
    &= 
    p^m\Big(1 + d\frac{\lambda^m}{(1-\lambda)^m} + \sum_{k = 2}^d\binom{d}{k}\frac{\lambda^{mk}}{(1-\lambda)^{mk}}\Big).
\end{align*}   
The statement follows as the sum can be bounded by $\sum_{j = 2}^{\infty}(d\lambda^m)^k/(1-\lambda)^{mk}.$ 
Now, clearly, there is exponential decay in the sum as $d\lambda^m/(1-\lambda)^m\le d\lambda^3/(1-\lambda)^3 = o(1).$ Finally, note that $(1-\lambda)^m\ge (1-\lambda)^{8/\lambda} = \Omega(1).$ The even case is the same, except that we use item 6 of \cref{claim:1dtoruscounts}, which gives
\begin{equation*}
\expect_{\bfG\sim \RGG(n,\dtorus,\unif,\sigma_p^\infty,p)}\big[\unsignedweight_{C_m}(\bfG)\big]=
    \big((1-\lambda)^m +
    \lambda^{m-1}\phi(m-1)
    - \lambda^{m}\big)^d.
    \qedhere
\end{equation*}
\end{proof}

\begin{remark}
    We get arbitrarily better precision in \cref{prop:linftycyclecounts} by keeping $k\ge 2$ terms in the expansion of $(1 + \lambda^m/(1-\lambda)^m)^d$.
\end{remark}

\begin{corollary}
\label{cor:linftysignedcycles} 
Suppose that $1\le m \le {1}/{8\lambda}.$ Then, 
\begin{equation}
    \expect_{\bfG\sim \RGG(n,\dtorus,\unif,\sigma_p^\infty,p)}\Big[\signedweight_{C_m}(\bfG)\Big]=\\
    \begin{cases}
        p^m\Big(\frac{d\lambda^m}{(1-\lambda)^m} + 
        {O}\big({d^2\lambda^{2m}}\big)\Big)
        \quad \text{when $m$ is odd,}\\[10pt]
         p^m\Big(\frac{d(\lambda^{m-1}\phi(m-1) - \lambda^m)}{(1-\lambda)^m} + {O}\big({d^2\lambda^{2(m-1)}}\big)\Big)
        \quad \text{when $m$ is even}.\\
    \end{cases}
\end{equation}
\end{corollary}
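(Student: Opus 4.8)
The plan is to feed the cycle weight formula \cref{prop:linftycyclecounts} into the inclusion--exclusion identity \eqref{eq:signedcountsasunsignedcounts}, which expresses signed weights as a signed sum of unsigned weights, specialized to $H = C_m$. First I would observe that every proper edge-subset $A \subsetneq E(C_m)$ induces a forest: deleting at least one edge from an $m$-cycle leaves a disjoint union of paths. Hence \cref{rmk:onragfactorization} (equivalently, item 2 of \cref{claim:1dtoruscounts} together with the factorization \eqref{eq:1dimtoddim}) gives $\expect_{\bfG}[\unsignedweight_A(\bfG)] = p^{|E(A)|}$ for all such $A$, while the unique remaining term, $A = E(C_m)$, contributes exactly the unsigned cycle weight computed in \cref{prop:linftycyclecounts}.

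Substituting into \eqref{eq:signedcountsasunsignedcounts} and grouping the proper subsets by their number $j$ of edges,
\begin{equation*}
\expect_{\bfG}[\signedweight_{C_m}(\bfG)] = \expect_{\bfG}[\unsignedweight_{C_m}(\bfG)] + \sum_{j=0}^{m-1}\binom{m}{j}(-p)^{m-j}p^{j} = \expect_{\bfG}[\unsignedweight_{C_m}(\bfG)] + p^m\sum_{j=0}^{m-1}\binom{m}{j}(-1)^{m-j}.
\end{equation*}
Since $\sum_{j=0}^{m}\binom{m}{j}(-1)^{m-j} = (1-1)^m = 0$, the truncated sum equals $-1$, and the identity collapses to $\expect_{\bfG}[\signedweight_{C_m}(\bfG)] = \expect_{\bfG}[\unsignedweight_{C_m}(\bfG)] - p^m$. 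Subtracting $p^m$ from the odd and even expressions in \cref{prop:linftycyclecounts} cancels the leading $p^m$ and leaves precisely the two claimed formulas, completing the proof.

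There is no genuine obstacle here; the only points requiring (minimal) care are that all proper subgraphs of a cycle are forests, so that \cref{rmk:onragfactorization} applies verbatim, and the elementary binomial cancellation. As a consistency check, this $p^m$ cancellation is exactly the vanishing of the $i=0$ (``ground state'') order in the general signed cluster expansion noted just after \eqref{eq:polymerexpansion}, so the corollary is the smallest instance of the mechanism that, carried out uniformly, yields \cref{prop:innersumbound}.
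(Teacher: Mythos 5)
Your proof is correct and follows essentially the same route as the paper: expand the signed weight via the inclusion--exclusion identity, note every proper edge-subset of $C_m$ is a forest so its unsigned expectation is $p^{|E(A)|}$ by \cref{rmk:onragfactorization}, and observe the binomial terms collapse to $-p^m$, after which \cref{prop:linftycyclecounts} finishes. In fact your version is written a little more carefully than the paper's (the paper's displayed index range has an off-by-one slip, $\sum_{j=1}^{m}$ where it should be $\sum_{j=0}^{m-1}$, though the final cancellation it claims is correct).
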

\begin{proof} Using the definition of $\signedweight_{C_m}$ in \cref{eq:signedweight} and  \cref{rmk:onragfactorization},
\begin{align*}
& \expect_{\bfG\sim \RGG}\Big[\signedweight_{C_m}(\bfG)\Big] = \expect_{\bfG\sim \RGG}\Big[\unsignedweight_{C_m}(\bfG)\Big] + 
\sum_{F\subsetneq E(C_m)}
(-p)^{m-|E(F)|}\expect_{\bfG\sim \RGG}\Big[\unsignedweight_{F}(\bfG)\Big]\\
& = 
\expect_{\bfG\sim \RGG}\Big[\unsignedweight_{C_m}(\bfG)\Big] + \sum_{j = 1}^m\binom{m}{j}(-p)^{m-j}p^j = 
\expect_{\bfG\sim \RGG}\Big[\unsignedweight_{C_m}(\bfG)\Big] - p^m,
\end{align*}
which is enough by \cref{prop:linftycyclecounts}.
\end{proof}

Finally, to derive the bounds on the weights of arbitrary subgraphs, we use the truncated inclusion-exclusion inequality in place of \cref{eq:pie}. Namely, for any odd number $t,$ 
\begin{equation}
\begin{split}
\label{eq:pietruncated}
        \sum_{A\subseteq E(H)\; : \;  |A|\le t}
        (-1)^{|E(A)|}
       \chi(A)\le 
        \expect_{\bfG\sim \RGG(n,\mathbb{T}^1, \unif, \sigma^\infty_{1-\lambda}, 1-\lambda)}[\unsignedweight_H(\bfG)] \le  
        \sum_{A\subseteq E(H)\; : \;  |A|\le t+1}
        (-1)^{|E(A)|}
       \chi(A).
\end{split}
\end{equation}
This yields the following proposition, proven in \cref{appendix:linftygraphcounts}.

\begin{proposition}
\label{prop:linftygraphcounts} 
Let $H = \{(i_1, j_1), (i_2, j_2),\ldots (i_k,j_k)\}$ be a set of edges and let $m$ be the length of the shortest cycle formed by these edges. Let $N(u)$ be the number of cycles of length $u$ in $H.$
Suppose further that Assumption \eqref{eq:assumption} holds and $k^{m+2} = o(1/\lambda) = o(d/\log(1/p))$ and let $\phi(u) = \Theta(u^{-1/2})$ be defined as in \cref{claim:1dtoruscounts}.
Then, for $\bfG\sim \RGG(n,\dtorus,\unif,\sigma_p^\infty,p),$ 
\begin{equation}
\Big|\expect\Big[\unsignedweight_H(\bfG)\Big]\Big|=\\
    \begin{cases}
        p^k\Big(1 + d\big(N(m) + \phi(m+1)N(m+1)\big)\lambda^m(1 + o(1))\Big)
        \quad \text{when $m$ is odd,}\\[10pt]
         p^k\Big(1 + d\phi(m)N(m)\lambda^{m-1}(1 + o(1))\Big)
        \quad \text{when $m$ is even}.\\
    \end{cases}
\end{equation}
\end{proposition}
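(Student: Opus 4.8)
\textbf{Proof proposal for \cref{prop:linftygraphcounts}.} The plan is to follow \cref{eq:errordefinition} exactly as in the proof of \cref{prop:linftycyclecounts}, but now control $\error(H,\lambda) = \sum_{A\subseteq H}(-1)^{|E(A)|}\psi(A)$ term by term rather than relying on a single explicit evaluation. First I would unwrap \cref{eq:errordefinition}:
\begin{equation*}
\expect[\unsignedweight_H(\bfG)] = \big((1-\lambda)^{k} + \error(H,\lambda)\big)^d = p^k\Big(1 + \tfrac{\error(H,\lambda)}{(1-\lambda)^k}\Big)^d,
\end{equation*}
so the whole statement reduces to (i) identifying the leading term of $\error(H,\lambda)$ and (ii) showing the binomial expansion in $d$ contributes only a $(1+o(1))$ factor, i.e. that $d^2 \error(H,\lambda)^2 / (1-\lambda)^{2k}$ and higher powers are negligible compared to $d\,\error(H,\lambda)/(1-\lambda)^k$. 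Since $(1-\lambda)^k = \Omega(1)$ under the hypothesis $k \cdot \lambda = o(1)$ (which follows from $k^{m+2} = o(1/\lambda)$), the factors $(1-\lambda)^{-k}$ are harmless constants up to $(1+o(1))$, and the real content is estimating $\error(H,\lambda)$.

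For step (i), I would split the sum $\error(H,\lambda) = \sum_{A\subseteq H} (-1)^{|E(A)|}\psi(A)$ according to the structure of $A$. By item 2 of \cref{claim:1dtoruscounts}, $\psi(A) = 0$ whenever $A$ is a forest, so only subgraphs $A$ containing a cycle survive. Among those, item 5 gives $|\psi(A)| \le 2\lambda^{\max(|V(A)|/2+1,\, |V(A)| - \numconnectedcomponets(A))}$, and for a subgraph containing a shortest cycle of length exactly $m$ we have $|V(A)| - \numconnectedcomponets(A) \ge m$, with equality precisely when $A$ is a disjoint union of a single $m$-cycle with (possibly) a forest — in which case items 1, 2 and 6 give $\psi(A) = \psi(C_m) = \lambda^{m-1}\phi(m-1) - \lambda^m$ (odd $m$: $\psi(C_m) = -\lambda^m$ by item 4 with $\phi$ interpreted appropriately). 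So the dominant contribution to $\error(H,\lambda)$ comes from the $N(m)$ copies of $C_m$ in $H$ (each contributing $(-1)^m\psi(C_m)$), giving a main term of size $\Theta(N(m)\lambda^{m-1})$ for even $m$ and $\Theta(N(m)\lambda^m)$ for odd $m$; in the odd case one must additionally keep the $N(m+1)$ copies of $C_{m+1}$, since $\phi(m+1)\lambda^{m}$ is of the same order $\lambda^m$ as the $C_m$ term $\lambda^m$ — that is the source of the $\phi(m+1)N(m+1)$ correction. I would then bound the remaining terms: subgraphs $A$ that are not of this minimal form have either a cycle of length $\ge m+2$ (contributing $\lambda^{m+1}$ or smaller), or contain $\ge 2$ cycles or a more complex 2-connected block (so $|V(A)|-\numconnectedcomponets(A) \ge m+1$), and the number of such $A$ is at most $2^k = $ polynomially bounded, so their total is $O(k^{O(1)}\lambda^{m+1}) = o(\lambda^{m-1}/d)$ after using $k^{m+2} = o(1/\lambda)$; here one needs to be a little careful in the odd case to separate $C_{m+1}$ from $C_m$ and to check that length-$(m+2)$ cycles, of which there are at most $k^{m+2}$, genuinely contribute lower order.

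For step (ii), I would observe $\error(H,\lambda) = O(k^{O(1)}\lambda^{m-1})$ from the crude bound above (this is essentially \cref{claim:errorbound} applied with the shortest cycle length $m$ in place of $3$), so
\begin{equation*}
\frac{d^2\,\error(H,\lambda)^2}{(1-\lambda)^{2k}} = O\!\big(d^2 k^{O(1)}\lambda^{2(m-1)}\big),
\end{equation*}
which should be compared with the leading term $d\,\error(H,\lambda)/(1-\lambda)^k = \Theta(d k^{O(1)} \lambda^{m-1})$ (worst case, even $m$); the ratio is $O(d\lambda^{m-1} k^{O(1)})$, and since $d\lambda^{m-1} \le d\lambda^{2} \lesssim (\log d)^2/d = o(1)$ for $m \ge 3$ (the case $m=1,2$ needs the density and $k$ hypotheses to kill it likewise), and all higher binomial terms decay geometrically, the $(1+o(1))$ factor is justified. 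The main obstacle I anticipate is the bookkeeping in step (i) for \emph{odd} $m$: one must track two scales ($\lambda^m$ from both $C_m$ and $C_{m+1}$) simultaneously and verify that no other subgraph — in particular no theta-graph or pair of short cycles sharing a vertex — sneaks in at order $\lambda^m$, which requires the subadditivity of $|V(\cdot)| - \numconnectedcomponets(\cdot)$ under edge unions (\cref{claim:subadditivity}) together with a careful case analysis of which 2-connected subgraphs of $H$ have $|V(A)| - \numconnectedcomponets(A) \le m$. Everything else is a routine application of \cref{claim:1dtoruscounts} and \cref{eq:errordefinition}; the truncated inclusion–exclusion \cref{eq:pietruncated} is not strictly needed here since cycles give exact values, but it provides a clean way to organize the error bound if one prefers two-sided estimates.
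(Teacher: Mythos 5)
Your decomposition tracks the paper's proof closely in structure: both start from \cref{eq:errordefinition}, isolate the $N(m)$ (and, for odd $m$, also $N(m+1)$) shortest-cycle contributions as the leading part of $\error(H,\lambda)$ using items 2, 4 and 6 of \cref{claim:1dtoruscounts}, and then expand the $d$-th power perturbatively once $d\,\error(H,\lambda) = o(1)$ is established. Your case analysis of which small 2-connected subgraphs can reach order $\lambda^{m-1}$ (or $\lambda^m$ in the odd case) is also correct.

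However, there is a real gap in how you propose to bound the tail of $\error(H,\lambda)$. You state that truncated inclusion--exclusion ``is not strictly needed'' and that the non-minimal subgraphs, ``at most $2^k$ = polynomially bounded,'' contribute $O(k^{O(1)}\lambda^{m+1})$. This does not hold up: $2^k$ is not polynomially bounded in $k$, and under the hypothesis $k^{m+2} = o(1/\lambda)$, $k$ may be as large as $d^{1/(m+2)-o(1)}$, so $2^k$ can be super-polynomial in $n$. The naive absolute bound $\sum_A |\psi(A)|$ over all non-forest $A \subseteq H$ is therefore not controlled by the hypothesis $k^{m+2}\lambda = o(1)$, and you cannot discard the sign cancellations in $\error$. (The argument of \cref{claim:errorbound} that you allude to does sum $|\psi(K)|$ over all subgraphs, but it only works because there $|E(H)|$ is restricted to $(\log d)^{5/4}/\log\log d$, which is far smaller than $d^{1/(m+2)}$.) This is exactly what \cref{eq:pietruncated} is for: sandwiching the one-dimensional expectation $\expect_{\bfG\sim \RGG(n,\mathbb{T}^1,\unif,\sigma^\infty_{1-\lambda},1-\lambda)}[\unsignedweight_H(\bfG)]$ between the partial sums of $(-1)^{|E(A)|}\chi(A)$ over $|E(A)| \le m$ and $|E(A)| \le m+1$ (or $m+2$ in the odd case) reduces the problem to $O\big(\binom{k}{m+2}\big)$ terms, and the hypothesis $k^{m+2}\lambda = o(1)$ is calibrated precisely for that count. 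Without invoking \cref{eq:pietruncated} --- or an equivalent argument that actually exploits the alternating signs --- your step (i) does not close, because the error from subgraphs with $\ge m+2$ edges is not bounded. Everything else in your outline, including the perturbative step (ii), matches the paper and is fine once the truncation is put in.
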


We remark that the only restrictive condition in this theorem is $k^{m+2} = o(1/\lambda).$ Note, however, that it still covers a wide range of cases. Indeed, suppose that $d = \poly(n).$ As $k\le m,$ it can be applied whenever $k = |E(H)| = o(\log d/\log \log d).$ If, furthermore, $m$ is a constant (say $m \in \{3,4\}$), it can be applied to very large graphs with polynomial number of edges, i.e. $|E(H)| = d^{1/(m+2) - o(1)}.$

\subsection{Signed Weights of Small Subgraphs}
\label{sec:linftysignedcounts}
We now work towards proving \cref{thm:linftysignedcounts}.
Fix $H$ with at most $(\log d)^{5/4}/(\log \log d)$ edges. We also assume $H$ is 2-connected. Indeed, otherwise $H$ can be decomposed into two graphs $H_1, H_2$ which share at most one vertex. Using the same argument as in the proof of part 1 of \cref{claim:1dtoruscounts},
$$
\expect_{\bfG\sim \RGG(n,\dtorus,\unif, \sigma^\infty_p, p)}\Big[\signedweight_H(\bfG)\Big] = 
\expect_{\bfG\sim \RGG}\Big[\signedweight_{H_1}(\bfG)\Big]\times
\expect_{\bfG\sim \RGG}\Big[\signedweight_{H_2}(\bfG)\Big]
$$
and we can induct as $|V(H_1)| + |V(H_2)|\ge |V(H)|,|E(H_1)| + |E(H_2)|\ge |E(H)| .$ In particular, the 2-connectivity assumption means that $|V(H)|\le |E(H)|.$ We also assume that $H$ has at least $4$ edges as the other cases are covered in \cref{claim:1dtoruscounts,cor:linftysignedcycles} (for triangles, we get $\expect_{\bfG\sim \RGG}\big[\signedweight_{C_3}(\bfG)\big] = p^3(\log(1/p)/d)^{2}$ and for acyclic graphs, 0).

\subsubsection{Proof of \cref{thm:linftysignedcounts} 
Assuming \cref{prop:innersumbound}}
We first show how \cref{prop:innersumbound} implies 
\cref{thm:linftysignedcounts}.

\begin{proof}
 Using \cref{eq:signedcountsexpansion} and \cref{eq:iequal0term}, we compute: 
\begin{equation}
    \begin{split}
        \Bigg|\expect_{\bfG\sim \RGG}[\signedweight_H(\bfG)]\Bigg|
        & \le 
        \Bigg|
        \sum_{i = 0}^d
        \binom{d}{i}
        (1-\lambda)^{(d-i)|E(H)|}
        \sum_{A\subseteq H}
        (-1)^{|E(H)| - |E(A)|}
        (1-\lambda)^{i(|E(H)| - |E(A)|)}
        \error(A,\lambda)^i\Bigg|\\
        & \le 
        \sum_{i = 1}^d
        \binom{d}{i}
        (1-\lambda)^{(d-i)|E(H)|}
        \Bigg|
        \sum_{A\subseteq H}
        (-1)^{|E(H)| - |E(A)|}
        (1-\lambda)^{i(|E(H)| - |E(A)|)}
        \error(A,\lambda)^i\Bigg|\\
        & \le 
        \sum_{i = 1}^d 
        d^i (1-\lambda)^{d|E(H)|}(1-\lambda)^{-i|E(H)|}
        \frac{1}{(4d)^i}\times 
        \Big(\frac{(\log d)^{C}}{d}\Big)^{|V(H)|/2}\\
        & = 
        (1-\lambda)^{d|E(H)|}
        \Big(\frac{(\log d)^{C}}{d}\Big)^{|V(H)|/2}
        \sum_{i = 1}^d 
        \Big(\frac{1}{4(1-\lambda)^{|E(H)|}}\Big)^i\\
        & = 
        p^{|E(H)|}
        \Big(\frac{(\log d)^{C}}{d}\Big)^{|V(H)|/2}
        \sum_{i = 1}^d 
        \Big(\frac{1}{4(1-\lambda)^{|E(H)|}}\Big)^i.
    \end{split}
\end{equation}
Now, observe that $(1-\lambda)^{|E(H)|}\ge 
1 - \lambda|E(H)|\ge 1 - O((\log d)^{9/4}/d)\ge 1/2 
$
for all large enough $d.$ Thus, 
$4(1-\lambda)^{|E(H)|}>2$ and so 
$
\sum_{i = 1}^d 
        \Big(\frac{1}{4(1-\lambda)^{|E(H)|}}\Big)^i\le 1,
$
which completes the proof.\end{proof}

What remains is to prove \cref{prop:innersumbound}. As described in Step 6 of \cref{sec:clusterhighlevel}, there are two conceptually different regimes.

\subsubsection{Proof of \texorpdfstring{\cref{prop:innersumbound}}{ithordererror} for Small Values of \texorpdfstring{$i.$}{i}} 
\label{sec:smallibounds}
Suppose that $i< 11|V(H)|/41.$\footnote{In principle, any constant in the interval $(1/4, 1/2)$ would work for the proof, but constants less than $3/10$ reduce the amount of case work, hence the peculiar choice of $11/41.$}
The first step towards proving \cref{prop:innersumbound} is expanding \cref{eq:signedcountsexpansion}. 

\begin{equation}
\label{eq:errorexpansion}
    \begin{split}
        &\sum_{A\subseteq H}
        (-1)^{|E(H)| - |E(A)|}
        (1-\lambda)^{i(|E(H)| - |E(A)|)}
        \error(A,\lambda)^i\\
        & = 
        \sum_{A\subseteq H}
        (-1)^{|E(H)| - |E(A)|}
        (1-\lambda)^{i(|E(H)| - |E(A)|)}
        \Big(
        \sum_{K\subseteq A}
        (-1)^{|E(K)|}
        \psi(K)
        \Big)^i\\
        & = 
        \Bigg(
        \sum_{K_1,K_2,\ldots, K_i\subseteq H}
        (-1)^{|E(K_1)| + |E(K_2)| + \cdots + |E(K_i)|}
        \psi(K_1)\psi(K_2)\cdots \psi(K_i)\times\\
        & \quad \quad\quad\quad \times
        \sum_{A \subseteq H \; : \; K_j \subseteq A \; \forall j}
        (-1)^{|E(H)|-|E(A)|}
        (1-\lambda)^{i(|E(H)| - |E(A)|)}\Bigg).
    \end{split}
\end{equation}
Let $\mathcal{K} = K_1\cup K_2\cdots K_i.$ Then, in the 
last sum, we perform a summation over all $A$ such that 
$\mathcal{K}\subseteq A \subseteq H.$ In particular, we obtain

\begin{equation}
\label{eq:pieterminerrorexpansion}
\begin{split}
& \sum_{A \subseteq H \; : \; K_j \subseteq A \; \forall j}(-1)^{|E(H)|-|E(A)|}(1-\lambda)^{i(|E(H)| - |E(A)|)}\\
& = 
\sum_{\mathcal{K}^c
\subseteq 
H\backslash \mathcal{K}
}
(-1)^{|E(\mathcal{K}^c)|}
(1-\lambda)^{i |E(\mathcal{K}^c)|}\\
& = 
\sum_{t = 0}^{|E(H)| - |E(\mathcal{K})|}
\binom{|E(H)| - |E(\mathcal{K})|}{t}
(-1)^t
(1-\lambda)^{it}\\
& = 
\big(1 - (1-\lambda)^i\big)^{|E(H)| - |E(\mathcal{K})|}\le 
(\lambda i)^{|E(H)| - |E(\mathcal{K})|}, 
\end{split}
\end{equation}
where in the last line we used Bernoulli's inequality $(1-\lambda)^  i\ge 1 - \lambda i.$  Now, using \cref{eq:pieterminerrorexpansion}, we can rewrite the RHS of
\cref{eq:errorexpansion} as
\begin{equation}
\label{eq:errorexpansionshort}
    \begin{split}
        \sum_{K_1,K_2,\ldots, K_i\subseteq H}
        (1 - (1-\lambda)^i)^{|E(H)| - |E(\mathcal{K})|}
        (-1)^{|E(K_1)| + |E(K_2)| + \cdots + |E(K_i)|}
        \psi(K_1)\psi(K_2)\cdots \psi(K_i)
    \end{split}
\end{equation}
Using the triangle-inequality, we bound \cref{eq:errorexpansionshort} by 
\begin{equation}
\label{eq:errorexpansiontriangleineq}
    \begin{split}
        \sum_{K_1,K_2,\ldots, K_i\subseteq H}
        (1 - (1-\lambda)^i)^{|E(H)| - |E(\mathcal{K})|}
        \Big|
        \psi(K_1)\psi(K_2)\cdots \psi(K_i)\Big|.
    \end{split}
\end{equation}

Now, we will bound the quantity $|\psi(K_1)\psi(K_2)\ldots \psi(K_i)|$ in two different ways.

\begin{observation}
\label{obs:productofdeviationsbound}
The value of $
    \big|
        \psi(K_1)\psi(K_2)\cdots \psi(K_i)
    \big|
    $ is less than each of 
    \begin{enumerate}
        \item $\displaystyle \prod_{j =1}^ i 
    (2\lambda)^{|V(K_j)|/2 +  1},$ and
        \item $(2\lambda)^{|V(\mathcal{K})|- \numconnectedcomponets(\mathcal{K})}.$
    \end{enumerate}
\end{observation}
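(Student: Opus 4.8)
The plan is to deduce both inequalities directly from item 5 of \cref{claim:1dtoruscounts}. That item gives, for each polymer $K_j$ (which is a subgraph of $H$ and hence satisfies $|V(K_j)| \le 2|E(H)| \ll 1/\lambda$, so the hypothesis $|V(K_j)| \le 1/(8\lambda)$ is met), the two simultaneous estimates
\[
|\psi(K_j)| \;\le\; 2\lambda^{|V(K_j)|/2 + 1}
\qquad\text{and}\qquad
|\psi(K_j)| \;\le\; 2\lambda^{|V(K_j)| - \numconnectedcomponets(K_j)},
\]
since $\lambda < 1$ implies $\lambda^{\max\{x,y\}} \le \min\{\lambda^x,\lambda^y\}$. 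Under Assumption~\eqref{eq:assumption} and for $d$ large we have $2\lambda \le 1$; moreover for any nonempty $K_j$ (i.e.\ $|E(K_j)| \ge 1$) both exponents above are at least $1$, so $2^{a}\ge 2$ for $a$ either exponent and hence $2\lambda^{a} \le (2\lambda)^{a}$. If some $K_j$ has no edges then $\psi(K_j) = \chi(\emptyset) - 1 = 0$, the whole product vanishes, and both claimed bounds are trivial; so we may assume every $K_j$ is nonempty and record
\[
|\psi(K_j)| \le (2\lambda)^{|V(K_j)|/2 + 1}
\qquad\text{and}\qquad
|\psi(K_j)| \le (2\lambda)^{|V(K_j)| - \numconnectedcomponets(K_j)}.
\]

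For the first bound I would simply multiply $|\psi(K_j)| \le (2\lambda)^{|V(K_j)|/2 + 1}$ over $j = 1, \dots, i$. For the second bound, multiplying $|\psi(K_j)| \le (2\lambda)^{|V(K_j)| - \numconnectedcomponets(K_j)}$ over $j$ yields
\[
|\psi(K_1)\psi(K_2)\cdots\psi(K_i)| \;\le\; (2\lambda)^{\sum_{j=1}^i \big(|V(K_j)| - \numconnectedcomponets(K_j)\big)}.
\]
It then remains to observe that the set function $K \mapsto |V(K)| - \numconnectedcomponets(K)$ is subadditive under unions of edge sets: this is exactly the content of \cref{claim:subadditivity}, and it holds because $|V(K)| - \numconnectedcomponets(K)$ is the rank of the edge set $K$ in the graphic matroid on $[n]$ (isolated vertices contribute nothing, so restricting attention to $V(K)$ is harmless), and matroid rank is submodular with $r(\cdot) \ge 0$, which gives subadditivity and then, by induction on $i$, $\sum_{j=1}^i (|V(K_j)| - \numconnectedcomponets(K_j)) \ge |V(\mathcal{K})| - \numconnectedcomponets(\mathcal{K})$ where $\mathcal{K} = K_1 \cup \cdots \cup K_i$. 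Since $2\lambda \le 1$, raising $2\lambda$ to a larger exponent only decreases it, so $|\psi(K_1)\cdots\psi(K_i)| \le (2\lambda)^{|V(\mathcal{K})| - \numconnectedcomponets(\mathcal{K})}$, as desired.

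There is no substantive obstacle here; the statement is a bookkeeping corollary of \cref{claim:1dtoruscounts} together with subadditivity of graphic-matroid rank. The only points that deserve a moment of care are the direction of the constant-absorption step (which needs $2\lambda \le 1$ and the relevant exponents to be at least $1$, valid for nonempty $K_j$) and the degenerate case of an empty $K_j$, both of which are handled as above.
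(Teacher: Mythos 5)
Your proof is correct and follows essentially the same route as the paper: invoke item 5 of \cref{claim:1dtoruscounts} on each $\psi(K_j)$, multiply for the first bound, and combine with \cref{claim:subadditivity} for the second. The only difference is that you explicitly justify two bookkeeping points the paper leaves implicit—the conversion from $2\lambda^{a}$ to $(2\lambda)^{a}$ (valid since the exponents are $\ge 1$ for nonempty $K_j$ and $2\lambda\le 1$) and the degenerate case $K_j=\emptyset$ where $\psi(K_j)=0$—both of which you handle correctly.
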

To prove Observation~\ref{obs:productofdeviationsbound}, we will need the following claim, whose proof is deferred to \cref{appendix:fromsignedsubgraphcounts}.

\begin{claim} 
\label{claim:subadditivity}
Suppose that $G$ is a graph and $G_1$ and $G_2$ are two (not necessarily induced) subgraphs such that $E(G_1)\cup E(G_2) = E(G).$ Then, 
$
|V({G})| - \numconnectedcomponets(G)\le 
|V(G_1)| - \numconnectedcomponets(G_1) + 
|V(G_2)| - \numconnectedcomponets(G_2).
$
\end{claim}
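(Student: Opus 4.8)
The plan is to interpret $|V(\cdot)| - \numconnectedcomponets(\cdot)$ as the number of edges in a spanning forest (equivalently, the rank of the graphic matroid) and to exploit subadditivity of this quantity. First I would record the elementary fact that for any \emph{forest} $F$ with $E(F)\subseteq E(G_j)$ one has $|E(F)|\le |V(G_j)| - \numconnectedcomponets(G_j)$. Indeed, each tree component of $F$ lies inside a single connected component of $G_j$; if a component $C$ of $G_j$ receives tree components of $F$ spanning $v_C$ vertices in total, these account for at most $v_C - 1\le |C|-1$ edges of $F$, and summing over the components of $G_j$ gives the bound. In particular, the maximum number of edges of a forest contained in $G_j$ is exactly $|V(G_j)| - \numconnectedcomponets(G_j)$, attained by any spanning forest.

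Next I would take a spanning forest $T$ of $G$, so that $|E(T)| = |V(G)| - \numconnectedcomponets(G)$. Since $E(G) = E(G_1)\cup E(G_2)$, every edge of $T$ lies in $E(G_1)$ or in $E(G_2)$; assign to each edge of $T$ one of the two parts containing it, obtaining a partition $E(T) = E(T_1)\sqcup E(T_2)$ with $E(T_j)\subseteq E(G_j)$. Each $T_j$ is a subgraph of the forest $T$, hence a forest, so the previous paragraph gives $|E(T_j)|\le |V(G_j)| - \numconnectedcomponets(G_j)$. Adding the two inequalities yields
\begin{align*}
|V(G)| - \numconnectedcomponets(G) = |E(T)| = |E(T_1)| + |E(T_2)| &\le \big(|V(G_1)| - \numconnectedcomponets(G_1)\big)\\
&\quad + \big(|V(G_2)| - \numconnectedcomponets(G_2)\big),
\end{align*}
which is exactly the claimed subadditivity. (Isolated vertices, should any appear, contribute $1-1=0$ to each side and may be discarded; in the intended application the subgraphs are defined by edge sets and have none.)

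I do not expect a genuine obstacle here: the statement is precisely the subadditivity of the graphic-matroid rank function, and one could alternatively just quote submodularity, $r(A\cup B) + r(A\cap B)\le r(A) + r(B)$ with $A = E(G_1)$, $B = E(G_2)$, and drop the nonnegative term $r(A\cap B)$. The only point requiring a little care is the bookkeeping in the first step, where a component of $G_j$ may meet several tree components of $F$ or none at all; the forest argument above is written out so that \cref{sec:linftysignedcounts} remains self-contained.
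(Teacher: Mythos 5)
Your proof is correct but takes a genuinely different route from the paper's. The paper constructs an auxiliary bipartite ``contraction'' graph $\mathcal{G}$ whose two sides are the connected components $D_1,\dots,D_a$ of $G_1$ and $F_1,\dots,F_b$ of $G_2$, joining $D_i$ to $F_j$ whenever they share a vertex and labelling each such edge by a distinct shared vertex; the claim then falls out by combining $\numconnectedcomponets(G)=\numconnectedcomponets(\mathcal{G})\ge|V(\mathcal{G})|-|E(\mathcal{G})|$ with $|V(G_1)|+|V(G_2)|-|V(G)|\ge|E(\mathcal{G})|$. You instead recognize $|V(\cdot)|-\numconnectedcomponets(\cdot)$ as the rank of the graphic matroid, take a spanning forest $T$ of $G$, partition $E(T)$ between $E(G_1)$ and $E(G_2)$, and bound each part by the corresponding rank; as you observe, this is exactly subadditivity (in fact submodularity) of matroid rank, so one could also just quote it. Your route is shorter and conceptually crisper once the spanning-forest characterization of rank is available, whereas the paper's is more elementary and fully self-contained, at the cost of introducing the auxiliary graph. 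Both arguments tacitly use that the graphs have no isolated vertices, so that $V(G)=V(G_1)\cup V(G_2)$; that holds in the paper's application and you flag it correctly. One small bookkeeping point in your first step: for a component $C$ of $G_j$ untouched by the forest, the bound ``$v_C-1$'' should be read as $0\le|C|-1$, which your closing parenthetical already addresses.
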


\begin{proof}[Proof of Observation~\ref{obs:productofdeviationsbound}]
Using part 5 of \cref{claim:1dtoruscounts} on each $\psi(K_i)$ yields the first bound.
For the second bound, we again apply part 5 of \cref{claim:1dtoruscounts} on each $\psi(K_i)$ and then we repeatedly apply Claim~\ref{claim:subadditivity}:
\begin{equation*}
    \Big|
        \psi(K_1)\psi(K_2)\cdots \psi(K_i)
    \Big|
    \le 
    \prod_{j =1}^ i 
    (2\lambda)^{|V(K_j)| -\numconnectedcomponets(K_j)}\le 
    (2\lambda)^{|V(\mathcal{K})|- \numconnectedcomponets(\mathcal{K})}.\qedhere
\end{equation*}
\end{proof}

We now proceed to bound \cref{eq:errorexpansiontriangleineq} for a fixed fixed $i$-tuple $K_1, K_2,\ldots, K_i.$

\begin{observation}
\label{obs:productofdeviationsboundextrafactor}
    The value of 
    $
    (1 - (1-\lambda)^i)^{|E(H)| - |E(\mathcal{K})|}
    \Big|
    \psi(K_1)\psi(K_2)\cdots \psi(K_i)
    \Big|$
    is less than each of 
        \begin{enumerate}
            \item $(2\lambda)^{i + \sum_{j = 1}^i |V(K_i)|/2}$, and
            \item $(2i\lambda)^{|V(H)|-1}.$
        \end{enumerate}
\end{observation}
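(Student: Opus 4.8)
The plan is to derive both bounds by combining the two estimates from Observation~\ref{obs:productofdeviationsbound} with crude but sufficient bounds on the extra compatibility factor $(1-(1-\lambda)^i)^{|E(H)|-|E(\mathcal{K})|}$, recalling that we are in the small-$i$ regime $i < 11|V(H)|/41$ and that $H$ is $2$-connected (so $|V(H)|\le |E(H)|$) with $|E(H)|\le (\log d)^{5/4}/(\log\log d)$ while $\lambda = \tilde\Theta(1/d)$.

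For the first bound, I would simply drop the compatibility factor, using $(1-(1-\lambda)^i)^{|E(H)|-|E(\mathcal{K})|}\le 1$ since $0\le 1-(1-\lambda)^i\le 1$ and $|E(\mathcal{K})|\le|E(H)|$. Then part~1 of Observation~\ref{obs:productofdeviationsbound} gives
$$
\prod_{j=1}^i (2\lambda)^{|V(K_j)|/2+1} = (2\lambda)^{\,i+\sum_{j=1}^i|V(K_j)|/2},
$$
which is exactly the claimed bound (one should note $2\lambda<1$ for large $d$, so the inequality goes the right way). This step is routine.

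For the second bound, the idea is to pair Bernoulli's inequality $1-(1-\lambda)^i\le \lambda i$ (so the compatibility factor is at most $(\lambda i)^{|E(H)|-|E(\mathcal{K})|}$, as already recorded in \eqref{eq:pieterminerrorexpansion}) with part~2 of Observation~\ref{obs:productofdeviationsbound}, namely $|\psi(K_1)\cdots\psi(K_i)|\le (2\lambda)^{|V(\mathcal{K})|-\numconnectedcomponets(\mathcal{K})}$. Bounding $2\lambda \le 2i\lambda$ and $\lambda i \le 2i\lambda$, the product is at most $(2i\lambda)^{\,|E(H)|-|E(\mathcal{K})|+|V(\mathcal{K})|-\numconnectedcomponets(\mathcal{K})}$. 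So it suffices to show the exponent is at least $|V(H)|-1$, i.e.
$$
|E(H)|-|E(\mathcal{K})| + |V(\mathcal{K})| - \numconnectedcomponets(\mathcal{K}) \ge |V(H)|-1.
$$
Here $\mathcal{K}=K_1\cup\cdots\cup K_i$ is a subgraph of $H$ on edge set $E(\mathcal{K})$. I would argue this by adding the $|E(H)|-|E(\mathcal{K})|$ missing edges of $H$ back to $\mathcal{K}$ one at a time: each added edge either decreases the number of connected components by one or leaves it unchanged, and in either case the quantity $(\#\text{edges added so far}) + |V(\cdot)| - \numconnectedcomponets(\cdot)$ is nondecreasing (adding an edge within a component raises the edge count by $1$ and does nothing else; adding an edge merging two components raises the edge count by $1$ and raises $|V|-\numconnectedcomponets$ by... wait, $|V|$ may also grow). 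More carefully: going from $\mathcal{K}$ to $H$, $|V|$ increases by $|V(H)|-|V(\mathcal{K})|\ge 0$ and $\numconnectedcomponets$ changes by some integer, but since $H$ is connected (being $2$-connected), each of the $|E(H)|-|E(\mathcal{K})|$ added edges decreases $\numconnectedcomponets$ by at most... Actually the cleanest route: for \emph{any} graph $G$ on $e$ edges, $v$ vertices, $c$ components one has $e \ge v-c$ (each component with $v_\ell$ vertices has $\ge v_\ell-1$ edges). Apply this to $G = H$ restricted appropriately; alternatively, observe directly that $|E(H)| - |E(\mathcal{K})| \ge (|V(H)|-\numconnectedcomponets(H)) - (|V(\mathcal{K})| - \numconnectedcomponets(\mathcal{K}))$ because the left side is the number of edges one adds to pass from $\mathcal{K}$ to $H$ and each such edge increases $|V|-\numconnectedcomponets$ by at most one (it adds at most one new... no — an edge adds $0$, $1$, or $2$ new vertices). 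Let me instead invoke the matroid/forest fact: $|V(H)|-\numconnectedcomponets(H)$ is the rank of $H$, $|V(\mathcal{K})|-\numconnectedcomponets(\mathcal{K})$ is the rank of $\mathcal{K}$, and rank is monotone and, crucially, $\mathrm{rank}(H) - \mathrm{rank}(\mathcal{K}) \le |E(H)\setminus E(\mathcal{K})|$ since each added edge raises the graphic-matroid rank by at most one. Using $\numconnectedcomponets(H)=1$ this yields exactly $|E(H)|-|E(\mathcal{K})| + |V(\mathcal{K})|-\numconnectedcomponets(\mathcal{K}) \ge |V(H)|-1$, as needed.

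The main obstacle is purely bookkeeping: getting the exponent inequality in the second bound right without slipping on how $|V(\mathcal{K})|$ compares to $|V(H)|$. Invoking the graphic-matroid rank inequality ("adding an edge raises rank by at most $1$", equivalently the standard $e\ge v-c$ bound applied componentwise) resolves it cleanly and avoids case analysis. Neither the $2$-connectivity beyond connectivity of $H$, nor the restriction $i<11|V(H)|/41$, is actually needed for this particular observation — those hypotheses are consumed later when these bounds are summed against Claim~\ref{claim:lownumberofsmallsum} — but I would keep the standing assumptions in force for consistency with the surrounding argument.
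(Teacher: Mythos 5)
Your proof is correct and, for the first bound, identical to the paper's (drop the compatibility factor via $|1-(1-\lambda)^i|\le 1$, then apply part~1 of Observation~\ref{obs:productofdeviationsbound}). For the second bound you arrive at the same target inequality $|E(H)|-|E(\mathcal{K})| \ge \numconnectedcomponets(\mathcal{K}) + |V(H)| - |V(\mathcal{K})|-1$ — this is exactly the paper's Claim~\ref{claim:numcintwoconnectedsubgraph}, which the paper proves by building an auxiliary multigraph $H'$ on the components of $\mathcal{K}$ together with $V(H)\setminus V(\mathcal{K})$ and showing every vertex of $H'$ has degree at least $2$ (this is where the paper actually uses 2-connectivity of $H$, and it yields the slightly stronger inequality without the $-1$ except in the degenerate case). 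Your route via the graphic-matroid rank identity $\mathrm{rank}(S) = |V(H)|-c(V(H),S)$ and the fact that rank increases by at most one per added edge is a genuinely different and cleaner argument: it dispenses with the case analysis and the auxiliary multigraph, and — as you correctly note — uses only connectivity of $H$, not 2-connectivity. What it does \emph{not} give is the paper's strictly stronger (no $-1$) inequality in the non-degenerate case, but that strengthening is not needed here. One small point worth making explicit when writing this up: when you replace $2\lambda$ and $i\lambda$ by the common upper bound $2i\lambda$ and then pass to the larger exponent $|V(H)|-1$, you are implicitly using $2i\lambda<1$; this holds under Assumption~\eqref{eq:assumption} since $\lambda=\tilde\Theta(1/d)$ and $i \le |V(H)| \le |E(H)| = O\big((\log d)^{5/4}\big)$, but it should be stated.
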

We will need the following combinatorial inequality, proved in \cref{appendix:fromsignedsubgraphcounts}.
\begin{claim}
\label{claim:numcintwoconnectedsubgraph}
For any 2-connected graph $H$ and any (not necessarily induced) subgraph $\mathcal{K}$ of $H,$
    $$|E(H)| - |E(\mathcal{K})|\ge \numconnectedcomponets(\mathcal{K}) + |V(H)| - |V(\mathcal{K})|-1.$$ 
\end{claim}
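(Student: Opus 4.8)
The plan is to recognize the inequality as a statement about \emph{circuit ranks}. Recall that the dimension of the cycle space $\mathcal{Z}(G)\subseteq\mathbb{F}_2^{E(G)}$ of a graph $G$ --- i.e.\ the space of edge subsets in which every vertex has even degree --- equals $|E(G)|-|V(G)|+\numconnectedcomponets(G)$. Rearranging the target inequality and using that $H$ is connected, so $\numconnectedcomponets(H)=1$, I would reduce the claim to showing
$$|E(\mathcal{K})|-|V(\mathcal{K})|+\numconnectedcomponets(\mathcal{K})\ \le\ |E(H)|-|V(H)|+\numconnectedcomponets(H),$$
that is, $\dim\mathcal{Z}(\mathcal{K})\le\dim\mathcal{Z}(H)$. (Note that only connectivity of $H$, not full $2$-connectivity, is actually needed here.)

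This last inequality is immediate: since $E(\mathcal{K})\subseteq E(H)$ and $V(\mathcal{K})\subseteq V(H)$, any $Z\subseteq E(\mathcal{K})$ with all vertex-degrees even relative to $V(\mathcal{K})$ also has all vertex-degrees even relative to $V(H)$ (every vertex of $V(H)\setminus V(\mathcal{K})$ meets no edge of $Z$, so has degree $0$), hence $Z\in\mathcal{Z}(H)$. Thus the coordinate inclusion $\mathbb{F}_2^{E(\mathcal{K})}\hookrightarrow\mathbb{F}_2^{E(H)}$ restricts to an injection $\mathcal{Z}(\mathcal{K})\hookrightarrow\mathcal{Z}(H)$, giving the dimension bound. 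If one prefers to avoid cycle-space language, an equivalent combinatorial argument contracts each of the $\numconnectedcomponets(\mathcal{K})$ components of $\mathcal{K}$ inside $H$ and deletes the resulting loops: the outcome $H'$ is connected with $\numconnectedcomponets(\mathcal{K})+|V(H)|-|V(\mathcal{K})|$ vertices and hence at least that many minus one edges, while $|E(H')|\le|E(H)|-|E(\mathcal{K})|$ because every edge of $H'$ is an uncontracted non-loop edge of $E(H)\setminus E(\mathcal{K})$.

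I do not foresee any real obstacle. The only point needing a line of care is that $\mathcal{K}$ is a possibly non-induced subgraph --- in the paper's convention it is a set of edges together with its set of endpoints, hence has no isolated vertices --- but this is harmless for either argument, since inserting or deleting an isolated vertex changes $|V(\mathcal{K})|$ and $\numconnectedcomponets(\mathcal{K})$ by the same amount and leaves both sides of the target inequality unchanged.
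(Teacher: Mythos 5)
Your proof is correct, and both of your formulations (circuit rank and contraction) are valid. The paper's own proof also works by contracting each connected component of $\mathcal{K}$ to a single vertex to form a multigraph $H'$ on $\numconnectedcomponets(\mathcal{K})+|V(H)|-|V(\mathcal{K})|$ vertices, so the underlying mechanism is the same as your second argument --- but the paper then aims for the \emph{stronger} bound $|E(H')|\ge|V(H')|$ (no $-1$), which genuinely does require $2$-connectivity: they rule out degree-$0$ and degree-$1$ vertices in $H'$ via a case analysis invoking $2$-connectivity of $H$. You instead settle for $|E(H')|\ge|V(H')|-1$, which is exactly the stated inequality and follows from mere connectivity of $H'$ (a connected graph has at least $|V|-1$ edges), neatly packaged as monotonicity of the circuit rank $\dim\mathcal{Z}(G)=|E(G)|-|V(G)|+\numconnectedcomponets(G)$ under taking subgraphs. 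The observation that $2$-connectivity is not needed is correct and is a genuine simplification: the stronger bound $|E(H')|\ge|V(H')|$ is never used in the paper (\cref{obs:productofdeviationsboundextrafactor} uses the inequality exactly as stated with the $-1$), so your shorter argument proves everything that is needed, avoids the case analysis entirely, and applies to all connected $H$. Your handling of the no-isolated-vertices convention for $\mathcal{K}$ is also correct and appropriate.
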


\begin{proof}
The first bound follows directly from part 1 in \cref{obs:productofdeviationsbound} and the fact that $|1 - (1-\lambda)^i|\le 1.$
For the second bound, we use \cref{claim:numcintwoconnectedsubgraph} to obtain
\begin{align*}
       &  
    (1 - (1-\lambda)^i)^{|E(H)| - |E(\mathcal{K})|}
    \Big|
    \psi(K_1)\psi(K_2)\cdots \psi(K_i)
    \Big|\\
        & \le
        (2\lambda)^{|V(\mathcal{K})| - \numconnectedcomponets(\mathcal{K})}
        (i\lambda )^{|V(H)| - |V(\mathcal{K})|+\numconnectedcomponets(\mathcal{K})-1}\\
        & \le (2i\lambda)^{|V(H)|-1}. \qedhere
\end{align*}
\end{proof}

\bigskip

We proceed to bounding the expression in \cref{eq:errorexpansiontriangleineq} in several steps. First, note that $H$ has $2^{|E(H)|}$ subgraphs. Thus, 
\begin{equation}
    \begin{split}
        & \sum_{K_1,K_2,\ldots, K_i\subseteq H}
        (1 - (1-\lambda)^i)^{|E(H)| - |E(\mathcal{K})|}
        \Big|
        \psi(K_1)\psi(K_2)\cdots \psi(K_i)\Big|\\
        & = 
        2^{|E(H)|i}\expect\Big[
        (1 - (1-\lambda)^i)^{|E(H)| - |E(\mathbf{K})|}
        |\psi(\bfK_1)\psi(\bfK_2)\cdots \psi(\bfK_i)|
        \Big],
    \end{split}
\end{equation}
where each $\bfK_j$ is sampled independently of the others by independently including each edge of $H$ with probability $1/2$ and $\mathbf{K} = \bfK_1\cup\mathbf{K}_2\ldots\cup\mathbf{K}_i.$ 


\paragraph{Case 1.1)} First, suppose that $i = 1.$ We will use the first bound in \cref{obs:productofdeviationsboundextrafactor}. We have to show that 
$$
2^{|E(H)|}(2\lambda)^{|V(H)|-1}\le \frac{1}{4d}\Big(\frac{(\log d)^C}{d}\Big)^{|V(H)|/2}
$$
for some absolute constant $C.$ 
Taking a logarithm on both sides, it is enough to show that 
$$
|E(H)| + (\log d- C\log \log d)(|V(H)|/2 +1)\le (\log d - C'''\log \log d)(|V(H)|-1)
$$
holds, where $C'''$ is the hidden constant in $\log (1/\lambda) = \log d - O(\log \log d).$ If $|E(H)| = o(\log \log d),$ choosing a large enough $C,$ we need to show that 
$$
(\log d- (C+2)\log \log d)(|V(H)|/2 +1)\le (\log d - C'''\log \log d)(|V(H)|-1).
$$
This clearly holds for large enough $C$ as 
$|V(H)|\ge 4,$ so $|V(H)|-1\ge |V(H)|/2 + 1.$

On the other hand, if $|E(H)| = \Omega(\log \log d),$ this means that $|V(H)| = \Omega((\log \log d)^{1/2}).$ Thus, for large enough $d,$ the inequality becomes equivalent to 
$$
|E(H)|\le \frac{1}{2}(\log d)|V(H)|(1-o_d(1)).
$$
This clearly holds since $\sqrt{|E(H)|}\le 2|V(H)|$ for any graph $H$ and 
$\sqrt{|E(H)|}< (\log d)^{5/8}.$
\paragraph{Case 1.2)} Now, suppose that
$\displaystyle
2< i \le \frac{3(\log d)|V(H)|}{10(|E(H)| + \log d)}.
$
In particular, this case is non-trivial if and only if 
$\displaystyle
2\le \frac{3(\log d)|V(H)|}{10(|E(H)| + \log d)},
$
which implies that
$|V(H)|\ge 6$ for large enough values of $d.$ Thus, we assume that $|V(H)|\ge 6.$ This, combined with $\displaystyle i \le \frac{3(\log d)|V(H)|}{10(|E(H)| + \log d)}$ implies
$(2i\lambda )^{|V(H)| - 1}2^{i|E(H)|}\le d^{- i - |V(H)|/2}$ for large enough values of $d$ (see \cref{prop:linftysignedboundcl1}).
One concludes from the second bound in \cref{obs:productofdeviationsboundextrafactor} that 
\begin{equation}
\begin{split}
&2^{|E(H)|i}\expect\Big[
        (1 - (1-\lambda)^i)^{|E(H)| - |E(\mathbf{K})|}
        |\psi(\bfK_1)\psi(\bfK_2)\cdots \psi(\bfK_i)|
        \Big]\\
&\le 2^{|E(H)|i}(2i\lambda)^{|V(H)|-1}\\
& \le 
d^{-i - |V(H)|/2}.
\end{split}
\end{equation}

\paragraph{Case 1.3)} $\frac{3(\log d)|V(H)|}{10(|E(H)| + \log d)}\le i \le \frac{11|V(H)|}{41}.$ In particular, such $i$ exist if and only if $|E(H)|\ge 13\log d/110 = \Omega(\log d).$ We assume that $|E(H)| = \Omega(\log d)$ in the rest of this case. We will use the following claim.

\begin{claim}
\label{claim:lowprobabilityofsmallsum}
Let $i \ge 2, 0 \le b \le i$ be integers and $a>0$ be a real number. Then, 
$$
\prob\Big[\sum_{j = 1}^i|V(\mathbf{K}_j)|\le ab\Big]\le 
\frac{1}{2^{i\times |E(H)|}}
\exp\Big(b(\log i) + a^2i\log |E(H)| + |E(H)|b\Big).
$$
\end{claim}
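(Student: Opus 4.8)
The plan is to bound the probability by a union bound over the possible "shapes" of the vertex sets $V(\mathbf{K}_1), \ldots, V(\mathbf{K}_i)$, carefully tracking two sources of entropy: the choice of which vertices each $\mathbf{K}_j$ touches, and the choice of edges realizing those touches. Fix a target $ab$ for $\sum_{j=1}^i |V(\mathbf{K}_j)|$. First I would observe that each $\mathbf{K}_j$ is a subgraph of $H$ obtained by keeping each edge independently with probability $1/2$; in particular $\mathbb{P}[\mathbf{K}_j = F] = 2^{-|E(H)|}$ for \emph{any} fixed subgraph $F \subseteq E(H)$, and more usefully, for any fixed set $U \subseteq V(H)$, the event $V(\mathbf{K}_j) \subseteq U$ is exactly the event that $\mathbf{K}_j$ uses no edge incident to $V(H)\setminus U$, which has probability $2^{-(|E(H)| - |E(H[U])|)}$. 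This is the key probabilistic input: forcing $\mathbf{K}_j$ to avoid many vertices is exponentially costly in the number of excluded edges.

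Next I would set up the counting. If $\sum_{j=1}^i |V(\mathbf{K}_j)| \le ab$, then by averaging at least one of the $\mathbf{K}_j$ has $|V(\mathbf{K}_j)|$ small, but more to the point, writing $v_j := |V(\mathbf{K}_j)|$, the tuple $(v_1,\ldots,v_i)$ is a composition of an integer $\le ab$ into $i$ nonnegative parts, and the number of such compositions is at most $\binom{ab + i}{i} \le (ab+i)^i / i! \le$ something like $\exp(O(i \log(ab)))$ — but I should be more economical. A cleaner route: condition on the sets $V(\mathbf{K}_j)$ themselves. The number of ways to choose subsets $U_1, \ldots, U_i \subseteq V(H)$ with $\sum_j |U_j| \le ab$ is at most $\sum_{s \le ab} \binom{i \cdot |V(H)|}{s}$-ish, but better: it is at most $(|V(H)|+1)^{\text{(something)}}$. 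Actually the sharp bookkeeping uses $|V(H)| \le |E(H)|$ (the $2$-connectivity assumption, noted in the excerpt) and bounds the number of choices of $U_j$ with $|U_j| = v_j$ by $\binom{|V(H)|}{v_j} \le |E(H)|^{v_j}$; multiplying over $j$ gives $|E(H)|^{\sum v_j} \le |E(H)|^{ab} \le \exp(ab \log|E(H)|)$. Meanwhile the number of ways to distribute the budget $ab$ among the $i$ parts, i.e. the number of vectors $(v_1,\ldots,v_i)$ with $\sum v_j \le ab$ and $v_j \le |V(H)|$, I would crudely bound — but here I need to be careful to get the stated exponents $b \log i$ and $|E(H)| b$ rather than something with $a$ in front of those terms.

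Here is where I expect the main obstacle, and the reason the statement is phrased with the three separate terms $b(\log i)$, $a^2 i \log|E(H)|$, $|E(H)| b$. The factor $2^{-i|E(H)|}$ must be pulled out, and what remains after conditioning on $V(\mathbf{K}_j) = U_j$ is $\prod_j 2^{-(|E(H)| - |E(H[U_j])|)} = 2^{-i|E(H)|} \cdot 2^{\sum_j |E(H[U_j])|}$. So the probability is $2^{-i|E(H)|}$ times a sum over admissible $(U_1,\ldots,U_i)$ of $2^{\sum_j |E(H[U_j])|} \le 2^{\sum_j \binom{v_j}{2}} \le 2^{\sum_j v_j^2 / 2}$. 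Now the constraint $\sum_j v_j \le ab$ does \emph{not} control $\sum_j v_j^2$ well unless the $v_j$ are spread out; the worst case concentrates the mass on one part, giving $\sum v_j^2 \lesssim (ab)^2$, which is far too big. The resolution — and this is the delicate step I would need to get right — must be that we only need to bound the number of "bad" tuples for use inside \cref{eq:errorexpansiontriangleineq}, where each bad tuple already carries a tiny weight $|\psi(\mathbf K_1)\cdots\psi(\mathbf K_i)|$; alternatively, one restricts attention to the regime where each $v_j \le a$ (so $v_j^2 \le a v_j$, whence $\sum v_j^2 \le a \sum v_j \le a^2 b$, producing exactly the term $a^2 i \log |E(H)|$ after I realize $2^{a^2 b} \le e^{a^2 b}$ and absorb — wait, that gives $a^2 b$, not $a^2 i \log|E(H)|$). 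Let me instead believe the intended decomposition is: (i) choosing for each $j$ whether $v_j = 0$ or $v_j \ge 1$, and among the $\le b$ nonempty ones (since $\sum v_j \le ab$ forces at most $ab$ nonempty, but the clean bound wants $\le b$, using $v_j \ge 1 \Rightarrow$ at most... hmm, at most $ab$), costs $\binom{i}{\le b} \le \exp(b \log i)$; (ii) choosing the actual vertex sets costs $\exp(a^2 i \log |E(H)|)$ after a per-coordinate budget argument giving each $v_j \lesssim a \cdot (\text{local count})$ and squaring; (iii) the edge-multiplicity factor $2^{\sum \binom{v_j}{2}}$ contributes $\exp(|E(H)| b)$ via $\binom{v_j}{2} \le |E(H)|$ for each of the $\le b$ relevant coordinates. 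I would make (i)–(iii) precise by \textbf{first} partitioning the $i$ indices into "active" (contributing a vertex not already counted) and "inactive", \textbf{then} union-bounding over the active structure, and \textbf{finally} multiplying the three exponential factors. The precise allocation of the $a$-dependence between steps (ii) and the edge factor is the one computation I would do slowly and carefully; everything else is a routine union bound against the uniform lower bound $2^{-i|E(H)|}$ on single-subgraph probabilities.
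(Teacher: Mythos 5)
There is a genuine gap. You correctly sense that a pigeonhole step is needed, but you try to split the indices into empty versus nonempty and notice yourself that $\sum_j |V(\mathbf{K}_j)| \le ab$ only gives at most $ab$, not $b$, nonempty indices. The right dichotomy is a Markov count in the other direction: since $a>0$, at most $b$ indices can have $|V(\mathbf{K}_j)| > a$ (otherwise the sum would exceed $ab$), so at least $i-b$ indices satisfy $|V(\mathbf{K}_j)| \le a$. This is what produces the $\exp(b\log i)$ term, via a union bound over which $i-b$ indices are small together with $\binom{i}{i-b} = \binom{i}{b} \le i^b$, and it localizes the analysis to $i-b$ independent events of the form $\{|V(\mathbf{K}_j)|\le a\}$.

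Your second obstacle, the $2^{\sum_j \binom{v_j}{2}}$ factor from enumerating vertex sets $U_j$, is avoided entirely by bounding $\prob[|V(\mathbf{K}_j)| \le a]$ through the edge count rather than through $U_j$: the event $|V(\mathbf{K}_j)| \le a$ forces $|E(\mathbf{K}_j)| \le a^2$, and since $|E(\mathbf{K}_j)| \sim \mathrm{Bin}(|E(H)|,1/2)$, one has $\prob[|E(\mathbf{K}_j)| \le a^2] \le 2^{-|E(H)|}\sum_{\ell \le a^2}\binom{|E(H)|}{\ell} \le 2^{-|E(H)|}|E(H)|^{a^2}$. Raising this to the $(i-b)$-th power, multiplying by $\binom{i}{b}$, and writing $2^{-|E(H)|(i-b)} = 2^{-i|E(H)|}\cdot 2^{|E(H)|b}$ gives $\frac{1}{2^{i|E(H)|}}\binom{i}{b}|E(H)|^{a^2(i-b)}2^{|E(H)|b}$, which is at most the stated bound. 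No conditioning on $U_j$, no appeal to $|V(H)|\le |E(H)|$, and no $2^{|E(H[U_j])|}$ bookkeeping are needed; the vertex-set route you pursued may be salvageable with extra case analysis, but it is substantially harder to close than the edge-count route, and as written it does not terminate.
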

\begin{proof}
Note that 
\begin{equation}
    \begin{split}
        \prob[|V(\bfK_1)|\le a]\le 
        \prob\Big[|E(\bfK_1)|\le a^2\Big]\le 
        \frac{1}{2^{|E(H)|}}
        \sum_{j = 0}^{a^2} \binom{|E(H)|}{j}\le 
        \frac{|E(H)|^{a^2}}{2^{|E(H)|}}.
    \end{split}
\end{equation}
It follows that 
\begin{equation}
    \begin{split}
& \prob\Big[\sum_{j = 1}^i|V(\mathbf{K}_j)|\le ab\Big]\\
& \le\prob [
\exists j_1< j_2< \ldots< j_{i - b}
\;\text{s.t.}\;
|V(\bfK_{j_u})|\le a\;  \forall \; u \in [i-b]
] \\
&\le  \sum_{1\le j_1, j_2, \ldots, j_{i - b}\le i}
\prod_{u = 1}^{i - b}
\prob[|V(\bfK_{j_u})|\le a]\\
& \le 
\binom{i}{i - b}
\Bigg(\frac{|E(H)|^{a^2}}{2^{|E(H)|}}\Bigg)^{i - b}\\
&  = 
\frac{1}{2^{i|E(H)|}}
\binom{i}{b}
{|E(H)|^{a^2(i - b)}2^{|E(H)|b}}\\
& \le 
\frac{1}{2^{i|E(H)|}}
i^{b}
|E(H)|^{ia^2}e^{|E(H)|b}\\
& \le 
\frac{1}{2^{i|E(H)|}}
\exp\Big(
b(\log i) + 
ia^2\log |E(H)| + 
b|E(H)|
\Big).\qedhere
    \end{split}
\end{equation}
\end{proof}
We will apply the claim with the choices
 $$a = \frac{|V(H)|^{1/2}(\log d)^{1/2}}{i^{1/2}}(\log \log d)^{-1}\quad \text{and} \quad
b =\bigg\lfloor \frac{|V(H)|\log d}{|E(H)|}(\log \log d)^{-1}\bigg\rfloor.
$$
The condition $b\le i$ holds for large enough $d$ since $|E(H)| = \Omega(\log d)$ and $\frac{3(\log d)|V(H)|}{10(|E(H)| + \log d)}\le i.$ 

Now, we can write 
\begin{equation}
    \begin{split}
        &2^{|E(H)|i}\expect\Big[
        (1 - (1-\lambda)^i)^{|E(H)| - |E(\mathbf{K})|}
        |\psi(\bfK_1)\psi(\bfK_2)\cdots \psi(\bfK_i)|
        \Big]\\
        & \le 
        2^{|E(H)|i}\expect\Big[
        (1 - (1-\lambda)^i)^{|E(H)| - |E(\mathbf{K})|}
        |\psi(\bfK_1)\psi(\bfK_2)\cdots \psi(\bfK_i)| \; \Big|\sum_{j = 1}^i|V(\mathbf{K}_j)|\le ab
        \Big]\prob\Big[\sum_{j = 1}^i|V(\mathbf{K}_j)|\le ab\Big] + \\
        & +  
        2^{|E(H)|i}\expect\Big[
        (1 - (1-\lambda)^i)^{|E(H)| - |E(\mathbf{K})|}
        |\psi(\bfK_1)\psi(\bfK_2)\cdots \psi(\bfK_i)| \; \Big|\sum_{j = 1}^i|V(\mathbf{K}_j)|> ab
        \Big]\prob\Big[\sum_{j = 1}^i|V(\mathbf{K}_j)|>ab\Big]\\
        &\le 2^{|E(H)|i}
        \frac{1}{2^{i|E(H)|}}
        \exp\Big(
        b(\log i) + 
        ia^2\log |E(H)| + 
        b|E(H)|
        \Big)
        (2i\lambda)^{|V(H)| - 1}\\
        & + 
        2^{|E(H)|i}
        (2\lambda )^{i + ab/2},\\
    \end{split}
\end{equation}
where we used the second bound from \cref{obs:productofdeviationsboundextrafactor} in the case $\sum_{j = 1}^i|V(\mathbf{K}_j)|\le ab$ and the first bound in the case $\sum_{j = 1}^i|V(\mathbf{K}_j)|>ab.$ We now analyze the two terms separately.

\paragraph{Case 1.3.1)} We show that 
$$
\exp\Big(
        b(\log i) + 
        ia^2\log |E(H)| + 
        b|E(H)|
        \Big)
        (2i\lambda)^{|V(H)| - 1}
        \le 
        d^{-i - |V(H)|/2}.
$$
This is equivalent to 
$$
b (\log i) + ia^2\log |E(H)| + b|E(H)| + (i + |V(H)|/2)\log d \le 
(|V(H)|-1)(\log d - O(\log \log d)).
$$
We compare as follows:
\begin{enumerate}
    \item $b(\log i) + b|E(H)|\le 2b |E(H)| = 2|V(H)|\log d/(\log \log d)\le (|V(H)|-1)(\log d - O(\log \log d))/41$ for all large enough $d.$ We used the fact that $i \le |V(H)|\le |E(H)|$ and $|V(H)|\ge |E(H)|^{1/2} = \omega_d(1).$
    \item $ia^2\log |E(H)| = |V(H)|(\log d)(\log |E(H)|)/(\log \log d)^2\le (|V(H)|-1)(\log d - O(\log \log d))/41 $ for all large enough $d,$ where we used the fact that $|E(H)|\le (\log d)^{5/4},$ so $\log |E(H)| = O(\log \log d).$
    \item $(i + |V(H)|/2)\log d \le 33(|V(H)|-1)(\log d - O(\log \log d))/41$ for all large enough $d$ as $i \le 11V(H)/41$ and $|V(H)|\ge \sqrt{|E(H)|} = \Omega(\sqrt{\log d}) = \omega_d(1).$
\end{enumerate}
Altogether, this implies that
$$
b (\log i) + ia^2\log |E(H)| + b|E(H)| + (i + |V(H)|/2)\log d \le \frac{35}{41}
(|V(H)|-1)(\log d - O(\log \log d)),
$$
which is enough.

\paragraph{Case 1.3.2)} We show that 
$$
2^{|E(H)|i}
(2\lambda)^{i + ab/2}
\le 
d^{-i - |V(H)|/2}.
$$
Bounding $2^{|E(H)|i}\le e^{|E(H)|i}$ and 
using $ab = \frac{|V(H)|^{3/2}(\log d)^{3/2}}{i^{1/2}|E(H)|}(\log \log d)^{-2},$ the inequality becomes
\begin{equation*}
    \begin{split}
        &\log d(i + |V(H)|/2) + |E(H)|i\\
        & \le (\log d - O(\log \log d))
\frac{|V(H)|^{3/2}(\log d)^{3/2}}{i^{1/2}|E(H)|}(\log \log d)^{-2} + 
i(\log d - O(\log \log d))\Longleftrightarrow\\
&O(i\log\log d) + (\log d)|V(H)|/2 + |E(H)|i
         \le (\log d - O(\log \log d))
\frac{|V(H)|^{3/2}(\log d)^{3/2}|}{2i^{1/2}|E(H)|}(\log \log d)^{-2}
    \end{split}
\end{equation*}
We now handle the terms separately.
\begin{enumerate}
    \item 
    $O(i\log\log d)  \le \frac{1}{3}
    (\log d - O(\log \log d))
\frac{|V(H)|^{3/2}(\log d)^{3/2}}{2i^{1/2}|E(H)|}(\log \log d)^{-2}.
    $
    For large enough $d,$ it is enough to show that
    $$
    8i^{3/2}|E(H)|(\log \log d)^3\le (\log d)^{5/2}|V(H)|^{3/2}.
    $$
    This clearly holds as $|E(H)|\le (\log d)^{5/4}, i \le 11|V(H)|/41.$
    \item $(\log d)|V(H)|/2 \le \frac{1}{3}
    (\log d - O(\log \log d))
\frac{|V(H)|^{3/2}(\log d)^{3/2}}{2i^{1/2}|E(H)|}(\log \log d)^{-2}.$ Again, for large enough $d,$ it is enough to show that 
$$
4(\log d)|V(H)|i^{1/2}|E(H)|(\log \log d)^{2}\le |V(H)|^{3/2}(\log d)^{5/2}.
$$
Again, this holds as $i^{1/2}\le |V(H)|^{1/2}$ and 
$|E(H)|\le (\log d)^{5/4}.$
\item $|E(H)|i \le \frac{1}{3}
    (\log d - O(\log \log d))
\frac{|V(H)|^{3/2}(\log d)^{3/2}|}{2i^{1/2}|E(H)|}(\log \log d)^{-2}.$ For large enough $d,$ it is sufficient to show that 
$$
8|E(H)|^2i^{3/2}(\log \log d)^2\le 
(\log d)^{5/2}|V(H)|^{3/2}.
$$
Again, this holds as $|V(H)|\ge i,|E(H)|\le (\log d)^{5/4}/(\log \log d).$\footnote{This is the only place in the proof where we need $|E(H)|\ll (\log d)^{5/4}$ rather than $|E(H)|\ll (\log d)^2.$ Improving \cref{claim:lownumberofsmallsum}/\cref{claim:lowprobabilityofsmallsum} would, potentially, improve the result for polynomials of degree up to $(\log d)^{2-\epsilon}$ for any constant $\epsilon>0.$}
\end{enumerate}

\subsubsection{Proof of \texorpdfstring{\cref{prop:innersumbound}}{ithordererror} for Large Values of \texorpdfstring{$i.$}{i}} 
\label{sec:largeibound}
Suppose that $d\ge i\ge 11|V(H)|/41.$
The main idea behind proving \cref{prop:innersumbound} in that case is to bound each term $\error(A,\lambda)$ and then sum over the $2^{|E(H)|}$ subgraphs of $H.$
\begin{claim}
\label{claim:errorbound}
If $|E(A)|\le  (\log d)^{5/4}/(\log \log d),$
then 
$|\error(A,\lambda)| \le d^{-3+o_d(1)}.$
\end{claim}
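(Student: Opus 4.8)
The plan is to bound $|\error(A,\lambda)|$ by the triangle inequality over all subgraphs of $A$, exploiting that forests contribute nothing and that non-forest perturbations decay rapidly in the number of vertices. By \eqref{eq:errordefinition} and part~2 of \cref{claim:1dtoruscounts}, $\error(A,\lambda)=\sum_{K\subseteq A}(-1)^{|E(K)|}\psi(K)=\sum_{K\subseteq A,\ K\text{ not a forest}}(-1)^{|E(K)|}\psi(K)$, so $|\error(A,\lambda)|\le\sum_{K\subseteq A,\ K\text{ not a forest}}|\psi(K)|$. First I would record that every subgraph $K$ of $A$ satisfies $|V(K)|\le|V(A)|\le 2|E(A)|\le 2(\log d)^{5/4}/(\log\log d)$, which under Assumption~\eqref{eq:assumption} is $o(1/\lambda)$ (recall $\lambda=\tfrac{\log(1/p)}{d}(1+o(1))\le C\tfrac{\log d}{d}$ for an absolute constant $C$, since $n^{-1+\epsilon}\le p$ and $n^\delta\le d$), so in particular $|V(K)|\le 1/(8\lambda)$ for $d$ large and \cref{claim:1dtoruscounts} applies to every such $K$. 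Parts~4 and~5 of that claim then give the single bound $|\psi(K)|\le(2\lambda)^{\max(3,\,|V(K)|/2+1)}$ for every non-forest $K$: part~5 yields $|\psi(K)|\le 2\lambda^{|V(K)|/2+1}\le(2\lambda)^{|V(K)|/2+1}$ always, and for $|V(K)|=3$ (so $K=C_3$) part~4 upgrades this to the sharp $|\psi(C_3)|=\lambda^3\le(2\lambda)^3$.

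Next I would group the sum by $v:=|V(K)|$, which ranges over $3\le v\le|V(A)|$, and use the crude count $\#\{K\subseteq A:\ |V(K)|=v\}\le\binom{|V(A)|}{v}\,2^{\min(\binom{v}{2},\,|E(A)|)}$: choose the vertex set, then an edge-subset of the induced subgraph, which has at most $\min(\binom v2,|E(A)|)$ edges. The estimate then splits into three ranges of $v$. For $v\in\{3,4\}$ the exponent in the $\psi$-bound equals $3$, the combinatorial factor is at most $|V(A)|^{4}\cdot 2^{6}=\polylog(d)$, and $(2\lambda)^{3}=d^{-3+o(1)}$; this is the dominant contribution, of order $d^{-3+o(1)}$. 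For $5\le v\le(\log d)^{1/2}$ one has $2^{\binom v2}\le 2^{(\log d)/2}=d^{(\ln 2)/2}$, a bounded power of $d$, while $(2\lambda)^{v/2+1}=d^{-(v/2+1)(1-o(1))}$ and the polylogarithmic prefactors contribute only $o(1)$ to the exponent; since $v/2+1\ge 7/2$ strictly beats $3+(\ln2)/2$, each such term is $\le d^{-3-\Omega(1)}$ and the sum over the range is $o(d^{-3})$. For $(\log d)^{1/2}<v\le|V(A)|$ one instead bounds $2^{\min(\binom v2,|E(A)|)}\le 2^{|E(A)|}=d^{O((\log d)^{1/4}/\log\log d)}$, whose exponent is $o(v)$ throughout this range, so the $d^{-v/2-1}$ decay dominates, each term is $\le d^{-v/4}$, and the sum is super-polynomially small. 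Combining, $|\error(A,\lambda)|\le d^{-3+o(1)}+o(d^{-3})=d^{-3+o(1)}$.

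The main obstacle is the bookkeeping in the last two ranges: a priori the number of subgraphs of $A$ on $v$ vertices is as large as $2^{|E(A)|}$, which is \emph{super}-polynomial in $d$ (its exponent grows like $(\log d)^{1/4}/\log\log d$), so one cannot simply bound $|\psi(K)|$ by $(2\lambda)^3$ for all of them. The resolution is the competition between two exponents of $d$: the combinatorial count contributes at most $|E(A)|=(\log d)^{5/4+o(1)}$ to the exponent of $d$, whereas the decay $(2\lambda)^{v/2+1}$ contributes $-(v/2+1)(1-o(1))$, and once $v\gtrsim\sqrt{\log d}$ the decay strictly wins, while for $v=O(\sqrt{\log d})$ the count is only a bounded power of $d$ and is absorbed by the decay with room to spare. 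One also has to be careful to keep the $v\in\{3,4\}$ terms at exponent exactly $3$ — this is precisely where the ``$\max(3,\cdot)$'' in the $\psi$-bound (equivalently, the sharp value $|\psi(C_3)|=\lambda^3$ from part~4 of \cref{claim:1dtoruscounts}) is needed — and to note that their combinatorial prefactors are merely polylogarithmic, which holds because $|V(A)|=\polylog(d)$.
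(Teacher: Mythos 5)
Your proof is correct and follows essentially the same strategy as the paper: apply the triangle inequality to $\error(A,\lambda)=\sum_{K\subseteq A}(-1)^{|E(K)|}\psi(K)$, discard forests (part~2 of \cref{claim:1dtoruscounts}), use parts~4 and~5 to bound $|\psi(K)|$ by a quantity decaying like $\lambda^{\max(3,\,|V(K)|/2+1)}$, and then play the combinatorial count $\binom{|V(A)|}{v}2^{\min(\binom{v}{2},|E(A)|)}$ against the decay $\lambda^{v/2+1}$ across a small/middle/large split of $v=|V(K)|$. The one structural difference is worth noting: the paper first establishes the bound under the extra hypothesis $|V(A)|\le|E(A)|$, and then reduces the general case to it by writing $A=A_1\sqcup A_2$ with $A_2$ an acyclic, vertex-disjoint piece so that $\error(A_2,\lambda)=0$ and $\error(A,\lambda)=\error(A_1,\lambda)(1-\lambda)^{|E(A_2)|}$. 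You sidestep that reduction entirely by observing that $|V(A)|\le 2|E(A)|$ holds for any edge-defined set $A$ (no isolated vertices), which is all the counting estimates actually need; this is a modest but genuine simplification. The remaining differences are cosmetic: you cut at $v\in\{3,4\}$, $v\le(\log d)^{1/2}$, $v>(\log d)^{1/2}$ where the paper cuts at $v\le 3$, $v\le(\log d)^{5/7}$, $v>(\log d)^{5/7}$, and you bound the number of edge sets on $v$ fixed vertices by $2^{\min(\binom v2,|E(A)|)}$ where the paper switches between $2^{\binom t2}$ and $\sum_{j\le(\log d)^{5/4}}\binom{\binom t2}{j}$; both choices give $d^{-3+o_d(1)}$ for the same reason, namely that once $v\gtrsim(\log d)^{5/8}$ the exponent of $d$ contributed by the count is $o(v)$ while the decay contributes $-v/2(1-o(1))$.
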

\begin{proof}
We first prove the statement in the case when $|V(A)|\le |E(A)|.$ 
Note that $|\psi(K)|\le \lambda^3$ when $|V(K)|\le  3$ and 
$|\psi(K)|\le 2\times \lambda^{|V(K)|/2 + 1}$ otherwise
by  \cref{claim:1dtoruscounts}.
\begin{equation}
        \begin{split}
            & |\error(A,\lambda)| = \sum_{E(K)\subseteq E(A)}(-1)^{|E(K)|}\psi(K)
            \le \sum_{E(K)\subseteq E(A)}
            |\psi(K)|\\
            & =
            \sum_{K:\; |V(K)| \le 3} \lambda^3 + 
            2\lambda \sum_{(\log d)^{5/7} \ge |V(K)|> 3} \lambda^{|V(K)|/2} + 
            2\lambda\sum_{|V(A)|\ge |V(K)|> (\log d)^{5/7}} \lambda^{|V(K)|/2}
            \\
        \end{split}
\end{equation}
We analyse the three sums separately. The constant $5/7$ in $(\log d)^{5/7}$ is chosen arbitrarily in $(5/8,1).$

\paragraph{Case 1)} $|V(K)| \le 3.$ There are $O(|V(A)|^3) = O((\log d)^{15/4})$ subgraphs of $A$ on at most three vertices. Thus, 
$$
\sum_{K:\; |V(K)| \le 3} \lambda^3 
= O\Big((\log d)^{15/4}(\log d/d)^3\Big) = d^{-3 + o_d(1)}.
$$
 
\paragraph{Case 2) $(\log d)^{5/7}\ge |V(K)| > 3.$} When $|V(K)| = t,$ one can choose $V(K)$ in $\binom{|V(A)|}{t}$ ways and, once $V(K)$ is chosen, choose $E(A)$ in $2^{\binom{t}{2}}$ ways at most. This leads to  
\begin{equation}
    \begin{split}
        & 2\lambda\sum_{(\log d)^{5/7}\ge |V(K)|> 3} \lambda^{|V(A)|/2}\\
        &\le 2\lambda\sum_{(\log d)^{5/7}\ge t> 3} \binom{|V(A)|}{t}2^{\binom{t}{2}}\lambda^{t/2}\\
        &\le 
            2\lambda\sum_{(\log d)^{5/7}\ge t> 3}
            \Big(\frac{e^2|V(A)|^2 e^t\lambda}{t^2}\Big)^{t/2}
    \end{split}
\end{equation}
Each value
${e^2|V(A)|^2 2^t\lambda}/{t^2}$ is bounded by  
$$
e^2((\log d)^{5/4})^2e^{(\log d)^{5/7}}\lambda = O\Big((\log d)^{5/2}\times d^{o(1)}\times (\log d)\times d^{-1}) = d^{-1 + o_d(1)},
$$
where we used the fact that $5/7< 1.$
As each exponent $t/2$ is at least 2, the sum is bounded by 
$$
2\lambda(\log d)^{5/7} \times
(d^{-1 +o_d(1)})^{2} = 
\lambda d^{-2 + o_d(1)} = 
d^{-3 + o_d(1)}.
$$

\paragraph{Case 3) $|V(A)|\ge |V(K)| \ge (\log d)^{5/7}.$} Note that when $|V(K)| = t,$ one can choose $V(K)$ in $\binom{|V(A)|}{t}\le \binom{(\log d)^{5/4}}{t}$ ways and, once $V(K)$ is chosen, choose $E(A)$ in $\sum_{j = 0}^{(\log d)^{5/4}}\binom{\binom{t}{2}}{j}\le (\log d)^{5/4}\binom{\binom{t}{2}}{(\log d)^{5/4}}$ ways at most (as $A$ and, thus, $K$ has at most $(\log d)^{5/4}$ edges). This leads to  
\begin{equation}
    \begin{split}
        & \sum_{(\log d)^{5/4}\ge |V(K)|> (\log d)^{5/7}} \lambda^{|V(K)|/2}\\
        &\le 
        (\log d)^{5/4}\sum_{(\log d)^{5/4} \ge t > (\log d)^{5/7}}
        \binom{(\log d)^{5/4}}{t}\binom{\binom{t}{2}}{(\log d)^{5/4}}\lambda^{t/2}\\
        & \le
            (\log d)^{5/4}\sum_{(\log d)^{5/4}\ge t> (\log d)^{5/7}}
            \Big(\frac{e(\log d)^{5/4}}{t}\Big)^t
            \Big(\frac{et^2}{(\log d)^{5/4}}\Big)^{(\log d)^{5/4}}
            \lambda^{t/2}\\
        & = (\log d)^{5/4}
        e^{(\log d)^{5/4}}\sum_{(\log d)^{5/4}\ge t> (\log d)^{5/7}}
        \Big(\frac{t^2}{(\log d)^{5/4}}\Big)^{(\log d)^{5/4}-t}(\lambda e^2t^2)^{t/2}\\
        & \le 
        (\log d)^{5/4}
        e^{(\log d)^{5/4}}\sum_{(\log d)^{5/4}\ge t > (\log d)^{5/7}}
        ((\log d)^{5/4})^{(\log d)^{5/4}}(\lambda e^2t^2)^{t/2}.\\
    \end{split}
\end{equation}
Now, consider the expression $(\log d)^{5/4} e^{(\log d)^{5/4}}((\log d)^{5/4})^{(\log d)^{5/4}}(\lambda e^2t^2)^{t/2}.$ It can be rewritten as 
\begin{equation*}
    \begin{split}
        & \exp\Big(O(\log \log d) + ((\log d)^{5/4}+1) \log ((\log d)^{5/4})  + 
        t (\log t + 1) + 
        (t/2)\log \lambda
        \Big)\\
        & = 
        \exp\Big(O((\log d)^{5/4} \log \log d)   - 
        \Omega((\log d \log d^{5/7})
        \Big)\\
        & = 
        \exp \Big(- \Omega(\log d^{1 + 5/7})\Big),
    \end{split}
\end{equation*}
since $5/4< 1 + 5/7.$
Since each of the $O((\log d)^{5/4})$ summands is of order 
$\exp (- \Omega(\log d^{12/7})),$ the sum is clearly of order $\exp (- \Omega(\log d^{12/7})) = O(d^{-3}).$

Combining the two cases, we obtain that for graphs $A$
satisfying $|V(A)|\le |E(A)|,$
$\displaystyle
|\error(A,\lambda)|\le   d^{-3 + o_d(1)}
$ as desired.

Now, suppose that $|V(A)|>|E(A)|.$ If $A$ is acyclic, then we know that $|\error(A,\lambda)| = 0$ as $\psi(K) = 0$ for all $K\subseteq H$ as subgraphs are also acyclic, \cref{claim:1dtoruscounts}. If $A$ is not acyclic, then, it can be partitioned into two vertex-disjoint graphs $A = A_1\cup A_2,$ where $A_1$ satisfies $|V(A_1)|\le |E(A_1)|$ and $A_2$ is acyclic. As in the proof of \cref{claim:1dtoruscounts}, this implies that
\begin{align}
\nonumber
        & |\error(A,\lambda)| 
        = \expect_{\bfG\sim \RGG(n,\onetorus,\unif, \sigma^\infty_{1-\lambda}, 1-\lambda)}\Big[\unsignedweight_A(\bfG)\Big] - (1-\lambda)^{|E(A)|}\\
        \nonumber
        & = \expect_{\bfG\sim \RGG(n,\onetorus,\unif, \sigma^\infty_{1-\lambda}, 1-\lambda)}\Big[\unsignedweight_{A_1}(\bfG)\Big]
        \expect_{\bfG\sim \RGG(n,\onetorus,\unif, \sigma^\infty_{1-\lambda}, 1-\lambda)}\Big[\unsignedweight_{A_2}(\bfG)\Big]- 
        (1-\lambda)^{|E(A_1)|}(1-\lambda)^{|E(A_2)|}\\
        \nonumber
        & = 
        \Big(\expect_{\bfG\sim \RGG(n,\onetorus,\unif, \sigma^\infty_{1-\lambda}, 1-\lambda)}\Big[\unsignedweight_{A_2}(\bfG)\Big] - (1-\lambda)^{|E(A_2)|}\Big)\expect_{\bfG\sim \RGG(n,\onetorus,\unif, \sigma^\infty_{1-\lambda}, 1-\lambda)}\Big[\unsignedweight_{A_1}(\bfG)\Big]\\
        \nonumber
        & \quad\quad\quad\quad+ \Big(
        \expect_{\bfG\sim \RGG(n,\onetorus,\unif, \sigma^\infty_{1-\lambda}, 1-\lambda)}\Big[\unsignedweight_{A_1}(\bfG)\Big] - (1-\lambda)^{|E(A_1)|}\Big)(1-\lambda)^{|E(A_2)|}\\
        & = 
        \error(A_2,\lambda) \expect_{\bfG\sim \RGG(n,\onetorus,\unif, \sigma^\infty_{1-\lambda}, 1-\lambda)}\Big[\unsignedweight_{A_2}(\bfG)\Big]  + 
        \error(A_1, \lambda) (1-\lambda)^{|E(A_2)|}.
\end{align}
Since $A_2$ is acyclic, $\error(A_2,\lambda) = 0,$ so thee first term vanishes. Thus, 
\[
|\error(A,\lambda)| = 
|\error(A_1, \lambda) (1-\lambda)^{|E(A_2)|}|\le 
|\error(A_1, \lambda)| = 
d^{-3 + o_d(1)}.
\qedhere
\]
\end{proof}

Since the graph $H$ has at most $2^{|E(H)|}$ subgraphs, the LHS in Prop.~\ref{prop:innersumbound} can be bounded as
\begin{equation}
    \begin{split}
        & \Big|\sum_{A\subseteq H}
        (-1)^{|E(H)| - |E(A)|}
        (1-\lambda)^{i(|E(H)| - |E(A)|)}
        \error(A,\lambda)^i\Big|\\
        & \le 
         \sum_{A\subseteq H}\Big|
        (-1)^{|E(H)| - |E(A)|}
        (1-\lambda)^{i(|E(H)| - |E(A)|)}
        \error(A,\lambda)^i\Big|\\
        & \le 
        2^{|E(H)|} d^{-i (3 +o_d(1))}.\\
    \end{split}
\end{equation}
To prove \cref{prop:innersumbound}, it is enough to show that $\displaystyle e^{|E(H)|} d^{-i (3 +o_d(1))}\le \frac{1}{(4d)^i}\times\Big(\frac{(\log d)^{C}}{d}\Big)^{|V(H)|/2}.$ This would follow from 
$$
|E(H)|- 3(1-o_d(1))i\log d \le -i(\log d+2) - (\log d)|V(H)|/2
$$
or, equivalently,
$$
|E(H)| + 2i + (\log d)|V(H)|/2\le 2i\log d.
$$
We analyse each of the terms separately:
\begin{enumerate}
    \item $|E(H)|\le 2i(\log d)/(\log d)^{3/8}$ for large enough $i.$ Indeed, this follows since 
    $$
    |E(H)|\le \sqrt{|V(H)|^2}\times \sqrt{(\log d)^{5/4}/(\log \log d)} = 
    |V(H)|(\log d)^{5/8}\times (\log \log d)^{-1/2}\le 
    2i (\log d)/(\log d)^{3/8}.
    $$
    The last inequality holds for all large enough $d$ since $i \ge 11|V(H)|/41.$
    \item $i \le 2i(\log d)/(2\log d).$
    \item $(\log d)|V(H)|/2\le 2i(\log d)\times \frac{41}{44}$ since $i \ge 11|V(H)|/41.$
\end{enumerate}
Altogether, this gives 
\begin{equation*}
    |E(H)| + 2i + (\log d)|V(H)|/2\le 
2i\log d\Big(
\frac{1}{(\log d)^{3/8}} +
\frac{1}{2\log d} +
\frac{41}{44}
\Big)\le 
2i\log d
\end{equation*}
for large enough $d.$
\hfill \qed

\subsection{Performance of Low-Degree Polynomials in the \texorpdfstring{$L_\infty$}{Sup-Norm} Model}
\label{sec:linftyperformance}

We now finish the proofs of 
\cref{thm:linftylowdegreeindist,thm:linfinitydetection,prop:linftydimensionestmation}. The arguments are standard applications of \cref{thm:linftysignedcounts} and \cref{cor:linftysignedcycles}.

\subsubsection{Proof of \cref{thm:linfinitydetection}}
Observe that $K_n$ has $\Theta(n^3)$ subgraphs isomorphic to $C_3$ and 
$\Theta(n^4)$ subgraphs isomorphic to $C_4.$  
From \cref{cor:linftysignedcycles}, we conclude that
\begin{equation}
\label{eq:3and4signedlinfty}
    \begin{split}
        &\expect_{\bfG\sim \RGG(n,\dtorus,\unif,\sigma_p^\infty,p)}\Big[\signedcount_{C_3}(\bfG)\Big] = \tilde{\Theta}(n^3p^3/d^2),\\
        & \expect_{\bfG\sim \RGG(n,\dtorus,\unif,\sigma_p^\infty,p)}\Big[\signedcount_{C_4}(\bfG)\Big] = \tilde{\Theta}(n^4p^4/d^2).
    \end{split}
\end{equation}

Clearly, $\expect_{\bfK\sim \ergraph}\Big[\signedcount_{C_3}(\bfK)\Big] = \expect_{\bfK\sim \ergraph}\Big[\signedcount_{C_4}(\bfK)\Big] = 0.$
We now need to compute the respective variances as in \cref{def:signedcountsperformance}.

\paragraph{Triangles.}
With respect to both the $\ergraph$ and $\RGG(n,\dtorus,\unif,\sigma_p^\infty,p)$ distributions, one can expand the variance as follows (e.g. \cite{Liu2021APV}). Denote by $\triangle(i,j,k)$ the labelled triangle on vertices $i,j,k.$ Then, taking into account the different possible overlap patterns of two triangles,\footnote{Abusing notation, we write $\signedweight_{\triangle(1,2,3)}(\bfH)$ for the signed weight of the triangle on labelled vertices $1,2,3.$ Similarly, 
$\signedweight_{\square(1,2,3,4)}(\bfH)$ stands for the signed weight of a 4-cycle on labelled vertices $1,2,3,4.$
}
\begin{equation}
\label{eq:linfty3varexpansion}
\begin{split}
    &\Var[\signedcount_{C_3}(\bfH)] = 
    \Theta(n^3)\times \Var[\signedweight_{\triangle(1,2,3)}(\bfH)] + 
    \Theta(n^4)\times \Cov[\signedweight_{\triangle(1,2,3)}(\bfH), 
    \signedweight_{\triangle(1,2,4)}(\bfH)]\\
    &+ 
    \Theta(n^5) \times  \Cov[\signedweight_{\triangle(1,2,3)}(\bfH), 
    \signedweight_{\triangle(1,4,5)}(\bfH)] + 
    \Theta(n^6) \times  \Cov[\signedweight_{\triangle(1,2,3)}(\bfH), 
    \signedweight_{\triangle(4,5,6)}(\bfH)]].
\end{split}
\end{equation}
It turns out that the product of any two signed weights of subgraphs can be naturally decomposed as a (weighted) sum of 
signed weights of subgraphs. Thus, we can bound the above expression via \cref{thm:linftysignedcounts,cor:linftysignedcycles}. We take this approach in \cref{appendix:linftyperformanceomitted} to show the following.
\begin{equation}
\begin{split}
    \label{eq:threecyclevars}
    &\Var_{\bfK\sim \ergraph}[\signedcount_{C_3}(\bfK)] = \Theta(n^3p^3)\text{, and}\\
    &\Var_{\bfG\sim \RGG(n,\dtorus,\unif,\sigma_p^\infty,p)}[\signedcount_{C_3}(\bfG)] = 
    {\Theta}(n^3p^3)+ \tilde{O}(n^4p^5/d^2).
\end{split}
\end{equation}
This is enough to complete part 2 of \cref{thm:linfinitydetection}. According to \cref{def:signedcountsperformance}, one can distinguish between $\ergraph$ and $\RGG(n,\dtorus,\unif,\sigma_p^\infty,p)$ with high probability using the signed triangle test if and only if 
$$
\Big|\expect_{\bfG\sim \RGG}\signedcount_{C_3}(\bfG)\Big|= 
\omega\Big(
\sqrt{\Var_{\bfK\sim \ergraph}[\signedcount_{C_3}(\bfK)] + 
\Var_{\bfG\sim \RGG}[\signedcount_{C_3}(\bfG)]
}
\Big).
$$
Using \cref{eq:3and4signedlinfty,eq:threecyclevars}, this holds if and only if $d = \tilde{o}((np)^{3/4}).$

\paragraph{4-Cycles.} Similarly, in the case of 4-cycles, one obtains
\begin{equation}
\label{eq:linfty4varexpansion}
\begin{split}
    &\Var[\signedcount_{C_4}(\bfH)] = 
    \Theta(n^4)\times \Var[\signedweight_{\square(1,2,3,4)}(\bfH)]\\
    & + 
    \Theta(n^5)\times \Big( \Cov[\signedweight_{\square(1,2,3,4)}(\bfH), 
    \signedweight_{\square(1,2,3,5)}(\bfH)] + 
    \Cov[\signedweight_{\square(1,2,3,4)}(\bfH), 
    \signedweight_{\square(1,2,4,5)}(\bfH)]\Big)
    \\
    &+ 
    \Theta(n^6) \times \Big( \Cov[\signedweight_{\square(1,2,3,4)}(\bfH), 
    \signedweight_{\square(1,2,5,6)}(\bfH)] + 
    \Cov[\signedweight_{\square(1,2,3,4)}(\bfH), 
    \signedweight_{\square(1,5,2,6)}(\bfH)] \\
    & \quad \quad\quad +  
    \Cov[\signedweight_{\square(1,2,3,4)}(\bfH), 
    \signedweight_{\square(1,5,3,6)}(\bfH)]
    \Big)\\
    & + 
    \Theta(n^7)\times \Cov[\signedweight_{\square(1,2,3,4)}(\bfH), 
    \signedweight_{\square(1,5,6,7)}(\bfH)]\\
    & + \Theta(n^8)\times \Cov[\signedweight_{\square(1,2,3,4)}(\bfH), 
    \signedweight_{\square(6,6,7,8)}(\bfH)].
\end{split}
\end{equation}
Similarly, we show in \cref{appendix:linftyperformanceomitted}, that
\begin{equation}
        \label{eq:fourcyclevars}
    \begin{split}
    & \Var_{\bfK\sim \ergraph}[\signedcount_{C_4}(\bfK)] = \Theta(n^4p^4)\text{ and,}\\
    &\Var_{\bfG\sim \RGG(n,\dtorus,\unif,\sigma_p^\infty,p)}[\signedcount_{C_4}(\bfG)] = 
    {\Theta}(n^4p^4)+ \tilde{O}(n^5p^6/d^2 + n^6p^7/d^3).
    \end{split}
\end{equation}
Again,
$$
\Big|\expect_{\bfG\sim \RGG}\signedcount_{C_4}(\bfG)\Big|= 
\omega\Big(
\sqrt{\Var_{\bfK\sim \ergraph}[\signedcount_{C_4}(\bfK)] + 
\Var_{\bfG\sim \RGG}[\signedcount_{C_4}(\bfG)]
}
\Big)
$$
holds if and only if $d = \tilde{o}(np).$ \hfill \qed

\subsubsection{Proof of \cref{prop:linftydimensionestmation}}
\label{sec:applicationstodimensionestimation}
Low degree polynomial statistics are used in the literature not only for testing, but also for estimation (see, for example, \cite{Schramm_2022}). We illustrate with the concrete example of using signed cycles for estimating the dimension of $\RGG(n,\dtorus,\unif,\sigma_p^\infty,p)$ as in \cref{problem:dimensionestimation}. 

Suppose that $m$ is a small odd number. The expected signed count of $m$-cycles is\footnote{The factor $\frac{(m-1)!}{2}\binom{n}{m}$ is the number of undirected $m$-cycle subgraphs of $K_n.$} is
$$\frac{(m-1)!}{2}\binom{n}{m}\times \Big(p^m(d\lambda^m/(1-\lambda)^m) + {O}(p^md^{2}\lambda^{2m})\Big)$$ by \cref{cor:linftysignedcycles}. 
Therefore, one can estimate ${\lambda}$ from the number of signed $m$-cycles. Under a sufficiently strong concentration of the number of signed $m$-cycles, this could allow one to estimate $d$ as $\lambda\approx (\log 1/p)/d.$ Similarly, one can perform this for small even numbers. We define the success of a low-degree polynomial test for estimating a parameter (in our case, the dimension) in analogy to \cref{def:signedcountsperformance}.

\begin{definition}[Success of Polynomial Statistics for Exact Estimation] 
\label{def:estimationfrompoly}
Given is a family of random graph distributions $(\mathcal{D}_{\theta})_{\theta\in \mathcal{A}}$ over $n$ vertices indexed by a parameter $\theta$ taking values in $\mathcal{A}.$
Let $f(\cdot)$ be a polynomial in the edges of an $n$-vertex graph.
For each $\theta \in \mathcal{A},$ let $\mathcal{M}_\theta\coloneqq \expect_{\bfG\sim \mathcal{D}_\theta}[f(\bfG)].$  
We say that polynomial $f(\cdot)$ succeeds with high probability on exactly recovering $\theta$ if the following property holds. There exists some collection of values $\big\{\mathcal{V}_\theta\big\}_{\theta \in \mathcal{A}}$ such that the intervals
$\displaystyle
    \big\{
    \big[\mathcal{M}_\theta - \mathcal{V}_\theta,\mathcal{M}_\theta + \mathcal{V}_\theta\big], \theta\in \mathcal{A}
    \big\}
$
are disjoint and $\mathcal{V}_\theta =\omega(\Var_{\bfG\sim\distribution_\theta}[f(\bfG)]^{1/2})$ for each $\theta.$
If, on the other hand, no such intervals exist, we say that the 
polynomial $f$ fails in the task of exact estimation.
\end{definition}
The interpretation of this definition is simple. Suppose that the true parameter is $\theta'.$ Then, by Chebyshev's, inequality
with high probability over $\bfG\sim \mathcal{D}_{\theta'},$ it is the case that $f(\bfG)\in  \big[\mathcal{M}_{\theta'} - \mathcal{V}_{\theta'},\mathcal{M}_{\theta'} + \mathcal{V}_{\theta'}\big].$ If the intervals are disjoint, this is the unique interval of the form $\big[\mathcal{M}_\theta - \mathcal{V}_\theta,\mathcal{M}_\theta + \mathcal{V}_\theta\big]$ with this property and, thus, one can find $\theta'.$ It must be noted that this is the implicit definition used in \cite{Bubeck14RGG,friedrich2023dimension} for estimating the dimension of random geometric graph models.

To apply this definition to \cref{problem:dimensionestimation}, we use the variance bounds \cref{eq:threecyclevars,eq:fourcyclevars} and the following simple estimate of $\lambda,$ deferred to \cref{appendix:linftyperformanceomitted}.

\begin{proposition}
\label{prop:exactlambda}
Suppose that $d = \omega(\log 1/p).$ Then, 
\begin{align*}
&\lambda^\infty_p = 
1 - p^{1/d} = \frac{\log 1/p}{d} - \frac{1}{2}\bigg(\frac{\log 1/p}{d}\bigg)^2 + O\Bigg(\bigg(\frac{\log 1/p}{d}\bigg)^3\Bigg),\\
& \lambda^\infty_p/(1 - \lambda^\infty_p) = 
p^{-1/d} - 1 = 
\frac{\log 1/p}{d} + \frac{1}{2}\bigg(\frac{\log 1/p}{d}\bigg)^2 + O\Bigg(\bigg(\frac{\log 1/p}{d}\bigg)^3\Bigg).
\end{align*}
\end{proposition}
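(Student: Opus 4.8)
The plan is to reduce both identities to a single one-variable Taylor expansion. First I would recall the identity $(1-\lambda^\infty_p)^d = p$ established at the start of \cref{sec:clusterhighlevel} (it comes from $p = \prob_{\bfx}[|x_1|_C \le \tau^\infty_p]^d = (\tau^\infty_p)^d$ together with $\tau^\infty_p = 1 - \lambda^\infty_p$). Taking $d$-th roots yields $1 - \lambda^\infty_p = p^{1/d}$, hence the two exact equalities $\lambda^\infty_p = 1 - p^{1/d}$ and $\lambda^\infty_p/(1-\lambda^\infty_p) = p^{-1/d} - 1$ that appear in the statement. After this, everything is a matter of expanding these explicit expressions.

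Next I would set $x \coloneqq (\log 1/p)/d$, so that $p^{1/d} = e^{-x}$, and observe that the hypothesis $d = \omega(\log 1/p)$ is precisely the statement $x = o(1)$, which puts us in the regime where Taylor's theorem with a cubic remainder applies with a uniformly controlled error. For the first expansion I would apply Taylor's theorem (or simply the alternating-series bound, valid since $x \in (0,1)$) to $g(x) = 1 - e^{-x}$ around $0$, giving $g(x) = x - \tfrac12 x^2 + O(x^3)$, and then substitute $x = (\log 1/p)/d$. For the second I would do the same with $h(x) = e^{x} - 1 = x + \tfrac12 x^2 + O(x^3)$, again substituting back; note $h = g/(1-g)$ is consistent with $\lambda^\infty_p/(1-\lambda^\infty_p) = e^{x}-1$.

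The only obstacle — such as it is — is bookkeeping: one must check that the remainder is genuinely $O((\log 1/p)^3/d^3)$ uniformly in $p$ over the allowed range, which follows from the explicit Lagrange remainder bounds $0 \le R(x) \le \tfrac16 x^3$ for $g$ and $0 \le R(x) \le \tfrac13 x^3$ for $h$ on $x \in (0,1)$, together with $x \to 0$. No other ingredient is needed; the whole statement is an immediate consequence of $(1-\lambda^\infty_p)^d = p$ and elementary calculus.
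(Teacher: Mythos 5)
Your proof is correct and follows essentially the same route as the paper: both start from the identity $(1-\lambda^\infty_p)^d = p$, take $d$-th roots to get $1-\lambda^\infty_p = p^{1/d} = e^{-(\log 1/p)/d}$, and then apply the Taylor expansion of $e^{\pm x}$ around $0$ using $x = (\log 1/p)/d = o(1)$. Your extra care with the Lagrange remainder bounds is fine (though the particular constant $1/3$ for $h$ is only valid for $x$ small, which is all that is needed here); the paper simply invokes the Taylor series and leaves the remainder control implicit.
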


We are now ready to evaluate the intervals in which the signed triangle and 4-cycle statistics succeed with high probability in the exact dimension recovery tasks. 

\paragraph{Triangles.} Using \cref{cor:linftysignedcycles} and \cref{prop:exactlambda}, the expected signed count of three cycles in dimension $d$ is 
\begin{align*}
& \mathcal{M}^{C_3}_d \coloneqq  \expect_{\bfG\sim \RGG(n,\dtorus, \unif, \sigma^\infty_p,p)}\big[\signedcount_{C_3}(\bfG)\big] = 
\binom{n}{3}p^3\times \Bigg(d\bigg(\frac{\lambda}{1-\lambda}\bigg)^3 + 
O(d^2\lambda^{6})
\Bigg)\\
& = \binom{n}{3}p^3\times \Bigg(
\frac{(\log 1/p)^3}{d^2} + 
\frac{3}{2} \times \frac{(\log 1/p)^4}{d^3} + 
O\bigg(\frac{\log(1/p)^5}{d^4}\bigg)
\Bigg).
\end{align*}
In particular, this means that 
\begin{equation}
\begin{split}
\label{eq:meandifference3cycle}
&\mathcal{M}^{C_3}_d - \mathcal{M}^{C_3}_{d+1} = 
\expect_{\bfG\sim \RGG(n,\dtorus, \unif, \sigma^\infty_p,p)}\big[\signedcount_{C_3}(\bfG)\big] - 
\expect_{\bfG\sim \RGG(n,\mathbb{T}^{d+1}, \unif, \sigma^\infty_p,p)}\big[\signedcount_{C_3}(\bfG)\big]\\
& = 
\binom{n^3}{p^3}\Bigg(\Theta\bigg(
\frac{(\log(1/p)^3)}{d^3}\bigg) + 
O\bigg(
\frac{(\log(1/p)^4)}{d^4}\bigg)
\Bigg)\\
& = 
\Theta(n^3p^3(\log 1/p)^3/d^3).
\end{split}
\end{equation}
In particular, $\mathcal{M}^{C_3}_{d+1}\le \mathcal{M}^{C_3}_d$ when $d = \omega(\log 1/p).$ Therefore, numbers $\mathcal{V}_d$ with the desired property from \cref{def:estimationfrompoly} exist if and only if for all $d\in [\omega(\log 1/p), M],$
$$
    \mathcal{M}^{C_3}_d - \mathcal{M}^{C_3}_{d+1} = \omega\bigg(\sqrt{
\Var_{\bfG\sim \RGG(n,\dtorus, \unif, \sigma^\infty_p,p)}\big[\signedcount_{C_3}(\bfG)\big] + 
\Var_{\bfG\sim \RGG(n,\mathbf{T}^{d+1}, \unif, \sigma^\infty_p,p)}\big[\signedcount_{C_3}(\bfG)\big]
    }\bigg).
$$
Using \cref{eq:threecyclevars} and \cref{eq:meandifference3cycle}, this is equivalent to 
$$
n^3p^3(\log 1/p)^3/d^3 = 
\tilde{\omega}\Big(
\sqrt{
n^3p^3 + n^4p^5/d^2}
\Big).
$$
One can easily check that this is satisfied if and only if 
$d = \tilde{o}((np)^{1/2}).$

\paragraph{4-Cycles.} In the exact same way we conclude from \cref{cor:linftysignedcycles} and \cref{prop:exactlambda}\footnote{Also, from \cref{claim:1dtoruscounts} we recall $\phi(3) = 2/3,$ even though the exact value of  $\phi(3)$ is irrelevant as long as it is non-zero.}
\begin{equation*}
    \begin{split}
        & \mathcal{M}_d^{C_4} = 
        \expect_{\bfG\sim \RGG(n,\dtorus, \unif, \sigma^\infty_p,p)}\big[\signedcount_{C_4}(\bfG)\big]\\
        & = 
        3\binom{n}{4}p^4\Bigg(
        \phi(3)\frac{(\log 1/p)^3}{d^2} 
        +\frac{3}{2}\phi(3)\frac{(\log 1/p)^4}{d^3} - 
        \frac{(\log 1/p)^4}{d^3} + 
         O\bigg(\frac{(\log 1/p)^5}{d^4}\bigg)
        \Bigg)\\
        & = 
        3\binom{n}{4}p^4\Bigg(
        \frac{2}{3}\frac{(\log 1/p)^3}{d^2} 
        + 
         O\bigg(\frac{(\log 1/p)^5}{d^4}\bigg)
        \Bigg).
    \end{split}
\end{equation*}
Thus, $0\le \mathcal{M}_d^{C_4} - \mathcal{M}_{d+1}^{C_4} = \Theta(n^4p^4(\log 1/p)^3/d^3).$ Finally, by \cref{eq:fourcyclevars}, the condition 
$$
    \mathcal{M}^{C_4}_d - \mathcal{M}^{C_4}_{d+1} = \omega\bigg(\sqrt{
\Var_{\bfG\sim \RGG(n,\dtorus, \unif, \sigma^\infty_p,p)}\big[\signedcount_{C_4}(\bfG)\big] + 
\Var_{\bfG\sim \RGG(n,\mathbf{T}^{d+1}, \unif, \sigma^\infty_p,p)}\big[\signedcount_{C_4}(\bfG)\big]
    }\bigg)
$$ is equivalent to 
$$
n^4p^4(\log 1/p)^3/d^3 = 
\tilde{\omega}\Big(
\sqrt{
n^4p^4 + n^5p^6/d^2 + 
n^6p^7/d^3}
\Big).
$$
One can easily check that this is satisfied if and only if 
$d = \tilde{o}((np)^{2/3}).$ \hfill \qed

\subsubsection{Proof of \cref{thm:linftylowdegreeindist}}
\label{sec:advantagebound}
The proof follows a standard procedure for bounding $\advantage_{\le D}^2 ,$ e.g in \cite{hopkins18}.

Suppose that $n^{-1+\epsilon}\le p \le 1/2$ and 
$d\ge np$ for some absolute constant $\epsilon.$
In particular, this means that $d\ge n^\epsilon$ and $d\ge p^{-\delta}$ for some absolute constant $\delta>0.$ 

Let $D = (\log d)^{5/4}/(\log \log d) = \Theta(\log n/\log \log n).$ Consider the orthonormal basis of $\ergraph$ given by the polynomials
$\sfp_H(\cdot ) \coloneqq  \signedweight_H(\cdot)/(p(1-p))^{|E(H)|/2}$ for all 
subgraphs $H$ of $K_n.$ 
From \cref{sec:prelimcompindist}, we know that
to show statistical indistinguishability with respect to degree $D$ polynomials, we simply need to prove the inequality
$$
\advantage^2_{\le D} - 1\coloneqq 
\sum_{H \; : \; 1\le |E(H)|\le D}\expect_{\bfG\sim \RGG(n,\dtorus, \unif, \sigma^\infty_p,p)}[\sfp_H(\bfG)] ^2 = o(1).
$$
We prove this as follows. First, note that if $H$ has a vertex of degree $1,$ then $\expect_{\bfG\sim \RGG}[\sfp_H(\bfG)] = 0$ as in \cref{rmk:onragfactorization}. Thus, we can assume that there is no such vertex and, so, $3\le |V(H)|\le |E(H)|.$ Using \cref{thm:linftysignedcounts}, we have the following inequality.

\begin{equation}
    \begin{split}
        &\sum_{H \; : \; 3\le |E(H)|\le D}\expect_{\bfG\sim \RGG}[\sfp_H(\bfG)] ^2\\
        & =
        \sum_{H \; : \; 3\le |E(H)|\le D}\frac{1}{(p(1-p))^{|E(H)|}}\expect[\signedweight_H(\bfG)]^2\\
        & \le 
        \sum_{H \; : \; 3\le |E(H)|\le D}\frac{1}{(p(1-p))^{|E(H)|}} p^{2|E(H)|}((\log d)^C/d)^{|V(H)|}\\
        & \le 
        \sum_{H \; : \; 3\le |E(H)|\le D}
        (2p)^{|E(H)|}((\log d)^C/d)^{|V(H)|}\\
        & = 
        \sum_{H \; : \; 3\le |E(H)|, \; |V(H)|\le D^{2/3}}
        (2p)^{|V(H)|}((\log d)^C/d)^{|V(H)|}\\
        &\quad\quad\quad + 
        \sum_{H \; : \; 3\le |E(H)|<D, \; |V(H)|>D^{2/3}}
        (2p)^{|V(H)|}((\log d)^C/d)^{|V(H)|}.
    \end{split}
\end{equation}
We used the fact that $1-p\ge 1/2$ and  $|V(H)|\le |E(H)|.$
Now, we consider the two sums separately.

\paragraph{Case 1) $|V(H)|\le D^{2/3}.$} When $|V(H)| = t,$ there are $\binom{n}{t}$ ways to choose 
$V(H)$ and then, once $V(H)$ is chosen, at most 
$2^{\binom{t}{2}}$ ways to choose 
$E(H).$ This gives
\begin{align*}
    &\sum_{H \; : \; 0<|E(H)|, \; |V(H)|\le D^{2/3}}
        (2p)^{|V(H)|}((\log d)^C/d)^{|V(H)|}\\
    &\le \sum_{t = 3}^{D^{2/3}}
    \binom{n}{t}(2p)^t2^{\binom{t}{2}}((\log d)^C/d)^{t}\\
    & \le \sum_{t = 3}^{D^{2/3}}\Bigg(\frac{np2^t(\log d)^C}{d}\Bigg)^t\\
    & \le 
    \sum_{t = 3}^{D^{2/3}}\Bigg(\frac{np2^{(\log d)^{5/6}}(\log d)^C}{d}\Bigg)^t.
\end{align*}
Clearly, if $d= \max\Big((np)^{1 + o_n(1)}), \omega_n(1)\Big),$ one has 
$$
\Bigg(\frac{np2^{(\log d)^{5/6}}(\log d)^C}{d}\Bigg) = o(1).
$$
Thus, there is exponential decay in the sum and it is of order $o(1).$
\paragraph{Case 2) $|V(H)|\ge D^{2/3}.$}
 When $|V(H)| = t,$ there are $\binom{n}{t}$ ways to choose 
$V(H)$ and then, once $V(H)$ is chosen, at most 
$$\sum_{j = 0}^D\binom{\binom{t}{2}}{j}\le 2\binom{\binom{t}{2}}{|D|}$$ ways to choose 
$E(H).$ This gives
\begin{align*}
    &\sum_{H \; : \; |E(H)|\le D, \; |V(H)|>D^{2/3}}
        (2p)^{|V(H)|}((\log d)^C/d)^{|V(H)|}\\
    &\le 
    \sum_{t = D^{2/3}}^D
    (2p)^t\binom{n}{t}2\binom{\binom{t}{2}}{D}((\log d)^C/d)^{t}\\
    & \le 
    2\sum_{t = D^{2/3}}^D
    (2p)^t\Big(\frac{ne}{t}\Big)^t
    \Big(
    \frac{et^2}{D}
    \Big)^D
    \Big(
    \frac{(\log d)^C}{d}
    \Big)^t\\
    & \le 
    2\sum_{t= D^{2/3}}^D
    (2p)^t
    n^t ((t^2)^{D/t})^t
    \Big(
    \frac{(\log d)^C}{d}
    \Big)^t\\
    & \le 
    2\sum_{t= D^{2/3}}^D
    \Big(
    \frac{2pnt^{2D/t}(\log d)^C}{d}
    \Big)^t\\
    & \le 
    2\sum_{t= D^{2/3}}^D
    \Big(
    \frac{2pnD^{2D^{1/3}}(\log d)^C}{d}
    \Big)^t\\
    & \le 
    2\sum_{t= D^{2/3}}^D
    \Big(
    \frac{2pne^{O\big((\log \log d)(\log d)^{5/12}\big)}(\log d)^C}{d}
    \Big)^t.
\end{align*}
Again, under the same conditions $d= \max\big\{(np)^{1 + o_n(1)}), \omega_n(1)\big\},$ the expression is of order $o(1).$  \hfill \qed

\section{Statistical Indistinguishability in the 
\texorpdfstring{$L_\infty$}{Sup-Norm} Model}
\label{sec:statslinfty}
In this section, we prove \cref{thm:linftyinformationtheory}. Recall condition \eqref{eq:assumption}. Suppose further that $d = \omega(n(\log n)^2).$ As in \cref{sec:statslinfty}, $\tau^\infty_p = 1 -\lambda_p^\infty,$ where $\lambda_p^\infty = \frac{\log (1/p)}{d}(1 + o(1)).$ We will write $\sigma,\lambda,\tau$ instead of $\sigma_p^\infty,\lambda_p^\infty,\tau_p^\infty $ for brevity. We can view $\sigma(\bfx, \bfy)$ as a single argument function of $\bfx - \bfy.$ 

Expanding one of the terms in \cref{eq:Kltensorization}, we obtain
\begin{equation}
    \begin{split}
&
\expect\Big[\Big(1+ 
\frac{\gamma(\bfg)}{p(1-p)}
\Big)^k\Big] =
\expect\Big[\Big(\frac{1-2p}{1-p} + 
\frac{\sigma*\sigma(\bfg)}{p(1-p)}
\Big)^k\Big] = 
\sum_{t = 0}^k
\binom{k}{t}
\Bigg(\frac{1-2p}{1-p}\Bigg)^{k-t}
\frac{\expect[(\sigma*\sigma)^{t}]}{p^t(1-p)^t}\,.
    \end{split}
\end{equation}

We will prove the following bound on the moments of $\sigma*\sigma.$

\begin{claim}
\label{claim:explicitconvolutionmoments}
For all $t\ge 1,t = o(1/\lambda) = o(d/(\log d)^2),$ it holds that
    $\expect[(\sigma*\sigma)^t] = p^{2t}(1 + \Theta(d\lambda^3t^2)).$ Also, $\expect[\sigma*\sigma] = p^{2}.$ 
\end{claim}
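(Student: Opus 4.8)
The plan is to reduce the moment computation to one dimension via the coordinate factorization of $\sigma=\sigma_p^\infty$. Writing $f$ for the self-convolution of the one-dimensional connection $\sigma_1(g)\coloneqq \indicator[|g|_C\le 1-\lambda]$ on $\onetorus$, the identity $\sigma(\bfg)=\prod_{u=1}^d\sigma_1(g_u)$ gives $\sigma*\sigma(\bfg)=\prod_{u=1}^d f(g_u)$, and since the coordinates of $\bfg\sim\unif(\dtorus)$ are independent,
\[
\expect\big[(\sigma*\sigma)^t\big]=\Big(\expect_{g\sim\unif(\onetorus)}\big[f(g)^t\big]\Big)^d=:M_t^d.
\]
First I would compute $f$ explicitly. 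Since $1-\lambda$ is within $\lambda$ of the diameter of $\onetorus$, the set $\{z:|z|_C\le 1-\lambda\}$ is the complement of an arc of length $2\lambda$, so $f(g)$ equals $1$ minus the normalized length of the union of two arcs of length $2\lambda$ whose centers lie at distance $|g|_C$; a short case analysis on whether the two arcs overlap yields
\[
f(g)=(1-2\lambda)+\big(\lambda-\tfrac12|g|_C\big)_+ ,
\]
so $f$ takes values in $[1-2\lambda,\,1-\lambda]$.

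Next I would compute $M_t$. Writing $h(g)\coloneqq(\lambda-\tfrac12|g|_C)_+$ and using that $|g|_C$ is uniform on $[0,1]$ when $g\sim\unif(\onetorus)$, one gets $\expect[h^j]=\tfrac{2\lambda^{j+1}}{j+1}$ for $j\ge 1$, so expanding $f^t=((1-2\lambda)+h)^t$ by the binomial theorem gives
\[
M_t=(1-2\lambda)^t+\sum_{j=1}^t\binom{t}{j}(1-2\lambda)^{t-j}\,\frac{2\lambda^{j+1}}{j+1}.
\]
For $t=1$ this collapses to $M_1=(1-\lambda)^2$, hence $\expect[\sigma*\sigma]=M_1^d=(1-\lambda)^{2d}=p^2$, the second assertion. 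For general $t$, I would pull $(1-2\lambda)^t$ out of both $M_t$ and $(1-\lambda)^{2t}=\big((1-2\lambda)+\lambda^2\big)^t$, so that $\expect[(\sigma*\sigma)^t]=p^{2t}(A_t/B_t)^d$ with
\[
A_t\coloneqq 1+\sum_{j\ge 1}\binom{t}{j}\frac{2\lambda^{j+1}}{(j+1)(1-2\lambda)^j}\quad\text{and}\quad B_t\coloneqq\Big(1+\tfrac{\lambda^2}{1-2\lambda}\Big)^t.
\]
Using $t\lambda=o(1)$, both $A_t-1$ and $B_t-1$ are $O(t\lambda^2)$, hence $\log(A_t/B_t)=(A_t-1)-(B_t-1)+O(t^2\lambda^4)$; the $j=1$ term of $A_t-1$ equals the leading term $\tfrac{t\lambda^2}{1-2\lambda}$ of $B_t-1$ and cancels, the tail $j\ge 3$ of $A_t-1$ and the remaining terms of $B_t-1$ are $O(t^3\lambda^4)$, and the surviving $j=2$ term is $\binom{t}{2}\tfrac{2\lambda^3}{3}(1-2\lambda)^{-2}=\tfrac{t(t-1)}{3}\lambda^3(1+O(\lambda))$. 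Thus $\log(A_t/B_t)=\tfrac{t(t-1)}{3}\lambda^3(1+o(1))$, which is positive and $\Theta(t^2\lambda^3)$ for $t\ge 2$.

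Finally, raising to the $d$-th power, $\expect[(\sigma*\sigma)^t]=p^{2t}\exp\big(d\log(A_t/B_t)\big)=p^{2t}\exp\big(\Theta(dt^2\lambda^3)\big)$; under the stated hypothesis $t=o(1/\lambda)=o(d/(\log d)^2)$ — together with $\lambda=O(\log d/d)$, which follows from $\lambda=\tfrac{\log(1/p)}{d}(1+o(1))$ and Assumption~\eqref{eq:assumption} — one checks $dt^2\lambda^3=o(1)$, so $\exp(\Theta(dt^2\lambda^3))=1+\Theta(dt^2\lambda^3)$, giving the claim for $t\ge 2$ and the exact identity $M_1^d=p^2$ for $t=1$. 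The main obstacle is the bookkeeping in the penultimate step: the $\lambda^2$-order parts of $M_t$ and of $(1-\lambda)^{2t}$ are each of size $\Theta(t\lambda^2)$, far exceeding the target error $\Theta(t^2\lambda^3)$, so one must verify that they cancel exactly and that the next-order terms combine without further cancellation — which is precisely why it pays to factor out the common $(1-2\lambda)^t$ and compare $A_t$ with $B_t$ term by term rather than expanding $M_t$ and $(1-\lambda)^{2t}$ directly in $\lambda$.
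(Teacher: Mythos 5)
Your proof is correct and follows the same high-level route as the paper: factorize $\sigma*\sigma$ over coordinates, compute the one-dimensional overlap function $f$ explicitly (your formula $f(g)=(1-2\lambda)+(\lambda-\tfrac12|g|_C)_+$ is exactly the paper's piecewise description), reduce the $t$-th moment to $M_t^d$ with $M_t=\expect[f^t]$, and show $M_t=(1-\lambda)^{2t}\bigl(1+\Theta(t^2\lambda^3)\bigr)$ before raising to the $d$-th power. The one genuine difference is in the final algebraic step. The paper (Appendix E) expands $M_t$ and $(1-\lambda)^{2t}$ directly in powers of $\lambda$, matches the coefficients of $\lambda^0,\lambda^1,\lambda^2,\lambda^3$ term by term, and then bounds the residual sum over binomial indices $i\ge 4$ with a two-case argument (small $i$ versus $i\ge 4\log\tfrac1\lambda$). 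You instead factor $(1-2\lambda)^t$ out of both quantities and compare $A_t/B_t$ via logarithms; this forces the exact cancellation of the $j=1$ terms (the dominant $\Theta(t\lambda^2)$ contributions on each side), so the surviving leading term $\binom{t}{2}\tfrac{2\lambda^3}{3(1-2\lambda)^2}=\tfrac{t(t-1)}{3}\lambda^3(1+O(\lambda))$ falls out immediately, with the remaining tails bounded by $O(t^3\lambda^4)+O(t^2\lambda^4)=o(t^2\lambda^3)$ using $t\lambda=o(1)$. This is a cleaner organization of the same computation: it buys you a one-paragraph error estimate in place of the paper's coefficient-matching and case analysis, and it makes transparent that the leading coefficient is $\tfrac{t(t-1)}{3}>0$ for $t\ge 2$ while $t=1$ collapses to the exact identity $M_1^d=p^2$. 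Both routes then conclude by checking $dt^2\lambda^3=o(1)$ under the stated hypotheses so that $\exp$ can be linearized.
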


We will first show how \cref{claim:explicitconvolutionmoments} yields \cref{thm:linftyinformationtheory} before proving \cref{claim:explicitconvolutionmoments}.

\begin{equation}
    \begin{split}
&\expect\Big[\Big(\frac{1-2p}{1-p} + 
\frac{\sigma*\sigma(\bfg)}{p(1-p)}
\Big)^k\Big]\\
& = 
\sum_{t = 0}^k
\binom{k}{t}\Big(\frac{1-2p}{1-p}\Big)^{k-t}
\frac{p^{2t}(1 + \Theta(d\lambda^3t^2))}{p^t(1-p)^t}\\
& = 
\sum_{t = 0}^k
\binom{k}{t}\Big(\frac{1-2p}{1-p}\Big)^{k-t}\Big(\frac{p}{1-p}\Big)^t + 
d\lambda^3
\Theta\Bigg(
\sum_{t = 0}^k
\binom{k}{t}\Big(\frac{1-2p}{1-p}\Big)^{k-t}\Big(\frac{p}{1-p}\Big)^tt^2
\Bigg)\\
& = 
1  + d\lambda^3 \Theta\Bigg(\frac{k(1-2p)^{k-1}p}{(1-p)^k} + 
k(k-1)\frac{p^2}{(1-p)^2}\sum_{t = 2}^k \binom{k-2}{t-2}
\Big(\frac{1-2p}{1-p}\Big)^{k-t}\Big(\frac{p}{1-p}\Big)^{t-2}
\Bigg)\\
& =  1 + d\lambda^3 \Theta(kp + k^2p^2) = 1 + \tilde{\Theta}(d^{-2}kp + d^{-2}k^2p^2).
    \end{split}
\end{equation}
Going back to \cref{eq:Kltensorization},
\begin{align*}
    & \sum_{k = 0}^{n-1}\log  \expect\bigg[\Big(1 + \frac{\gamma(\bfx)}{p(1-p)}\Big)^k\bigg] = 
    \sum_{k = 0}^{n-1}
    \log  \expect\bigg[1 + \tilde{\Theta}(d^{-2}kp + d^{-2}k^2p^2)\bigg]\\
    & \le 
    \tilde{\Theta}\Big(
    d^{-2}p\sum_{k = 0}^{n-1}k + 
    d^{-2}p^2\sum_{k = 0}^{n-1}k
    \Big) = 
    \tilde{\Theta}\Big(
    d^{-2}pn^2 + 
    d^{-2}p^2n^3
    \Big),
\end{align*}
where we used the fact that $t\le k \le n = o(d/\log d).$
The last expression is of order $o(1)$
whenever $p \ge 1/n, d \ge n^{3/2}p$ with which the poof follows.
\begin{proof}[Proof of \cref{claim:explicitconvolutionmoments}] First, note that 
\begin{equation}
    \begin{split}
\sigma*\sigma(\bfx) 
& = 
\expect_\bfg[\sigma(\bfg)\sigma(\bfx-\bfg)] = 
\expect\Bigg[
\prod_{i = 1}^d
\indicator[|g_i|_C\le 1- \lambda]
\indicator[|x_i - g_i|_C\le 1 - \lambda]
\Bigg]\\
& = 
\prod_{i = 1}^d 
\expect\Big[
\indicator[|g_i|_C\le 1- \lambda]
\indicator[|x_i - g_i|_C\le 1 - \lambda]
\Big].
\end{split}
\end{equation}
Now, as is easy to see from \cref{Fig:xfarfromorigindiag,Fig:xclosetoorigindiag},
\begin{equation}
        f(x)\coloneqq \prob_{g\sim \unif(\mathbb{T}^1)}[|g|_C\le 1-\lambda, |x - g|_C\le 1 - \lambda] = 
    \begin{cases}
        1 - 2\lambda\quad \text{when} |x|_C\ge 2\lambda,\\[2pt]
        1 - \lambda - |x|_C/2\quad \text{when}  |x|_C\le 2\lambda.
    \end{cases}
\end{equation}

\begin{figure}[!htb]
   \begin{minipage}{0.47\textwidth}
     \centering
     \includegraphics[width=0.4\linewidth]{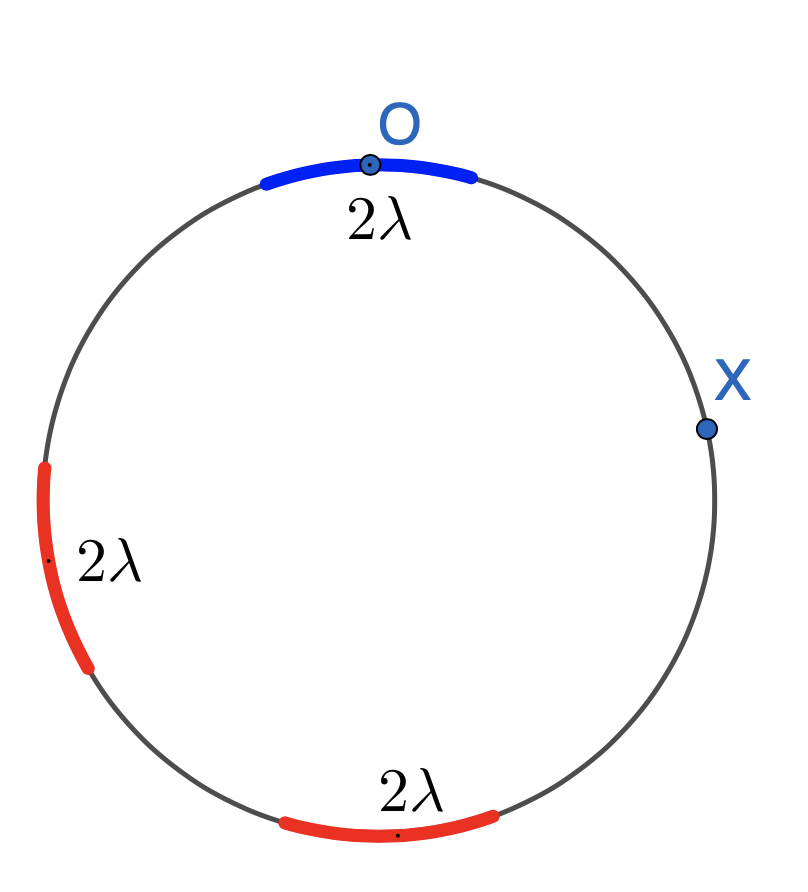}
     \caption{\footnotesize{In the case when $x$ is far from the origin (i.e., not within distance $2\lambda$), the antipodal arcs of length $2\lambda$ of $0$ and $x$ (colored in red) do not intersect. Thus, $g$ should be anywhere outside of the two segments of total length $4\lambda$ to satisfy 
     $|g|_C\le 1-\lambda, |x - g|_C\le 1 - \lambda.$
     }}\label{Fig:xfarfromorigindiag}
   \end{minipage}\hfill
   \begin{minipage}{0.47\textwidth}
     \centering
     \includegraphics[width=0.4\linewidth]{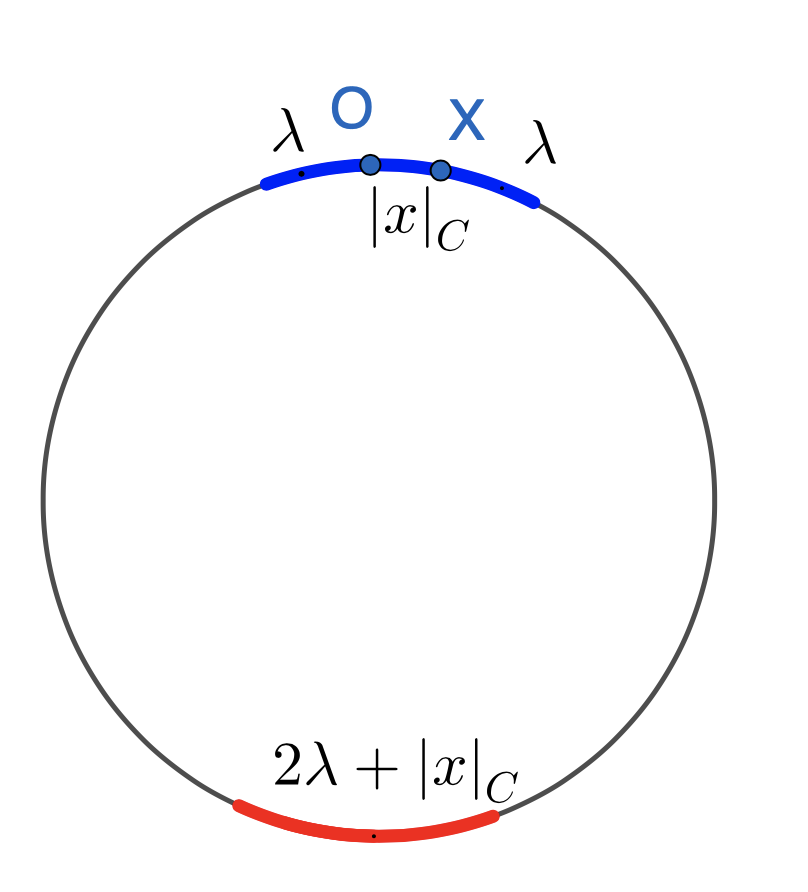}\caption{\footnotesize{In the case when $x$ is close to the origin (i.e., within distance $2\lambda$), the antipodal arcs of length $2\lambda$ of $0$ and $x$ (colored in red) intersect. Thus, $g$ should be anywhere outside of the intersection of the two segments of total length $2\lambda + |x|_C$ to satisfy 
     $|g|_C\le 1-\lambda, |x - g|_C\le 1 - \lambda.$}}\label{Fig:xclosetoorigindiag}
   \end{minipage}\hfill
\end{figure}

Thus, 
$\displaystyle
\sigma*\sigma(\bfx) = 
\prod_{i = 1}^d f(x_i).
$ 
It follows that \begin{equation}
    \begin{split}
    & \expect[(\sigma*\sigma(\bfx))^t] = 
    \expect\Big[(\prod_{i=1}^df(x_i))^t\Big] = 
    \expect[f(x_i)^t]^d\\
    & = \Bigg(
\int_0^1f(x)^t
    \Bigg)^d =\Big( (1-2\lambda)\times (1 - 2\lambda)^t + 
    \int_{0}^{2\lambda}
    (1-\lambda-s/2)^tds
    \Big)^d \\
    & 
    = \Big( (1 - 2\lambda)^{t+1} + 
    2\frac{(1-\lambda)^{t+1} - (1-2\lambda)^{t+1}}{(t+1)}
    \Big)^d \\
    & = \Big(
    \frac{2}{t+1}(1-\lambda)^{t+1} + 
    \frac{t-1}{t+1}(1-2\lambda)^{t+1} 
    \Big)^d.
    \end{split}
\end{equation}
A simple calculation, deferred to \cref{appendix:linftystatscalculation}, shows that the last expression is $p^{2t}(1 + \Theta(d\lambda^3t^2))$. \end{proof}

\section{Statistical Indistinguishability in the \texorpdfstring{$L_q$}{Lq} Model}
\label{sec:lqproof}
Here, we prove \cref{thm:torussmallq}. We will give in full detail the proof in the case $q = o(d/\log d)$ and explain the necessary changes in the (much simpler) case $q = \Omega(d/\log d).$ The latter is technically much simpler and does not use any ideas which do not appear in the case $q = o(d/\log d).$ 
\subsection{The Proof for Small \texorpdfstring{$q$}{q}}
\paragraph{Further Notation.}
Throughout, we fix $q\ge 1, q  = o(d\log^{-1}d)$ and consider $\RGG(n, \dtorus,\unif,\sigma^q_{1/2},1/2).$ For simplicity of notation, we denote $\tau^q_{1/2}$ simply by $\tau$ and $\sigma^q_{1/2}$ by $\sigma.$ Note that $\sigma,$ when viewed as a single argument function, can be equivalently defined as the indicator of 
$B_{L_q(\dtorus),\tau}(\bfnull),$ where $B_{L_q(\dtorus),\tau}(\bfx)$ is the $L_q$ ball of radius $\tau$ on $\dtorus$ centered at $\bfx.$ Under this notation,
\begin{equation}
\gamma(\bfg) = 
\expect_{\bfz}\Bigg[\bigg(\sigma(\bfg-\bfz)-\frac{1}{2}\bigg)\bigg(\sigma(\bfz)-\frac{1}{2}\bigg)\Bigg]
=\expect_\bfz[\sigma(\bfg - \bfz)\sigma(\bfz)]
- \frac{1}{4} = 
\big|B_{L_q(\dtorus),\tau}(\bfg)\cap 
     B_{L_q(\dtorus),\tau}(\bfnull)\big| - \frac{1}{4}.
\end{equation}

\paragraph{Proof Strategy.}
Our main goal will be to prove that 
\begin{equation}
\label{eq:momentnorms}
\expect_{\bfg\sim\unif(\dtorus)}\big[|\gamma(\bfg)|^k\big]^{1/k}\le C\frac{k}{\sqrt{dq}}.
\end{equation}
for an absolute constant $C.$
This is sufficient to conclude \cref{thm:torussmallq} for the following reason. Using \cref{eq:Kltensorization} and the fact $\expect_{\bfg\sim\unif(\dtorus)}[\gamma(\bfg)] = 0,$ 

\begin{equation}
\label{eq:smallqboundingKLdivergence}
\begin{split}\KL\Big(
\RGG(n, \dtorus,\unif,\sigma^q_{1/2},1/2)\|
\ergraphhalf
\Big) 
&\le \sum_{k = 0}^{n-1}
\log \Big(\expect_{\bfg\sim \unif{(\dtorus)}}
[(1 + 4\gamma(\bfg))^k]\Big)\\
&= \sum_{k = 0}^{n-1}
\log \Big(1 +\sum_{t\ge 2}\binom{k}{t}4^t\expect[\gamma(\bfg)^t]
\Big)\\
& \le 
\sum_{k = 0}^{n-1}
\sum_{t = 2}^k
\binom{k}{t}4^t\expect[\gamma(\bfg)^t]\\
& \le 
\sum_{k = 0}^{n-1}
\sum_{t = 2}^k
\binom{k}{t}4^t\expect[|\gamma(\bfg)|^t]\\
&\le 
n\sum_{k = 0}^{n}
\binom{n}{k}4^k\expect[|\gamma(\bfg)|^k]\\
& \le  
n
\sum_{k = 2}^n
\left(\frac{ne}{k}\right)^k4^k C^k \frac{k^k}{(dq)^{k/2}}\\
&\le  
n 
\sum_{k = 2}^n
\left(\frac{4eCn}{\sqrt{dq}}\right)^k\\
& =  
n \times  O\bigg(\frac{n^2}{dq}\bigg) = 
O\bigg(\frac{n^3}{dq}\bigg) = o(1).
\end{split}
\end{equation}
We used the fact that $dq = \omega(n^3)$ to conclude that there is exponential decay in $\displaystyle \sum_{k = 2}^n
\left(\frac{4eCn}{\sqrt{dq}}\right)^k.$

In light of \cref{claim:bernsteinmcdiarmidrag}, to prove 
\cref{eq:momentnorms}, it is enough to show the following two statements:
\begin{enumerate}
    \item \emph{Small Marginal Increments:} $\|D_i\gamma\|_\infty = O(\frac{1}{\sqrt{dq}})$ for all $i.$
    \item \emph{Small Marginal Variances:} $\|\Var_i[\gamma]\|_\infty = O(\frac{1}{d^2q})$ for all $i.$
\end{enumerate}
Due to symmetry, it is enough to prove the statements for $d = i.$
In deriving those two quantities, we will need the following anticoncentration result.

\paragraph{Anticoncentration of random $L_q$-distances.}

\begin{claim}
\label{claim:lqanticoncentration}
Suppose that 
$U_1, U_2, \ldots, U_{d-1}$ are iid $\unif([0,1])$ random variables and $q = o(d/\log d).$ Let $F$ be the CDF of $\sum_{i = 1}^{d-1}U_i^q.$ Then, for $\psi(\ell)\coloneqq F(\tau^q) - F(\tau^q - \ell^q) = F([\tau^q - \ell^q, \tau^q]),$ we have 
\begin{equation}
\int_0^1
\psi(\ell)d\ell = 
O\Big(
\frac{1}{\sqrt{dq}}
\Big)\quad \text{ and }\quad
 \int_0^1
\psi(\ell)^2d\ell = O\Big(\frac{1}{d}\Big).
\end{equation}
\end{claim}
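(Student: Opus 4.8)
The plan is to reduce the whole claim to a single uniform density bound for $S := \sum_{i=1}^{d-1} U_i^q$. Precisely, I would show that $S$ has a density $f_S$ (which exists since $d\ge 2$) with $\sup_s f_S(s) = O(\sqrt{q/d})$. Granting this and recalling that $F$ is the CDF of $S$, one has $\psi(\ell) = F(\tau^q)-F(\tau^q-\ell^q) = \int_{\tau^q-\ell^q}^{\tau^q} f_S(s)\,ds \le \ell^q \sup_s f_S(s)$, hence
\[
\int_0^1 \psi(\ell)\,d\ell \le \frac{\sup_s f_S(s)}{q+1} = O\!\left(\frac{1}{q+1}\sqrt{\frac{q}{d}}\right) = O\!\left(\frac{1}{\sqrt{dq}}\right),
\]
\[
\int_0^1 \psi(\ell)^2\,d\ell \le \frac{\big(\sup_s f_S(s)\big)^2}{2q+1} = O\!\left(\frac{q}{d(2q+1)}\right) = O\!\left(\frac{1}{d}\right),
\]
which is exactly the claim. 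Note that it is genuinely the $L^\infty$ density bound, and not merely a L\'evy-type concentration bound, that is needed: one must retain the factor $\ell^q$ for small $\ell$, since the ``flat'' estimate $\psi(\ell)=O(\sqrt{q/d})$ alone loses a factor of $q$ in the first integral.

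To establish $\sup_s f_S(s) = O(\sqrt{q/d})$ I would expose $\Theta(d/q)$ well-spread summands by conditioning. The obstruction is that each $U_i^q$ has an \emph{unbounded} density (a spike proportional to $v^{1/q-1}$ at $v=0$), so the Bobkov--Chistyakov bound on the maximal density of a sum of independent bounded-density variables cannot be fed the $U_i^q$ directly. The remedy is to restrict to the ``good'' coordinates: let $J := \{\, i\in[d-1] : U_i \ge 1-\tfrac1q \,\}$ and condition on the set $J$ together with the values $(U_i)_{i\notin J}$. Given this data, $S = c_J + \sum_{i\in J} W_i$ where $c_J$ is a constant and the $W_i$ are i.i.d.\ copies of $U^q$ for $U\sim\unif[1-\tfrac1q,1]$; a direct computation gives the density of $W_i$ as $w\mapsto w^{1/q-1}$ on $[(1-\tfrac1q)^q,1]$, which is bounded by the absolute constant $4$ (using $(1-\tfrac1q)^q \ge \tfrac14$ for $q\ge 2$, while $q=1$ is trivial since $U^q=U$ is already $\unif[0,1]$). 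Bobkov--Chistyakov applied to $\sum_{i\in J}W_i$ then yields, conditionally, $\big\|f_{S\mid J,(U_i)_{i\notin J}}\big\|_\infty = O(1/\sqrt{|J|})$.

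Finally I would average over $J$. Since $|J|\sim\mathrm{Binomial}(d-1,1/q)$ and the hypothesis $q=o(d/\log d)$ forces $(d-1)/q=\omega(\log d)$, a Chernoff bound gives $\prob[\,|J|<\tfrac{d-1}{2q}\,]\le e^{-\Omega(d/q)} = d^{-\omega(1)}$. On the complementary ``good'' event, for any interval $I$ we bound $\prob[S\in I\mid J,(U_i)_{i\notin J}]\le |I|\cdot\big\|f_{S\mid J,(U_i)_{i\notin J}}\big\|_\infty = O(|I|/\sqrt{|J|}) = O\!\big(|I|\sqrt{q/d}\big)$, using $|J|\ge\tfrac{d-1}{2q}$; combining with the bad event,
\[
\prob[S\in I] \le O\!\left(\sqrt{\frac{q}{d}}\right)|I| + d^{-\omega(1)}.
\]
Taking $I=(\tau^q-\ell^q,\tau^q]$ gives $\psi(\ell)\le O(\sqrt{q/d})\,\ell^q + d^{-\omega(1)}$, and since $d^{-\omega(1)}\ll d^{-1}\le(dq)^{-1/2}$ (the last inequality because $q=o(d)$), this extra additive term is negligible after integrating against $d\ell$ and against $\psi(\ell)\,d\ell$. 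Hence the two displayed integral estimates survive and the claim follows.

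I expect the real work to be concentrated in the middle step: extracting the sharp $\Theta(\sqrt{q/d})$ density bound despite the unbounded summand densities. The conditioning device above — keeping $\Theta(d/q)$ of the coordinates, each now carrying an $O(1)$ density — is the key idea, and it is precisely here that the Bobkov--Chistyakov inequality must be re-derived so as to accommodate the unbounded-density setting; the elementary parts (the reduction to the density bound, the density computation for $W_i$, the Chernoff tail for $|J|$, and the final integration) are routine by comparison.
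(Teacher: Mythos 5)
Your proposal is correct and is essentially the paper's argument in a slightly more hands-on form: the paper (in Appendix~D) packages your "condition on the good set $J$, apply Bobkov--Chistyakov to the $\Theta(d/q)$ bounded-density summands, handle the $d^{-\omega(1)}$ bad event via Chernoff" device as a general lemma about random variables that are TV-close to one with bounded density, using the truncation threshold $U^q\ge 1/2$ in place of your $U\ge 1-1/q$. Both routes land on the same intermediate small-ball estimate $\prob\big[\sum U_i^q\in[a,b]\big]\le \exp(-\Omega(d/q)) + O\big(\sqrt{q/d}\,\big)(b-a)$ (the paper's Proposition~\ref{prop:inervalprobsmallq}) and then integrate exactly as you do.
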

The proof is delayed to 
\cref{appendix:anticoncentration}. We derive it applying (an approximate version of) a bound on the sup-norm of the density of a convolution of several random variables with given sup-norms of their densities due to Bobkov and Chistyakov \cite{Bobkov14}. This nearly captures the setting of the above claim since the result would follow from a small enough density of $\sum_{i = 1}^{d-1}U_i^q.$ We cannot directly apply the result from \cite{Bobkov14}, however, as the density of $U_i^q$ diverges around $0.$ A simple argument based on representing $U_i^q$ as the mixture of two random variables, one of which having uniformly small density, suffices. We derive \cref{claim:lqanticoncentration} via the following bound, which can be viewed as a strengthening of the Berry-Esseen theorem for arbitrarily small intervals. \

\begin{proposition}
\label{prop:inervalprobsmallq}
For any interval $[a,b],$ 
$\displaystyle
\prob\bigg[\sum_{i = 1}^{d-1}U_i^q\in [a,b]\bigg]\le
\exp(- \Omega(d/q)) + 
(b-a)\times \sqrt{q/d}.
$  
\end{proposition}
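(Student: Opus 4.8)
The plan is to reduce the statement to a uniform density bound: I would show that, outside an event of probability $\exp(-\Omega(d/q))$, the conditional density of $S:=\sum_{i=1}^{d-1}U_i^q$ is $O(\sqrt{q/d})$, and then integrate over $[a,b]$ (since $S\ge 0$ one may assume $a\ge 0$). The two inputs are the Bobkov--Chistyakov bound \cite{Bobkov14} --- if $Y_1,\dots,Y_m$ are independent with densities bounded by $M$, then the density of $Y_1+\dots+Y_m$ is at most $CM/\sqrt m$ for an absolute constant $C$ --- together with the elementary fact that convolving with an independent variable cannot increase the supremum of a density.

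The crucial step is the choice of truncation level. I would set $\theta:=1-1/q$ and split each $U_i$ according to whether $U_i>\theta$. Conditioned on $U>\theta$, the variable $U^q$ has the law of $V^q$ for $V\sim\unif([\theta,1])$, whose density is $\tfrac{1}{q(1-\theta)}v^{1/q-1}$ on $[\theta^q,1]$ and hence is bounded by $M:=\theta^{1-q}/(q(1-\theta))=(1-1/q)^{1-q}=O(1)$; at the same time $\prob[U>\theta]=1-\theta=1/q$ is not too small. This is the ``mixture of two random variables, one with uniformly small density'' mentioned in the introduction --- note that a fixed constant threshold would be useless here, since then $M$ would be exponential in $q$. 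Writing $S_{\mathrm{good}}:=\{i\in[d-1]:U_i>\theta\}$ and $N:=|S_{\mathrm{good}}|\sim\mathrm{Bin}(d-1,1/q)$, a Chernoff bound gives $\prob[N<(d-1)/(2q)]\le \exp(-\Omega(d/q))$, using $q=o(d/\log d)$ so that $(d-1)/q\to\infty$.

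Then I would condition on the set $S_{\mathrm{good}}$ (equivalently, on all the indicators $\indicator[U_i>\theta]$). Given it, the variables $(U_i^q)_{i\in S_{\mathrm{good}}}$ are independent with densities $\le M$, and $S$ equals $\sum_{i\in S_{\mathrm{good}}}U_i^q$ plus an independent remainder; so by the two inputs above the conditional density of $S$ is $\le CM/\sqrt N$, which on the event $N\ge(d-1)/(2q)$ is $O(M\sqrt{q/d})=O(\sqrt{q/d})$. Combining,
\[
\prob\big[S\in[a,b]\big]\;\le\;\prob\big[N<\tfrac{d-1}{2q}\big]\;+\;\expect\big[\indicator_{\{N\ge (d-1)/(2q)\}}\,\prob[S\in[a,b]\mid S_{\mathrm{good}}]\big]\;\le\;\exp(-\Omega(d/q))+(b-a)\cdot O\big(\sqrt{q/d}\big),
\]
which is the claimed estimate (the absolute constant in front of $(b-a)\sqrt{q/d}$ can be absorbed, or shaved by tuning $\theta=1-c/q$).

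The hard part will be only that first step: recognizing that the threshold must be $\theta=1-\Theta(1/q)$ in order to balance ``$U^q\mid U>\theta$ has bounded density'' against ``$\prob[U>\theta]$ is large enough that $N\gtrsim d/q$ with overwhelming probability''. Once that is in place the rest is a routine conditioning argument plus Bobkov--Chistyakov; the ``approximate'' form of the latter that tolerates unbounded individual summands is exactly the conditioning-on-$S_{\mathrm{good}}$ device above, which I would isolate as a standalone lemma (this is what \cref{appendix:anticoncentration} does).
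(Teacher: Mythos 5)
Your proposal is correct and follows essentially the same route as the paper: split each $U_i^q$ into a piece with $O(1)$ density occurring with probability $\Theta(1/q)$ (the paper thresholds directly on $U_i^q\ge 1/2$, which corresponds to $U_i\ge(1/2)^{1/q}=1-\Theta(1/q)$, matching your $\theta=1-1/q$), use a Chernoff bound to get $\gtrsim d/q$ ``good'' coordinates with probability $1-\exp(-\Omega(d/q))$, and then apply Bobkov--Chistyakov to that subset. The only difference is packaging: the paper isolates this as a standalone mixture lemma (\cref{lem:TVdensitylemma} and \cref{cor:smallball}) about random variables TV-close to one with bounded density, whereas you condition directly on the set of good indices --- but the underlying decomposition and estimates coincide.
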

\cref{cor:smallball} generalizes this result to other random variables besides $U^q.$

\subsubsection{Bounding Marginal Increments}
For any fixed $\bfg_{-d},$
\begin{equation}
\begin{split}
    D_d\gamma(\bfg_{-d})\; & =  
    \sup_{\bfg^M_d}
    \gamma(\bfg_{-d},\bfg^M_d) - 
    \inf_{\bfg^m_d}
    \gamma(\bfg_{-d},\bfg^m_d)\\
    & =  
    \sup_{\bfg^M_d}\big|B_{L_q(\dtorus),\tau}(\bfg_{-d},\bfg^M_d)\cap
     B_{L_q(\dtorus),\tau}(\bfnull)\big| - 
     \inf_{\bfg^m_d}\big|B_{L_q(\dtorus),\tau}(\bfg_{-d},\bfg^m_d)\cap
     B_{L_q(\dtorus),\tau}(\bfnull)\big|\\
     & = 
     |B_{L_q(\dtorus),\tau}(\bfg_{-d},0)\cap
     B_{L_q(\dtorus),\tau}(\bfnull)\big| - 
     \big|B_{L_q(\dtorus),\tau}(\bfg_{-d},1)\cap
     B_{L_q(\dtorus),\tau}(\bfnull)\big|\\
     & \le 
     |
     B_{L_q(\dtorus),\tau}(\bfg_{-d},0)\backslash
     B_{L_q(\dtorus),\tau}(\bfg_{-d},1)
     |\\
     & = 
     |B_{L_q(\dtorus),\tau}(\bfnull)\backslash
     B_{L_q(\dtorus),\tau}(\bfnull_{-d},1)|.
\end{split}
\end{equation}
Now, observe that a point $(h_1, h_2, \ldots, h_{d-1}, h_{d})$ is in $B_{L_q(\dtorus),\tau}(\bfnull)\backslash
     B_{L_q(\dtorus),\tau}(\bfnull_{-d},1)$ if and only if 

\begin{equation*}
    \begin{split} \sum_{i = 1}^{d-1} |h_i|_C^{q} \le  
\tau^q - |h_d|_C^q\quad \text{ and }\quad
 \sum_{i = 1}^{d-1} |h_i|_C^{q} \ge  
\tau^q - |1 - h_d|_C^q = 
\tau^q - (1-|h_d|_C)^q.
    \end{split}
\end{equation*}
Clearly, one needs to have $|h_d|_C \in [0,1/2]$ for this event to occur.  
Since each $|h_i|_C$ is uniformly distributed on $[0,1],$ we conclude that the probability of this event is 
\begin{equation}
\label{eq:supnormclaimuse}
    \begin{split}
& \int_0^{1/2}\prob\Bigg[\tau^q -(1-\ell)^q\le \sum_{i = 1}^{d-1} |h_i|_C^{q}\le \tau^q - \ell^q\Bigg]d\ell\\
& \le  
\int_{0}^1\prob\Bigg[\tau^q -\ell^q\le \sum_{i = 1}^{d-1} |h_i|_C^{q}\le \tau^q \Bigg]d\ell\\
 & = 
 \int_0^1 F([\tau^q - \ell^q, \tau^q])d\ell = 
 \int_0^1 \psi(\ell)d\ell = 
 O\left(\frac{1}{\sqrt{qd}}\right),
    \end{split}
\end{equation}
as desired. 
\subsubsection{Bounding Marginal Variances}
For the second moment, we will first rewrite $\gamma.$ By definition,
\begin{equation}
    \begin{split}
\gamma(\bfg) + \frac{1}{4} = &
\int_{B_{L_q(\dtorus), \tau}(\bfnull)}
\indicator\bigg[\bfz \in B_{L_q(\dtorus), \tau}(\bfg)\bigg]d\bfz\\
= & 
\int_{B_{L_q(\dtorus), \tau}(\bfnull)}
\indicator\Bigg[\sum_{i = 1}^d |g_i -z_i|_C^q \le \tau^q\Bigg]d\bfz.
    \end{split}
\end{equation}
Now, fix $\bfg_{-d}$ and denote 
\begin{equation*}
    \kappa(u) = 
\int_{B_{L_q(\dtorus), \tau}(\bfnull)}
\indicator\bigg[\sum_{i = 1}^{d-1} |g_i - z_i|_C^q + 
|u-z_d|_C^q \le \tau^q\bigg]d\bfz.
\end{equation*}
$\Var_{U\sim \unif(\unitcircle)
}[\kappa(U)]$ is exactly $\Var_d[\gamma(\bfg_{-d})].$ By definition, 
\begin{equation*}
\begin{split}
    \expect_{U \sim \unif(\unitcircle)}[\kappa(U)] &= 
    \expect_U\Bigg[
    \int_{B_{L_q(\dtorus), \tau}(\bfnull)}
    \indicator[\sum_{i = 1}^{d-1} |g_i - z_i|_C^q + 
    |U-z_i|_C^q \le \tau^q]d\bfz
    \Bigg]\\
    & = 
    \expect_V\Bigg[
    \int_{B_{L_q(\dtorus), \tau}(\bfnull)}
    \indicator[\sum_{i = 1}^{d-1} |g_i - z_i|_C^q + 
    |V|_C^q \le \tau^q]d\bfz
    \Bigg]: = \expect[\rho(V)],
\end{split}
\end{equation*}
where $\rho$ is defined by the last equation, i.e., 
$$
\rho(v) \coloneqq  
\int_{B_{L_q(\dtorus), \tau}(\bfnull)}
    \indicator\bigg[\sum_{i = 1}^{d-1} |g_i - z_i|_C^q + 
    |v|_C^q \le \tau^q\bigg]d\bfz.
$$
On the other hand, 
\begin{equation*}
\begin{split}
    &\expect_{U \sim \unif(\unitcircle)}[\kappa^2(U)]\\
    & =
    \expect_U\Bigg[
    \int_{B_{L_q(\dtorus), \tau}(\bfnull)}
    \indicator\bigg[\sum_{i = 1}^{d-1} |g_i - z^1_i|_C^q + 
    |U-z^1_i|_C^q \le \tau^q\bigg]d\bfz^{1}\\
    &\quad \quad \times 
    \int_{B_{L_q(\dtorus), \tau}(\bfnull)}
    \indicator\bigg[\sum_{i = 1}^{d-1} |g_i - z^2_i|_C^q + 
    |U-z^2_i|_C^q \le \tau^q\bigg]d\bfz^{2}
    \Bigg]\\
    & = \expect_U\Bigg[
    \int_{B_{L_q(\dtorus), \tau}(\bfnull)}
    \indicator\bigg[\sum_{i = 1}^{d-1} |g_i - z^1_i|_C^q + 
    |V|_C^q \le \tau^q\bigg]d\bfz^{1}\\
    &\quad \quad \times 
    \int_{B_{L_q(\dtorus), \tau}(\bfnull)}
    \indicator\bigg[\sum_{i = 1}^{d-1} |g_i - z^2_i|_C^q + 
    |V + z^1_i-z^2_i|_C^q \le \tau^q\bigg]d\bfz^{2}
    \Bigg]\\
    & = \expect_{V, R}[\rho(V)\rho(V+R)],
\end{split}
\end{equation*}
where $V = U-z^1_d\sim \unif(\unitcircle)$ and 
$R \sim z^1_d -z^2_d$ and $V, R$ are independent. It follows that 
\begin{equation}
    \Var[\kappa] = \expect_{V, R}[\rho(V)\rho(V + R)] - 
    \expect[\rho(V)]^2.
\end{equation}
Since $\rho:\unitcircle \longrightarrow [0,1]$ is clearly $L_2$-integrable, we can write its Fourier series. Furthermore, as $\rho(v) = \rho(-v)$ and $\rho $ is real, we can write 
$$
\rho(v) = 
\widehat{\rho}(0) + 
\sum_{k \ge  1}
2\widehat{\rho}(k) \cos(\pi k v).
$$
In particular, we have $\expect_{V\sim \unif(\unitcircle)}[\rho(V)] = \widehat{\rho}(0)$ and, using the convolution formula, 

\begin{equation}
    \begin{split}
        \expect_{V, R}[\rho(V)\rho(V+R)] = 
        \widehat{\rho}(0)^2 + \sum_{k \ge 1}2{\widehat{\rho}(k)^2}\expect_R[\cos(\pi k R)].
    \end{split}
\end{equation}
Putting all of this together, we have 
\begin{equation}
\begin{split}
    &\Var_d[\gamma(\bfg_{-d})] = 
    \sum_{k \ge 1}2{\widehat{\rho}(k)^2}\expect_R[\cos(\pi k R)]\le 
    \bigg(\sum_{k \ge 1}2{\widehat{\rho}(k)^2}\bigg)\times 
    \sup_{k\ge 1}
    \expect_R[\cos(\pi k R)]\\
    & =  
    \Var_{V\sim \unif(\unitcircle)}[\rho(V)]
    \times 
    \sup_{k\ge 1}
    \expect_R[\cos(\pi k R)].
\end{split}
\end{equation}
We will now bound the variance of $\rho$ and the cosine expectations separately. That is, we will show that 
$$
\Var_{V\sim \unif(\unitcircle)}[\rho(V)] = O\bigg(\frac{1}{d}\bigg)\quad\text{ and }\quad
\sup_{k\ge 1}
    \expect_R[\cos(\pi k R)] = 
    O\bigg(\frac{1}{qd}\bigg),
$$
which is enough.

\paragraph{1) Variance of $\rho.$}
\begin{equation*}
    \begin{split}
\Var[\rho]\; &\le 
\expect_V\bigg[
\big(
\rho(0)
- \rho(V)\big)^2\bigg]\\
&  =  
\expect_V\Bigg[
\Bigg(
\int_{B_{L_q(\dtorus), \tau}(\bfnull)}
    \indicator\bigg[\sum_{i = 1}^{d-1} |g_i - z_i|_C^q \le \tau^q\bigg]d\bfz - 
    \indicator\bigg[\sum_{i = 1}^{d-1} |g_i - z_i|_C^q  + |V|_C^q\le \tau^q\bigg]d\bfz
    \Bigg)^2\Bigg]\\
 & = 
\expect_V\Bigg[
\Bigg(
\int_{B_{L_q(\dtorus), \tau}(\bfnull)}
    \indicator\bigg[\sum_{i = 1}^{d-1} |g_i - z_i|_C^q \in [\tau^q, \tau^q - |V|^q_C]\bigg]d\bfz
    \Bigg)^2\Bigg]\\ 
& \le 
\expect_V\Bigg[
\Bigg(
\int_{\dtorus}
    \indicator\bigg[\sum_{i = 1}^{d-1} |g_i - z_i|_C^q \in [\tau^q, \tau^q - |V|^q_C]\bigg]d\bfz
    \Bigg)^2\Bigg].
    \end{split}
\end{equation*}
Since the integral is over the entire torus, the variables $\{|g_i - z_i|_C\}_{i = 1}^{d-1}$ are iid uniformly distributed over $[0,1],$ just like $V.$ Therefore, the last expression equals 
\begin{equation}
   \int_0^1 F([\tau^q - \ell^q, \tau^q])^2d\ell  = O\left(\frac{1}{d}\right),
\end{equation}
where we used \cref{claim:lqanticoncentration}.

\paragraph{2) Cosine Expectation.}
We need to find
\begin{equation}
    \sup_{k\ge 1}
    \expect_R[\cos(\pi k R)], \quad R\sim z^1_d - z^2_d,
\end{equation}
where $z^1_d, z^2_d$ are independent copies of the last coordinate of uniformly random point in $B_{L_q(\dtorus), \tau}(\bfnull).$
First, we will make a few simple observations abound the density of $R.$ Let the density of $z^1_d$ be $\nu(x).$ Note that 
$$
\nu(x)\propto 
F(\tau^q - |x|_C^q)
$$
since $(z_1, z_2, \ldots, z_{d-1},x)\in B_{L_q(\dtorus), \tau}(\bfnull)$ if and only if 
$$
\sum_{i=1}^{d-1}|z_i|_C^q\le \tau^q - x^q, 
$$
but $|z_1|_C, |z_2|_C, \ldots, |z_{d-1}|_C$ are iid $\unif([0,1])$ variables. In particular, this has the following implications:
\begin{enumerate}
    \item $|x|_C\longrightarrow \nu(x)$ is positive and decreasing.
    \item $\nu$ is even, i.e. $\nu(x) = \nu(-x).$
\end{enumerate}
Now, $R \sim z^1_d-z^2_d\sim z^1_d + z^2_d.$ Thus, if $\mu$ is the distribution of $R,$ clearly $\mu = \nu *\nu.$ In particular:
\begin{equation}
\label{eq:mudensityproperties}
    \begin{split}
        & \text{1. $\mu$ is even, i.e. $\mu(y) = \mu(-y),$}\\
        &\text{2. $|y|_C\longrightarrow \mu(y)$ is decreasing.}
    \end{split}
\end{equation}
The first fact is trivial. The second fact for $y \in [0,1]$ can be shown as follows. First, note that $\nu'(x) \le 0 $ for $x\in [0,1]$ as $|x|_C\longrightarrow \nu(x)$ is decreasing
    and $\nu'(x) = -\nu'(-x)$ since $x$ is even. 
    Now, 
    \begin{equation*}
        \begin{split}
            \mu'(y)\; & = (\nu * \nu)'(y) = 
            (\nu'*\nu)(y) = 
            \int_{-1}^1
            \nu'(x)\nu(y-x)dx\\
            & =  
            \int_0^1
            \nu'(x)\nu(y-x)dx + 
            \int_{-1}^0
            \nu'(x)\nu(y-x)dx\\
            & =  
            \int_0^1
            \nu'(x)\nu(y-x)dx + 
            \int_{0}^1
            \nu'(-x)\nu(y+x)dx\\
            & =  
            \int_0^1
            \nu'(x)\big(\nu(y-x) - \nu(y+x)\big)dx.
        \end{split}
    \end{equation*}
We know that $\nu'(x)\le 0$ for $x\in [0,1].$ On the other hand $\nu(y-x) \ge \nu(y+x)$ holds because  $|z|_C\longrightarrow \nu(z)$ is decreasing and 
$|y-x|_C\le |y+x|_C$ whenever $x,y\in [0,1].$ To show the last part, note that $|y-x|_C\in \{y-x, x-y\},$ and 
$|y+x|_C\in \{y+x, 2- y - x\}.$ However, 
\begin{enumerate}
    \item $y- x, x-y \le y + x$ whenever $x,y \ge 0.$
    \item $y - x, x - y \le 2 - y - x$ whenever $x,y \le 1.$ 
\end{enumerate}
We split the rest of the proof into two claims.

\begin{claim}
    $\sup_{k\ge 1}
    \expect_R[\cos(\pi k R)] \le 
    2\TV\Big(R, \unif([-1,1])\Big).$
\end{claim}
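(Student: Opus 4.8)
The plan is to compare the law of $R$ with the uniform law on the circle $\unitcircle\cong[-1,1]$ (endpoints identified) by using that the real characters $x\mapsto\cos(\pi k x)$, $k\ge 1$, have mean zero under the uniform measure. First I would record the elementary computation
\[
\expect_{U\sim\unif([-1,1])}\big[\cos(\pi k U)\big]=\frac12\int_{-1}^1\cos(\pi k u)\,du=\frac{\sin(\pi k)-\sin(-\pi k)}{2\pi k}=0
\]
for every integer $k\ge 1$, so that inserting this vanishing term gives the identity $\expect_R[\cos(\pi k R)]=\expect_R[\cos(\pi k R)]-\expect_{U\sim\unif([-1,1])}[\cos(\pi k U)]$.

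Next, writing $\mu$ for the density of $R$ with respect to Lebesgue measure on $[-1,1]$ — which exists and is bounded, since $\mu=\nu*\nu$ with $\nu$ the (bounded) density of a single coordinate of a uniform point in $B_{L_q(\dtorus),\tau}(\bfnull)$, as observed above — I would rewrite the difference of expectations as an integral against $\mu-\tfrac12$ and bound it pointwise using $|\cos|\le 1$:
\[
\expect_R[\cos(\pi k R)]=\int_{-1}^1\cos(\pi k x)\Big(\mu(x)-\tfrac12\Big)\,dx\;\le\;\int_{-1}^1\Big|\mu(x)-\tfrac12\Big|\,dx=2\,\TV\big(R,\unif([-1,1])\big).
\]
Since the resulting bound is independent of $k$, taking the supremum over $k\ge 1$ finishes the proof.

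I do not anticipate any real obstacle in this claim: it is a one-line consequence of the triangle inequality for integrals together with the mean-zero property of the cosine characters. The only fact that needs to be invoked (and which is already in hand) is that $R$ is absolutely continuous with a bounded density, so that $\TV(R,\unif([-1,1]))$ may be expressed as $\tfrac12\|\mu-\tfrac12\|_{L^1[-1,1]}$. The genuinely substantive estimate — namely that this total variation distance is itself $O(1/(qd))$ — is the content of the subsequent claim, and it is there that the anticoncentration input from \cref{claim:lqanticoncentration} (via \cref{prop:inervalprobsmallq}) will enter.
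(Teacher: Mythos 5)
Your proof is correct and uses essentially the same idea as the paper: subtract the (vanishing) expectation of $\cos(\pi k U)$ under the uniform law and bound the difference by total variation. The only cosmetic difference is that you invoke the $L^1$-density characterization $\TV(R,U)=\tfrac12\int_{-1}^1|\mu(x)-\tfrac12|\,dx$, whereas the paper passes through an optimal coupling $(R,U)$ and bounds $\expect[\indicator[U\neq R](\cos(\pi k R)-\cos(\pi k U))]$ by $2\prob[U\neq R]$; these are interchangeable formulations of the same estimate.
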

\begin{proof}
Let $(R,U)$ be an optimal coupling of a $\unif([-1,1])$ random variable $U$ with $R.$ Then, 
\begin{equation}
    \begin{split}
        \expect [\cos (\pi k R)] =& 
\expect[\cos(\pi k  R) - \cos(\pi k U)] = 
\expect\bigg[\indicator[U \neq R]\big(\cos(\pi k  R) - \cos(\pi k U)\big)\bigg]\\
\le & 
\|\cos(\pi k R) - \cos(\pi k U)\|_\infty\times 
\expect\bigg[\indicator[U \neq R]\bigg] \le 
2\TV\Big(R, \unif([-1,1])\Big).
    \end{split}
\end{equation}
\end{proof}

\begin{claim}
    $\TV\Big(R, \unif([-1,1])\Big)
     = O(1/dq).$
\end{claim}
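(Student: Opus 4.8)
The plan is to prove the (formally stronger) statement that $z_d$ is itself within $O(1/\sqrt{dq})$ of uniform in total variation, and then to exploit the fact that the difference $R=z^1_d-z^2_d$ of two i.i.d.\ copies is \emph{quadratically} closer to uniform. For the first part, recall from the discussion above that, identifying the last coordinate's circle with $[-1,1]$, the density $\nu$ of $z_d$ satisfies $\nu(x)\propto F(\tau^q-|x|_C^q)$, where $F$ is the CDF of $\sum_{i=1}^{d-1}U_i^q$ as in \cref{claim:lqanticoncentration}. The proportionality constant is $1$: indeed $\int_{-1}^1 F(\tau^q-|x|_C^q)\,dx = 2\int_0^1 F(\tau^q-\ell^q)\,d\ell = 2\,\prob\big[\sum_{i=1}^{d}U_i^q\le\tau^q\big] = 2p = 1$. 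Hence, with $\psi(\ell)=F(\tau^q)-F(\tau^q-\ell^q)$ as in \cref{claim:lqanticoncentration} and $c:=F(\tau^q)-\tfrac12$, we get $\nu(x)-\tfrac12 = c-\psi(|x|_C)$; since $\nu$ and the uniform density $\tfrac12$ on $[-1,1]$ both integrate to $1$, this mean-zero condition forces $c=\int_0^1\psi(\ell)\,d\ell$. Therefore
\[
\big\|\nu-\tfrac12\big\|_{L^1([-1,1])} = 2\int_0^1\big|c-\psi(\ell)\big|\,d\ell \;\le\; 2\Big(c+\int_0^1\psi(\ell)\,d\ell\Big) = 4\int_0^1\psi(\ell)\,d\ell = O\!\Big(\tfrac{1}{\sqrt{dq}}\Big)
\]
by \cref{claim:lqanticoncentration}; equivalently $\TV\big(z_d,\unif([-1,1])\big)=O(1/\sqrt{dq})$, the $\sqrt q$ gain over the Berry--Esseen scale being exactly what \cref{prop:inervalprobsmallq} supplies.

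For the second part, write $\eta:=\nu-\tfrac12$ and recall (as already noted in the text) that, by symmetry of $z_d$, the density of $R$ is the convolution $\mu=\nu\ast\nu$ over the circle $\mathbb{R}/2\mathbb{Z}$. Expanding $\nu=\tfrac12+\eta$ and using that on this circle $\tfrac12\ast\tfrac12=\tfrac12$ (the uniform density is idempotent) and $\tfrac12\ast\eta=\tfrac12\int\eta=0$, we obtain $\mu-\tfrac12=\eta\ast\eta$. Young's convolution inequality then gives
\[
\TV\big(R,\unif([-1,1])\big) \;=\; \tfrac12\big\|\mu-\tfrac12\big\|_{L^1} \;=\; \tfrac12\|\eta\ast\eta\|_{L^1} \;\le\; \tfrac12\|\eta\|_{L^1}^2 \;=\; O\!\Big(\tfrac{1}{dq}\Big),
\]
which is the claim. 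It is essential that both convolution factors are the \emph{small} function $\eta$: the weaker identity $\mu-\tfrac12=\nu\ast\eta$ combined with $\|\nu\|_{L^1}=1$ would only reproduce the $O(1/\sqrt{dq})$ bound, which is insufficient.

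This argument is short, and there is no real obstacle: the only genuine input is the anticoncentration estimate $\int_0^1\psi=O(1/\sqrt{dq})$ already established in \cref{claim:lqanticoncentration}, and the only idea is to subtract the correct constant $c$ so that the error $\eta$ is mean zero (which is what makes Young's inequality produce the square). One should just be careful with the two routine points: that the normalizing constant of $\nu$ is $1$ and that $\TV$ between two distributions on $[-1,1]$ is half the $L^1$ distance of their densities. If one wishes to bypass computing the normalization, one can instead write $\|\nu-\tfrac12\|_{L^1}=2\,\TV(z_d,\unif([-1,1]))$ and bound the latter directly from the explicit slab volumes of the $L_q$ ball, which changes nothing of substance.
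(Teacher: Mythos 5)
Your proof is correct, and it takes a genuinely different route from the paper's. The paper bounds $\TV(R,\unif)$ via the sup-deficiency $\tfrac12-\inf_y\mu(y)$, using the monotonicity of $\mu$ in $|y|_C$ to locate the infimum at $y=\pm1$, and then explicitly evaluates $\mu(1)$ after computing the normalizing constant $K$; the whole argument leans on the qualitative structure (evenness, monotonicity) of $\mu$ established just before. Your version instead decomposes $\nu=\tfrac12+\eta$ with $\eta$ mean-zero, observes $\mu-\tfrac12=\eta\ast\eta$ on the circle, and invokes Young's inequality to get $\|\eta\ast\eta\|_1\le\|\eta\|_1^2$. This makes transparent where the quadratic gain ($1/dq$ rather than $1/\sqrt{dq}$) actually comes from: convolving two mean-zero perturbations. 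It also sidesteps the monotonicity facts about $\mu$ entirely, and it exploits a simplification the paper leaves implicit, namely that the normalization constant of $\nu(x)\propto F(\tau^q-|x|_C^q)$ is exactly $1$ (since $2\int_0^1F(\tau^q-\ell^q)\,d\ell=2p=1$), which the paper instead carries around as the constant $K$. Both routes feed off the same input, $\int_0^1\psi=O(1/\sqrt{dq})$ from \cref{claim:lqanticoncentration}; your argument is shorter and arguably more robust, since it would still work if $\mu$ were not unimodal.
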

\begin{proof} We use properties of the aforementioned density $\mu.$ Let $U\sim \unif([-1,1]).$
\begin{equation}
    \begin{split}
        \TV(R, U) &= \sup_A\bigg(\prob[U \in A] - \prob[R \in A] \bigg)\\
        &= 
\sup_A\int_A \bigg(\frac{1}{2} - \mu(u)\bigg)dy\le 
|A|\times \bigg(\frac{1}{2} - \inf_y\mu(y)\bigg)\le 
\bigg(\frac{1}{2} - \inf_y\mu(y)\bigg).
    \end{split}
\end{equation}

Our last step will be to show that $\inf_y\mu(y) = \frac{1}{2}- O\bigg(\frac{1}{dq}\bigg).$ As we know, $\nu(x) \propto F(\tau^q - |x|_C^q) = F(\tau^q) - \psi(|x|_C) = C - \psi(|x|_C),$ where 
$$C = F(\tau^q) = 
\prob\bigg[\sum_{i = 1}^{d-1}U_i^q\le \tau^q\bigg]\ge 
\prob\bigg[\sum_{i = 1}^{d}U_i^q\le \tau^q\bigg]
\ge \frac{1}{2}.$$ 
Thus,
$$
\mu(y) = (\nu*\nu)(y)\propto 
\int_{-1}^1
F(\tau^q - |x|_C^q)
F(\tau^q - |y - x|_C^q)dx = 
\int^1_{-1} (C - \psi(|x|_C))(C - \psi(|y - x|_C))dx.
$$
Now, we will find the normalizing constant in $\propto.$ 
\begin{equation}
\label{eq:secondmomentclaimusecosine}
    \begin{split}
K\; &= 2\int^1_{-1}\int^1_{-1} (C - \psi(|x|_C))(C - \psi(|y - x|_C))dxdy = 
2\bigg(
\int^1_{-1} (C - \psi(|x|_C))dx
\bigg)^2\\
&=  
2\bigg(
2C - \int_{-1}^1\psi(|x|_C)dx 
\bigg)^2 = 
2\Bigg(4C^2 - 4C\int_{-1}^1\psi(|x|_C)dx + \bigg(\int_{-1}^1\psi(|x|_C)dx \bigg)^2\Bigg)\\
&=8C^2 - 8C\int_{-1}^1\psi(|x|_C)dx + O\bigg(\frac{1}{dq}\bigg),
    \end{split}
\end{equation}
where we used \cref{claim:lqanticoncentration}. Note that $K = \Theta(1)$ since $C \ge 1/2$ and $\int_{-1}^1\psi(|x|_C)dx = O(\sqrt{\frac{1}{qd}}) = o(1)$ by \cref{claim:lqanticoncentration}.
However, we know that $\inf_y \mu(y) = \mu(1)$ by \cref{eq:mudensityproperties}, so
\begin{equation*}
    \begin{split}
        \mu(1)\; &= 
        \frac{1}{K}\int_{-1}^1
        (C - \psi(|x|_C))(C - \psi(|1 - x|_C))dx\\
        & =  \frac{1}{K}\Bigg(4C^2 - 2C\bigg(
        \int_{-1}^1\psi(|x|_C)dx + 
        \int_{-1}^1\psi(|1-x|_C)dx
        \bigg) + 
        \int_{-1}^1\psi(|x|_C)dx \times 
        \int_{-1}^1\psi(|1-x|_C)dx\bigg)\\
        & \ge \frac{1}{K}\Bigg(4C^2 - 2C\bigg(
        \int_{-1}^1\psi(|x|_C)dx + 
        \int_{-1}^1\psi(|1-x|_C)dx
        \bigg)\Bigg)\\
        & = 
        \frac{1}{K}\Bigg(4C^2 - 4C
        \int_{-1}^1\psi(|x|_C)dx\Bigg)\\ 
        & =  \frac{K/2 - O\bigg(\frac{1}{dq}\bigg)}{K} = 
        \frac{1}{2} - O\bigg(\frac{1}{dq}\bigg),
    \end{split}
\end{equation*}
with which the desired bound on the cosine expectation follows. 
\end{proof}

\subsection{The Proof for Large \texorpdfstring{$q$}{q}}
When $q = \Omega(d/\log d),$ we will follow a similar strategy as in the proof for the case of $q = o(d/\log d).$ Namely, our goal will be to prove that for any integer $k,$
\begin{equation}
\label{eq:momentnormslargeq}
    \expect_{\bfg\sim\unif(\dtorus)}[|\gamma(\bfg)|^k]^{1/k} \le  C(\log d)^C\frac{k}{d}.
\end{equation}
for some absolute constant $C.$ Following the same steps as in \cref{eq:smallqboundingKLdivergence}, this will be enough to conclude the second part of \cref{thm:torussmallq}. Again, we will use the Bernstein-McDiarmid approach to bounding the moments of 
of $\gamma.$ Our goal, this time, is to show the following. 
\begin{enumerate}
    \item \emph{Small Marginal Increments:} $\|D_i\gamma\|_\infty = \tilde{O}(\frac{1}{d})$ for all $i.$
    \item \emph{Small Marginal Variances:} $\|\Var_i[\gamma]\|_\infty = \tilde{O}(\frac{1}{d^3})$ for all $i.$
\end{enumerate}
We use the following anticoncentration results instead of \cref{claim:lqanticoncentration}. The rest of the proof is exactly the same.

\begin{claim}
\label{claim:lqanticoncentrationlarge}
Suppose that 
$U_1, U_2, \ldots, U_{d-1}$ are iid $\unif([0,1])$ random variables and $q = \Omega(d/\log d).$ Let $F$ be the CDF of $\sum_{i = 1}^{d-1}U_i^q.$ Then, for $\psi(\ell)\coloneqq F(\tau^q) - F(\tau^q - \ell^q) = F([\tau^q - \ell^q, \tau^q]),$ we have 
\begin{equation}
\int_0^1
\psi(\ell)d\ell = 
\tilde{O}\left(
\frac{1}{d}
\right)\quad \text{ and }\quad
 \int_0^1
\psi(\ell)^2d\ell = \tilde{O}\left(\frac{1}{d}\right).
\end{equation}
\end{claim}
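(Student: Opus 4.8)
The argument is a variant of the proof of \cref{claim:lqanticoncentration} in \cref{appendix:anticoncentration}; let me indicate the changes. Write $S\coloneqq \sum_{i=1}^{d-1}U_i^q$ for the variable whose CDF is $F$, and let $f$ be its density. Since $\psi$ is $[0,1]$-valued, $\psi(\ell)^2\le\psi(\ell)$, so the second inequality follows from the first and it suffices to bound $\int_0^1\psi(\ell)\,d\ell$. Unwinding $\psi(\ell)=\prob[(\tau^q-\ell^q)_+<S\le\tau^q]$ and applying Tonelli's theorem,
\begin{equation}
\label{eq:lqlargefubini}
\int_0^1\psi(\ell)\,d\ell=\int_0^{\tau^q}f(t)\Big(\int_0^1\indicator[\ell^q>\tau^q-t]\,d\ell\Big)\,dt=\int_0^{\min(1,\tau^q)}f(\tau^q-w)\,\big(1-w^{1/q}\big)\,dw.
\end{equation}
The crucial point is the elementary identity $\int_0^1(1-w^{1/q})\,dw=\tfrac1{q+1}$, which is $\tilde{O}(1/d)$ exactly because $q=\Omega(d/\log d)$. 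Hence if $f$ is bounded by $M$ on the interval ending at $\tau^q$ swept out in \cref{eq:lqlargefubini}, then $\int_0^1\psi\le M/(q+1)$.

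To exploit this I would first locate $\tau^q$ (the median of $\sum_{i=1}^d U_i^q$). From $\sum_i U_i^q\ge\max_i U_i^q$ and $\prob[\max_i U_i^q\le t]=t^{d/q}$ we get $\tau^q\ge 2^{-q/d}$, so $\tau=(\tau^q)^{1/q}\ge 2^{-1/d}=1-\tilde{O}(1/d)$; and a Bernstein bound for $\sum_{i=1}^dU_i^q$ (mean $d/(q+1)=O(\log d)$, per-coordinate variance $\Theta(1/q)$) gives $\tau^q=\tilde{O}(1)$. I would then distinguish two regimes. If $\tau^q\le 1$ (which happens for large $q$), \cref{eq:lqlargefubini} only involves $f$ on $(0,\tau^q]\subseteq(0,1)$, where $f$ is given by the closed-form Dirichlet/Beta density $f(t)=\frac{\Gamma(1/q)^{\,d-1}}{q^{\,d-1}\,\Gamma((d-1)/q)}\,t^{(d-1)/q-1}\le C\big(1+\tfrac{d-1}{q}\big)t^{(d-1)/q-1}$; I would split the $w$-integral at $\eta\coloneqq e^{-Kq/d}$ for a sufficiently large $K=O(\log d)$ (so that $\eta\le\tau^q/2$), bound the range $w\in(\eta,\min(1,\tau^q))$ by $(1-\eta^{1/q})\le K/d=\tilde{O}(1/d)$, and bound the range $w\in(0,\eta)$ using the above density estimate together with the refinement $\int_0^\eta(1-w^{1/q})\,dw\le\frac{\eta(\ln(1/\eta)+1)}{q}$ and the bound $\eta/\tau^q\le e^{-(K-\ln2)q/d}$, so that the possible $t^{(d-1)/q-1}$-blow-up near the (exponentially small) point $\tau^q$ is killed and this range contributes $\tilde{O}(1/d)$. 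If instead $\tau^q>1$ (which forces $q=\tilde{\Theta}(d)$), the relevant interval $[\tau^q-1,\tau^q]$ is bounded away from $0$, and I would bound the density there by $\tilde{O}(\sqrt{q/d})=\tilde{O}(1)$ using a mixture decomposition of $U_i^q$ (a uniformly bounded-density piece on $[\eta_0,1]$ plus a remainder) followed by the Bobkov--Chistyakov convolution bound \cite{Bobkov14}, exactly as in the proof of \cref{claim:lqanticoncentration}; then $\int_0^1\psi\le\tilde{O}(1)/(q+1)=\tilde{O}(1/d)$. Finally, \cref{eq:momentnormslargeq} and hence \cref{thm:torussmallq}(2) follow by feeding these bounds through \cref{claim:bernsteinmcdiarmidrag} and the $\KL$ computation exactly as in \cref{eq:smallqboundingKLdivergence}.

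The main obstacle is the singularity of the density of $U_i^q$ at the origin, which prevents a direct appeal to the generic sup-norm convolution bound; moreover for large $q$ one cannot simply discard the singular part, since only $\tilde{O}(1)$ of the $d-1$ summands land in the bounded-density piece. The resolution is that \cref{eq:lqlargefubini} never requires a global density bound---only a bound on an interval ending at $\tau^q$---so one can use the closed-form density when $\tau^q$ is tiny and a localized Bobkov--Chistyakov estimate when $\tau^q$ sits in the bulk; the genuinely delicate point is the crossover $\tau^q\approx 1$, where the appropriate tailoring of $\eta$ (and of the mixture parameter $\eta_0$) must be done carefully, and it is here that the two facts $1-\tau=\tilde{O}(1/d)$ and $\int_0^1(1-w^{1/q})\,dw=\tfrac1{q+1}$ do the essential work.
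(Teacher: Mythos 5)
Your Tonelli reduction $\int_0^1\psi(\ell)\,d\ell=\int_0^{\min(1,\tau^q)}f(\tau^q-w)\bigl(1-w^{1/q}\bigr)\,dw$ together with the identity $\int_0^1(1-w^{1/q})\,dw=\tfrac1{q+1}$ is a genuinely different route from the paper's, which never invokes the density $f$ at all: the paper splits at $\ell_0=1-(\log d)^3/d$, disposes of the tail $\ell\in(\ell_0,1]$ trivially, and handles the bulk with the stochastic identity $\sum_iU_i^q\overset{d}{=}W^q\cdot T$, $W=\max_iU_i$, $T\ge1$ (\cref{lem:smallballq}), giving $\prob[S\in[a,b]]\le b^{(d-1)/q}-a^{(d-1)/q}$ uniformly and with no dichotomy on $\tau^q$. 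In the regime $\tau^q\le 1$ (roughly $q\gtrsim d$) your route does close: there the exact Dirichlet density and the estimate $C_0(\tau^q/2)^{(d-1)/q-1}\lesssim (d/q)e^{2q/d}$, combined with $\int_0^\eta(1-w^{1/q})\,dw\lesssim\eta(\ln(1/\eta)+1)/q$, yield the claim.

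The gap is in the case $\tau^q>1$. This is not a corner: it covers the whole lower end $q\in[\Omega(d/\log d),O(d)]$ (where the mean $d/(q+1)$, hence the median $\tau^q$, is $\gtrsim 1$). There the Dirichlet formula is unavailable on $[\tau^q-1,\tau^q]$, and the mixture-plus-Bobkov--Chistyakov fallback via \cref{cor:smallball} does not control the additive error $\exp(-dp/8)$. If you cut so that $p=\prob[U^q\ge\eta_0]=c/q$ for a constant $c$, then $dp=cd/q=\Theta(1)$ when $q=\Theta(d)$, so $\exp(-dp/8)=\Theta(1)$ rather than $\tilde{O}(1/d)$. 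If instead you force $dp\gtrsim\log d$ by taking $p=c'(\log d)/q$, then $\eta_0\approx d^{-c'}$ and $m=\eta_0^{1/q-1}/q\approx d^{c'}/q$, so the density bound $m/\sqrt{p^3d}$ is polynomially large in $d$ and the extra $1/(q+1)=\Theta(1/d)$ factor does not save it. No choice of the cut balances the two. Falling back to the unconstrained Dirichlet bound $f(t)\le C_0t^{(d-1)/q-1}$ also fails: for $\tau^q>1$ the prefactor $C_0(\tau^q)^{(d-1)/q-1}\le\frac{d-1}{q}(C''\log d)^{O(\log d)}$ is superpolynomial, and this is exactly the $(\tau^q)^{(d-1)/q}$ blow-up the paper kills with the much sharper $\exp(-\Theta((\log d)^2))$ coming from its step involving $\bigl((1-(\log d)^3/d)/\tau\bigr)^q$. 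The clean fix is to replace Bobkov--Chistyakov here by \cref{lem:smallballq}; once you do, your argument essentially becomes the paper's. Separately, in your own $\tau^q\le 1$ branch, $K=O(\log d)$ is fine precisely because there $q\gtrsim d$; be aware that the same threshold would \emph{not} suffice when $q=\Theta(d/\log d)$ (there $\eta/\tau^q\le e^{-(K-\ln 2)q/d}$ is only $e^{-\Theta(K/\log d)}=\Theta(1)$), which is another reason the $\tau^q>1$ case requires a different tool.
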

The proof of \cref{claim:lqanticoncentration} is substantially different (and much simpler) than the proof of \cref{claim:lqanticoncentration}. As we will need one of the ingredients in the next section as well, we present it in full detail here.

\begin{lemma}
\label{lem:smallballq}
Suppose that 
$U_1, U_2, \ldots, U_{d-1}$ are iid $\unif([0,1])$ random variables and $q \ge 1.$ Then, for any interval $[a,b],$
$$
\prob\bigg[\sum_{i  =1}^{d-1}U_i^q\in [a,b]\bigg]\le 
b^{(d-1)/q} - a^{(d-1)/q}.
$$
\end{lemma}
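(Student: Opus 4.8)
The plan is to reduce the bound to a pointwise estimate on the density of $T := \sum_{i=1}^{d-1} U_i^q$ and then integrate. Write $m = d-1$. Each $V := U^q$ with $U\sim\unif([0,1])$ satisfies $\prob[V\le s] = s^{1/q}$ for $s\in[0,1]$, so $V$ has density $g(s) = \tfrac1q s^{1/q-1}\mathbf{1}_{(0,1)}(s)$, and since $T$ is a sum of $m$ independent copies of $V$ it has density $f_T = g^{*m}$. I claim that
$$
f_T(t) \;\le\; \frac{m}{q}\,t^{m/q-1}\qquad\text{for all }t>0 .
$$
This immediately gives the lemma: for $0\le a\le b$,
$$
\prob\Big[\sum_{i=1}^{d-1}U_i^q\in[a,b]\Big] \;=\; \int_a^b f_T(t)\,dt \;\le\; \int_a^b \tfrac{m}{q}\,t^{m/q-1}\,dt \;=\; b^{m/q}-a^{m/q}.
$$

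To prove the density estimate I would dominate $g$ by the nonnegative (but non‑integrable) function $\tilde g(s):=\tfrac1q s^{1/q-1}$ on $(0,\infty)$; note $g\le\tilde g$ pointwise, with equality on $(0,1)$. Because convolution of nonnegative functions is monotone, $f_T = g^{*m}\le \tilde g^{*m}$ pointwise. The $m$‑fold convolution of $\tilde g$ still has a finite closed form — this is exactly where $q\ge 1$ is used, since then the exponent $\tfrac1q-1$ is $>-1$ and the relevant Beta integrals $\int_0^1 u^{x-1}(1-u)^{y-1}\,du=\Gamma(x)\Gamma(y)/\Gamma(x+y)$ converge. A short induction on $m$ (base case $\tilde g^{*1}=\tilde g$, inductive step via the substitution $s=ut$) gives
$$
\tilde g^{*m}(t) \;=\; \frac{\Gamma(1/q)^m}{q^m\,\Gamma(m/q)}\,t^{m/q-1}
\;=\; \frac{m}{q}\cdot\frac{\Gamma(1+1/q)^m}{\Gamma(1+m/q)}\,t^{m/q-1},
$$
where the second equality is just the functional equation $x\Gamma(x)=\Gamma(x+1)$ applied to rewrite the constant.

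The remaining point — and the only one I would flag as needing an actual idea rather than bookkeeping — is that $\Gamma(1+1/q)^m\le\Gamma(1+m/q)$, i.e. the displayed constant is $\le 1$. This follows because $\phi(x):=\log\Gamma(1+x)$ is convex on $(-1,\infty)$ (log‑convexity of $\Gamma$) and $\phi(0)=0$; a convex function vanishing at the origin is superadditive on $[0,\infty)$, so $\phi(m/q)\ge m\,\phi(1/q)$, which is the desired inequality after exponentiating. Combining the three pieces yields $f_T(t)\le \tfrac{m}{q}t^{m/q-1}$ and hence the lemma. (An equivalent route, avoiding the truncation trick, is to prove $f_T(t)\le\tfrac mq t^{m/q-1}$ directly by induction on $m$ using $f_{T_m}=g*f_{T_{m-1}}$, the same substitution $s=ut$, and the same $\Gamma$‑inequality; I would present whichever is shorter.)
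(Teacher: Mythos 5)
Your proof is correct, but it takes a genuinely different route from the paper's. You establish the pointwise density bound $f_T(t)\le\frac{d-1}{q}t^{(d-1)/q-1}$ by dominating the density of $U^q$ by the non-integrable power law $\tilde g(s)=\tfrac1q s^{1/q-1}$ on all of $(0,\infty)$, evaluating $\tilde g^{*(d-1)}$ exactly via Beta integrals, and then controlling the resulting constant $\Gamma(1+1/q)^{d-1}/\Gamma(1+(d-1)/q)$ using superadditivity of $\log\Gamma(1+\cdot)$ (which follows from convexity and the normalization $\log\Gamma(1)=0$). All three steps check out: monotonicity of convolution of nonnegative kernels gives $f_T\le\tilde g^{*(d-1)}$; the Beta recursion is the standard one; and the convexity-plus-vanishing-at-zero argument does yield superadditivity and hence $\Gamma(1+(d-1)/q)\ge\Gamma(1+1/q)^{d-1}$. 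The paper instead conditions on the order statistics: it writes $\sum_{i=1}^{d-1}U_i^q\stackrel{d}{=}W^q\big(1+\sum_{j=1}^{d-2}V_j^q\big)$ with $W=\max_i U_i$ independent of the $V_j\iidsim\unif([0,1])$, notes that the second factor is a.s.\ at least $1$, and reduces to the one-variable tail $\prob[W\in[(a/t)^{1/q},(b/t)^{1/q}]]\le (b/t)^{(d-1)/q}-(a/t)^{(d-1)/q}\le b^{(d-1)/q}-a^{(d-1)/q}$. The paper's argument is shorter and entirely elementary (no special functions); yours proves the strictly stronger pointwise bound on the density and would apply verbatim to give small-interval probabilities even when $a<1$ is not comparable to $1$, whereas the paper's argument leans on the a.s.\ lower bound $T\ge1$, which is the special structure it exploits. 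Either is a valid proof of the lemma as stated.
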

\begin{proof}
The main idea is to reduce the computation to a computation for $q = \infty.$
Let $W = \max(U_1, \ldots, U_{d-1})$ and
$V_1, V_2, \ldots, V_{d-2}\iidsim \unif([0,1])$ be 
independent of $W.$ Then, $\sum_{i  =1}^{d-1}U_i^q$ has the same distribution as $W^q\times \big(1 + \sum_{j  =1}^{d-2}V_j^q\big).$ 
This follows simply by conditioning on the maximal value $w$ of $U_1, U_2, \ldots, U_{d-1}.$ Denote $T = 1 + \sum_{j  =1}^{d-2}V_j^q$ and observe that $T\ge 1$ a.s. This implies that 
\begin{equation*}
    \begin{split}
        & \prob\bigg[\sum_{i  =1}^{d-1}U_i^q\in [a,b]\bigg] = 
        \prob\bigg[W^q\times T\in [a,b]\bigg]\le 
        \sup_{t \ge 1}
        \prob\bigg[W^q\times t\in [a,b]\bigg] =  
        \sup_{t\ge 1}
        \prob\bigg[\big(\frac{a}{t}\big)^{1/q}\le W\le 
        \big(\frac{b}{t}\big)^{1/q}
        \bigg].
    \end{split}
\end{equation*}
Now, since $W$ is the maximum of $d-1$ iid $\unif([0,1])$ random variables, $\prob[W\le x] = x^{d-1}$ for any $x\in [0,1].$ Thus, 
$\displaystyle \prob\bigg[\big(\frac{a}{t}\big)^{1/q}\le W\le 
        \big(\frac{b}{t}\big)^{1/q}
        \bigg] = \big(\frac{b}{t}\big)^{\frac{d-1}{q}} - 
    \big(\frac{a}{t}\big)^{\frac{d-1}{q}}\le  b^{(d-1)/q} - a^{(d-1)/q},$
where we used $t\ge 1.$
\end{proof}

Now, we are ready to prove \cref{claim:lqanticoncentrationlarge}.

\begin{proof}[Proof of \cref{claim:lqanticoncentrationlarge}] 
Suppose that $q\ge d/(C'\log d)$ for some absolute constant $C'.$ We begin by proving the following two simple statements:
\begin{enumerate}
    \item $\tau \ge 1 - 1/d.$ Recall that $\tau$ is defined as the radius  of a $1/2$ volume ball in $(\dtorus, L_q).$ Let $U_1,U_2, \ldots, U_{d}$ be iid $\unif([0,1])$ random variables. Thus,
    $1/2 = \prob\big[\tau\ge \|(U_1, U_2, \ldots, U_d)\|_q\big].$ However,
    $$ \prob\big[1 - 1/d\ge \|(U_1, U_2, \ldots, U_d)\|_q\big]\le 
    \prob\big[1 - 1/d\ge \|(U_1, U_2, \ldots, U_d)\|_\infty\big] = (1-1/d)^d \le 1/e<1/2
    ,$$  
    which means that $\tau\ge 1-1/d.$
    \item $\tau^q\le C''(\log d)$ for some constant $C''$ depending solely on $C'.$ Observe that each variable $U^q_i$ has expectation $1/(q+1),$ variance lass than 
    $\expect[U_i^{2q}] = 1/(2q+1)$ and is bounded between 0 and 1. Thus, by \cref{claim:bernsteinmcdiarmidrag}, 
    $$
\prob\bigg[
\sum_{j = 1}^d U_j^q\ge t + d/(q+1)
\bigg]\le 
\exp\Big(- \min\big\{\Theta(t^2/(d/q)), \Theta(t)\big\}\Big).
    $$
\end{enumerate} 
In particular, this means that setting $t = C''\times
\max(1, d/q) \le C''\times C'\times (\log d)$ for large enough $C'',$ we obtain a tail bound less than $1/2.$ Thus, $\tau^q\le C''\log d$ for some $C''.$

Now, we go back to proving \cref{claim:lqanticoncentrationlarge}. We begin with the first inequality. 

\begin{align*}
& \int_0^1
\psi(\ell)d\ell = 
\int_0^1
\prob\bigg[\tau^q-\ell^q\le \sum_{i = 1}^{d-1}U_i^{d-1}\le \tau^q\bigg]d\ell\\
& = \int_0^{1 - \frac{(\log d)^3}{d}}
\prob\bigg[\tau^q-\ell^q\le \sum_{i = 1}^{d-1}U_i^{d-1}\le \tau^q\bigg]d\ell + 
\int_{1 - \frac{(\log d)^3}{d}}^1
\prob\bigg[\tau^q-\ell^q\le \sum_{i = 1}^{d-1}U_i^{d-1}\le \tau^q\bigg]d\ell\\
& \le 
\prob\bigg[\tau^q-\bigg(1 - \frac{(\log d)^3}{d}\bigg)^q\le \sum_{i = 1}^{d-1}U_i^{d-1}\le \tau^q\bigg] +\frac{(\log d)^3}{d}.
\end{align*}
All that is left to do is bound $\prob\bigg[\tau^q-\bigg(1 - \frac{(\log d)^3}{d}\bigg)^q\le \sum_{i = 1}^{d-1}U_i^{d-1}\le \tau^q\bigg].$ Using \cref{lem:smallballq},
\begin{equation}
    \begin{split}
        & \prob\bigg[\tau^q-\bigg(1 - \frac{(\log d)^3}{d}\bigg)^q\le \sum_{i = 1}^{d-1}U_i^{d-1}\le \tau^q\bigg]\\
        & \le 
        (\tau^q)^{(d-1)/q} - \Bigg(\tau^q-\Bigg(1 - \frac{(\log d)^3}{d}\Bigg)^q\Bigg)^{(d-1)/q}\\
        & = 
        (\tau^{q})^{(d-1)/q}\times \Bigg[
        1 - \Bigg(1 - \bigg(\frac{1 - (\log d)^3/d}{\tau}\bigg)^{q}\Bigg)^{(d-1)/q}
        \Bigg].
    \end{split}
\end{equation}
Since $\tau \ge 1-1/d,$ it is the case that 
$\frac{1 - (\log d)^3/d}{\tau}\le 1 - (\log d)^3/(2d).$ Thus, $$\bigg(\frac{1 - (\log d)^3/d}{\tau}\bigg)^q\le \bigg(1 - (\log d)^3/(2d)\bigg)^q\le \exp\bigg(-(\log d)^3q/(2d)\bigg)
\le \exp\bigg( - \Theta((\log d)^2)\bigg).
$$
It follows that 
\begin{align*}
&\Bigg(1 - \bigg(\frac{1 - (\log d)^3/d}{\tau}\bigg)^{q}\Bigg)^{(d-1)/q}\ge 
\Bigg( 1- \exp\bigg( - \Theta((\log d)^2)\bigg)\Bigg)^{(d-1)/q}\\
& \ge 
\Bigg( 1- \exp\bigg( - \Theta((\log d)^2)\bigg)\Bigg)^{C'(\log d)} = 
1- \exp\bigg( - \Theta((\log d)^2)\bigg).
\end{align*}
Therefore, 
\begin{align*}
    & (\tau^{q})^{(d-1)/q}\times \Bigg[
        1 - \Bigg(1 - \bigg(\frac{1 - (\log d)^3/d}{\tau}\bigg)^{q}\Bigg)^{(d-1)/q}
        \Bigg]\\
    & \le
    (C''(\log d))^{C'\log d}\times \exp\bigg( - \Theta((\log d)^2)\bigg) = 
    \exp\bigg( - \Theta((\log d)^2)\bigg) = 
    o(1/d).
\end{align*}
With this, the proof of the first inequality is completed. 
The second inequality follows directly as
\[
\int_0^1
\psi(\ell)^2d\ell = 
\int_0^1
\prob\bigg[\tau^q-\ell^q\le \sum_{i = 1}^{d-1}U_i^{d-1}\le \tau^q\bigg]^2d\ell\le  
\int_0^1
\prob\bigg[\tau^q-\ell^q\le \sum_{i = 1}^{d-1}U_i^{d-1}\le \tau^q\bigg]d\ell=\int_0^1
\psi(\ell)d\ell.\qedhere
\]
\end{proof}
\section{Entropic Upper Bound in the \texorpdfstring{$L_q$}{Lq} Model}

\begin{theorem}[$\varepsilon$-Net Argument for Hard Threshold Random Geometric Graphs]
There exists some constant $C$ with the following property.
Consider a random geometric graph $\RGG(n,\Omega,\mathcal{D}, \sigma,p)$ over the metric space $(\Omega, \mu),$ where $1/n \le p\le 1/2$ and
$\sigma(\bfx, \bfy) = \indicator[\mu(\bfx,\bfy)\le \tau]$ for some $\tau.$ Suppose, further, that 
$(\Omega, \mu)$ has a finite $\varepsilon$-net $\mathcal{N}(\varepsilon)$ which satisfies the following property. $\prob_{\bfx,\bfy\iidsim\mathcal{D}}[\mu(\bfx,\bfy)\in [\tau-2\varepsilon, \tau+2\varepsilon]] = o(n^{-2}).$ If $|\mathcal{N}(\varepsilon)|\le \exp\Big(Cnp\log 1/p\Big),$ then 
$$
\TV\Big(\RGG(n,\Omega,\mathcal{D}, \sigma,p), \ergraph\Big) = 1- o(1).
$$
\end{theorem}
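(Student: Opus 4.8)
The plan is an entropy/counting argument: almost all the mass of $\RGG(n,\Omega,\mathcal{D},\sigma,p)$ sits on a set of graphs too small to carry appreciable mass under $\ergraph$.

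\emph{Snapping to the net.} I would first fix a rounding map $r:\Omega\to\mathcal{N}(\varepsilon)$ with $\mu(\bfx,r(\bfx))\le\varepsilon$ for every $\bfx$ (this exists since $\mathcal{N}(\varepsilon)$ is a finite $\varepsilon$-net). For points $\bfy^1,\dots,\bfy^n\in\Omega$ write $G(\bfy^1,\dots,\bfy^n)$ for the graph on $[n]$ with edge set $\{ij:\mu(\bfy^i,\bfy^j)\le\tau\}$, and set
$$\mathcal{S}:=\big\{G(\bfy^1,\dots,\bfy^n):\bfy^1,\dots,\bfy^n\in\mathcal{N}(\varepsilon)\big\},\qquad |\mathcal{S}|\le|\mathcal{N}(\varepsilon)|^{n}\le\exp\!\big(Cn^{2}p\log(1/p)\big).$$
Since $\sigma$ is a hard threshold, $\bfG\sim\RGG$ is a deterministic function of the latent vectors $\bfx^1,\dots,\bfx^n$. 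By the triangle inequality $|\mu(\bfx^i,\bfx^j)-\mu(r(\bfx^i),r(\bfx^j))|\le2\varepsilon$, so whenever $\mu(\bfx^i,\bfx^j)\notin[\tau-2\varepsilon,\tau+2\varepsilon]$ we get $\indicator[\mu(\bfx^i,\bfx^j)\le\tau]=\indicator[\mu(r(\bfx^i),r(\bfx^j))\le\tau]$. Letting $E$ be the event that some pair of latent vectors has $\mu(\bfx^i,\bfx^j)\in[\tau-2\varepsilon,\tau+2\varepsilon]$, on $E^c$ we have $\bfG=G(r(\bfx^1),\dots,r(\bfx^n))\in\mathcal{S}$; a union bound over the $\binom n2$ pairs together with the hypothesis $\prob_{\bfx,\bfy}[\mu(\bfx,\bfy)\in[\tau-2\varepsilon,\tau+2\varepsilon]]=o(n^{-2})$ gives $\prob[E]=o(1)$, hence $\prob_{\bfG\sim\RGG}[\bfG\in\mathcal{S}]\ge1-o(1)$.

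\emph{$\ergraph$ charges $\mathcal{S}$ very little.} Write $M=\binom n2$ and let $\mathcal{T}_m$ be the set of graphs on $[n]$ with exactly $m$ edges, so $\prob_{\bfK\sim\ergraph}[\bfK\in\mathcal{S}]=\sum_{m}|\mathcal{S}\cap\mathcal{T}_m|\,p^{m}(1-p)^{M-m}$. I would pick the window $W=[Mp-t,Mp+t]$ with $t=\sqrt{Mp}\,\log M$; since $Mp\ge(n-1)/2\to\infty$ we have $t=o(Mp)$ and $t/\sqrt{Mp}\to\infty$, so by a Chernoff bound $\sum_{m\notin W}|\mathcal{S}\cap\mathcal{T}_m|\,p^m(1-p)^{M-m}\le\prob[\mathrm{Bin}(M,p)\notin W]=o(1)$. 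For $m\in W$ use $|\mathcal{S}\cap\mathcal{T}_m|\le|\mathcal{S}|$ and the fact that $m\mapsto p^m(1-p)^{M-m}$ is non-increasing (because $p\le1/2$), so
$$\sum_{m\in W}|\mathcal{S}\cap\mathcal{T}_m|\,p^m(1-p)^{M-m}\le|\mathcal{S}|\,|W|\,p^{Mp-t}(1-p)^{M-Mp+t}\le\exp\!\Big(Cn^2p\log\tfrac1p+O(\log n)-MH(p)+t\log\tfrac1p\Big),$$
where $H$ is the binary entropy in natural log. Since $MH(p)\ge Mp\log(1/p)$, while $t\log(1/p)=o\!\big(Mp\log(1/p)\big)$, $O(\log n)=o\!\big(Mp\log(1/p)\big)$, and $Cn^2p\log(1/p)=2CMp\log(1/p)(1+o(1))$, choosing the absolute constant $C<1/2$ makes this $\exp\!\big(-(1-2C-o(1))\,Mp\log(1/p)\big)$, which tends to $0$. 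Hence $\prob_{\ergraph}[\mathcal{S}]=o(1)$.

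\emph{Conclusion.} Taking $A=\mathcal{S}$ in the variational formula for total variation, $\TV(\RGG,\ergraph)\ge\prob_{\RGG}[\mathcal{S}]-\prob_{\ergraph}[\mathcal{S}]\ge1-o(1)$, and since $\TV\le1$ this yields $\TV=1-o(1)$. The main obstacle is the final estimate: because the weights $p^m(1-p)^{M-m}$ are \emph{decreasing} in $m$ rather than peaked at $m=Mp$, one cannot simply bound the $\ergraph$-mass of $\mathcal{S}$ by $|\mathcal{S}|$ times the mode; restricting to the concentration window $W$ repairs this, and the bookkeeping showing that the ``price'' $t\log(1/p)$ of sliding to the edge of $W$ — as well as the log of the window width — is $o\!\big(Mp\log(1/p)\big)$ is exactly where the hypotheses $1/n\le p\le1/2$ enter.
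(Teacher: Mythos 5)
Your argument is correct, and the first half (rounding to the net, the triangle-inequality observation that $|\mu(\bfx^i,\bfx^j)-\mu(r(\bfx^i),r(\bfx^j))|\le2\varepsilon$, and the union bound over pairs) is essentially identical to the paper's coupling of $\RGG(n,\Omega,\mathcal{D},\sigma,p)$ with the net-restricted model $\mathcal{Q}=\RGG(n,\mathcal{N}(\varepsilon),\mathcal{D}',\sigma,p')$. Where you diverge is the endgame: the paper, having reduced to a distribution $\mathcal{Q}$ supported on at most $|\mathcal{N}(\varepsilon)|^n$ graphs, simply cites an external result (\cite[Theorem 7.5]{bangachev2023random}) for the fact that any such small-support distribution is at total variation distance $1-o(1)$ from $\ergraph$; you instead give a self-contained proof of that fact by splitting $\prob_{\ergraph}[\mathcal{S}]$ according to the edge count $m$, discarding $m$ outside a Chernoff window $W=[Mp-t,Mp+t]$ with $t=\sqrt{Mp}\log M$, and on $W$ using monotonicity of $m\mapsto p^m(1-p)^{M-m}$ (valid since $p\le1/2$) plus $MH(p)\ge Mp\log(1/p)$ to absorb $|\mathcal{S}|\le\exp(Cn^2p\log(1/p))$ once $C<1/2$. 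Both the peel-off of the window cost $t\log(1/p)=o(Mp\log(1/p))$ and the log of the window width $O(\log n)=o(Mp\log(1/p))$ are correctly verified using $Mp=\Omega(n)$ (from $p\ge1/n$), and the choice $C<1/2$ is legitimate since the statement only asserts existence of some absolute constant. The upshot is that your write-up makes the theorem self-contained at the cost of a paragraph of elementary bookkeeping, whereas the paper's version is shorter but delegates the key entropic estimate to a cited lemma; as a matter of mathematical content the two routes are the same.
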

\begin{proof} First, we will show that there exists a graph distribution $\mathcal{Q}$
on support of size at most 
$|\mathcal{N}(\varepsilon)|^n$ such that 
$$
\TV\Big(\RGG(n,\Omega,\mathcal{D}, \sigma,p), \mathcal{Q}\Big) = o(1).
$$
Let $\pi$ be the projection map form $\Omega$ to $\mathcal{N}(\varepsilon).$ Let $\mathcal{D}'$ be the distribution over $\mathcal{N}(\varepsilon)$ defined by $\pi\circ\mathcal{D}.$ Let $p' = \prob_{\bfx,\bfy\iidsim 
\mathcal{D}'}\big[\mu(\bfx,\bfy)\le \tau\big].$ We will show that $\mathcal{Q} = \RGG(n,\mathcal{N}(\varepsilon),\mathcal{D}', \sigma,p')$ satisfies the desired property. Here, we think of $\mathcal{N}(\varepsilon)$ as a metric space with the induced metric $\mu.$

First, $\mathcal{Q}$ has support of size at most $|\mathcal{N}(\varepsilon)|^n$ as the $n$ latent vectors in $\mathcal{N}(\varepsilon)$ uniquely determine the corresponding geometric graph.

Second, we will form a coupling between $\RGG(n,\Omega,\mathcal{D}, \sigma,p)$ and 
$\RGG(n,\mathcal{N}(\varepsilon),\mathcal{D}', \sigma,p')$ as follows. For latent vectors $\bfx^1, \bfx^2, \ldots \bfx^n\in \Omega,$ let $\bfgg_\Omega(\bfx^1, \bfx^2, \ldots, \bfx^n)$ be the corresponding graph according to $\RGG(n,\Omega,\mathcal{D}, \sigma,p)$ and 
$\bfgg_{\mathcal{N}(\varepsilon)}(\pi(\bfx^1), \pi(\bfx^2), \ldots, \pi(\bfx^n))$ be the corresponding graph according to\linebreak $\RGG(n,\mathcal{N}(\varepsilon),\mathcal{D}', \sigma,p').$ By definition, when we take 
$\bfx^1, \bfx^2, \ldots, \bfx^n\iidsim\mathcal{D},$ it is the case that 
\begin{align*}
    & \bfgg_\Omega(\pi(\bfx^1), \pi(\bfx^2), \ldots, \pi(\bfx^n))\sim \RGG(n,\Omega,\mathcal{D}, \sigma,p)\text{ and,}\\
    & \bfgg_{\mathcal{N}(\varepsilon)}(\pi(\bfx^1), \pi(\bfx^2), \ldots, \pi(\bfx^n))
    \sim \RGG(n,\mathcal{N}(\varepsilon),\mathcal{D}', \sigma,p').
\end{align*}
All that is left to show is that with probability $1-o(1)$ over $\bfx^1, \bfx^2, \ldots, \bfx^n\iidsim\mathcal{D},$ it is the case that 
$\bfgg_\Omega(\pi(\bfx^1), \pi(\bfx^2), \ldots, \pi(\bfx^n)) = 
\bfgg_{\mathcal{N}(\varepsilon)}(\pi(\bfx^1), \pi(\bfx^2), \ldots, \pi(\bfx^n)).
$

Observe that whenever $\bfgg_\Omega(\pi(\bfx^1), \pi(\bfx^2), \ldots, \pi(\bfx^n)) \neq 
\bfgg_{\mathcal{N}(\varepsilon)}(\pi(\bfx^1), \pi(\bfx^2), \ldots, \pi(\bfx^n)),$ there exist some $i,j$ such that 
$$
\indicator\bigg[\mu(\bfx^i,\bfx^j)\le \tau\bigg]\neq 
\indicator\bigg[\mu(\pi(\bfx^i),\pi(\bfx^j))\le \tau\bigg].
$$
However, by triangle inequality,
$$
\bigg|\mu(\bfx^i,\bfx^j) - 
\mu(\pi(\bfx^i),\pi(\bfx^j))
\bigg|\le 
\mu(\bfx^i, \pi(\bfx^i)) + 
\mu(\bfx^j, \pi(\bfx^j))\le 2\varepsilon.
$$
In particular, this means that $\mu(\bfx^i,\bfx^j)\in [\tau-2\varepsilon,\tau+2\varepsilon].$ As this happens with probability $o(n^{-2})$ for a fixed pair $i,j,$  the union bound implies that this happens with probability 
$o(1)$ for some $i,j,$ which finishes the proof that 
$\TV\Big(\RGG(n,\Omega,\mathcal{D}, \sigma,p), \mathcal{Q}\Big) = o(1).$ Thus, it is enough to show that $\TV\Big(\mathcal{Q},\ergraph\Big) = 1- o(1)$ under the given conditions.
This follows immediately from $|\mathcal{N}(\varepsilon)|\le \exp\Big(Cnp\log 1/p\Big)$ as shown in \cite[Theorem 7.5]{bangachev2023random}.
\end{proof}

\cref{thm:entropybound} now immediately follows from the following proposition.

\begin{theorem} Consider any $q\in [1,+\infty)\cup\{\infty\}, d \ge n^\delta, p\ge n^{-1+\epsilon}.$ For $\varepsilon = \exp(-(\log nd)^4),$
there exists an $\varepsilon$-net of $(\dtorus, L_q)$ of size $\exp(\tilde{\Theta}(d)).$ Furthermore,  $\prob_{\bfx,\bfy\iidsim \dtorus}[\|\bfx-\bfy\|_q\in [\tau^q_p - 2\varepsilon, \tau^q_p+2\varepsilon] \le n^{-3}.$
\end{theorem}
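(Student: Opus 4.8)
The plan is to prove the two assertions separately by elementary means, with no case split in $q$.

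\textbf{The net.} Fix a spacing $\eta>0$ and place an evenly spaced set of $\lceil 2/\eta\rceil$ points on each of the $d$ circles $2\sphere^1$; let $\mathcal{N}(\varepsilon)$ be the product of these $d$ sets inside $\dtorus$. Every $\bfx\in\dtorus$ has some $\bfy\in\mathcal{N}(\varepsilon)$ with $|x_i-y_i|_C\le\eta/2$ for all $i$, whence $\|\bfx-\bfy\|_q\le\big(d(\eta/2)^q\big)^{1/q}=d^{1/q}\eta/2\le d\eta/2$ for every $q\in[1,\infty)$ (using $d^{1/q}\le d$), and $\|\bfx-\bfy\|_\infty\le\eta/2$ for $q=\infty$. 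Taking $\eta=2\varepsilon/d$ therefore makes $\mathcal{N}(\varepsilon)$ an $\varepsilon$-net for all $q\in[1,\infty]$ at once, and $|\mathcal{N}(\varepsilon)|\le(2d/\varepsilon)^d$, so $\log|\mathcal{N}(\varepsilon)|\le d\big(\log(2d)+(\log nd)^4\big)$. Since $d\ge n^{\delta}$ forces $\log(nd)=O_\delta(\log d)$, this is $O_\delta\big(d(\log d)^4\big)$, i.e.\ $\tilde\Theta(d)$.

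\textbf{The anticoncentration bound.} By translation invariance and the identification of each coordinate circular distance with a $\unif([0,1])$ variable, $\|\bfx-\bfy\|_q$ has the law of $\big(\sum_{i=1}^d U_i^q\big)^{1/q}$ with $U_1,\dots,U_d\iidsim\unif([0,1])$ (and of $\max_i U_i$ when $q=\infty$). I will use the scaling identity already exploited in the proof of \cref{lem:smallballq}: conditioning on which coordinate attains the maximum $\mathsf{M}:=\max_{i\le d}U_i$, one gets $\sum_{i=1}^d U_i^q\overset{d}{=}\mathsf{M}^q\,T$, where $T:=1+\sum_{j=1}^{d-1}V_j^q\ge 1$ for fresh i.i.d.\ uniforms $V_j$ independent of $\mathsf{M}$; equivalently $\|\bfx-\bfy\|_q\overset{d}{=}\mathsf{M}\,T^{1/q}$ (with $T^{1/q}:=1$ when $q=\infty$). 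Hence, conditionally on $T=t\ge 1$, the event $\|\bfx-\bfy\|_q\in[\tau^q_p-2\varepsilon,\tau^q_p+2\varepsilon]$ says exactly that $\mathsf{M}$ lies in an interval of length $4\varepsilon/t^{1/q}\le 4\varepsilon$. Since $\mathsf{M}$ has CDF $x\mapsto x^d$ on $[0,1]$, which is $d$-Lipschitz there, this conditional probability is at most $4\varepsilon d$; averaging over $t$ gives $\prob_{\bfx,\bfy\iidsim\unif(\dtorus)}\big[\|\bfx-\bfy\|_q\in[\tau^q_p-2\varepsilon,\tau^q_p+2\varepsilon]\big]\le 4\varepsilon d$, in fact for any value of the threshold, not just $\tau^q_p$, and uniformly in $q\in[1,\infty]$. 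Finally $4\varepsilon d=4d\exp\big(-(\log nd)^4\big)\le\exp\big(4\log(nd)-(\log nd)^4\big)$, which is below $n^{-3}$ once $n$ is large, since $(\log nd)^4$ dominates $4\log(nd)+3\log n$.

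\textbf{Main obstacle.} There is no serious obstacle once one observes that the $\max$-scaling identity of \cref{lem:smallballq} reduces the anticoncentration question, uniformly in $q$, to the trivial anticoncentration of the single order statistic $\max_i U_i$; everything else is bookkeeping. The two points that need a little care are that $d^{1/q}\le d$ makes one axis-aligned grid an $\varepsilon$-net in every $L_q$ norm simultaneously (so a single net and a single $\varepsilon$ serve all $q$), and that with $\varepsilon=\exp(-(\log nd)^4)$ the quantities $d\varepsilon$ and $d\log(1/\varepsilon)$ fall, respectively, far below $n^{-3}$ and within $\tilde\Theta(d)$---both immediate from $d\ge n^{\delta}$.
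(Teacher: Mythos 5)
Your construction of the $\varepsilon$-net is essentially the same as the paper's: both lay an axis-aligned grid of spacing $\Theta(\varepsilon/d)$, observe that the $L_q$ distance to the nearest grid point is dominated by the $L_1$ distance (your $d^{1/q}\le d$ step is the same inequality applied to the all-$\eta/2$ vector), and count $\lceil d/\varepsilon\rceil^d=\exp(\tilde\Theta(d))$ points.

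The anticoncentration half, however, takes a genuinely different and cleaner route. The paper splits into two regimes: for $q=o(d/\log d)$ it passes to the $q$-th power, expands the interval width $(\tau+2\varepsilon)^q-(\tau-2\varepsilon)^q$, and invokes the Bobkov--Chistyakov-type small-ball bound \cref{prop:inervalprobsmallq} (whose proof involves decomposing $U^q$ as a mixture and tracking an extra $\exp(-\Omega(d/q))$ error); for $q=\Omega(d/\log d)$ it instead applies the max-scaling identity \cref{lem:smallballq}. You observe that the max-scaling identity alone, applied to all $d$ coordinates and read off in $\|\cdot\|_q$ rather than $\|\cdot\|_q^q$, already handles every $q$ at once: since $\|\bfx-\bfy\|_q\overset{d}{=}\mathsf{M}\,T^{1/q}$ with $T\ge 1$ independent of $\mathsf{M}$ and $\mathsf{M}$ has density $dx^{d-1}\le d$, the length-$4\varepsilon/t^{1/q}\le 4\varepsilon$ interval for $\mathsf{M}$ gives a uniform bound of $4d\varepsilon$ with no case split and no error term. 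This is correct, tighter than the paper's Case~2 estimate (which carries a spurious $d^{d/q}=\exp(O((\log d)^2))$ factor), and sidesteps \cref{prop:inervalprobsmallq} entirely; the only price is that the resulting bound does not generalize to arbitrary thresholds for $\sum_i U_i^q$ the way the Bobkov--Chistyakov route does, but for the statement at hand that flexibility is not needed. The final arithmetic ($4d\varepsilon\le n^{-3}$ once $(\log nd)^4$ dominates $\log(4d)+3\log n$, which it does given $d\ge n^\delta$) is fine.
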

\begin{proof} First, we will show the existence of a small $\varepsilon$ net. Let $k  = \lceil d/\varepsilon\rceil$ be an integer and consider the set $\mathcal{N} = \{i/k\in \mathbb{T}^1\; : \; 0\le i \le 2k-1\}^d\subseteq \dtorus.$ This is a set of size $(2k)^d = \exp(\tilde{\Theta}(d)).$ Furthermore, it is a $\varepsilon$-net for any $L_q$ geometry for the following reason. Take $\bfx\in \dtorus$ and let $\bfu = (u_1/k,u_2/k,\ldots, u_d/k)$ be the projection of $\bfx$ to $\mathcal{N}.$ Then, for any $q\in [1,+\infty)\cup\{\infty\},$
$$
\|\bfx - \bfu\|_q\le 
\|\bfx - \bfu\|_1= 
\sum_{j = 1}^d |x_u - u_i/k|\le d/k \le \varepsilon.
$$

Now, we need to show that for each $q,$ it is the case that $\prob_{\bfx,\bfy\iidsim \dtorus}[\|\bfx-\bfy\|_q\in [\tau^q_p - 2\varepsilon, \tau^q_p+2\varepsilon] \le n^{-3}.$ This is equivalent to showing that for $U_1, U_2, \ldots, U_d\iidsim\unif([0,1]),$ it is the case that 
\begin{align*}
& \prob\bigg[
\|
(U_1, U_2, \ldots, U_d)
\|_q
\in [\tau^q_p - 2\varepsilon, \tau^q_p + 2\varepsilon]\bigg] \le n^{-3} \Longleftrightarrow\\
& \prob\bigg[
\sum_{j = 1}^d U_j^d
\in [(\tau^q_p - 2\varepsilon)^{q}, (\tau^q_p + 2\varepsilon)^q]\bigg] \le n^{-3}.
\end{align*}
As in the proof of \cref{claim:lqanticoncentrationlarge}, clearly 
$(\tau^q_p)\ge 1 - (\log 1/p)/d\ge 1/2.$ Furthermore, note that
$(\tau^q_p)^q\le d$ as $\|
(U_1, U_2, \ldots, U_d)
\|^q_q\le d$ a.s. Now, we consider two cases:
\paragraph{Case 1)  When $q = o(d/(\log d)).$} Note that $(\tau^q_p +2\varepsilon)^{q}- (\tau^q_p + 2\varepsilon)^q = (\tau^q_p)^q \Bigg((1 + 2\varepsilon/\tau_q^p)^q - (1 -2\varepsilon/\tau_q^p)^q\Bigg).$ Using that $q = o(d/\log d) = o(1/\varepsilon),(\tau^q_p)^q\le d,\tau_p^q\ge 1/2,$ the last expression is of order ${O}(dq\varepsilon) = o(n^{-3}).$ By \cref{prop:inervalprobsmallq}, 
$\prob\bigg[
\sum_{j = 1}^d U_j^d
\in [(\tau^q_p - 2\varepsilon)^{q}, (\tau^q_p + 2\varepsilon)^q]\bigg]  = o (n^{-3}),$ as desired.

\paragraph{Case 2) When $q= \Omega(d/\log d).$} Using
\cref{lem:smallballq},
\begin{align*}
    & \prob\bigg[
\sum_{j = 1}^d U_j^d
\in [(\tau^q_p - 2\varepsilon)^{q}, (\tau^q_p + 2\varepsilon)^q]\bigg]\le 
\Bigg((\tau^q_p + 2\varepsilon)^q\Bigg)^{d/q} - 
\Bigg((\tau^q_p - 2\varepsilon)^q\Bigg)^{d/q}\\
& = 
((\tau^q_p)^q)^{d/q}\Bigg(
\bigg(1 +  2\varepsilon/\tau^q_p
\bigg)^d - 
\bigg(1 -  2\varepsilon/\tau^q_p
\bigg)^d\Bigg)\\
& \le d^{d/q}\times O(d\epsilon) = 
\exp(O((\log d)^2))\times 
\exp( - (\log (nd))^4)\le n^{-3}.\qedhere
\end{align*}

\end{proof}

\section{Random Algebraic Graphs Over the Hypercube}
\subsection{Preliminaries}
We begin with some preliminaries on Boolean Fourier analysis. 
Any function $f:\hypercube\longrightarrow \mathbb{R}$ can be written uniquely as $f(\bfx) = \sum_{S\subseteq [d]}\widehat{f}(S)\omega_S(\bfx),$ where $\omega_S(\bfx)\coloneqq \prod_{i \in S}x_i$ is the Walsh polynomial \cite{ODonellBoolean}. The influence $\infl_i[f]$ of variable $i$ is defined as
\begin{equation}
    \label{eq:influence}
    \infl_i[f] = \sum_{i \in S}\widehat{f}(S)^2 = 
    \expect_{\bfx\sim \unif(\hypercube)}\Big[
    (f(\bfx)  - f(\bfx^{\oplus i}))^2/4\Big],
\end{equation}
where $\bfx^{\oplus i}$ is $\bfx$ with the $i$-th coordinate flipped. We denote $\inflvector[f]$ as the vector in $\mathbb{R}_{\ge 0}^d$ with $i'$th coordinate equal to $\infl_i[f].$ In particular, 
$\|\inflvector[f]\|_1 = \sum_{i = 1}^d\infl_i[f] = \infl[f],$ which is the total influence, and 
$\|\inflvector[f]\|_\infty = \maxinfl[f],$ which is the max influence. Also, $\|\inflvector[f]\|^2_2 = \sum_i \infl^2_i[f],$ which is the quantity of interest in \cref{thm:hypercube}.
\subsection{The Proof of \texorpdfstring{\cref{thm:hypercube}}{Hypercube Theorem}}

 Throughout, we make the following assumption, without which the statement of \cref{thm:hypercube} is trivial (as it gives an upper bound of a total variation by a number larger than 1).

\begin{equation}
\label{assumption:l2norm}
\frac{n\|\inflvector[\sigma]\|_2}{p(1-p)} = o(1)
\end{equation} 

Write $\sigma$ in the standard Fourier basis as 
$
\sigma(\bfg) = p + \sum_{
\emptyset \subsetneq S\subsetneq [d]}
\widehat{\sigma}(S)
\omega_S(\bfg),
$
where  $\widehat{\sigma}(\emptyset) = \expect[\sigma] = p.$
Then, 
$
\gamma(\bfg) = \sigma*\sigma(\bfg) - p^2 = 
\sum_{
\emptyset \subsetneq S\subseteq [d]}
\widehat{\sigma}(S)^2
\omega_S(\bfg).
$
In particular, this means that for any $i \in [d],$ and 
$\bfh_{-i}\in \{\pm1\}^{d-1},$ we have
$$
\gamma(\bfg)|_{\bfg_{-i} = \bfh_{-i}} = 
\sum_{i \not \in S} 
\widehat{\sigma}(S)^2
\omega_S(\bfh_{-i})
  + 
\bfg_i
\sum_{i \in S} 
\widehat{\sigma}(S)^2
\omega_{S\backslash \{i\}}(\bfh_{-i}).
$$
It follows that $$D_i\gamma(\bfh_{-i}) = 2\sum_{i \in S} 
\widehat{\sigma}(S)^2
\omega_{S\backslash \{i\}}(\bfh_{-i}) \le 
2\sum_{i \in S} 
\widehat{\sigma}(S)^2 = 
2\infl_i[\sigma]$$ and 
$\Var_i[\gamma(\bfh_{-i})] = \Bigg(\sum_{i \in S} 
\widehat{\sigma}(S)^2
\omega_{S\backslash \{i\}}(\bfh_{-i})\Bigg)^2 \le 
\Bigg(\sum_{i \in S} 
\widehat{\sigma}(S)^2\Bigg)^2
= \infl^2_i[\sigma].$ 
Therefore, by \cref{claim:bernsteinmcdiarmidrag},
$$
\|\gamma\|_k \le 
C\Bigg(
\sqrt{k}\sqrt{\sum_{i = 1}^d\infl^2_i[\sigma]} + 
k\times \maxinfl[\sigma]
\Bigg) = 
C(\sqrt{k}\times \|\inflvector[\sigma]\|_2 + k \times 
\|\inflvector[\sigma]\|_\infty.
)
$$
This implies 
$$
\|\gamma\|_k^k \le 
(2C)^{k}\sqrt{k}^k\|\inflvector[\sigma]\|^k_2 + 
(2C)^kk^k\|\inflvector[\sigma]\|^k_\infty.
$$

Plugging this into \cref{eq:Kltensorization} and using $\expect_\bfg[\gamma(\bfg)] = 0,$ we obtain the following bound. The computation is analogous to \cref{eq:smallqboundingKLdivergence}.

\begin{align*}
 \KL\Big(\RAG(n,\hypercube,\sigma,p)\|\ergraph\Big)\; &\le 
\sum_{k =0}^{n-1}\log\Bigg( \expect_\bfg\bigg[\bigg(1 + \frac{\gamma(\bfg)}{p(1-p)}\bigg)^k\bigg]\Bigg)\\
& \le \sum_{k = 0}^{n-1}\log \Bigg(
1 + \sum_{t =2}^k\binom{k}{t}\frac{\expect[|\gamma|^t]}{p^{t}(1-p)^t}
\Bigg)\\
& \le n\sum_{k = 2}^{n}
\binom{n}{k}\frac{\expect[|\gamma|^k]}{p^k(1-p)^k}\\
& \le 
n
\sum_{k \ge 2}\binom{n}{k}(2C)^k\sqrt{k}^k\frac{\|\inflvector[\sigma]\|_2^k}{p^k(1-p)^k} +
n
\sum_{k \ge 2}\binom{n}{k}(2C)^k{k}^k\frac{\|\inflvector[\sigma]\|_\infty^k}{p^k(1-p)^k}.
\end{align*}
We now handle the two sums separately. We will use the inequality $\binom{n}{k}\le (ne/k)^k.$

\paragraph{Sum depending on $L_2$ norm.} 
\begin{equation}
	\begin{split}
		&n
\sum_{k \ge 2}\binom{n}{k}(2C)^k\sqrt{k}^k\frac{\|\inflvector[\sigma]\|_2^k}{p^k(1-p)^k}
\le n
\sum_{k \ge 2}
\Bigg(\frac{2eCn\|\inflvector[\sigma]\|_2}{\sqrt{k}p(1-p)}\Bigg)^k.
	\end{split}
\end{equation}
We will show exponential decay in the summands. That is, for all $k\ge 2,$
$$
\Bigg(\frac{2eCn\|\inflvector[\sigma]\|_2}{\sqrt{k}p(1-p)}\Bigg)^k\ge 
2 \Bigg(\frac{2eCn\|\inflvector[\sigma]\|_2}{\sqrt{k+1}p(1-p)}\Bigg)^{k+1}.
$$
This is equivalent to 
$
\sqrt{k+1}\ge C'
\frac{n\|\inflvector[\sigma]\|_2}{p(1-p)}
$ for some absolute constant $C'.$
The latter inequality clearly holds for all $k\ge 2$ by \cref{assumption:l2norm}. Since there is exponential decay, the term for $k = 2$ is dominant and, thus, the entire expression is of order
$
O\Bigg(\frac{n^3\|\inflvector[\sigma]\|_2^2}{p^2(1-p)^2}\Bigg).$

\paragraph{Sum depending on $L_\infty$ norm.}
Using the same reasoning, the expression can be bounded by 
\begin{equation}
n\sum_{k\ge 2}
\Bigg(
\frac{2eCn\|\inflvector[\sigma]\|_\infty}{p(1-p)}
\Bigg)^k.
\end{equation}
Again, whenever $\frac{n\|\inflvector[\sigma]\|_\infty}{p(1-p)} = o(1),$ we have exponential decay. This, however, clearly is the case by \cref{assumption:l2norm} as $\|\inflvector[\sigma]\|_\infty\le 
\|\inflvector[\sigma]\|_2.$ Thus, the term for $k = 2$ is dominant,
so the $L_\infty$ contribution is bounded by
$
O\Bigg(\frac{n^3\|\inflvector[\sigma]\|_\infty^2}{p^2(1-p)^2}\Bigg).
$
Combining with the $L_2$ contribution, the statement follows as 
$\|\inflvector[\sigma]\|^2_\infty\le 
\|\inflvector[\sigma]\|^2_2.$
\section{Discussion}
\label{sec:discussion}
We studied the question of detecting $L_q$ geometry in random geometric graphs. Our work shows that for different values of $q,$ not only the limits of computational and statistical detection vary, but also the optimal algorithms are different. In particular, contrary to previous work, we show that the signed triangle count is not always optimal as the signed 4-cycle test might succeed in a polynomially larger range. This, however, opens more questions than it answers. What other tests besides counting signed 3- and 4- cycles can be optimal for detecting latent geometry? Are there instances in which a statistical-computational gap for detecting geometry is present? A positive answer to this question might even be hidden in the $\RGG(n,\dtorus, \unif, \sigma^q_p,p)$ models considered in the current paper as our statistical lower-bounds and computationally efficient algorithmic upper bounds  are essentially nowhere matching. 

Similarly, one can study other statistical tasks related to random geometric graphs with $L_q$ geometry. Especially intriguing seems the task of efficiently embedding a sample from $\RGG(n,\dtorus, \unif, \sigma^q_p,p)$ into $(\dtorus, \|\cdot\|_q)$ so that marginal distances are non-trivially approximated. This question will most likely require new ideas, different from previous work on embedding random geometric graphs. The spectral approach of \cite{li2023spectral} heavily relies on an inner product structure, which is only present in  $\RGG(n,\dtorus, \unif, \sigma^q_p,p)$ when $q = 2.$ The optimization framework of \cite{Ma2020UniversalLS} works in settings of $L_q$ geometry for general $q,$ but only gives strong poly-time guarantees for connection functions bounded away from $0$ and $1,$ i.e. $c\le\sigma(\bfx, \bfy)\le 1-c $ for some $c>0.$ This however, is not the case in $\RGG(n,\dtorus, \unif, \sigma^q_p,p)$ as $\sigma^q_p$ only takes values 0 and 1.

Finally, we compared the $L_\infty$ geometry giving rise to an \textsf{AND} structure in random geometric graphs and the $L_2$ geometry giving rise to a (weighted) \textsf{MAJORITY} structure (as would any $L_C$ when $C = O(1)$). It could be interesting to consider an extension of these constructions for general $f:\{0,1\}^d\longrightarrow\{0,1\}$ beyond \textsf{AND} an \textsf{MAJORITY}.
One way to formalize is the following. One first samples $d$ graphs on $n$ vertices $\bfG^1, \bfG^2, \ldots, \bfG^d$ from some fixed ``1-dimensional'' distribution $\mathcal{G}$ and then forms the $d$ dimensional graph $\bfG$ in which $\bfG_{ij} = f(\{\bfG^u_{ij}\}_{u = 1}^d).$ When do such graphs converge to \ER? When can they be represented as random geometric graphs? We note that in the special case $f= \textsf{AND}$ (respectively, $f= \textsf{OR}$ under taking a complement) one can apply our cluster-expansion based approach for \textit{any} 1-dimensional distribution $\mathcal{G}.$ An example of the $\textsf{AND}$ structure beyond $L_\infty$ toric random geometric graphs are random intersection graphs (e.g. \cite{Brennan19PhaseTransition}).

\section*{Acknowledgements}
We want to thank Will Perkins for insightful conversations on the cluster expansion formula and for suggesting that it might be useful in understanding random geometric graphs. We also want to thank Tselil Schramm for stimulating conversations on random geometric graphs.
\printbibliography

@misc{Liu2021APV,
  title={A probabilistic view of latent space graphs and phase transitions},
  author={Suqi Liu and Mikl{\'o}s Z. R{\'a}cz},
  journal={ArXiv},
  year={2021},
  archivePrefix = {arXiv},
  eprint = {2110.15886},
}

@book{ODonellBoolean,
  url = {https://arxiv.org/abs/2105.10386},
  author = {O'Donnell, Ryan},
  title = {Analysis of Boolean Functions},
  publisher = {Cambridge University Press;},
  year = {2014},
}

@misc{RvH,
url = {https://web.math.princeton.edu/~rvan/APC550.pdf},
author = {Handel, Ramon van},
title = {Lecture Notes on Probability in High Dimension}
}

@inproceedings{Liu2022STOC, author = {Liu, Siqi and Mohanty, Sidhanth and Schramm, Tselil and Yang, Elizabeth}, title = {Testing Thresholds for High-Dimensional Sparse Random Geometric Graphs}, year = {2022}, isbn = {9781450392648}, publisher = {Association for Computing Machinery}, address = {New York, NY, USA}, url = {https://doi.org/10.1145/3519935.3519989}}

@article{Bubeck14RGG,
author = {Bubeck, Sébastien and Ding, Jian and Eldan, Ronen and Rácz, Miklós},
year = {2014},
month = {11},
pages = {},
title = {Testing for high-dimensional geometry in random graphs},
volume = {49},
journal = {Random Structures \& Algorithms}
}

@misc{Brennan22AnisotropicRGG,
author = {Brennan, Matthew and Bresler, Guy and Huang, Brice},
year = {2022},
month = {},
pages = {},
journal = {},
title = {Threshold for Detecting High Dimensional Geometry in Anisotropic Random Geometric Graphs},
note={To appear in Random Structures and Algorithms},
url = {10.48550/arXiv.2206.14896}
}

@misc{Liu21PhaseTransition,
author = {Liu, Suqi and Racz, Miklos},
year = {2021},
title = {Phase transition in noisy high-dimensional random geometric graphs},
  archivePrefix = {arXiv},
  eprint = {2103.15249},
}

@article{Schramm_2022,
	url = {https://doi.org/10.1214%2F22-aos2179}, 
	year = 2022,
	month = {6},  
	publisher = {Institute of Mathematical Statistics}, 
	volume = {50}, 
	number = {3}, 
	author = {Tselil Schramm and Alexander S. Wein}, 
	title = {Computational barriers to estimation from low-degree polynomials}, 
	journal = {The Annals of Statistics}
}

@article{Brennan19PhaseTransition,
author = {Brennan, Matthew and Bresler, Guy and Nagaraj, Dheeraj},
year = {2020},
month = {12},
pages = {1215-1289},
title = {Phase transitions for detecting latent geometry in random graphs},
volume = {178},
journal = {Probability Theory and Related Fields}
}

@book{Polianskiy22+,
author = {Polyanskiy, Yury and Wu, Yihong},
year = {Forthcoming},
title = {Information Theory: From Coding to Learning},
publisher = {Cambridge University Press},
url={https://people.lids.mit.edu/yp/homepage/data/itbook-export.pdf}
}

@article{Devroye11,
author = {Luc Devroye and Andr{\'a}s Gy{\"o}rgy and G{\'a}bor Lugosi and Frederic Udina},
title = {{High-Dimensional Random Geometric Graphs and their Clique Number}},
volume = {16},
journal = {Electronic Journal of Probability},
publisher = {Institute of Mathematical Statistics and Bernoulli Society},
pages = {2481 -- 2508},
keywords = {Clique number, dependency testing, Geometric graphs, Random graphs},
year = {2011},
URL = {https://doi.org/10.1214/EJP.v16-967}
}

@article{ESTRADA201620,
title = {Consensus dynamics on random rectangular graphs},
author = {Estrada, Ernesto and Sheerin, Matthew},
journal = {Physica D: Nonlinear Phenomena},
volume = {323-324},
pages = {20-26},
year = {2016},
note = {Nonlinear Dynamics on Interconnected Networks},
issn = {0167-2789},
url = {https://www.sciencedirect.com/science/article/pii/S0167278915002171},
}

@misc{duchemin22,
  author = {Duchemin, Quentin and de Castro, Yohann},
  keywords = {Social and Information Networks (cs.SI), Statistics Theory (math.ST), FOS: Computer and information sciences, FOS: Computer and information sciences, FOS: Mathematics, FOS: Mathematics},
  archivePrefix = {arXiv},
  title = {Random Geometric Graph: Some recent developments and perspectives},
  eprint = {2203.15351},
  year = {2022},
}

@misc{bangachev2023random,
      title={Random Algebraic Graphs and Their Convergence to Erdos-Renyi}, 
      author={Kiril Bangachev and Guy Bresler},
      year={2023},
      eprint={2305.04802},
      archivePrefix={arXiv},
      primaryClass={math.PR}
}

@misc{kothari2023planted,
      title={Is Planted Coloring Easier than Planted Clique?}, 
      author={Pravesh K. Kothari and Santosh S. Vempala and Alexander S. Wein and Jeff Xu},
      year={2023},
      eprint={2303.00252},
      archivePrefix={arXiv},
      primaryClass={cs.CC}
}

@misc{kelley2023strong,
      title={Strong Bounds for 3-Progressions}, 
      author={Zander Kelley and Raghu Meka},
      year={2023},
      eprint={2302.05537},
      archivePrefix={arXiv},
      primaryClass={math.NT}
}

@misc{bloom2023kelleymeka,
      title={The Kelley--Meka bounds for sets free of three-term arithmetic progressions}, 
      author={Thomas F. Bloom and Olof Sisask},
      year={2023},
      eprint={2302.07211},
      archivePrefix={arXiv},
      primaryClass={math.NT}
}

@article{Bobkov14,
author = {Bobkov, Sergey and Chistyakov, G.},
year = {2014},
month = {05},
pages = {100-106},
title = {Bounds on the Maximum of the Density for Sums of Independent Random Variables},
volume = {199},
journal = {Journal of Mathematical Sciences},
doi = {10.1007/s10958-014-1836-9}
}

@article{OConner20,
author = {O'Connor, Luke and Médard, Muriel and Feizi, Soheil},
year = {2020},
month = {04},
pages = {5289-5297},
title = {Maximum Likelihood Embedding of Logistic Random Dot Product Graphs},
volume = {34},
journal = {Proceedings of the AAAI Conference on Artificial Intelligence},
doi = {10.1609/aaai.v34i04.5975}
}

@article{Preciado2009SpectralAO,
  title={Spectral analysis of virus spreading in random geometric networks},
  author={Victor M. Preciado and Ali Jadbabaie},
  journal={Proceedings of the 48h IEEE Conference on Decision and Control (CDC) held jointly with 2009 28th Chinese Control Conference},
  year={2009},
  pages={4802-4807}
}

@article{Higham03,
    author = {Higham, Desmond J. and Rašajski, Marija and Pržulj, Nataša},
    title = "{Fitting a geometric graph to a protein–protein interaction network}",
    journal = {Bioinformatics},
    volume = {24},
    number = {8},
    pages = {1093-1099},
    year = {2008},
    month = {03},
    issn = {1367-4803},
    doi = {10.1093/bioinformatics/btn079},
    url = {https://doi.org/10.1093/bioinformatics/btn079},
    eprint = {https://academic.oup.com/bioinformatics/article-pdf/24/8/1093/49046772/bioinformatics\_24\_8\_1093.pdf},
}

@article{XIE2016167,
title = {A geometric graph model for citation networks of exponentially growing scientific papers},
journal = {Physica A: Statistical Mechanics and its Applications},
volume = {456},
pages = {167-175},
year = {2016},
issn = {0378-4371},
doi = {https://doi.org/10.1016/j.physa.2016.03.018},
url = {https://www.sciencedirect.com/science/article/pii/S0378437116300164},
author = {Zheng Xie and Zhenzheng Ouyang and Qi Liu and Jianping Li},
}

@ARTICLE{Haenggi2009,
  author={Haenggi, Martin and Andrews, Jeffrey G. and Baccelli, Francois and Dousse, Olivier and Franceschetti, Massimo},
  journal={IEEE Journal on Selected Areas in Communications}, 
  title={Stochastic geometry and random graphs for the analysis and design of wireless networks}, 
  year={2009},
  volume={27},
  number={7},
  pages={1029-1046},
  doi={10.1109/JSAC.2009.090902}}

@article{Solovey18,
author = {Kiril Solovey and Oren Salzman and Dan Halperin},
title ={New perspective on sampling-based motion planning via random geometric graphs},
journal = {The International Journal of Robotics Research},
volume = {37},
number = {10},
pages = {1117-1133},
year = {2018},
doi = {10.1177/0278364918802957},
URL = {https://doi.org/10.1177/0278364918802957},
eprint = {https://doi.org/10.1177/0278364918802957}
}

@article{Almagro22,
author = {Almagro, P., Boguñá and M. \& Serrano, M.Á.},
title= {Detecting the ultra low dimensionality of real networks},
year = {2022},
journal = {Nature Communications},
url = {https://doi.org/10.1038/s41467-022-33685-z}
}

@misc{li2023spectral,
      title={Spectral clustering in the Gaussian mixture block model}, 
      author={Shuangping Li and Tselil Schramm},
      year={2023},
      eprint={2305.00979},
      archivePrefix={arXiv},
      primaryClass={stat.ML}
}

@misc{Friedrich23cliques,
  author = {Friedrich, Tobias and Göbel, Andreas and Katzmann, Maximilian and Schiller, Leon},
  title = {Cliques in High-Dimensional Geometric Inhomogeneous Random Graphs},
  archivePrefix = {arXiv},
  eprint = {2302.04113},
  year = {2023},
}

@misc{friedrich2023dimension,
      title={A simple statistic for determining the dimensionality of complex networks}, 
      author={Tobias Friedrich and Andreas Göbel and Maximilian Katzmann and Leon Schiller},
      year={2023},
      eprint={2302.06357},
      archivePrefix={arXiv},
      primaryClass={cs.SI}
}

@misc{hopkins18,
    author = {Samuel Hopkins},
    title = {STATISTICAL INFERENCE AND THE SUM OF SQUARES METHOD},
    year = {2018}
}

@article{kotecky86,
author = {R. Koteck{\'y} and D. Preiss},
title = {{Cluster expansion for abstract polymer models}},
volume = {103},
journal = {Communications in Mathematical Physics},
number = {3},
publisher = {Springer},
pages = {491 -- 498},
year = {1986},
}

@misc{rush2022,
      title={Is it easier to count communities than find them?}, 
      author={Cynthia Rush and Fiona Skerman and Alexander S. Wein and Dana Yang},
      year={2022},
      eprint={2212.10872},
      archivePrefix={arXiv},
      primaryClass={math.ST}
}

@article{chung87,
title = {Quasi-random graphs},
author = {F. Chung and 
         R. Graham and
         R.  Wilson 
         },
year = {1988},
journal = {Combinatorica},
vol = {9 (4)},
pages = {345-362}
}

@misc{montanari2022equivalence,
      title={Equivalence of Approximate Message Passing and Low-Degree Polynomials in Rank-One Matrix Estimation}, 
      author={Andrea Montanari and Alexander S. Wein},
      year={2022},
      eprint={2212.06996},
      archivePrefix={arXiv},
      primaryClass={math.ST}
}

@article{brennan2021statisticalquery,
      title={Statistical Query Algorithms and Low-Degree Tests Are Almost Equivalent}, 
      author={Matthew Brennan and Guy Bresler and Samuel B. Hopkins and Jerry Li and Tselil Schramm},
      year={2021},
      journal = {Conference on Learning Theory}
}

@misc{kunisky2019notes,
      title={Notes on Computational Hardness of Hypothesis Testing: Predictions using the Low-Degree Likelihood Ratio}, 
      author={Dmitriy Kunisky and Alexander S. Wein and Afonso S. Bandeira},
      year={2019},
      eprint={1907.11636},
      archivePrefix={arXiv},
      primaryClass={math.ST}
}

@book{friedli_velenik_2017,
place={Cambridge},
title={Statistical Mechanics of Lattice Systems: A Concrete Mathematical Introduction},
DOI={10.1017/9781316882603},
ISBN={978-1-107-18482-4},
publisher={Cambridge University Press},
author={Friedli, Sacha and Velenik, Yvan},
year={2017}
}

@article{Ma2020UniversalLS,
  title={Universal Latent Space Model Fitting for Large Networks with Edge Covariates},
  author={Zhuang Ma and Zongming Ma and Hongsong Yuan},
  journal={J. Mach. Learn. Res.},
  year={2020},
  volume={21},
  pages={4:1-4:67},
  url={https://api.semanticscholar.org/CorpusID:211529745}
}

@book{mayer40cluster,
title = {Statistical Mechanics},
author = {Maria Mayer and Joseph Mayer},
year = {1940},
publisher = {John Wiley},
title = {Statistical Mechanics}
}

@article{scott2005cluster,
author = {Scott, Alexander and Sokal, Alan},
year = {2005},
month = {01},
pages = {1151-1261},
title = {The Repulsive Lattice Gas, the Independent-Set Polynomial, and the Lovasz Local Lemma},
volume = {118},
journal = {Journal of Statistical Physics},
doi = {10.1007/s10955-004-2055-4}
}

\appendix
\section{Signed Counts in \texorpdfstring{$L_q$}{Lq} Geometries}
\label{appendix:signedcountsinLq}
Our only rigorous progress towards signed  subgraph tests in $L_q$ geometries is the following. 
\begin{theorem}
\label{thm:signedfourcyclesinlq}
The signed 4-cycle test cannot distinguish between 
$H_0: \ergraphhalf$ and\linebreak
$H_1:\RGG(n,\dtorus,\unif, \sigma^q_{1/2},1/2)$ in the following regimes:
\begin{enumerate}
    \item When $q = o(d/\log d)$ and $dq = \omega(n^2).$ 
    \item When $q = \Omega(d/\log d)$ and $d = \tilde{\omega}(n).$
\end{enumerate}
\end{theorem}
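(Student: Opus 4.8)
The plan is to reduce the claim to the second moment of the function $\gamma$, which the paper has already controlled in the course of proving \cref{thm:torussmallq}. By \cref{def:signedcountsperformance} (and since $\expect_{\ergraphhalf}[\signedcount_{C_4}]=0$), the signed $C_4$ test fails with high probability exactly when
\[
\Big|\expect_{\bfG\sim\RGG(n,\dtorus,\unif,\sigma^q_{1/2},1/2)}\big[\signedcount_{C_4}(\bfG)\big]\Big| = o\Big(\sqrt{\Var_{\ergraphhalf}[\signedcount_{C_4}] + \Var_{\RGG}[\signedcount_{C_4}]}\Big),
\]
so it suffices to upper bound the expected signed count under $H_1$ and to lower bound the variance under $H_0$.

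First I would evaluate the expected signed weight of one labelled $4$-cycle. A $4$-cycle is a copy of $K_{2,2}$; conditioning on the two latent vectors on one side of the bipartition, the two latent vectors on the other side contribute independent factors, each equal to $\gamma$ evaluated at the difference of the two conditioned vectors (using that $\sigma^q_{1/2}$ is symmetric). Hence $\expect_{\bfG\sim\RGG}[\signedweight_{C_4}(\bfG)] = \expect_{\bfy\sim\unif(\dtorus)}[\gamma(\bfy)^2] = \|\gamma\|_2^2$ — the $t=2$ instance of the ``$K_{2,t}$ equals the $t$-th centered moment of $\sigma\ast\sigma$'' identity exploited in \cref{sec:statslinfty}. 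Since $K_n$ has $\Theta(n^4)$ labelled $4$-cycles, $\big|\expect_{\bfG\sim\RGG}[\signedcount_{C_4}(\bfG)]\big| = \Theta(n^4)\,\|\gamma\|_2^2$. Taking $k=2$ in the moment bounds already established inside the proof of \cref{thm:torussmallq} gives $\|\gamma\|_2^2 = O(1/(dq))$ when $q = o(d/\log d)$ (by \eqref{eq:momentnorms}) and $\|\gamma\|_2^2 = \tilde O(1/d^2)$ when $q = \Omega(d/\log d)$ (by \eqref{eq:momentnormslargeq}). Thus $\big|\expect_{\bfG\sim\RGG}[\signedcount_{C_4}(\bfG)]\big|$ is $O(n^4/(dq))$ in the first regime and $\tilde O(n^4/d^2)$ in the second.

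Next I would record the variance under $\ergraphhalf$. Because the edges are independent with mean $1/2$, signed weights of distinct edge sets are orthogonal, so all cross terms vanish and $\Var_{\ergraphhalf}[\signedcount_{C_4}] = \Theta(n^4)\cdot(1/2)^8 = \Theta(n^4)$. In particular $\sqrt{\Var_{\ergraphhalf}[\signedcount_{C_4}] + \Var_{\RGG}[\signedcount_{C_4}]} \ge \sqrt{\Var_{\ergraphhalf}[\signedcount_{C_4}]} = \Theta(n^2)$, so the variance under $H_1$ is never needed. The failure criterion \eqref{eq:signedtestdefinitionfail} therefore holds as soon as $n^4/(dq) = o(n^2)$, i.e.\ $dq = \omega(n^2)$, in the regime $q = o(d/\log d)$, and as soon as $\tilde O(n^4/d^2) = o(n^2)$, i.e.\ $d = \tilde\omega(n)$, in the regime $q = \Omega(d/\log d)$ — precisely the two stated ranges.

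Since the genuinely analytic ingredients (the anticoncentration of random $L_q$ distances and the Bernstein--McDiarmid bounds on the moments of $\gamma$) are imported verbatim from \cref{sec:lqproof}, there is no new obstacle here; the argument is essentially a corollary of \cref{thm:torussmallq} plus a second-moment computation. The only two points needing a little care are the bipartite-conditioning identity $\expect_{\RGG}[\signedweight_{C_4}] = \|\gamma\|_2^2$ and the observation that the $\ergraphhalf$ variance alone already supplies a denominator of order $n^2$, so that $\Var_{\RGG}[\signedcount_{C_4}]$ — which we do not estimate — is harmless.
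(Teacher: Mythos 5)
Your proof is correct and follows essentially the same route as the paper: you derive the identity $\expect_{\RGG}[\signedweight_{C_4}] = \expect[\gamma^2]$ (the paper does this via the variable substitution $\bfz_1 =\bfg_1 - \bfg_2,\ \bfz_2 = \bfg_1 - \bfg_4,\ \bfh = \bfg_1 - \bfg_3$, which is the same computation as your bipartite conditioning on the $\{1,3\}$ side of $K_{2,2}$), then apply the $k=2$ case of \cref{eq:momentnorms} and \cref{eq:momentnormslargeq}, and conclude by comparing $\Theta(n^4)\|\gamma\|_2^2$ against $\sqrt{\Var_{\ergraphhalf}[\signedcount_{C_4}]} = \Theta(n^2)$. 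The only noteworthy detail is that your bound $\|\gamma\|_2^2 = \tilde O(1/d^2)$ in the second regime is the correct one and is in fact what the paper uses in its final arithmetic, even though the paper's text mistakenly writes ``of order $\tilde{O}(1/d)$'' there.
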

\begin{proof} The main observation is that the signed 4-cycle count corresponds to the second moment of $\gamma.$  
\begin{align*}
    & \expect_{\bfG\sim \RGG(n,\dtorus, \unif, \sigma^q_{1/2}, 1/2)}[\signedweight_{C_4}(\bfG)]
     = 
    \expect_{\bfG\sim \RGG}[
    (\bfG_{12} - 1/2)(\bfG_{23}  - 1/2)
    (\bfG_{34} - 1/2)(\bfG_{41}  - 1/2)
    ]\\
        & = 
    \expect_{\bfg_1, \bfg_2, \bfg_3, \bfg_4\iidsim\unif(\dtorus)}
    \Big[\big(\sigma(\bfg_1 - \bfg_2) - 1/2\big)
        \big(\sigma(\bfg_2 - \bfg_3) - 1/2\big)
        \big(\sigma(\bfg_3 - \bfg_4) - 1/2\big)
        \big(\sigma(\bfg_4 - \bfg_1) - 1/2\big)\Big]\\
    & = 
    \expect_{\bfh, \bfz_1, \bfz_2\iidsim\unif(\dtorus)}
    \Big[\big(\sigma(\bfz_1) - 1/2\big)
        \big(\sigma(\bfh - \bfz_1) - 1/2\big)
        \big(\sigma(\bfz_2 ) - 1/2\big)
        \big(\sigma(\bfh - \bfz_2) - 1/2\big)\Big]\\
    & = \expect\Big[\big(\sigma*\sigma(\bfh)-1/4\big)^2\Big],
\end{align*}
as desired. We used the substitution $\bfz_1 =\bfg_1 - \bfg_2, \bfz_2 = \bfg_1 - \bfg_4, \bfh = \bfg_1 - \bfg_3.$ Recalling \cref{eq:momentnorms,eq:momentnormslargeq}, we conclude that the signed count is of order $O(1/dq)$ in the regime $q = o(d/\log d))$ and of order $\tilde{O}(1/d)$ in the regime $q = \Omega(d/\log d).$ However, $K_n$ has $\Theta(n^4)$ subgraphs isomorphic to $C_4$ and
$\Var_{\bfH\sim \ergraphhalf}[\signedcount_{C_4}(\bfH)] = \Theta(n^4)$ by \cref{eq:fourcyclevars}. Therefore, a necessary condition for detection via the signed 4-cycle test is 
$n^4\expect_{\bfG\sim \RGG}[\signedweight_{C_4}(\bfG)] = \omega(\sqrt{n^4}).$ This leads to the desired conclusion.
\end{proof}

We believe that $1/dq$ and $1/d^2$ are the correct (up to $\log$ factors) orders of the signed 4-cycle count in the two regimes. Note that when $q = \infty,$ the signed 4-cycle count is indeed ${\Theta}(d^{-2})$ by \cref{cor:linftysignedcycles}. Similarly, in $L_2$ geometry (admittedly over a different latent space such as $\hypercube,$ but again with a hard threshold connection with density $1/2$), the signed 4-cycle count is 
$\tilde{\Theta}(1/d)$ (follows directly from  \cite[Observation 2.12]{bangachev2023random}). As this is the correct behaviour at both ends, we believe that it is also correct for all $q,$ which leads to the following conjecture.

\begin{conjecture}
\label{conj:sigendfourcycleinlqp}
The signed four-cycle test distinguishes w. h. p. between 
$H_0: \ergraphhalf$ and
$H_1:\RGG(n,\dtorus,\unif,\sigma^q_{1/2},1/2)$ in the following regimes:
\begin{enumerate}
    \item When $q = o(d/\log d)$ and $dq = \omega(n^2).$ 
    \item When $q = \Omega(d/\log d)$ and $d = \tilde{\omega}(n).$
\end{enumerate}
\end{conjecture}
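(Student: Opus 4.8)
The plan is to run the second‑moment recipe of \cref{def:signedcountsperformance} for the statistic $\signedcount_{C_4}$. Writing $\gamma(\bfg)=\sigma*\sigma(\bfg)-\tfrac14$ as in the proof of \cref{thm:signedfourcyclesinlq}, that proof already identifies $\expect_{\bfG\sim\RGG}[\signedweight_{C_4}(\bfG)]=\expect[\gamma^2]=\sum_{\xi\in\mathbb{Z}^d\setminus\{0\}}\widehat{\sigma}(\xi)^4$ for every $4$‑cycle (using $\widehat{\sigma*\sigma}(\xi)=\widehat{\sigma}(\xi)^2$ and $\widehat{\gamma}(0)=0$), so, counting the $\Theta(n^4)$ $4$‑cycles in $K_n$, $\expect_{\bfG\sim\RGG}[\signedcount_{C_4}(\bfG)]=\Theta(n^4)\,\|\gamma\|_2^2$, while $\expect_{\ergraphhalf}[\signedcount_{C_4}]=0$ and $\Var_{\ergraphhalf}[\signedcount_{C_4}]=\Theta(n^4)$ by \cref{eq:fourcyclevars}. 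Thus the test succeeds exactly when $n^4\|\gamma\|_2^2=\omega(n^2)$ and the $\RGG$‑variance is no larger; given the orders of $\|\gamma\|_2^2$ believed in the discussion preceding the conjecture — $\|\gamma\|_2^2=\tilde\Theta(1/(dq))$ when $q=o(d/\log d)$ and $\|\gamma\|_2^2=\tilde\Theta(1/d^2)$ when $q=\Omega(d/\log d)$ — this is precisely the regime $dq=\tilde o(n^2)$, resp.\ $d=\tilde o(n)$, i.e.\ the complement of the failure regime of \cref{thm:signedfourcyclesinlq}. So there are two things to establish: a matching \emph{lower} bound $\|\gamma\|_2^2=\Omega(1/(dq))$, resp.\ $\tilde\Omega(1/d^2)$, and an upper bound $\Var_{\bfG\sim\RGG}[\signedcount_{C_4}(\bfG)]=O(n^4)+o\bigl((n^4\|\gamma\|_2^2)^2\bigr)$.

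For the variance I would expand over the isomorphism type of $H_1\cup H_2$ for two labelled $4$‑cycles $H_1,H_2$, exactly as in \cref{eq:linfty4varexpansion}. Since $(\bfG_e-\tfrac12)^2=\tfrac14$, every covariance term has the form $4^{-|E(H_1)\cap E(H_2)|}\,\expect_{\bfG\sim\RGG}[\signedweight_{H_1\triangle H_2}(\bfG)]-\|\gamma\|_2^4$ with $H_1\triangle H_2$ on at most $8$ edges: the disjoint and one‑shared‑vertex patterns contribute $0$ (factorization, \cref{rmk:onragfactorization}), the identical‑cycle pattern gives the $\Theta(n^4)$ null term, and what remains is a short list of small graphs — a $K_{2,4}$ (whose signed weight is $\expect[\gamma^4]$, controlled by \cref{eq:momentnorms,eq:momentnormslargeq}), a $C_4$, a $C_6$, two triangles glued at a vertex, etc. For each I would use the Fourier identity $\expect_{\bfG\sim\RGG}[\signedweight_H(\bfG)]=\sum_{f}\prod_{e\in E(H)}\widehat{\sigma}\bigl(f(e)\bigr)$, where $f$ ranges over the cycle space of $H$ with values in $\mathbb{Z}^d\setminus\{0\}$ (a flow is determined by $|E(H)|-|V(H)|+c(H)$ free frequencies, and $0$ is excluded because $\widehat{\sigma}(0)=p=\tfrac12$ while the edges carry the centred $\sigma-\tfrac12$), together with $\sum_{\xi\neq0}\widehat{\sigma}(\xi)^2=p(1-p)=\tfrac14$, $\sum_{\xi\neq0}\widehat{\sigma}(\xi)^4=\|\gamma\|_2^2$ and $\max_{\xi\neq0}\widehat{\sigma}(\xi)^2\le\|\gamma\|_2$. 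This already gives $|\expect[\signedweight_{C_6}]|=\bigl|\sum_{\xi\neq0}\widehat{\sigma}(\xi)^6\bigr|\le(\max_{\xi\neq0}\widehat{\sigma}(\xi)^2)\,\|\gamma\|_2^2$ and $|\expect[\signedweight_{C_3}]|\le\tfrac14\max_{\xi\neq0}|\widehat{\sigma}(\xi)|$; multiplying each by the $O(n^5)$ or $O(n^6)$ ways to realise its pattern and comparing with $(n^4\|\gamma\|_2^2)^2$ reduces everything to $\|\gamma\|_2^2=\tilde\omega(n^{-2})$, which already holds in the target regime once the sharp orders above are available. One also uses $\Var_{\bfG\sim\RGG}[\signedweight_{C_4}(\bfG)]\le\tfrac1{256}$, which is immediate.

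The main obstacle is the common input to both steps: sharp Fourier‑analytic control of $\widehat{\sigma}(\xi)$ for $\sigma=\indicator[\|\cdot\|_q\le\tau]$ on $\dtorus$, and in particular the lower bound $\|\gamma\|_2^2=\sum_{\xi\neq0}\widehat{\sigma}(\xi)^4=\Omega(1/(dq))$ (resp.\ $\tilde\Omega(1/d^2)$). For $q=\infty$ this is precisely \cref{claim:explicitconvolutionmoments} (there $\|\gamma\|_2^2=p^4\,\Theta(d\lambda^3)$), but for finite $q$ the self‑convolution $\sigma*\sigma$ does not factor over coordinates and no bounded set of frequencies carries the bulk of $\sum_{\xi\neq0}\widehat{\sigma}(\xi)^4$ — already $\widehat{\sigma}(e_1)^4$ is short by a polynomial factor in $d$. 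I would instead go through the radial profile: with $\psi$ as in \cref{claim:lqanticoncentration} one has, for $k\neq0$, $\widehat{\sigma}(k e_1)=-\int_0^1\cos(\pi k x)\,\psi(x)\,dx$, so $\|\gamma\|_2^2\ge d\sum_{k\neq0}\widehat{\sigma}(k e_1)^4$ by summing over the $d$ coordinate axes; sharp two‑sided control $\psi(\ell)\asymp \ell^q\sqrt{q/d}$ (for $\ell$ not too small) then yields the claimed order. This control is an ``anticoncentration from below'' — a matching lower bound on the density of $\sum_{i=1}^{d-1}U_i^q$ near $\tau^q$, complementing the one‑sided estimates of \cref{prop:inervalprobsmallq} and \cref{claim:lqanticoncentration,claim:lqanticoncentrationlarge} — which should follow from a local CLT for $\sum U_i^q$ but is not stated. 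Symmetrically, keeping the variance bound from degrading requires sharp \emph{upper} bounds such as $\expect[\signedweight_{C_3}]=O(1/(dq))$ and $\expect[\signedweight_{C_6}]=O(1/(dq)^2)$ — in analogy with the $L_\infty$ values in \cref{cor:linftysignedcycles}, where the crude Parseval estimates above are too lossy by a factor of $d$ — which amounts to showing that $\widehat{\sigma}$ is ``spread out'' across $\Theta(dq)$ axis frequencies of comparable size. In short, the conjecture reduces to computing signed subgraph counts for general $L_q$ geometry, which the present paper explicitly leaves open even for triangles; that Fourier‑analytic understanding of $L_q$‑ball indicators on $\dtorus$ is where the real work lies.
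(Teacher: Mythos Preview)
This statement is a \emph{conjecture} in the paper, not a theorem: the paper offers no proof, and the surrounding discussion (the paragraph before \cref{conj:sigendfourcycleinlqp} and the Fourier computation at the end of \cref{appendix:signedcountsinLq}) explicitly presents it as an extrapolation from the endpoints $q=2$ and $q=\infty$. So there is no ``paper's own proof'' to compare your proposal against, and you are right to end by saying the real work lies in Fourier control of the $L_q$--ball indicator that the paper does not have.

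Your reduction is the natural one and matches the paper's own framing: success of the signed $C_4$ test requires a matching \emph{lower} bound $\|\gamma\|_2^2=\tilde\Omega(1/(dq))$ (resp.\ $\tilde\Omega(1/d^2)$) together with variance control under $\RGG$, and both would follow from sharp two--sided estimates on the signed weights of the small graphs appearing in \cref{eq:linfty4varexpansion}. The paper says exactly this in other words --- it states that the upper bounds of \cref{eq:momentnorms,eq:momentnormslargeq} are believed sharp, but proves neither the lower bound on $\expect[\gamma^2]$ nor any signed--count estimate for non--bipartite subgraphs when $q<\infty$. Your local--CLT suggestion for the density of $\sum_i U_i^q$ near $\tau^q$ is a reasonable line of attack for the lower bound, but it is genuinely new input, not something hidden in the paper.

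One remark on the statement itself: as printed, the regimes in the conjecture ($dq=\omega(n^2)$, $d=\tilde\omega(n)$) are identical to those in \cref{thm:signedfourcyclesinlq}, where the signed $4$--cycle test is \emph{proved to fail}. You correctly read the intended meaning as the complementary regimes $dq=\tilde o(n^2)$ and $d=\tilde o(n)$, consistent with the discussion and with \cref{Fig:lqdiagappendix}; the printed inequalities are almost certainly a copy--paste typo from \cref{thm:signedfourcyclesinlq}.
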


Similarly, we conjecture the performance of the signed-triangle statistic by extrapolating from behaviour at $q = 2$ and $q = \infty.$

\begin{conjecture}
\label{conj:sigendtriangleinlq}
When testing between
$H_0: \ergraphhalf$ and
$H_1:\RGG(n,\dtorus,\unif,\sigma^q_{1/2},1/2),$ when $dq^3 = \omega(n^3).$ 
the signed triangle test:
\begin{enumerate}
    \item Succeeds with high probability when $dq^3 = o(n^3)$ and fails with high probability when 
    $dq^3 = \omega(n^3)$ for $q = o(d/\log d).$
    \item Succeeds with high probability when $d = \tilde{o}(n^{3/4})$ and fails with high probability when 
    $d^3 = \tilde{\omega}(n^{3/4})$ for $q = \Omega(d/\log d).$
\end{enumerate}
\end{conjecture}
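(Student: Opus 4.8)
The plan is to run the programme of \cref{def:signedcountsperformance} together with the variance expansion \cref{eq:linfty3varexpansion}. Writing $W \coloneqq \expect_{\bfG\sim\RGG(n,\dtorus,\unif,\sigma^q_{1/2},1/2)}[\signedweight_{C_3}(\bfG)]$ for the per--triangle signed weight, the first step is the identity
\begin{align*}
W &= \prob_{\bfu,\bfv\iidsim\unif(\dtorus)}\big[\|\bfu\|_q\le\tau,\ \|\bfv\|_q\le\tau,\ \|\bfu-\bfv\|_q\le\tau\big]-\tfrac18\\
&= \expect_{\bfv\sim\unif(\dtorus)}\big[\indicator[\|\bfv\|_q\le\tau]\,\gamma(\bfv)\big]\\
&= \sum_{\xi\in\mathbb{Z}^d\setminus\{0\}}\widehat{\sigma}(\xi)^3,
\end{align*}
obtained by expanding the product $(\bfG_{12}-\tfrac12)(\bfG_{23}-\tfrac12)(\bfG_{31}-\tfrac12)$, the substitution $\bfu=\bfx-\bfy,\ \bfv=\bfz-\bfy$, the identity $\expect[\gamma]=0$, and Parseval applied to the product $(\sigma-\tfrac12)\gamma$ (whose Fourier coefficients off the zero mode are $\widehat\sigma(\xi)^3$); here $\gamma(\bfv)=|B_{L_q(\dtorus),\tau}(\bfv)\cap B_{L_q(\dtorus),\tau}(\bfnull)|-\tfrac14$ as in \cref{thm:torussmallq}. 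Since $\expect_{\bfK\sim\ergraphhalf}[\signedcount_{C_3}(\bfK)]=0$, $\Var_{\bfK\sim\ergraphhalf}[\signedcount_{C_3}(\bfK)]=\Theta(n^3)$ by \cref{eq:threecyclevars}, and $K_n$ has $\Theta(n^3)$ triangles, the signed triangle test succeeds w.h.p.\ iff $n^3|W|=\omega\big(\sqrt{n^3+\Var_{\bfG\sim\RGG}[\signedcount_{C_3}(\bfG)]}\big)$. So the two quantities to pin down are the order of $W$ and the order of $\Var_{\bfG\sim\RGG}[\signedcount_{C_3}(\bfG)]$.

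The heart of the matter is to establish $|W|=\tilde{\Theta}\big((dq^3)^{-1/2}\big)$ for $q=o(d/\log d)$ and $|W|=\tilde{\Theta}(d^{-2})$ for $q=\Omega(d/\log d)$ --- the values the conjecture interpolates between the $q=\infty$ endpoint (where \cref{cor:linftysignedcycles} already gives $W=\tilde{\Theta}(d^{-2})$, consistent with \cref{thm:linfinitydetection}) and the $q=2$ endpoint. I see two routes. The spectral route estimates the oscillatory sum $\sum_{\xi\ne0}\widehat\sigma(\xi)^3$ directly from asymptotics of Fourier coefficients of $L_q$ balls on $\dtorus$; for $q=\infty$ this is exact because $\widehat\sigma(\xi)=\prod_{i=1}^d\widehat g(\xi_i)$ with $g$ a one--dimensional arc indicator, whence a short Bernoulli--polynomial computation gives $\sum_{k\ne0}\widehat g(k)^3=\lambda^3$ and the product over coordinates reproduces \cref{cor:linftysignedcycles}; but for finite $q$ the ball does not factorize and one faces a genuinely $d$--dimensional oscillatory integral. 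The geometric route writes $W=\int_{B_{L_q(\dtorus),\tau}(\bfnull)}\gamma$, uses $\int_{\dtorus}\gamma=0$, and estimates to leading order the near--cancellation inside the ball between the positive bulk of $\gamma$ (where $\|\bfv\|_q$ is small and $\gamma\approx\tfrac14$) and its negative tail (where $\|\bfv\|_q$ is close to $\tau$). The quantitative point in both routes is that a crude bound $|W|\le\big(\sum\widehat\sigma(\xi)^2\big)^{1/2}\big(\sum_{\xi\ne0}\widehat\sigma(\xi)^4\big)^{1/2}=\tfrac12\,\expect[\gamma^2]^{1/2}=\tilde{O}\big((dq)^{-1/2}\big)$, using \cref{thm:torussmallq} for the last equality, is off by a factor $q$; the sign structure of $\widehat\sigma$ (equivalently, the oscillation of the overlap function $\bfv\mapsto|B_{L_q(\dtorus),\tau}(\bfv)\cap B_{L_q(\dtorus),\tau}(\bfnull)|$) must be exploited. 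Concretely one would push the coordinate--revelation, Bernstein--McDiarmid, and anticoncentration machinery behind \cref{claim:lqanticoncentration} and \cref{prop:inervalprobsmallq} far enough to extract the \emph{leading term} of the three--point configuration integral rather than an absolute bound; for $q=\Omega(d/\log d)$ the same scheme should go through with \cref{claim:lqanticoncentrationlarge} in place of \cref{claim:lqanticoncentration}, since there $W$ is governed by the same $d^{-2}$--type behaviour as at $q=\infty$.

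For the variance, expand $\Var_{\bfG\sim\RGG}[\signedcount_{C_3}(\bfG)]$ as in \cref{eq:linfty3varexpansion} into contributions from ordered pairs of triangles sharing $0$, $1$, or $2$ vertices; disjoint pairs contribute $0$ by independence, so what remains are covariances $\Cov[\signedweight_{\triangle(1,2,3)},\signedweight_{\triangle(1,2,4)}]$ (two triangles sharing an edge, union $=K_4$ minus an edge) and $\Cov[\signedweight_{\triangle(1,2,3)},\signedweight_{\triangle(1,4,5)}]$ (two triangles sharing a vertex, the bowtie), each of which expands into signed weights $\expect_{\bfG\sim\RGG}[\signedweight_H(\bfG)]$ of a fixed handful of small graphs $H$. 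Every such $H$ is an iterated convolution of $\sigma$, so the Fourier/Bernstein--McDiarmid technique of \cref{thm:torussmallq} should extend to bound these weights --- this is the $L_q$ analogue of the $L_\infty$ bounds in \cref{thm:linftysignedcounts}, which the paper currently establishes only for $q=\infty$. The goal is to show $\Var_{\bfG\sim\RGG}[\signedcount_{C_3}(\bfG)]=\Theta(n^3)+(\text{lower order throughout the conjectured regime})$, exactly as in \cref{eq:threecyclevars} for $q=\infty$, so that the $\Theta(n^3)$ term dominates and the success criterion collapses to $|W|=\tilde{\omega}(n^{-3/2})$, i.e.\ $dq^3=\tilde{o}(n^3)$ for $q=o(d/\log d)$ and $d=\tilde{o}(n^{3/4})$ for $q=\Omega(d/\log d)$; the matching failure halves follow by reading the same estimates in the opposite direction with the upper bounds $|W|=\tilde{O}((dq^3)^{-1/2})$, resp.\ $\tilde{O}(d^{-2})$.

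The main obstacle is the sharp estimate of $W$ for intermediate $q$. The signed $4$--cycle weight equals the \emph{nonnegative} quantity $\expect[\gamma^2]$ and is therefore controlled outright by the $L^k$--moment bounds on $\gamma$ proved in \cref{thm:torussmallq}; by contrast the signed triangle weight $\sum_{\xi\ne0}\widehat\sigma(\xi)^3$ is a \emph{signed} cubic ``third moment'' whose true size is strictly smaller than the absolute bound $\sum_{\xi\ne0}|\widehat\sigma(\xi)|^3$, so controlling the cancellation is unavoidable --- this is precisely the obstruction the paper records when it notes it cannot compute signed triangle weights for $q<\infty$, and it is why the statement is a conjecture rather than a theorem. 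The variance step is a secondary obstacle of the same flavour, since it needs signed weights of small non--bipartite graphs in the $L_q$ model, which is likewise open. Absent a genuinely new handle on the oscillatory integral $\sum_{\xi\ne0}\widehat\sigma(\xi)^3$ over $L_q$ balls, I would expect only the endpoints $q=\infty$ (factorization) and $q=2$ (a Euclidean/local--central--limit structure that tames the cube) to be fully rigorous.
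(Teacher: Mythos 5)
The statement you were asked to prove is, in the paper, labelled a \emph{conjecture}, not a theorem; the paper explicitly says that for $q<\infty$ the authors ``have not succeeded to perform such a computation even for triangles'' and arrive at \cref{conj:sigendtriangleinlq} only ``by extrapolating from behaviour at $q = 2$ and $q = \infty$.'' So there is no paper proof to compare against, and your write-up correctly recognizes this rather than pretending to close the gap.

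Given that, your proposal is a faithful and accurate account of exactly what would be needed and why it is currently out of reach. The Fourier identity $W=\sum_{\xi\in\mathbb{Z}^d\setminus\{0\}}\widehat\sigma(\xi)^3$ is precisely the formula the paper records in \cref{appendix:signedcountsinLq} for general signed $k$-cycle weights, and your reduction of success/failure to $|W|\stackrel{?}{=}\tilde\omega(n^{-3/2})$ (once one shows $\Var_{\RGG}[\signedcount_{C_3}]=\Theta(n^3)+(\text{lower order})$) is the same Chebyshev scheme as \cref{def:signedcountsperformance} plus \cref{eq:linfty3varexpansion}. Your Cauchy--Schwarz observation that the paper's moment bounds on $\gamma$ give only $|W|=\tilde O((dq)^{-1/2})$---short of the conjectured $\tilde\Theta((dq^3)^{-1/2})$ by a factor of $q$---is correct and is the clearest concrete articulation I have seen of why the signed triangle weight, unlike the nonnegative signed $4$-cycle weight $\expect[\gamma^2]$, is not controlled by the Bernstein--McDiarmid/anticoncentration machinery that underlies \cref{thm:torussmallq} and \cref{thm:signedfourcyclesinlq}. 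Two small notational remarks: the trailing clause ``when $dq^3 = \omega(n^3)$'' in the conjecture's preamble and the exponent in ``fails \dots when $d^3 = \tilde\omega(n^{3/4})$'' in item 2 are evidently typos in the paper (the latter should read $d=\tilde\omega(n^{3/4})$, matching your derivation from $|W|=\tilde\Theta(d^{-2})$); and your intermediate identity $W=\expect_{\bfv}\big[\indicator[\|\bfv\|_q\le\tau]\,\gamma(\bfv)\big]$ uses $\expect[\gamma]=0$ and hence is correct only at density $p=1/2$, which is the regime in force here. In short: you have not proved the conjecture (nobody has), but you have correctly identified the obstruction, and your proposal is an accurate description of the open problem rather than a gap in your own reasoning.
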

These conjectures can be summarized with the following diagram.

\begin{figure}[!htb]
\centering
     \includegraphics[width=0.4\linewidth]{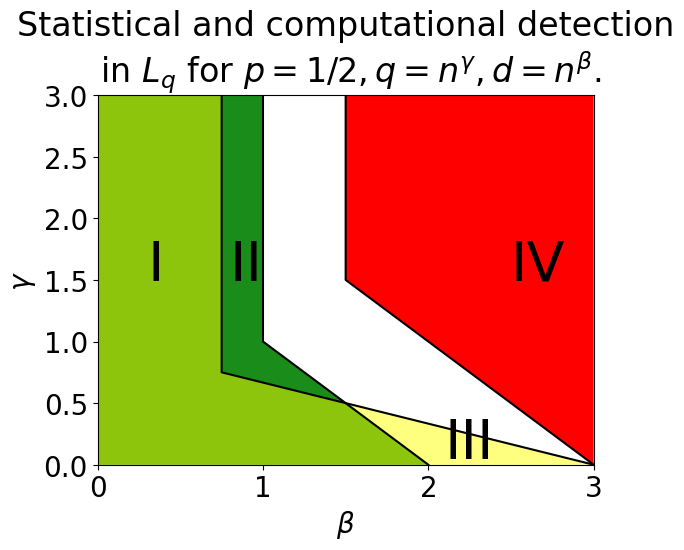}
     \caption{\footnotesize{Visualizing \cref{thm:torussmallq,thm:signedfourcyclesinlq,conj:sigendfourcycleinlqp} and \cref{,conj:sigendtriangleinlq}. $I + III$ is the conjectured region in which the signed triangle test solves \cref{eq:hypoithesistesting} for $\RGG(n,\dtorus, \unif, \sigma^q_{1/2},{1/2})$ with high probability. Region $I + II$
     is the conjectured region in which the signed four-cycle test succeeds with high probability. In $IV,$ it is information theoretically impossible to solve \cref{eq:hypoithesistesting} with high probability. 
     The last region is potentially suboptimal.}
     Interestingly, if these conjectures are correct, the signed 4-cycle statistic is always at least as good as the entropic upper bound \cref{thm:entropybound} unlike the signed 3-cycle statistic.
     }\label{Fig:lqdiagappendix}
\end{figure}

\paragraph{A Fourier-based Approach to Signed Cycle Counts.}
We end with a Fourier-based approach to computing the signed cycle counts for $\RGG(n,\dtorus, \unif, \sigma^q_p,p)$ (which extends to any random algebraic graph over $\dtorus$
).

We begin with some brief refresher on Fourier analysis over $\dtorus.$ Recall that we defined $\dtorus$ as a product of $d$ circles of circumference 2, or, equivalently, $\dtorus = \mathbf{R}^d/\sim,$ where $\bfx\sim\bfy$ if and only if $\bfx-\bfy\in 2\mathbb{Z}^d.$
Similarly to the Boolean case, we will use the fact that any $L_2$-integrable function $f:\dtorus\longrightarrow \mathbf{R}$ can be uniquely written as 
$f(\bfx) = \sum_{\bfv \in \mathbb{Z}^d}\widehat{f}(\bfv)\exp(i\pi \langle \bfv, \bfx\rangle).$ We make the following simple well-known observation. If $f$ satisfies $f(\bfx) = f(-\bfx)$ for all $\bfx,$ then each coefficient $\widehat{f}(\bfv)$ is real and, furthermore, $\widehat{f}(\bfv) =\widehat{f}(-\bfv).$ Indeed, this follows by uniqueness as 
$$
\sum_{\bfv}
\widehat{f}(\bfv)\exp(-i\pi \langle \bfv, \bfx\rangle)  = 
f(-\bfx) = 
f(\bfx) = 
\overline{f(\bfx)} = 
\sum_\bfv
\overline{\widehat{f}(\bfv)\exp(i\pi \langle \bfv, \bfx\rangle)} = 
\sum_\bfv
\overline{\widehat{f}(\bfv)}
\exp(-i\pi \langle \bfv, \bfx\rangle).
$$
Finally, recall that $\widehat{f}(\bfnull) = \int_\dtorus f(\bfx)d\bfx.$
Now, $\sigma^q_p$ is clearly $L_2$-integrable. Thus, for any  signed $k$-cycle weight, 
\begin{equation}
    \begin{split}
        & 
        \expect_{\bfG\sim\RGG(n,\dtorus,\unif, \sigma^q_p,p)}\Big[\signedweight_{C_k}(\bfG)\Big]\\
        & = \expect_{\bfg_1, \bfg_2, \cdots, \bfg_k\iidsim\unif(\dtorus)}
        \Bigg[\prod_{i = 1}^k (\sigma(\bfg_i - \bfg_{i+1}) - p)\Bigg]\\
        & = \int_{(\dtorus)^k}
        \prod_{i = 1}^k \sum_{\bfv\in \mathbb{Z}^d\backslash\bfnull}\widehat{\sigma}(\bfv)\exp(i \pi \langle \bfv, \bfg_i - \bfg_{i+1}\rangle)d\bfg_1d\bfg_2\cdots\bfg_k, \\
        & = \sum_{\bfv_1, \bfv_2,\cdots,\bfv_k\in \mathbb{Z}^d\backslash\bfnull}\int_{(\dtorus)^k}
        \widehat{\sigma}(\bfv_1)
        \widehat{\sigma}(\bfv_1)
        \cdots\widehat{\sigma}(\bfv_k)
        \exp(- i\pi \sum_{i = 1}^k \langle \bfv_i, \bfg_i - \bfg_{i+1}\rangle)\\
        & = \sum_{\bfv_1, \bfv_2,\cdots,\bfv_k\in \mathbb{Z}^d\backslash\bfnull}\int_{(\dtorus)^k}
        \widehat{\sigma}(\bfv_1)
        \widehat{\sigma}(\bfv_1)
        \cdots\widehat{\sigma}(\bfv_k)
        \exp(- i\pi \sum_{i = 1}^k \langle \bfg_i, \bfv_i - \bfv_{i-1}\rangle)\\
        & = 
        \sum_{\bfv\in \mathbb{Z}^d\backslash\bfnull}
        \widehat{\sigma}(\bfv)^k,
    \end{split}
\end{equation}
where the last line follows from the simple observation that if $\bfv_i \neq \bfv_{i-1}$ for some $i,$ the integral vanishes. It must be noted, however, that even if one manages to compute a signed cycle count, there still remains the obstacle of computing its variance.

\section{Comparison of \texorpdfstring{\cref{thm:hypercube}}{Hypercube Theorem} with Prior Work}
\label{appendix:twohypercuberesults} 
In \cite{bangachev2023random}, the authors prove the following theorem in the same setup as \cref{thm:hypercube}. 

\begin{theorem}[\cite{bangachev2023random}]
\label{thm:maintheoremindistingishability}
Consider a dimension $d\in \mathbb{N},$ connection $\sigma:\{\pm 1 \}^d\longrightarrow [0,1]$ with expectation $p,$ and absolute constant 
$m\in \mathbb{N}.$ There exists a constant $K_m$ depending only on $m,$ but not on $\sigma, d, n,p,$ with the following property.
Suppose that $n \in \mathbb{N}$ is such that $nK_m < d.$
For $1\le i \le d,$ let\linebreak $ B_i = \max \B\{ |\widehat{\sigma}(S)|\binom{d}{i}^{1/2} : 
|S| = i\B\}
.$ Denote also
\begin{equation*}
C_m  = \sum_{i = m+1}^{ \frac{d}{2en}} B_i^2 + \sum_{i = d-\frac{d}{2en}}^{d-m-1} B_i^2\quadand
D   = \sum_{\frac{d}{2en}\le 
j
\le d - \frac{d}{2en}
} B_i^2.
\end{equation*}
If the following conditions additionally hold
\begin{itemize}
    \item $d \ge K_m\times n\times  \B(\frac{C_m}{p(1-p)}\B)^{\frac{2}{m+1}},$
    \item $d\ge K_m \times n \times \B(\frac{B^2_u}{p(1-p)}\B)^{\frac{2}{u}}$ for all $2\le u \le m,$
    \item $d\ge K_m \times n \times \B(\frac{B^2_{d-u}}{p(1-p)}\B)^{\frac{2}{u}}$ for all $2\le u \le m,$
\end{itemize}
then
\begin{align*}
        & \TV\Big( \hypercubegraph\| \ergraph\Big)^2\\ & \le  K_m\times \frac{n^3}{p^2(1-p)^2}\times 
\left(
\sum_{i = 1}^m
\frac{B_i^4}{d^i} + 
\sum_{i = d-m}^d
\frac{B_i^4}{d^i} 
+ \frac{C^2_m}{{d}^{m+1}} + 
{D^2}\times \exp\left( - \frac{d}{2en}\right)
\right).
\end{align*}
\end{theorem}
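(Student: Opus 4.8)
The plan is to bound $\KL\!\left(\hypercubegraph\,\|\,\ergraph\right)$ through the Liu--Racz inequality \eqref{eq:Kltensorization} and conclude with Pinsker, as in the proof of \cref{thm:hypercube}, but to depart from the Bernstein--McDiarmid step and instead compute the moments of the auto-convolution kernel $\gamma$ directly, level by level. Writing $\sigma(\bfg)=\sum_{S}\widehat\sigma(S)\omega_S(\bfg)$ with $\widehat\sigma(\emptyset)=p$, we have $\gamma(\bfg)=\sigma*\sigma(\bfg)-p^2=\sum_{\emptyset\subsetneq S}\widehat\sigma(S)^2\omega_S(\bfg)$, so $\expect_\bfg[\gamma(\bfg)]=0$; moreover the definition of $B_i$ gives the two inequalities $\sum_{|S|=i}\widehat\sigma(S)^2\le B_i^2$ and $\sum_{|S|=i}\widehat\sigma(S)^4\le B_i^2\max_{|S|=i}\widehat\sigma(S)^2\le B_i^4/\binom{d}{i}$, which are the only analytic facts about $\sigma$ the argument uses.

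First I would expand each summand of \eqref{eq:Kltensorization} as $\expect_\bfg\!\left[(1+\gamma(\bfg)/(p(1-p)))^k\right]=\sum_{t=0}^k\binom{k}{t}(p(1-p))^{-t}\expect[\gamma(\bfg)^t]$ and drop $t\in\{0,1\}$ since $\expect[\gamma]=0$. The core is to bound $\expect[\gamma(\bfg)^t]=\sum\prod_{j=1}^t\widehat\sigma(S_j)^2$, the sum ranging over $t$-tuples of nonempty sets with $S_1\triangle\cdots\triangle S_t=\emptyset$. I would partition the levels into three regimes: boundary levels $1\le i\le m$ and $d-m\le i\le d-1$; near-boundary levels $m<i\le d/(2en)$ and their complements; bulk levels $d/(2en)<i<d-d/(2en)$. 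Any configuration touching a bulk level carries a factor $1/\binom{d}{i}\le\exp(-d/(2en))$ per occurrence, producing the $D^2\exp(-d/(2en))$ term. On the boundary and near-boundary levels, the dominant configurations are the matched ones in which the $S_j$ coincide in pairs; a single matched pair at level $i$ contributes $\sum_{|S|=i}\widehat\sigma(S)^4\le B_i^4/\binom{d}{i}$, and since $\binom{d}{i}\ge (d/i)^i$ this is at most $B_i^4(i/d)^i$, which is $\Theta(B_i^4/d^i)$ for constant $i$ and, for $i>m$, is maximized at $i=m+1$, so $\sum_{i>m}B_i^4(i/d)^i\le C_m^2\,(m+1)^{m+1}/d^{m+1}$ after using $\sum_i B_i^4\le (\sum_iB_i^2)^2=C_m^2$; the factor $(m+1)^{m+1}$ is absorbed into $K_m$. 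The $t=2$ term is dominant, and the hypotheses on $d$ --- $d\ge K_m n(C_m/p(1-p))^{2/(m+1)}$ and the analogous bounds with $B_u^2$, $B_{d-u}^2$ --- are exactly what is needed to make the series over $t\ge 3$ geometric with ratio $o(1)$. Hence $\expect_\bfg[(1+\gamma/(p(1-p)))^k]=1+O\!\left(k^2(p(1-p))^{-2}E\right)$, where $E$ is the bracketed expression of the theorem; taking logarithms, summing over $0\le k\le n-1$ (this produces the extra $n$ on top of $k^2\le n^2$), and applying $\TV^2\le\frac12\KL$ yields the claim.

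The main obstacle is controlling $\expect[\gamma^t]$ for $t\ge 3$ when the sets $S_1,\dots,S_t$ are genuinely mixed --- neither matched in pairs nor confined to a single level. The argument here should peel off the largest set in the multiset, use the constraint $S_1\triangle\cdots\triangle S_t=\emptyset$ to force it to be balanced by the others (so it cannot be charged for free), and induct on $\{S_1,\dots,S_t\}$ to show such configurations are dominated by pure pair-matchings, uniformly across the three level regimes. This induction, together with the choice of the threshold $d/(2en)$ separating the bulk from the rest, is where the absolute constant $K_m$ is consumed; the remainder of the proof is bookkeeping with binomial coefficients and geometric series.
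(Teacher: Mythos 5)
This statement is cited from \cite{bangachev2023random} (their Theorem~3.1) and is \emph{not} proved in the paper at hand; the paper states it in \cref{appendix:twohypercuberesults} purely as a point of comparison for its own, much simpler \cref{thm:hypercube}, which is established via Bernstein--McDiarmid rather than via a level-by-level moment computation. So there is no ``paper's own proof'' here to compare against, and your proposal must be judged on its own terms.

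Your outline --- pass through the Liu--Racz inequality \eqref{eq:Kltensorization}, expand $\gamma=\sigma*\sigma-p^{2}$ in the Walsh basis, bound $\expect[\gamma^{t}]$ by summing over $t$-tuples with vanishing symmetric difference, split the levels into boundary/near-boundary/bulk regimes, and finish with Pinsker --- is the right family of techniques and is consistent with the spirit of the cited result. The analytic bookkeeping you carry out is correct: $\sum_{|S|=i}\widehat\sigma(S)^{2}\le B_{i}^{2}$ and $\sum_{|S|=i}\widehat\sigma(S)^{4}\le B_{i}^{4}/\binom{d}{i}$ both follow from the definition of $B_{i}$ as a level-wise supremum rescaled by $\binom{d}{i}^{1/2}$; the monotonicity of $(i/d)^{i}$ on $i\le d/e$ justifies isolating $i=m+1$ as the dominant near-boundary level; the bulk estimate $\binom{d}{i}\ge\exp(d/(2en))$ on $d/(2en)\le i\le d-d/(2en)$ is fine; and the sum over $k$ produces the advertised $n^{3}$.

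The gap is exactly the one you flag yourself: the treatment of $\expect[\gamma^{t}]$ for $t\ge 3$ with genuinely mixed configurations is the technical heart of the theorem and is left as a one-sentence sketch. The hypotheses $d\ge K_{m}n(C_{m}/p(1-p))^{2/(m+1)}$, $d\ge K_{m}n(B_{u}^{2}/p(1-p))^{2/u}$ and the mirror condition at level $d-u$ are not generic ``this makes the series geometric'' conditions: each $u$ and each $C_{m}$-term has its own exponent, and they have to be derived from the combinatorics of the XOR constraint at each triple of levels, not assumed. In particular for odd $t$ a pure pair-matching is impossible (the constraint $S_{1}\triangle\cdots\triangle S_{t}=\emptyset$ with all sets nonempty has no matched decomposition), so the claim that pair-matchings ``dominate'' has to be replaced by a quantitative statement about how much worse mixed configurations can be, and the naive bound $\expect[\gamma^{t}]\le\|\gamma\|_{\infty}^{t-2}\expect[\gamma^{2}]\le\expect[\gamma^{2}]$ is far too weak to cancel the $(p(1-p))^{-t}$ factor. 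The ``peel off the largest set and induct'' heuristic is the right instinct, but as stated it neither exploits the location of the levels (boundary vs.\ near-boundary vs.\ bulk) nor produces the precise exponents $2/(m+1)$ and $2/u$ appearing in the hypotheses, and these exponents are exactly what the conclusion hinges on. Until that induction is carried out and shown to deliver geometric decay with ratio controlled by the stated conditions on $d$, the argument is incomplete at its central step.
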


We make several remarks on the comparison between those two theorems. 

First, \cref{thm:hypercube} is much easier to apply than \cref{thm:maintheoremindistingishability} and its proof is substantially simpler. 

Furthermore, it can be applied in setting when $d = o(n).$ Thus, for example in \cite{bangachev2023random} prove the first part of \cref{cor:applicationshypercube} only when $d = \Omega(n),$ that is $r  = O(\sqrt{n}).$ 

Still, in many cases \cref{thm:maintheoremindistingishability} is much stronger. For example, consider the double threshold connection $\sigma(\bfg) = \indicator\big[|\sum_{i = 1}^d g_i| \ge \chi_d\big],$ where $\chi_d$ is chosen so that $\expect[\sigma] = 1/2.$ Then, \cref{thm:maintheoremindistingishability} implies that $\TV\Big(\RAG(n,\hypercube, \sigma,1/2), \ergraphhalf\Big) = o(1)$ whenever $d = \omega(n^{3/2})$ \cite[Corollary 4.10]{bangachev2023random}. However, 
\cref{thm:hypercube} only implies this for $d = \omega(n^3).$ The reason \cref{thm:hypercube} is much weaker in this setting is that the expression $\sum_{i=1}^d\infl^2_i[\sigma]$ puts a much larger weight on levels close to $d.$ Indeed, note that for $S\subset[d],$ the Fourier coefficient $\widehat{\sigma}(S)$ contributes to $|S|$ of the terms $\infl^2_i[\sigma],$ but it only contributes once to the expression $\sum_{i = d-m}^d
\frac{B_i^4}{d^i} $ from \cref{thm:maintheoremindistingishability}.

\section{On the Bound of Racz and Liu}
\label{appendix:onthekltensobound}

\paragraph{3-Term Arithmetic Progressions.}
Expanding the left-hand side of \cref{eq:Kltensorization}, we conclude that small (centered) moments of the self-convolution imply a certain randomness of $\sigma,$ respectively of 
$A\subseteq \Group$ when $\sigma(\bfg)\coloneqq \indicator[\bfg\in A].$ We note that the same notion of pseudorandomness was recently used by Kelley and Meka in their breakthrough paper \cite{kelley2023strong} on 3-term arithmetic progressions, in the case $\Group= \mathbf{F}_q^n$ (see also the exposition \cite{bloom2023kelleymeka}). One simplification in our setup is that $\sigma(\bfg) = \sigma(-\bfg)$ in the context of random algebraic graphs, so $\sigma*\sigma(\bfg)\coloneqq 
\expect_\bfh\sigma(\bfg-\bfh)\sigma(\bfh) = 
\expect_\bfh\sigma(\bfg+\bfh)\sigma(\bfh) =: 
\sigma\star\sigma(\bfg).
$

\paragraph{Quasi-Randomness.} The left-hand side of \cref{eq:Kltensorization} can be expanded either in terms of the moments of $\sigma*\sigma$ or in terms of the moments of $(\sigma-p)*(\sigma-p).$ 
However, one can easily observe that the $k$-th moment of $\expect[(\sigma*\sigma)^k]$ is exactly the probability that each edge of a fixed copy of $K_{2,t}$ appears in $\RAG(n,\Group, \sigma,p).$ In other words, one interpretation of \cref{eq:Kltensorization} is that if all subgraphs of the form $K_{2,t}$ appear with probability sufficiently close to $p^{2t}$ in $\RAG(n,\Group, \sigma,p),$ then 
$\RAG(n,\Group, \sigma,p)$ is (up to $o(1)$ total variation) the same as $\ergraph.$ This can be viewed as a certain analogue of the celebrated theorem due to Chung-Graham-Wilson \cite{chung87}. It  (very informally) states that if a graph simultaneously has a number of edges and 4-cycles close that of $\ergraph,$ then every other subgraph count is close to that of $\ergraph.$
Similarly, the $t$-th moment of 
$(\sigma-p)*(\sigma-p)/(p(1-p))$ is exactly the 
Fourier coefficient corresponding to $K_{2,t}$ and one can make an equivalent interpretation for signed copies of $K_{2,t}.$

\section{Anticoncentration of Convolutions and the Proof of \texorpdfstring{\cref{claim:lqanticoncentration}}{Lq Anticoncentration}}
\label{appendix:anticoncentration}

Suppose that $X$ is a real-valued random variable with density which is absolutely continuous with respect to the Lebesgue density on $\mathbb{R}.$ Denote by $M(X)\in \mathbb{R}_+\cup \{+\infty\}$ the maximum value of the density of $X.$ We will use the following fact from \cite{Bobkov14}.\footnote{The result in \cite{Bobkov14} is more general and holds for random variables taking values in any $\mathbb{R}^a.$}

\begin{theorem}
\label{thm:bobkovdensitybound}
 Suppose that $Y_1, Y_2, \ldots, Y_d$ are independent real random variables with densities absolutely continuous with respect to the Lebesgue measure. Then, 
 $$
 M^{-2}(Y_1 + Y_2 + \cdots + Y_d)\ge \frac{1}{e}\sum_{i = 1}^d M^{-2}(Y_i).
$$
\end{theorem}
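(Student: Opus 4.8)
\textbf{Proof plan for Theorem~\ref{thm:bobkovdensitybound}.}
The plan is to reduce the bound on the sup-norm of the density of a sum to a statement about the \emph{entropy power} (or, dually, the Fisher information) of the summands, and then invoke superadditivity. Write $M(X)$ for the essential supremum of the density $f_X$ of a random variable $X$. The key quantity to work with is $N_\infty(X) := M(X)^{-2}$, which plays the role of an ``$L_\infty$ entropy power''. The goal is exactly $N_\infty(Y_1+\cdots+Y_d) \ge \tfrac1e \sum_{i=1}^d N_\infty(Y_i)$.

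First I would establish the two-summand case: for independent $X,Y$ with densities $f_X,f_Y$, the convolution identity $f_{X+Y}(t) = \int f_X(s) f_Y(t-s)\,ds$ combined with H\"older's inequality in the form $\|f_X * f_Y\|_\infty \le \|f_X\|_p \|f_Y\|_{p'}$ for conjugate exponents does not by itself give the sharp constant, so instead I would use a rearrangement/optimization argument. Concretely, among all densities with a prescribed value of $M(\cdot)$, the one that \emph{maximizes} $M$ of the convolution with a fixed second density is (by a symmetric-rearrangement inequality) the uniform density on an interval; so it suffices to prove the inequality when both $Y_1,Y_2$ are uniform on intervals of lengths $\ell_1,\ell_2$. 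In that case $M(Y_i) = 1/\ell_i$ and a direct computation gives $M(Y_1+Y_2) = 1/\max(\ell_1,\ell_2)$, hence $N_\infty(Y_1+Y_2) = \max(\ell_1,\ell_2)^2 \ge \tfrac12(\ell_1^2+\ell_2^2) = \tfrac12\big(N_\infty(Y_1)+N_\infty(Y_2)\big)$. This already yields a constant $\tfrac12$ per pairwise step, which when iterated in a balanced binary-tree fashion over $d$ summands degrades, so the naive iteration is not good enough and one must be more careful.

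To get the clean $\tfrac1e$ constant uniformly in $d$, I would instead follow the information-theoretic route of Bobkov--Chistyakov: relate $M(X)$ to the R\'enyi entropy of infinite order, $h_\infty(X) = -\log M(X)$, and use the entropy-power-type inequality $e^{2h_\infty(Y_1+\cdots+Y_d)} \ge \tfrac1e \sum_i e^{2 h_\infty(Y_i)}$, which in turn follows from a single application of a sharp form of the inequality together with an extremizer analysis showing the worst case is again uniform densities; the factor $e$ arises precisely as the gap between the uniform distribution and the Gaussian in the comparison (the Gaussian would give constant $1$, uniforms give the loss, and $\sup_{\ell} \frac{(\text{variance of }\mathrm{Unif}[0,\ell])}{\ell^2/(2\pi e)}$-type computations produce the $e$). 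The main obstacle is this extremizer step: one has to justify rigorously that, for the functional $(f_1,\dots,f_d) \mapsto M(f_1*\cdots*f_d)$ subject to fixed $M(f_i)$, the supremum is attained at uniform densities, which requires a symmetrization/rearrangement argument in $d$ dimensions of integration and a compactness argument to ensure the supremum is attained. Once that reduction is in hand, the remaining computation for uniforms is elementary. I would cite \cite{Bobkov14} for the sharp form and extremizer analysis rather than reproving it, since for our applications (Claim~\ref{claim:lqanticoncentration}) only the stated inequality is needed.

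\textbf{Remark.} For the downstream use it is worth noting that the inequality is applied with all $Y_i$ having uniformly bounded densities after the truncation trick described before Theorem~\ref{thm:bobkovdensitybound} (splitting $U_i^q$ into a bounded-density part and a low-probability spike), so only the easy direction $M(Y_1+\cdots+Y_d) \le \sqrt{e}\,\big(\sum_i M(Y_i)^{-2}\big)^{-1/2}$ is invoked, and the constant need not be optimal for our bounds to go through.
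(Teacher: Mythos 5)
The paper does not prove \cref{thm:bobkovdensitybound}: the sentence immediately preceding it reads ``We will use the following fact from \cite{Bobkov14},'' and the result is imported verbatim with no argument. Your plan reaches the same conclusion (defer to Bobkov--Chistyakov), so the treatments agree, and your intermediate sketches --- including the correct observation that the pairwise constant $\tfrac12$ does not iterate to $\tfrac1e$ and that the naive binary-tree reduction fails --- are harmless motivation rather than load-bearing steps. One small correction to the closing remark: you call $M(Y_1+\cdots+Y_d)\le \sqrt{e}\,\big(\sum_i M(Y_i)^{-2}\big)^{-1/2}$ ``the easy direction,'' but this is just an algebraic rearrangement of the displayed inequality, not a weaker or logically distinct statement; the theorem has only one direction.
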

In particular, when $Y_1, Y_2, \ldots, Y_d$ are iid, this implies that 
$M(Y_1 + Y_2 + \cdots + Y_d)\le \sqrt{\frac{e}{d}}M(Y_1).$ As already mentioned in \cref{sec:lqproof}, in the setup of \cref{claim:lqanticoncentration} $M(U^q) = +\infty$ when $q>1$ and, thus, we need to generalize \cref{thm:bobkovdensitybound}. We do so as follows.

\begin{lemma}
\label{lem:TVdensitylemma}
Suppose that $X$ is a real-valued random variable with the following property. There exists another random variable $Y$ such that
\begin{enumerate}
    \item $\TV(X,Y)= 1-p\in [0,1),$ and
    \item The density of $Y$ is absolutely continuous with respect to the Lebesgue measure on $\mathbb{R}$ and\linebreak $M(Y) =  m <\infty.$
\end{enumerate}
Let $d$ be an integer and let $X_1, X_2, \ldots, X_d$ be independent copies of $X.$ Then, there exists a random variable $Z_d$ on $\mathbb{R}$ such that 
\begin{enumerate}
    \item $\TV(X_1+X_2 + \cdots + X_d, Z_d)\le \exp( - dp/8),$ and
    \item The density of $Z_d$ is absolutely continuous with respect to the Lebesgue measure on $\mathbb{R}$ and\linebreak $M(Z_d)\le \sqrt{2e} \frac{m}{\sqrt{p^3d}}.$
\end{enumerate}
\end{lemma}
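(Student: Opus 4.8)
The plan is to use the hypothesis $\TV(X,Y) = 1-p$ to write $X$ as a mixture: there is a coupling of $X$ and $Y$ so that $\prob[X = Y] = p$, which lets us represent each $X_i$ as $X_i = B_i Y_i + (1-B_i) W_i$, where $B_i \iidsim \Bernoulli(p)$, the $Y_i$ are independent copies of $Y$ (with bounded density, $M(Y_i) = m$), the $W_i$ are the ``bad'' parts, and all of these are independent with the appropriate joint law reproducing the distribution of $X_i$. Conditioning on the vector $(B_1, \dots, B_d)$, write $S = \sum_i X_i = \sum_{i : B_i = 1} Y_i + \sum_{i : B_i = 0} W_i$. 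Let $N = \sum_i B_i$; by a Chernoff bound, $\prob[N < dp/2] \le \exp(-dp/8)$. On the event $N \ge dp/2$, the conditional law of $S$ is a convolution of at least $dp/2$ independent copies of $Y$ (the $Y_i$'s with $B_i = 1$) convolved with something else, so it has a density, and by \cref{thm:bobkovdensitybound} (applied to the $Y_i$'s, noting that convolving further with an independent variable only decreases the sup-norm of the density) its density is bounded by $\sqrt{e/N}\, m \le \sqrt{2e}\, m / \sqrt{dp}$.

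The remaining step is to assemble $Z_d$. Define $Z_d$ to have the law of $S$ conditioned on the event $\{N \ge dp/2\}$. Then $\TV(S, Z_d) \le \prob[N < dp/2] \le \exp(-dp/8)$, giving the first conclusion. For the second conclusion, the density of $Z_d$ is $\frac{1}{\prob[N \ge dp/2]}$ times the sub-probability density of $S$ restricted to that event; since $\prob[N \ge dp/2] \ge 1/2$ for $d$ large (and in any case one can absorb this into constants, or simply note $\prob[N \ge dp/2] \ge 1 - \exp(-dp/8) \ge 1/2$ once $dp \ge 8\ln 2$, handling small $dp$ trivially since then the claimed bound $\sqrt{2e}\,m/\sqrt{p^3 d}$ is weak), we get
\[
M(Z_d) \le \frac{1}{\prob[N \ge dp/2]} \cdot \sqrt{\frac{2e}{dp}}\, m \le 2 \sqrt{\frac{2e}{dp}}\, m \le \sqrt{2e}\,\frac{m}{\sqrt{p^3 d}},
\]
where the last inequality uses $p \le 1$ so that $2/\sqrt{p} \le 2 \le \sqrt{1/p^{?}}$—here one should be slightly careful: $2\sqrt{2e/(dp)}\,m = \sqrt{8e/(dp)}\,m$ and we want this $\le \sqrt{2e/(p^3 d)}\,m$, i.e. $8e/(dp) \le 2e/(p^3 d)$, i.e. $4p^2 \le 1$. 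This holds only when $p \le 1/2$; for $p \in (1/2, 1)$ one uses $\prob[N \ge dp/2] \ge 1 - \exp(-dp/8)$ more carefully together with $1/\sqrt{p^3} \ge 1$ and the plain Bobkov bound $\sqrt{e/(dp)}$, which already beats $\sqrt{2e/(p^3 d)}$ when $p > 1/2$ up to the factor $1/\prob[N\ge dp/2] \le (1-\exp(-dp/8))^{-1}$; absorbing this factor is where the constant $\sqrt{2e}$ (rather than $\sqrt{e}$) is spent.

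The main obstacle is purely bookkeeping: making the conditioning argument rigorous (that conditioning a convolution on an event about the mixture indices does not destroy absolute continuity, and that the density bound survives both the extra convolution with the $W_i$'s and the renormalization by $\prob[N \ge dp/2]$), and then chasing the constants so that the final bound reads exactly $\sqrt{2e}\,m/\sqrt{p^3 d}$ in all regimes of $p$. No new idea beyond the mixture decomposition and \cref{thm:bobkovdensitybound} is needed; the Chernoff bound controls the ``failure'' probability and the factor $p^3$ in the denominator (rather than $p$) is the price of renormalizing a density that has been conditioned on an event of probability $\Omega(p)$—wait, of probability $\ge 1/2$—so in fact the $p^3$ is pessimistic and the lemma as stated has slack, which is convenient since it means the constant-chasing only needs to land on the correct side of the inequality.
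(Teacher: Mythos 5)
Your high-level plan matches the paper's: decompose each $X_i$ via the optimal coupling with $Y$, condition on the number $N$ of ``good'' coordinates, apply \cref{thm:bobkovdensitybound} to the good coordinates, and use a Chernoff bound to control the probability that $N$ is too small. However, there is a genuine error in your decomposition that makes the argument fail.

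You claim that $\TV(X,Y)=1-p$ lets you write $X_i = B_i Y_i + (1-B_i) W_i$ with $B_i\sim\Bernoulli(p)$ independent of $(Y_i,W_i)$ and $Y_i$ an \emph{independent copy of $Y$}, so that $M(Y_i)=m$. This is false. Such a representation would force $\text{law}(X) \ge p\cdot\text{law}(Y)$ as measures, which does not follow from $\TV(X,Y)=1-p$. The correct consequence of the optimal coupling is that the agreement component is $\lambda/p$, where $\lambda=\text{law}(X)\wedge\text{law}(Y)$ has total mass $p$ and density bounded by $m$; after renormalizing, the agreement component has density bounded by $m/p$, \emph{not} $m$. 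A concrete counterexample: take $Y\sim\unif([0,1])$ (so $m=1$) and $X\sim\unif([0,\epsilon])$; then $p=\epsilon$, but no mixture representation with weight $p$ on $\text{law}(Y)$ exists since it would force a negative density on $(\epsilon,1]$. In this example $M(X_1+\cdots+X_d)=\Theta\big(1/(p\sqrt{d})\big)$, so the bound $\sqrt{2e}\,m/\sqrt{dp}$ that your argument produces before renormalization is off by a factor $\sqrt{p}$ and cannot be repaired by adjusting $Z_d$ on a set of exponentially small probability (the bulk of the mass would need to move). The factor $1/p$ you are missing is precisely the source of the $p^3$ in the denominator of the lemma: the correct chain is $M\le \sqrt{e/N}\cdot (m/p)\le \sqrt{2e/(dp)}\cdot m/p = \sqrt{2e}\,m/\sqrt{dp^3}$.

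Separately, your concern about paying a renormalization factor $1/\prob[N\ge dp/2]$ when conditioning is misplaced: the density of $S$ conditioned on $\{N\ge dp/2\}$ is the \emph{weighted average} of the conditional densities $f_{S\mid N=k}$ for $k\ge dp/2$, each already bounded by the target quantity, so the conditional density inherits the same bound with no extra factor. (The paper sidesteps the issue entirely by mixing in a dummy distribution for the small-$k$ components rather than conditioning, but either construction works.) This second confusion happened to roughly offset your first error, which is why your constant-chasing at the end looked almost consistent for $p\le 1/2$; but both steps need to be corrected for the proof to be right, and after correction you land exactly on $\sqrt{2e}\,m/\sqrt{dp^3}$ with no slack and no restriction to $p\le 1/2$.
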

\begin{proof} We first introduce two notational conventions. 

If $(\mathcal{D}_i)_{i = 1}^n$ are probability 
distributions and $\vec{p}\in \mathbb{R}^n_{\ge 0}$ is a vector with weights with sum to $1,$ we define the mixture
$\sum_{i = 1}^n p_i\mathcal{D}_i$ as follows.
First, one takes $B\in [n]$ such that $\prob[B = i] = p_i.$ Then, one draws $Z\sim \mathcal{D}_B$ independently from $B.$

If $\mathcal{D},\mathcal{F}$ are real-valued probability distributions, denote by $\mathcal{D}*\mathcal{F}$ the distribution of $Y_\mathcal{D}+Y_\mathcal{F},$ where $Y_\mathcal{D},Y_\mathcal{F}$ are independent and $Y_\mathcal{D}\sim\mathcal{D}, Y_\mathcal{F}\sim\mathcal{F}.$

We will use the following trivial identity.
$$
\Bigg(\sum_{i = 1}^{n}p_i\mathcal{D}_i\Bigg) *
\Bigg(\sum_{j = 1}^{m}q_j\mathcal{F}_j\Bigg) = 
\sum_{1\le i \le n,1\le j \le m}
p_iq_j 
\mathcal{D}_i * 
\mathcal{F}_j.
$$

Now, we go back to \cref{lem:TVdensitylemma}. Consider such a random variable $X$ and let $Y$ be its corresponding random variable from the statement of the lemma. Consider an optimal coupling $(X',Y')$ of $X$ and $Y$ such that $X' = Y'$ with probability $p.$ Denote by $\distribution_{\neq}$ the distribution of $X'|X'\neq Y'$  and by $\distribution_=$ the distribution of $X'|X'= Y',$ which is the same as the distribution of $Y'|X' = Y'.$  Since $Y$ is absolutely continuous with respect to  the Lebesgue measure, so is $Y'|X'=Y'.$ Furthermore, the maximum value of the density of $\mathcal{D}_=$ is at most $mp^{-1}$ as $m$ is the maximum value of the density of $Y$ and $\prob[X' = Y'] = p.$ 

In particular, note that the distribution $\distribution$ of $X$ is the mixture  $(1-p)\distribution_{\neq} + p\distribution_=,$ where  $\distribution_=$ is absolutely continuous with respect to the Lebesgue measure and its density is bounded by $mp^{-1}.$ Therefore, the distribution of $X_1 + X_2 + \cdots + X_d$ is the mixture 
\begin{equation}
    \begin{split}
        &\sum_{k = 0}^d \binom{d}{k}p^{k}(1-p)^{d-k}
        (\distribution_=)^{* k}*
        (\distribution_{\neq})^{*(d-k)}\\ 
        & = \sum_{k <dp/2} \binom{d}{k}p^{k}(1-p)^{d-k}
        (\distribution_=)^{* k}*
        (\distribution_{\neq})^{*(d-k)} + 
        \sum_{k \ge dp/2}\binom{d}{k}p^{k}(1-p)^{d-k}
        (\distribution_=)^{* k}*
        (\distribution_{\neq})^{*(d-k)}.\\
    \end{split}
\end{equation}
We now show the following two facts. First, the weight on summands $k<dp/2$ is at most $\exp(- dp/8),$ which means that 
$X_1 + X_2 + \cdots + X_d$ is $\exp(- dp/8)$-close to the mixture\linebreak $\sum_{k \ge dp/2}\binom{d}{k}p^{k}(1-p)^{d-k}(\distribution_=)^{* k}*(\distribution_{\neq})^{*(d-k)}.$ On the other hand, the latter mixture is absolutely continuous with respect to the Lebesgue measure and has density bounded by $\sqrt{2e} \frac{m}{\sqrt{p^3d}}.$ We begin with the first part.

\begin{claim}
\label{claim:mixturenumberconcnetration}
$\displaystyle \sum_{k < dp/2}\binom{d}{k}p^{k}(1-p)^{d-k}\le \exp(- dp/8).$
\end{claim}
\begin{proof}
This is a trivial application of Chernoff bounds. Let $V_1, V_2, \ldots,V_d$ be iid $Bernoulli(p)$ random variables. Then, 
\begin{equation*}
\begin{split}
\sum_{k < dp/2}\binom{d}{k}p^{k}(1-p)^{d-k}
= 
\prob\left[
\sum_{i = 1}^d V_i<dp (1 - 1/2)
\right]\le 
\exp(-{dp}/{8}).
\end{split}
\end{equation*}
\end{proof}

\begin{claim}
\label{claim:convolutionbounds}
For each $k\ge dp,$ $M\bigg((\distribution_=)^{* k}*
        (\distribution_{\neq})^{*(d-k)}\bigg)\le \sqrt{2e}\frac{m}{\sqrt{dp^3}}.$
\end{claim}
\begin{proof} Note that the density of $\distribution_=$ is at most $mp^{-1}$ as discussed. Therefore, by \cref{thm:bobkovdensitybound}, we immediately obtain
$$
M((\distribution_=)^{* k})\le 
\sqrt{\frac{e}{k}}mp^{-1}\le 
\sqrt{\frac{2e}{dp}}mp^{-1}.
$$
This is enough since 
$$
M\bigg((\distribution_=)^{* k}*
        (\distribution_{\neq})^{*(d-k)}\bigg)\le
M((\distribution_=)^{* k})\le 
\sqrt{2e}\frac{m}{\sqrt{dp^3}}.
$$
\end{proof}
Now let $\mathcal{D}_{<}$ be an arbitrary random variable on $\mathbb{R}$ with maximal density at most $\sqrt{2e}\frac{m}{p\sqrt{dp}}.$ Consider $Z$ distributed according to 
$$
Z\sim 
\left(
\sum_{k \ge dp/2}\binom{d}{k}p^{k}(1-p)^{d-k}
\right)\distribution_< + 
\sum_{k < dp/2}\binom{d}{k}p^{k}(1-p)^{d-k}
        (\distribution_=)^{* k}*
        (\distribution_{\neq})^{*(d-k)}.
$$
\cref{claim:mixturenumberconcnetration} implies $\TV(X_1 + X_2 +\cdots + X_d, Z) \le \exp(- dp/8).$ 
\cref{claim:convolutionbounds} implies that 
$M(Z)\le \sqrt{2e}\frac{m}{\sqrt{dp^3}}.$
\end{proof}

An immediate corollary of \cref{lem:TVdensitylemma} is the following small-ball probability bound which we will use to prove \cref{claim:lqanticoncentration}.

\begin{corollary}
\label{cor:smallball}
Suppose that $X$  is a non-negative real-valued random variable that is absolutely continuous with respect to the Lebesgue density with pdf $f.$ Let $d\in \mathbb{N}$ and $p \in (0,1]$ be such that $d>p^{-1}.$ Let $m$ be such that 
$\int_{\{f(x)> m\}}f(y)dy = 1-p.$ Then, for any interval $[a,b]\subseteq\mathbb{R},$ if $X_1, X_2, \ldots, X_d$ are independent copies of $X,$
$$
\prob\big[X_1 + X_2 + \cdots + X_d \in [a,b]\big]\le 
\exp(- dp/8) + \sqrt{2e} \frac{m}{\sqrt{p^3d}}(b-a).
$$
\end{corollary}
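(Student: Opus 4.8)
\textbf{Proof plan for \Cref{cor:smallball}.}
The plan is to decompose $X$ as a two-component mixture whose ``bulk'' component has uniformly bounded density, and then invoke \Cref{lem:TVdensitylemma} essentially verbatim. First I would split the pdf $f$ of $X$ at level $m$: write $f = f\cdot\indicator[f\le m] + f\cdot\indicator[f>m]$, so that by the hypothesis $\int_{\{f>m\}}f = 1-p$ and hence $\int_{\{f\le m\}}f = p$. Accordingly, define two distributions: $\distribution_=$ with density $\tfrac{1}{p}f(x)\indicator[f(x)\le m]$ (so $M(\distribution_=)\le m/p$ and $\distribution_=$ is absolutely continuous w.r.t.\ Lebesgue), and $\distribution_{\neq}$ with density $\tfrac{1}{1-p}f(x)\indicator[f(x)>m]$. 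Then the law $\distribution$ of $X$ is exactly the mixture $p\distribution_= + (1-p)\distribution_{\neq}$. This is precisely the structural hypothesis needed to run \Cref{lem:TVdensitylemma}: taking $Y$ to be distributed as $\distribution_=$ (possibly after renormalising, but in any case a random variable whose density is $\le m/p$ on the region $\{f\le m\}$ and which agrees with $X$ on an event of probability $p$), one has $\TV(X,Y) = 1-p$ and $M(Y)\le m/p$. Actually it is cleanest to bypass $Y$ and directly reuse the mixture computation inside the proof of \Cref{lem:TVdensitylemma}.

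Concretely, with $X_1,\dots,X_d$ independent copies of $X$, expand the law of $X_1+\cdots+X_d$ as the mixture
\[
\sum_{k=0}^d \binom{d}{k} p^k (1-p)^{d-k}\, (\distribution_=)^{*k} * (\distribution_{\neq})^{*(d-k)},
\]
exactly as in the proof of \Cref{lem:TVdensitylemma}. Split this sum at $k = dp/2$. For $k < dp/2$, the total mixture weight is at most $\exp(-dp/8)$ by \Cref{claim:mixturenumberconcnetration} (a Chernoff bound on a sum of $\Bernoulli(p)$ variables), so the law of $X_1+\cdots+X_d$ is $\exp(-dp/8)$-close in total variation to a distribution $Z_d$ supported on the $k\ge dp/2$ part. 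For $k\ge dp/2$, \Cref{thm:bobkovdensitybound} applied to the $k$-fold convolution of $\distribution_=$ (whose density is $\le m/p$) gives $M\big((\distribution_=)^{*k}\big)\le \sqrt{e/k}\cdot (m/p)\le \sqrt{2e/(dp)}\cdot(m/p) = \sqrt{2e}\,m/\sqrt{dp^3}$, and convolving further with $(\distribution_{\neq})^{*(d-k)}$ (which only smooths and cannot increase the sup of the density) preserves this bound. Hence $M(Z_d)\le \sqrt{2e}\,m/\sqrt{p^3 d}$.

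To finish, for any interval $[a,b]$,
\[
\prob\big[X_1+\cdots+X_d\in[a,b]\big] \le \TV(X_1+\cdots+X_d, Z_d) + \prob\big[Z_d\in[a,b]\big] \le \exp(-dp/8) + M(Z_d)\,(b-a),
\]
where the last step bounds the probability that an absolutely continuous variable lands in an interval by (sup of density)$\times$(length). Substituting the bound on $M(Z_d)$ yields exactly the claimed inequality $\exp(-dp/8) + \sqrt{2e}\,\tfrac{m}{\sqrt{p^3 d}}(b-a)$. There is no serious obstacle here: the entire content is already packaged in \Cref{lem:TVdensitylemma}, and the only thing to check is that the hypothesis ``$\int_{\{f>m\}}f = 1-p$'' is precisely what produces a mixture decomposition of $X$ with a bulk component of density $\le m/p$ — so the mild subtlety, if any, is just making sure the normalisations of $\distribution_=$ and $\distribution_{\neq}$ are stated correctly and that $d > p^{-1}$ guarantees $dp/2 \ge 1$ so the ``$k\ge dp/2$'' part is nonempty.
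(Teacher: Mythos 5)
Your proposal is correct, and it reaches the stated bound by the same Chernoff-plus-Bobkov mechanism; the only departure from the paper is in how the hypothesis is fed into \cref{lem:TVdensitylemma}. The paper builds an explicit auxiliary random variable $Y$ with density $f(x)$ on $\{f\le m\}$ and a flat block of height $m$ on $[-(1-p)/m,0]$, so that $M(Y)\le m$ and $\TV(X,Y)=1-p$; then \cref{lem:TVdensitylemma} applies as a black box and yields $M(Z_d)\le\sqrt{2e}\,m/\sqrt{p^3 d}$. You instead bypass the auxiliary $Y$ and reproduce the lemma's interior: split $f$ at level $m$ into normalized components $\distribution_=$ (density $\le m/p$) and $\distribution_{\neq}$, write the law of the sum as the binomial mixture over how many of the $d$ samples land in the $\distribution_=$ piece, discard the $k<dp/2$ tail via the Chernoff bound of \cref{claim:mixturenumberconcnetration}, and apply Bobkov's \cref{thm:bobkovdensitybound} to the bulk. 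This is sound, and you correctly anticipated the one subtlety worth stating: setting $Y\sim\distribution_=$ and invoking \cref{lem:TVdensitylemma} literally would only give $M(Y)\le m/p$, which feeds through to the weaker bound $\sqrt{2e}\,m/\sqrt{p^5d}$, so one must either unpack the lemma as you do or build the shifted $Y$ with density $\le m$ as the paper does. Either way the final bound and the role of $d>p^{-1}$ (ensuring $dp/2\ge 1$ so the bulk range of $k$ is nonempty and the Chernoff error is nontrivially small) coincide.
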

\begin{proof}
Let $\Omega  = \{x\ge 0 \; :\;f(x)\le  m\}.$ 
Clearly, $\int_\Omega f(x)dx = p.$
Let $Y$ be the real-valued random variable with density $f(x)$ for $x\in \Omega$ and density equal to $m$ on $[-(1-p)/m,0].$ Then, the density of $Y$ is bounded by $m$ and $\TV(Y, X) = 1-p$ (as the two densities agree on $\Omega$ which has measure $p$). Now, we simply find the random variable $Z_d$ given by \cref{lem:TVdensitylemma} and observe that for an optimal coupling of $Z_d, X_1 +X_2 + \cdots + X_d,$ we have 
$$
\prob\big[X_1 + X_2 + \cdots + X_d \in [a,b]\big]\le 
\prob[X_1 + X_2 + \cdots +X_d\neq Z_d] + 
\prob[Z_d\in [a,b]]\le
\exp(- dp/8) + M(Z_d)(b-a),
$$
from which the claim follows.
\end{proof}

\begin{proof}[Proof of \cref{prop:inervalprobsmallq}]
We apply \cref{cor:smallball} as follows. Consider the random variable $U^q,$ where $U \sim \unif([0,1]).$ The CDF $\phi(x)$ of $U^q$ for $x\in [0,1]$ is 
$$
\phi(x) = 
\prob[U^q\le x] = 
\prob[U\le x^{1/q}] = 
x^{1/q}.
$$
Thus, the density $h(x)$ of $U^q$ is $h(x) = (x^{1/q})' = \frac{1}{q}x^{1/q-1}\indicator\big[x\in (0,1]\big].$ Now, observe that $h(x)\le \frac{1}{q}(1/2)^{1/q-1}\le \frac{2}{q}$ for $x\in [1/2,1]$ and also 
$$
p\coloneqq \prob\big[U^q\in [1/2, 1]\big] = 
1 - \prob\big[U^q\le 1/2\big] = 
1- (1/2)^{1/q}\ge 
\frac{1}{2q}.
$$
Thus, applying \cref{cor:smallball} with $m = \frac{1}{q}(1/2)^{1/q-1} \le \frac{2}{q}, p \ge \frac{1}{2q}$ gives the result.
\end{proof}

\begin{proof}[Proof of \cref{claim:lqanticoncentration}]
Using \cref{prop:inervalprobsmallq},
$$
\psi(\ell) =\prob\big[U^q_1 + U^q_2 + \cdots + U^q_{d-1} \in [\tau^q,\tau^q - \ell^q]\big]\le 
\exp(- \Omega(d/q)) + O\left(\sqrt{\frac{q}{d}}\ell^q\right).
$$
Using that $d/q = \omega(\log d)$ and integrating over $[0,1],$ we conclude 
\begin{equation}
    \begin{split}
        \int_0^1 \psi(\ell)d\ell  = 
        \exp(- \Omega(d/q)) + 
        O\left(\sqrt{\frac{q}{d}}\int_0^1\ell^qd\ell\right) = 
        \exp(- \Omega(d/q)) + 
        O\left(\sqrt{\frac{1}{qd}}\right) = 
        O\left(\sqrt{\frac{1}{qd}}\right).
    \end{split}
\end{equation}
Similarly, 
\begin{equation}
    \begin{split}
        &\int_0^1 \psi^2(\ell)d\ell  = 
        \int_0^1
        \left(\exp(- \Omega(d/q)) + O\left(\sqrt{\frac{q}{d}}\ell^q\right)\right)^2d\ell\\
        & = 
        O(\exp(- \Omega(d/q))) + 
        O\left({\frac{q}{d}}\int_0^1\ell^{2q}d\ell\right) = 
        O\left(\frac{1}{d}\right).
    \end{split}
\end{equation}
\end{proof}
\section{Finishing the Proof of 
\texorpdfstring{\cref{claim:explicitconvolutionmoments}}{Explicit Convolution Moments}}
\label{appendix:linftystatscalculation}
\label{sec:lq}
All that was left to show is that 
whenever $t = o(d/(\log d)^2),$
$$
\Big(
    \frac{2}{t+1}(1-\lambda)^{t+1} + 
    \frac{t-1}{t+1}(1-2\lambda)^{t+1} 
    \Big)^d = 
p^{2t}(1 + \Theta(d\lambda^3t^2)).   
$$

We expand the brackets on the left-hand side as follows.
\begin{equation}
    \begin{split}
    &\frac{2}{t+1}(1-\lambda)^{t+1} + 
    \frac{t-1}{t+1}(1-2\lambda)^{t+1}  = \\
    & 
    \sum_{i = 0}^{t+1}\binom{t+1}{i}\frac{1}{t+1}(2(-\lambda)^i + (t-1)(-2\lambda)^i)
    =\\
    & 1 - 2t \lambda  + \binom{2t}{2}\lambda^2 - 
    \binom{2t}{3}\lambda^3 + \frac{2t(t-1)}{6}\lambda^3 + 
    \sum_{i = 4}^t \binom{t+1}{i}\Big(\frac{2(-\lambda)^i + (t-1)(-2\lambda)^i}{t+1}\Big).
    \end{split}
\end{equation}

We claim that the last expression equals $(1-\lambda)^{2t} + \frac{2t(t-1)}{6}\lambda^3(1 + o(1)).$ This is equivalent to proving that 
\begin{equation}
    \begin{split}
        \sum_{i = 4}^k\Bigg( \binom{t+1}{i}\Big(\frac{2(-\lambda)^i + (t-1)(-2\lambda)^i}{t+1}\Big) - 
        \binom{2t}{i}(-\lambda)^i\Bigg) = o(t^2\lambda^3).
    \end{split}
\end{equation}
We split the sum into two parts, $i \ge 4\log\frac{1}{\lambda}$ and $i < 4\log\frac{1}{\lambda}.$

\paragraph{Case 1) Large values of $i$.} We have 
\begin{equation}
    \begin{split}
        & \Bigg|\sum_{i \ge 4\log\frac{1}{\lambda}}^t\Bigg( \binom{t+1}{i}\Big(\frac{2(-\lambda)^i + (t-1)(-2\lambda)^i}{t+1}\Big) - 
        \binom{2t}{i}(-\lambda)^i\Bigg)\Bigg|\\
        & \le 
         \sum_{i \ge 4\log\frac{1}{\lambda}}^t\Bigg| \binom{t+1}{i}\Big(\frac{2(-\lambda)^i + (t-1)(-2\lambda)^i}{t+1}\Bigg)\Bigg| + \Bigg| 
        \binom{2t}{i}(-\lambda)^i\Bigg|\\
        & \le 2 \sum_{i \ge 4\log\frac{1}{\lambda}} 
        (3\lambda t)^i = 
        O((3\lambda t)^{4\log\frac{1}{\lambda}}) = 
        O(\lambda^4) = o(\lambda^3t^2),
    \end{split}
\end{equation}
where we used the fact that $\lambda t  = o(1).$

\paragraph{Case 2) Small values of $i$.} 
We bound the coefficient in front of $(-\lambda)^i$ as follows. 
\begin{equation}
    \begin{split}
        &\binom{t+1}{i}\frac{2 + 2^{i}(t-1)}{t+1} - \binom{2t}{i}\\
        & = 
        \frac{t(t-1)(t-2)\cdots (t - i+2)(2^i(t-1) + 2) - 2t(2t-1)(2t-2)\cdots (2t - i+1)}{i!}\\
        & = 
        \frac{2t(t-1)(t-2) \cdots (t-i+2)}{i!} + \\
        & \quad \frac{2t(2t-2)(2t-4)\cdots (2t-2i+4)(2t-2) - 
        2t(2t-1)(2t-2)\cdots (2t - i+1)
        }{i!}\\
        & =
        O(t^{i-1}) - 
        \frac{2t(2t-1)(2t-2)\cdots (2t - i+1)}{i!}\Big(1 - \frac{2t(2t-2)(2t-4)\cdots (2t-2i+4)(2t-2)}{2t(2t-1)(2t-2)\cdots (2t - i+1)}\Big).
    \end{split}
\end{equation}
Now, observe that 
$$
\frac{2t(2t-2)(2t-4)\cdots (2t-2i+4)(2t-2)}{2t(2t-1)(2t-2)\cdots (2t - i+1)}\ge 
\Big(\frac{2t-2i}{2t}\Big)^i = 
(1 - i/t)^i \ge 1 - i^2/t,
$$
where we used that $4\le i \le t$ and Bernoulli's inequality. Furthermore,
$$
\frac{2t(2t-2)(2t-4)\cdots (2t-2i+4)(2t-2)}{2t(2t-1)(2t-2)\cdots (2t - i+1)} = 
\frac{2t-2}{2t-1}
\prod_{j = 2}^{i-2}\frac{2t - 2j}{2t - j - 1}<1. 
$$
Hence, the desired sum is of order 
\begin{equation}
    \begin{split}
        O(t^{i-1})  - 
        O\Big(\frac{(2t)^i}{i!}\frac{i^2}{t}\Big) = O(t^{i-1}).
    \end{split}
\end{equation}
It follows that the sum in the small $i$ case is bounded by 
$$
\sum_{i = 4}^{4\log\frac{1}{\lambda}}O(t^{i-1}\lambda^i) = 
O(t^3\lambda^4) = o(t^2\lambda^3),
$$
where again we used $t\lambda = o(1).$

Altogether, using that $(1-2\lambda)^t \ge 1-2\lambda t = \Theta(1),$
\begin{equation}
\begin{split}
    & \expect[(\sigma*\sigma(\bfg))^t] = 
    \Big(
    (1-\lambda)^{2t} + \Theta(t^2\lambda^3)
    \Big)^d\\
    & =  
    \Big(
    (1-\lambda)^{2t}(1 + \Theta(t^2\lambda^3)
    \Big)^d\\
    & = 
    (1-\lambda)^{2dt}(1 + \Theta(t^2\lambda^3))^d\\
    & = p^{2t}(1 + \Theta(dt^2\lambda^3)), 
    \end{split}
\end{equation}
where again we used that $t = o(\lambda^{-1}),$ $dt^2\lambda^3 = o(1)$ and 
$(1-\lambda)^d = p$ by definition.

\section{Graphs Counts in the \texorpdfstring{$L_\infty$}{Sup-Norm} Model}
\label{appendix:linftygraphcounts}
\begin{proof}[Proof of \cref{prop:linftygraphcounts}]
\textbf{Case 1) The smallest cycle of $H$ is of even size.} From \cref{eq:pietruncated}, we have 
\begin{equation}
\begin{split}
\label{eq:pietruncatedapp}
        \sum_{A\subseteq E(H)\; : \;  |A|\le m+1}
        (-1)^{|E(A)|}
       \chi(A)\le 
        \expect_{\bfG\sim \RGG(n,\mathbb{T}^1, \unif, \sigma^\infty_{1-\lambda}, 1-\lambda)}[\unsignedweight_H(\bfG)] \le  
        \sum_{A\subseteq E(H)\; : \;  |A|\le m}
        (-1)^{|E(A)|}
       \chi(A).
\end{split}
\end{equation}
First, consider the upper bound. Since each subgraph of $H$ on at most $m-1$ edges is acyclic and there are exactly $N(m)$ cycles on $m$ edges, from \cref{claim:1dtoruscounts}, 
\begin{align*}
&\sum_{A\subseteq H\; : \;  |A|\le m}
        (-1)^{|E(A)|}
       \chi(A) = 
\sum_{j = 0}^m (-1)^{j}\binom{|E(H)|}{j}\lambda^{j} + 
N(m)\times \Big(\lambda^{m-1}\phi(m-1) - \lambda^m\Big).\\
\end{align*}
Similarly, we can carry out the calculation for the lower bound in \cref{eq:pietruncatedapp}. Note that all $(m+1)$-edge subgraphs of $H$ have one of three structures: 1) Acyclic, in which case $\chi(A) = \lambda^{m+1},$ 2) An $m$ cycle with an extra edge not creating a cycle, in which case $\chi(A) = \lambda^{m}\phi(m-1),$ 3) An $m+1$ cycle in which case $\chi(A) = 0.$ In all three cases, importantly, 
$|\chi(A)|\le \lambda^{m}.$ Altogether, this means that 
\begin{equation}
\label{eq:linftyevencasesumapprox}
\begin{split}
    & \expect_{\bfG\sim \RGG(n,\mathbb{T}^1, \unif, \sigma^\infty_{1-\lambda}, 1-\lambda)}[\unsignedweight_H(\bfG)]\\
    & = 
    \sum_{j = 0}^m (-1)^{j}\binom{|E(H)|}{j}\lambda^{j} + 
N(m)\times \Big(\lambda^{m-1}\phi(m-1) - \lambda^m\Big)+
O\Bigg(\binom{|E(H)|}{m+1}\lambda^m
\Bigg).
\end{split}
\end{equation}
Again, using the truncated principle of inclusion-exclusion, 
$$
\sum_{j = 0}^m (-1)^{j}\binom{|E(H)|}{j}\lambda^{j}\ge 
(1-\lambda)^{|E(H)|}
\ge 
\sum_{j = 0}^{m+1} (-1)^{j}\binom{|E(H)|}{j}\lambda^{j},
$$
so $\sum_{j = 0}^m (-1)^{j}\binom{|E(H)|}{j}\lambda^{j} = 
(1-\lambda)^{|E(H)|} + O\bigg(\binom{|E(H)|}{m+1}\lambda^m
\bigg)
.$
Using \cref{eq:linftyevencasesumapprox},
\begin{align*}
& \expect_{\bfG\sim \RGG(n,\mathbb{T}^1, \unif, \sigma^\infty_{1-\lambda}, 1-\lambda)}[\unsignedweight_H(\bfG)]\\
& = 
    (1-\lambda)^{|E(H)|}+ 
N(m)\times \Big(\lambda^{m-1}\phi(m-1) - \lambda^m\Big)+
O\Bigg(\binom{|E(H)|}{m+1}\lambda^m
\Bigg)\\
& = 
(1-\lambda)^{|E(H)|}+ 
N(m)\times \lambda^{m-1}\phi(m-1) + 
O\Bigg(\binom{|E(H)|}{m+1}\lambda^m + 
\binom{|E(H)|}{m}\lambda^m
\Bigg),
\end{align*}
where we used the trivial observation that $N(m)\le \binom{|E(H)|}{m+1}.$ Now, using the simple fact that  that $\phi(m - 1) = \Theta(m^{-1/2})$ (see \cref{claim:1dtoruscounts}) and the assumption that $ |E(H)|^{m+1} = o(\lambda^{-1}),$ one can easily see that the last expression is of order
$$
(1-\lambda)^{|E(H)|}+ 
N(m)\times \lambda^{m-1}\phi(m-1)(1 + o(1)).
$$
Finally, 
\begin{align*}
    \begin{split}
    & \expect_{\bfG\sim \RGG(n,\dtorus, \unif, \sigma^\infty_p, p)}[\unsignedweight_H(\bfG)]\\
    & = 
    \Bigg(
    (1-\lambda)^{|E(H)|}+ 
N(m)\times \lambda^{m-1}\phi(m-1)(1 + o(1))
    \Bigg)^d\\
    & = 
    (1-\lambda)^{d\times |E(H)|}\times 
    \Bigg( 1+ 
\frac{N(m)\times \lambda^{m-1}\phi(m-1)(1 + o(1))}{(1-\lambda)^{|E(H)|}}
    \Bigg)^d.
    \end{split}
\end{align*}
Since $|E(H)|\le \lambda^{1/(m+1)},$ clearly $(1-\lambda)^{|E(H)|} = 1 + o(1).$ Furthermore, 
$|N(m)\lambda^{m-1}| = O(\binom{|E(H)|}{m}\lambda^{m-1}) = O(\lambda^{m-2}) = O(1/d)
$  as $\lambda  = O(\log n /d) = o(d^{-1/2}).$ Using also the fact that $(1-\lambda)^{d} = p,$ we conclude that 
$$
\expect_{\bfG\sim \RGG(n,\dtorus, \unif, \sigma^\infty_p, p)}[\unsignedweight_H(\bfG)] = 
p^{|E(H)|}\times 
    \Bigg( 1+ 
{d N(m)\phi(m-1)\lambda^{m-1}}\big(1 + o(1)\big)\Bigg).
$$

\noindent
\textbf{Case 2) The smallest cycle of $H$ is of even size.} We repeat the same steps as in the even case. The only difference is that when considering cycles of length $m+1,$ one needs to take extra care of cycles of length $m+1$ as $\chi(C_{m+1}) =\lambda^{m}\phi(m),$ which is of the same order as $\psi(C_m) = -\lambda^m.$ Namely, we have
\begin{equation}
    \begin{split}
        & \expect_{\bfG\sim \RGG(n,\mathbb{T}^1, \unif, \sigma^\infty_{1-\lambda}, 1-\lambda)}[\unsignedweight_H(\bfG)] \le  
        \sum_{A\subseteq E(H)\; : \;  |A|\le m+1}
        (-1)^{|E(A)|}
       \chi(A)\\
       & = 
       \sum_{j = 0}^{m+1}
       (-\lambda)^{j}\binom{|E(H)|}{m+1}
        + N(m)\lambda^m + 
        N(m+1)(\phi(m)\lambda^m - \lambda^{m+1}) + 
        O\Bigg(
        \lambda^{m+1}\binom{|E(H)|}{m+1}
        \Bigg).
    \end{split}
\end{equation}
Again, we used that all subgraphs of $H$ on at most $m$ edges are acyclic, except for $N(m)$ isomorphic to $C_m.$ The subgraphs on $m+1$ vertices
have one of three structures: 1) Acyclic, in which case $\chi(A) = \lambda^{m+1},$ 2) An $m$ cycle with an extra edge not creating a cycle, in which case $\chi(A) = 0,$ 3) An $m+1$ cycle in which case $\chi(A) = \phi(m)\lambda^m.$ 

Similarly, the subgraphs on $m+2$ vertices have one of four structures: 1) Acyclic, in which case $\chi(A) = \lambda^{m+2},$ 2) An $m$ cycle with two extra edges, in which case $\chi(A) = 0,$ 3) An $m+2$ cycle, in which case $\chi(A) = 0,$ 4) An $m+1$ cycle with an extra edge, in which case $\chi(A) = \phi(m)\lambda^{m+1}$. In all cases, $|\chi(A)|$ is at most $\lambda^{m+1}.$ Thus, 
\begin{align*}
& \expect_{\bfG\sim \RGG(n,\mathbb{T}^1, \unif, \sigma^\infty_{1-\lambda}, 1-\lambda)}[\unsignedweight_H(\bfG)]\\
& = 
\sum_{j = 0}^{m+1}
       (-\lambda)^{j}\binom{|E(H)|}{m+1}
        + N(m)\lambda^m + 
        N(m+1)\phi(m)\lambda^m + 
O\Bigg(
\binom{|E(H)|}{m+1}\lambda^{m+1} + 
\binom{|E(H)|}{m+2}\lambda^{m+1} 
\Bigg)\\
& = 
(1-\lambda)^{|E(H)|} + 
\Bigg(N(m)\lambda^m + N(m+1)\phi(m)\lambda^m\Bigg)\big(1 + o(1)\big).
\end{align*}
As in the even case, we used 
 $\sum_{j = 0}^{m+1} (-1)^{j}\binom{|E(H)|}{j}\lambda^{j} = 
(1-\lambda)^{|E(H)|} + O\bigg(\binom{|E(H)|}{m+2}\lambda^{m+1}
\bigg)
.$ The desired conclusion follows as in the even case.
\end{proof}
\section{Omitted Proofs from 
\texorpdfstring{\cref{sec:linftysignedcounts}}{Sigend Counts}}
\label{appendix:fromsignedsubgraphcounts}
\begin{proof}[Proof of \cref{claim:subadditivity}]
Let $G_1$ have $a$ connected components with vertex sets $D_1, D_2, \ldots, D_a$ and let $G_2$ have $b$ connected components with vertex sets $F_1, F_2, \ldots, F_b.$ Consider the bipartite graph $\mathcal{G}$ on parts $\mathcal{D}, \mathcal{F}$ with vertex sets respectively $D_1, D_2, \ldots, D_a$ and $F_1, F_2, \ldots, F_b.$ Draw an edge between $D_i$ and $F_j$ if and only if they have a common vertex and, if so, label this edge with one of their common vertices. Clearly, each edge is labeled by a different vertex.

Note that $\numconnectedcomponets(G) = \numconnectedcomponets(\mathcal{G}).$ On the other hand, as each edge is labelled by a different repeated vertex, $|V(G)|\le |V(G_1)| + |V(G_2)| - |E(\mathcal{G})|.$ Trivially, $$\numconnectedcomponets(\mathcal{G})\ge |V(\mathcal{G})| - |E(\mathcal{G})| = \numconnectedcomponets(G_1) + 
\numconnectedcomponets(G_2)
-  |E(\mathcal{G})|.
$$ 
Putting these together, we obtain
\[
|V(G_1)| + |V(G_2)| - |V(G)|\ge 
|E(\mathcal{G})|\ge 
\numconnectedcomponets(G_1) + 
\numconnectedcomponets(G_2) - 
\numconnectedcomponets(G).
\qedhere\]
\end{proof}

\begin{proof}[Proof of \cref{claim:numcintwoconnectedsubgraph}]
First, suppose that $\mathcal{K}$ is connected and $V(H) = V(\mathcal{K}).$ Then, the right-hand side of the desired inequality equals 0 and the left-hand side is non-negative.

Otherwise, let the connected components of $\mathcal{K}$ be $F_1, F_2, \ldots ,F_a,$ where $a = \numconnectedcomponets(\mathcal{K}).$  
Consider the multigraph (with multiedges, but no self-loops) $H'$ on $\numconnectedcomponets(K) + |V(H)| - V(\mathcal{K})$ vertices $[a]\cup \big(V(H)\backslash V(\mathcal{K})\big).$ In $H',$ two vertices in $V(H)\backslash V(\mathcal{K})$ are adjacent with multiplicity 1 if and only if they are adjacent in $H.$
The multiplicity of an edge between 
a connected component $F_j$ and a vertex $u\in \big(V(H)\backslash V(\mathcal{K})\big)$ equals the number of neighbours of $u$ in $F_j$ with respect to $H.$ Finally, the multiplicity between $F_j$ and $F_{j'}$ equals the number of edges between them in $H.$

Clearly, the number of edges (with multiplicities) in $H$ is at most $|E(H)| - |E(\mathcal{K})|.$ On the other hand, it must be at least $a + |V(H)\backslash V(\mathcal{K})| = \numconnectedcomponets(\mathcal{K}) + |V(H)| - |V(\mathcal{K})|.$ Indeed, otherwise there is a vertex of degree (counted with multiplicities) $1$ or $0$ in $H'.$ If it is of degree 0, clearly $H$ cannot be connected. If it is of degree 1, suppose that the corresponding edge in $H$ is $(u,v)$ and the vertex of degree 1 is either the vertex $u$ or a connected component $F_j$ containing $u.$ Since $H$ is $2$-connected, $v$ has at least one more neighbour $u_1$ other than $u$ in $H.$ In 
$H|_{V(H)\backslash \{v\}},$ there is no path between $u$ and $u_1.$ This is a contradiction with the 2-connectivity of $H.$ Thus, it must be the case that 
$|E(H)| - |E(\mathcal{K})|\ge \numconnectedcomponets(\mathcal{K}) + |V(H)| - |V(\mathcal{K})|.$
\end{proof}

\begin{proposition}
\label{prop:linftysignedboundcl1}
If $|V(H)|\ge 6$ and $\displaystyle i \le \frac{3(\log d)|V(H)|}{10(|E(H)| + \log d)},$ then
$(2i\lambda )^{|V(H)| - 1}2^{i|E(H)|}\le d^{- i - |V(H)|/2}$ for all large enough values of $d.$
\end{proposition}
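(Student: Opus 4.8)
\textbf{Proof proposal for \cref{prop:linftysignedboundcl1}.}

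The plan is to take logarithms and reduce the claimed inequality to a linear inequality in $\log d$ with a comfortable constant-factor slack, using $\lambda = \tilde\Theta(1/d)$ and the hypotheses $|V(H)|\ge 6$ and $i \le \frac{3(\log d)|V(H)|}{10(|E(H)|+\log d)}$. Write $V = |V(H)|$, $E = |E(H)|$ for brevity. Recall $\lambda = \lambda_p^\infty = \frac{\log(1/p)}{d}(1+o_d(1))$, so in particular $\log(1/\lambda) = \log d - O(\log\log d)$ under Assumption \eqref{eq:assumption} (since $p\ge n^{-1+\epsilon}$ and $d\ge n^\delta$ give $\log(1/p) = \Theta(\log d)$). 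Taking the logarithm of the target inequality $(2i\lambda)^{V-1}2^{iE}\le d^{-i-V/2}$, it suffices to prove
\begin{equation*}
(V-1)\bigl(\log(2i) - \log(1/\lambda)\bigr) + iE\log 2 \le -(i + V/2)\log d.
\end{equation*}
Since $\log(1/\lambda) = \log d - O(\log\log d)$, and $\log(2i) = O(\log V) = O(\log\log d)$ because $i \le V \le E \le (\log d)^{5/4}$, the left-hand side is at most $-(V-1)(\log d - O(\log\log d)) + iE\log 2$. So it is enough to show
\begin{equation*}
iE\log 2 + (i + V/2)\log d \le (V-1)(\log d)(1 - o_d(1)).
\end{equation*}

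The key step is to bound each of the two terms on the left by a fixed fraction of $(V-1)\log d$. First, from $i \le \frac{3(\log d)V}{10(E+\log d)}$ we get $i(E+\log d) \le \frac{3}{10}(\log d)V$, hence $iE\log 2 \le iE \le \frac{3}{10}(\log d)V \le \frac{3}{10}\cdot\frac{6}{5}(\log d)(V-1) = \frac{9}{25}(\log d)(V-1)$, using $V\ge 6 \Rightarrow V \le \frac{6}{5}(V-1)$. Likewise $i\log d \le \frac{3}{10}(\log d)V \le \frac{9}{25}(\log d)(V-1)$ from the same chain (dropping the $E$ term in the denominator only makes $i$ smaller relative to what we need), and $(V/2)\log d \le \frac{3}{5}(\log d)(V-1)$ again by $V\le\frac65(V-1)$. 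Summing these three contributions gives left-hand side $\le \bigl(\tfrac{9}{25} + \tfrac{9}{25} + \tfrac{3}{5}\bigr)(\log d)(V-1) + O((\log\log d)V)$. The constant $\tfrac{9}{25}+\tfrac{9}{25}+\tfrac{3}{5} = \tfrac{33}{25}$ exceeds $1$, so this crude split is too lossy; instead I would use $i\log d \le \frac{3}{10}(\log d)\cdot\frac{6}{5}(V-1)$ \emph{together with} the observation that the bound $i(E+\log d)\le\frac{3}{10}(\log d)V$ already controls $iE$ and $i\log d$ jointly, so that $iE\log 2 + i\log d \le (\log 2)\cdot iE + i\log d \le i(E+\log d) \le \frac{3}{10}(\log d)V \le \frac{9}{25}(\log d)(V-1)$ (here using $\log 2 < 1$). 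This leaves $iE\log 2 + i\log d + (V/2)\log d \le \frac{9}{25}(\log d)(V-1) + \frac{3}{5}(\log d)(V-1) = \frac{24}{25}(\log d)(V-1)$, which is strictly below $(\log d)(V-1)$.

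Finally I would absorb the additive error: the omitted terms are $(V-1)O(\log\log d) + O((\log\log d)V) = O(V\log\log d)$, and since $\frac{24}{25}(\log d)(V-1) + O(V\log\log d) \le (\log d)(V-1)$ holds for all large enough $d$ (the gap is $\Theta((\log d)V)$ while the error is $O(V\log\log d)$, and $V\ge 6$ so $V-1 \ge \frac56 V$), the inequality follows. The main obstacle is purely bookkeeping of constants: one must be careful that the coefficient in front of $iE$ coming from the hypothesis on $i$ combines with the $(V/2)\log d$ term to stay below $1$, which is why the hypothesis has the specific shape $i(E+\log d) \lesssim (\log d)V$ rather than just $i\lesssim V$ — the $+\log d$ in the denominator is exactly what keeps $i\log d$ small. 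There is no real analytic difficulty; everything reduces to the elementary facts $\log(1/\lambda)=\log d - O(\log\log d)$, $E\le(\log d)^{5/4}$, $i\le V\le E$, and $V\ge 6$.
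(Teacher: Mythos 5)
Your proof is correct and takes essentially the same route as the paper: take logarithms, use $\log(1/\lambda)=\log d - O(\log\log d)$, and reduce to the linear inequality $i(|E(H)|+\log d) + (|V(H)|/2)\log d \le (|V(H)|-1)(\log d)(1-o_d(1))$, which the hypothesis on $i$ delivers once $|V(H)|\ge 6$ is used to compare $|V(H)|$ with $|V(H)|-1$. The paper reaches the same endpoint slightly more directly by moving $(|V(H)|/2)\log d$ to the right and factoring to get $i \le \frac{(\log d)(|V(H)|/2-1)}{|E(H)|+\log d}(1+o_d(1))$, whereas you bound $i(|E(H)|+\log d)$ and $(|V(H)|/2)\log d$ separately and check that $\tfrac{9}{25}+\tfrac{3}{5}=\tfrac{24}{25}<1$; both work.
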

\begin{proof}
It is enough to show that 
$(2i\lambda )^{|V(H)| - 1}e^{i|E(H)|}\le d^{- i - |V(H)|/2}.$ Taking a logarithm on both sides, this reduces to showing 
$$
i |E(H)| + i \log d + (|V(H)|/2)\log d\le 
(|V(H)| - 1)(\log d - O(\log \log d)),
$$
which is equivalent to 
$$
i \le \frac{(\log d) (|V(H)|/2 - 1)}{|E(H)| + \log d}(1 + o_d(1))\le
 \frac{3(\log d)|V(H)|}{10(|E(H)| + \log d)},
$$
where in the last line we used $|V(H)|>5.$
\end{proof}
\section{Omitted Proofs from \texorpdfstring{
\cref{sec:linftyperformance}}{Sup-Norm Performance}}
\label{appendix:linftyperformanceomitted}
\subsection*{Proof of \cref{eq:threecyclevars}}
Recall \cref{eq:linfty3varexpansion}.

We begin by calculating the variance for \ER. Note that whenever $(i,j,k)\neq (i',j',k'),$ there exists some edge in only one of the two triangles $\triangle(i,j,k), \triangle(i',j',k').$ Without loss of generality this is $(ij).$ Then,
$\expect_{\bfK\sim \ergraph}[\signedweight_{\triangle(i,j,k)}(\bfK)\signedweight_{\triangle(i',j',k')}(\bfK)] = 0$ as the product 
$\signedweight_{\triangle(i,j,k)}(\bfK)\signedweight_{\triangle(i',j',k')}(\bfK)$ contains the factor $\bfK_{(ij)} - p$ which is independent of everything else. Similarly,\linebreak 
$\expect_{\bfK\sim \ergraph}[\signedweight_{\triangle(i,j,k)}(\bfK)]\expect_{\bfK\sim \ergraph}[\signedweight_{\triangle(i',j',k')}(\bfK)] = 0.$
Thus, 
$$
\Var_{\bfK\sim \ergraph}[\signedweight_{C_3}(\bfK)] = 
\Theta(n^3)\times \Var_{\bfK\sim \ergraph}[\signedcount_{C_3}(\bfK)].
$$
We now use the following fact:
\begin{equation}
\label{eq:signedindicatorsqaure}
    \begin{split}
        \text{For any indicator $I,$ the equality $(I-p)^2 = (I-p)(1-2p) + (p-p^2)$ holds.}
    \end{split}
\end{equation}
We deduce that $\Var_{\bfK\sim \ergraph}[\signedcount_{C_3}(\bfK)] = \expect_{\bfK\sim \ergraph}[\signedcount_{C_3}(\bfK)^2] = 
\Theta(n^3)\times (p-p^2)^3 =\Theta(n^3p^3).
$

Now, we proceed to bounding $\Var_{\bfG\sim \RGG(n,\dtorus, \unif, \sigma^\infty_p,p)}[\signedweight_{C_3}(\bfG)].$ The idea is to split each term\linebreak
$\expect_{\bfG\sim \RGG}[\signedweight_{\triangle(i,j,k)}(\bfG)\signedweight_{\triangle(i',j',k')}(\bfG)]$ into a (weighted) sum of signed counts with respect to\linebreak $\RGG(n,\dtorus, \unif, \sigma^\infty_p,p)$ via \cref{eq:signedindicatorsqaure} and then apply \cref{thm:linftysignedcounts}.

\paragraph{1)} $\Var_{\bfG\sim\RGG}[\signedweight_{\triangle(1,2,3)}(\bfG)] = \expect_{\bfG\sim\RGG}[\signedweight_{\triangle(1,2,3)}(\bfG)^2] - 
\expect_{\bfG\sim\RGG}[\signedweight_{\triangle(1,2,3)}(\bfG)]^2.
$
By \cref{cor:linftysignedcycles}, \linebreak $\expect_{\bfG\sim\RGG}[\signedweight_{\triangle(1,2,3)}(\bfG)]^2 = 
\tilde{\Theta}(p^6/d^4) = o(p^3).$  On the other hand, 
$$\expect_{\bfG\sim\RGG}[\signedweight_{\triangle(1,2,3)}(\bfG)^2] = 
\expect_{\bfG\sim\RGG}[(\bfG_{(12) - p})^2(\bfG_{(23) - p})^2(\bfG_{(13) - p})^2]
.$$ 
Using \cref{eq:signedindicatorsqaure}, this is equal to 
$(p-p^2)^3 + (1-2p)^3\expect_{\bfG\sim\RGG}[\signedweight_{\triangle(1,2,3)}(\bfG)]$ and some terms with only one or two factors of the form $\bfG_{ij} - p.$ By \cref{rmk:onragfactorization}, those terms vanish as they form a graph with a leaf. Thus, the result is of order $\Theta(p^3) + \tilde{\Theta}((1-2p)^2p^3/d^2) = \Theta(p^3).$

\paragraph{2)} $\Cov[\signedweight_{\triangle(1,2,3)}(\bfG), 
    \signedweight_{\triangle(1,2,4)}(\bfG)] = 
    \expect[\signedweight_{\triangle(1,2,3)}(\bfG) 
    \signedweight_{\triangle(1,2,4)}(\bfG)] - \tilde{\Theta}(p^6/d^4)
    $ by \cref{cor:linftysignedcycles}. However, using \cref{eq:signedindicatorsqaure},
    \begin{align*}
        &\expect_{\bfG\sim\RGG}[\signedweight_{\triangle(1,2,3)}(\bfG) 
    \signedweight_{\triangle(1,2,4)}(\bfG)]\\
    & = 
    (p-p^2)\expect_{\bfG\sim\RGG}[(\bfG_{(13)} - p)(\bfG_{(23)} - p)(\bfG_{(14)} - p)(\bfG_{(24)} - p)]\\
    &\quad \quad + 
    (1-2p)
    \expect_{\bfG\sim\RGG}[(\bfG_{(12)}-p)(\bfG_{(13)} - p)(\bfG_{(23)} - p)(\bfG_{(14)} - p)(\bfG_{(24)} - p)].
    \end{align*}
 Both summands correspond to the signed weights of graphs on at most $4$ vertices. By \cref{thm:linftysignedcounts}, the last expression is of order $\tilde{O}(p\times p^4/d^2) = \tilde{O}(p^5/d^2).$
\paragraph{3)} In the cases of $\Cov[\signedweight_{\triangle(1,2,3)}(\bfH), 
    \signedweight_{\triangle(1,4,5)}(\bfH)]$ and $\Cov[\signedweight_{\triangle(1,2,3)}(\bfH), 
    \signedweight_{\triangle(4,5,6)}(\bfH)]],$ the two graphs $\triangle(i,j,k)$ and $\triangle(i',j',k')$ share at most one vertex, so their (signed) weights are independent by \cref{rmk:onragfactorization} and the covariance is zero. 

Combining those estimates via \cref{eq:linfty3varexpansion}, the variance is of order 
$\Theta(n^3p^3) + \tilde{O}(n^4p^5/d^2).$

\subsection*{Proof of \cref{eq:fourcyclevars}}
The estimate for \ER holds in the same way as in the proof of \cref{eq:threecyclevars}. We now estimate each of the terms in \cref{eq:linfty4varexpansion} for $\bfG\sim \RGG(n,\dtorus, \unif, \sigma^\infty_p,p).$

\paragraph{1)} As in the case fro triangles, we estimate
\begin{align*}
    &\Var_{\bfG\sim\RGG}[\signedweight_{\square(1,2,3,4)}(\bfG)]\\
    & = 
    \expect_{\bfG\sim\RGG}[(\bfG_{(12)} - p)^2(\bfG_{(23)} - p)^2(\bfG_{(34)} - p)^2(\bfG_{(24)} - p)^2] - \tilde{\Theta}(p^8/d^4)\\
    & = 
    (p-p^2)^4 + 
    (1-2p)^4
    \expect_{\bfG\sim\RGG}[(\bfG_{(12)} - p)(\bfG_{(23)} - p)(\bfG_{(34)} - p)(\bfG_{(24)} - p)] + \tilde{\Theta}(p^8/d^4)\\
    & = 
    (p-p^2)^4 + 
    (1-2p)^4\tilde{\Theta}(p^4/d^2)+ \tilde{\Theta}(p^8/d^4) = \Theta(p^4).
\end{align*}

\paragraph{2)} Similarly, using \cref{eq:signedindicatorsqaure}
\begin{align*}
    &\Cov_{\bfG\sim\RGG}[\signedweight_{\square(1,2,3,4)}(\bfG), 
    \signedweight_{\square(1,2,3,5)}(\bfG)]\\
    & = 
    \expect_{\bfG\sim\RGG}[
    (\bfG_{(12)} - p)^2
    (\bfG_{(23)} - p)^2
    (\bfG_{(34)} - p)
    (\bfG_{(14)} - p)
    (\bfG_{(35)} - p)
    (\bfG_{(15)} - p)
    ] - \tilde{\Theta}(p^8/d^4)\\
    & = 
    (p-p^2)^2
    \expect_{\bfG\sim\RGG}[
    (\bfG_{(34)} - p)
    (\bfG_{(14)} - p)
    (\bfG_{(35)} - p)
    (\bfG_{(15)} - p)
    ]\\
    & \quad \quad+ 2(p-p^2)(1-2p)
    \expect_{\bfG\sim\RGG}[
    (\bfG_{(23)} - p)
    (\bfG_{(34)} - p)
    (\bfG_{(14)} - p)
    (\bfG_{(35)} - p)
    (\bfG_{(15)} - p)
    ]\\
    & \quad \quad+ (1-2p)^2
    \expect_{\bfG\sim\RGG}[
    (\bfG_{(12)} - p)
    (\bfG_{(23)} - p)
    (\bfG_{(34)} - p)
    (\bfG_{(14)} - p)
    (\bfG_{(35)} - p)
    (\bfG_{(15)} - p)
    ] - 
    \tilde{\Theta}(p^8/d^4)\\
    & = 
    \tilde{O}(p^6/d^2) + 
    \tilde{O}(p^6/d^{5/2}) + 
    \tilde{O}(p^6/d^3) - 
    \tilde{\Theta}(p^8/d^4) = 
    \tilde{O}(p^6/d^2).
\end{align*}
In the second to last line, we used \cref{thm:linftysignedcounts} for each for the corresponding graphs.

\paragraph{3)} 
\begin{align*}
    &\Cov_{\bfG\sim\RGG}[\signedweight_{\square(1,2,3,4)}(\bfG), 
    \signedweight_{\square(1,2,4,5)}(\bfG)]\\
    & = 
    \expect_{\bfG\sim\RGG}[
    (\bfG_{(12)} - p)^2
    (\bfG_{(23)} - p)
    (\bfG_{(34)} - p)
    (\bfG_{(14)} - p)
    (\bfG_{(35)} - p)
    (\bfG_{(15)} - p)
    (\bfG_{(25)} - p)
    ] - \tilde{\Theta}(p^8/d^4)\\
    & = 
    (p-p^2)
    \expect_{\bfG\sim\RGG}[
    (\bfG_{(23)} - p)
    (\bfG_{(34)} - p)
    (\bfG_{(14)} - p)
    (\bfG_{(35)} - p)
    (\bfG_{(15)} - p)
    (\bfG_{(25)} - p)
    ]\\
    & \quad \quad+ (1-2p)
    \expect_{\bfG\sim\RGG}[
     (\bfG_{(12)} - p)
    (\bfG_{(23)} - p)
    (\bfG_{(34)} - p)
    (\bfG_{(14)} - p)
    (\bfG_{(35)} - p)
    (\bfG_{(15)} - p)
    (\bfG_{(25)} - p)
    ]\\
    &\quad\quad- 
    \tilde{\Theta}(p^8/d^4)\\
    & = 
    \tilde{O}(p^7/d^{5/2}) + 
    \tilde{O}(p^7/d^{5/2}) - 
    \tilde{\Theta}(p^8/d^4)=
    \tilde{O}(p^7/d^{5/2}) = 
    \tilde{O}(p^6/d^2).
\end{align*}

\paragraph{4)}
\begin{align*}
    &\Cov_{\bfG\sim\RGG}[\signedweight_{\square(1,2,3,4)}(\bfG), 
    \signedweight_{\square(1,2,5,6)}(\bfG)]\\
    & = 
    \expect_{\bfG\sim\RGG}[
    (\bfG_{(12)} - p)^2
    (\bfG_{(23)} - p)
    (\bfG_{(34)} - p)
    (\bfG_{(14)} - p)
    (\bfG_{(25)} - p)
    (\bfG_{(56)} - p)
    (\bfG_{(61)} - p)
    ] - \tilde{\Theta}(p^8/d^4)\\
    & = 
    (p-p^2)
    \expect_{\bfG\sim\RGG}[
    (\bfG_{(23)} - p)
    (\bfG_{(34)} - p)
    (\bfG_{(14)} - p)
    (\bfG_{(25)} - p)
    (\bfG_{(56)} - p)
    (\bfG_{(61)} - p)
    ]\\
    & \quad \quad+ (1-2p)
    \expect_{\bfG\sim\RGG}[
     (\bfG_{(12)} - p)
    (\bfG_{(23)} - p)
    (\bfG_{(34)} - p)
    (\bfG_{(14)} - p)
    (\bfG_{(25)} - p)
    (\bfG_{(56)} - p)
    (\bfG_{(61)} - p)
    ]\\
    &\quad\quad- 
    \tilde{\Theta}(p^8/d^4)\\
    & = 
    \tilde{O}(p^7/d^{3}) + 
    \tilde{O}(p^7/d^{3}) - 
    \tilde{\Theta}(p^8/d^4)=
    \tilde{O}(p^7/d^{3}) = 
    \tilde{O}(p^7/d^3)
\end{align*}

\paragraph{5)}
\begin{align*}
    &\Cov_{\bfG\sim\RGG}[\signedweight_{\square(1,2,3,4)}(\bfG), 
    \signedweight_{\square(1,5,2,6)}(\bfG)]\\
    & = 
    \expect_{\bfG\sim\RGG}[
    (\bfG_{(12)} - p)
    (\bfG_{(23)} - p)
    (\bfG_{(34)} - p)
    (\bfG_{(14)} - p)
    (\bfG_{(15)} - p)
    (\bfG_{(52)} - p)
    (\bfG_{(26)} - p)
    (\bfG_{(61)} - p)
    ]\\
    &\quad\quad
    - \tilde{\Theta}(p^8/d^4)\\
    & = 
    \tilde{O}(p^8/d^{3}) + 
    \tilde{\Theta}(p^8/d^4)=
    \tilde{O}(p^8/d^3) = \tilde{O}(p^7/d^3)
\end{align*}

\paragraph{6)}
\begin{align*}
    &\Cov_{\bfG\sim\RGG}[\signedweight_{\square(1,2,3,4)}(\bfG), 
    \signedweight_{\square(1,5,3,6)}(\bfG)]\\
    & = 
    \expect_{\bfG\sim\RGG}[
    (\bfG_{(12)} - p)
    (\bfG_{(23)} - p)
    (\bfG_{(34)} - p)
    (\bfG_{(14)} - p)
    (\bfG_{(15)} - p)
    (\bfG_{(53)} - p)
    (\bfG_{(36)} - p)
    (\bfG_{(61)} - p)
    ]\\
    &\quad\quad
    - \tilde{\Theta}(p^8/d^4)\\
    & = 
    \tilde{O}(p^8/d^{3}) + 
    \tilde{\Theta}(p^8/d^4)=
    \tilde{O}(p^8/d^3) = \tilde{O}(p^7/d^3)
\end{align*}

\paragraph{7)} In the cases of $\Cov[\signedweight_{\square(1,2,3,4)}(\bfH), 
    \signedweight_{\square(1,5,6,7)}(\bfH)]$ and $\Cov[\signedweight_{\square(1,2,3,4)}(\bfH), 
    \signedweight_{\square(5,6,7,8)}(\bfH)]],$ the two graphs $\square(i,j,k,\ell)$ and $\square(i',j',k',\ell')$ share at most one vertex, so their (signed) weights are independent by \cref{rmk:onragfactorization} and the covariance is zero. 

Combining those estimates via \cref{eq:linfty4varexpansion}, the variance is of order 
$\Theta(n^4p^4) + \tilde{O}(n^5p^6/d^2 + n^6p^7/d^3).$

\subsection*{Proof of \cref{prop:exactlambda}}
We know that $\lambda^\infty_p$ satisfies 
$(1-\lambda^\infty_p)^d = p.$ Thus, $$\lambda^\infty_p = 1 - p^{1/d} = 1 - \exp(\log p/d) = 1 - \exp(-(\log 1/p)/d).
$$ 
By the usual Taylor series expansion, 
$\exp(-x) = 1 - x + x^2/2 + O(x^3)$ when $x = o(1).$ Similarly, 
$\lambda^\infty_p/(1-\lambda^\infty_p) = p^{-1/d} - 1$ and we argue in the same way.

\end{document}